\let\realItem\item % save a copy of the original item
\NewDocumentCommand\myItem{ o }{%
   \IfNoValueTF{#1}%
      {\realItem}% add an item
      {\realItem[#1]\def\@currentlabel{#1}}% add an item and update label
}
\setlist[enumerate]{
    before=\let\item\myItem,       % use \myItem in enumerate
    label=\textnormal{(\arabic*)}, % format the label
    widest=(2')                    % set the widest label
}
\newcommand\N{{\mathbb N}}
\newcommand\R{{\mathbb R}}
\renewcommand{\d}{\mathrm{d}}
\newcommand{\dv}{\mathrm{d} v}
\newcommand{\dt}{\mathrm{d} t}
\newcommand{\dx}{\mathrm{d} x}
\def\AA{{\mathcal A}}
\def\BB{{\mathcal B}}
\def\CC{{\mathcal C}}
\def\DD{{\mathcal D}}
\def\EE{{\mathcal E}}
\def\HH{{\mathcal H}}
\def\LL{{\mathcal L}}
\def\MM{{\mathcal M}}
\def\NN{{\mathcal N}}
\def\OO{{\mathcal O}}
\def\PP{{\mathcal P}}
\def\QQ{{\mathcal Q}}
\def\TT{{\mathcal T}}
\def\UU{{\mathcal U}}
\def\VV{{\mathcal V}}
\def\WW{{\mathcal W}}
\def\VV{{\mathcal V}}
\def\ZZ{{\mathcal Z}}
\def\AAA{{\mathscr A}}
\def\BBB{{\mathscr B}}
\def\DDD{{\mathscr D}}
\def\EEE{{\mathscr E}}
\def\FFF{{\mathscr F}}
\def\GGG{{\mathscr G}}
\def\HHH{{\mathscr H}}
\def\LLL{{\mathscr L}}
\def\MMM{{\mathscr M}}
\def\PPP{{\mathscr P}}
\def\QQQ{{\mathscr Q}}
\def\RRR{{\mathscr R}}
\def\SSS{{\mathscr S}}
\def\VVV{{\mathscr V}}
\def\WWW{{\mathscr W}}
\def\ZZZ{{\mathscr Z}}
\def\BBBB{{\mathfrak B}}
\def\CCCC{{\mathfrak C}}
\def\FFFF{{\mathfrak F}}
\def\HHHH{{\mathfrak H}}
\def\LLLL{{\mathfrak L}}
\def\MMMM{{\mathfrak M}}
\def\NNNN{{\mathfrak N}}
\def\WWWW{{\mathfrak W}}
\def\ffff{{\mathfrak f}}
\def\MMMM{{\mathfrak M}}
\def\Lloc{L_{\rm\tiny loc}}
\newcommand{\wto}{\rightharpoonup}
\def\eps{{\varepsilon}}
\newcommand{\la}{\langle}
\newcommand{\ra}{\rangle}
\newcommand{\rra}{\rangle\!\rangle}
\newcommand{\lla}{\langle\!\langle}
\newcommand{\lv}{\lvert}
\newcommand{\rv}{\lvert}
\newcommand{\lvv}{\lVert}
\newcommand{\rvv}{\rVert }
\newcommand{\lvvv}{\lvert\!\lvert\!\lvert}
\newcommand{\rvvv}{\rvert\!\rvert\!\rvert}
\newcommand{\grad}{\nabla}
\DeclareMathOperator{\Div}{div}
\newcommand\Ind{{\mathbf 1}}
\newtheorem{theo}{Theorem}[section]
\newtheorem{prop}[theo]{Proposition}
\newtheorem{lem}[theo]{Lemma}
\newtheorem*{thm*}{Theorem}
\theoremstyle{remark}
\newtheorem{rem}[theo]{Remark}
\newtheorem*{ex*}{Example}
\theoremstyle{definition}
\numberwithin{equation}{section}
\newcommand{\be}{\begin{equation}}
\newcommand{\ee}{\end{equation}}
\newcommand{\ba}{\begin{aligned}}
\newcommand{\ea}{\end{aligned}}
\newcommand{\beqn}{\begin{equation*}}
\newcommand{\eeqn}{\end{equation*}}
\newcommand{\bear}{\begin{eqnarray}}
\newcommand{\eear}{\end{eqnarray}}
\newcommand{\bean}{\begin{eqnarray*}}
\newcommand{\eean}{\end{eqnarray*}}
\newcommand{\bal}{\begin{aligned}}
\newcommand{\eal}{\end{aligned}}
\title[Non-equilibrium steady states of a weakly non-linear kinetic Fokker-Planck]{Existence and stability of non-equilibrium steady states of a weakly non-linear kinetic Fokker-Planck equation in a domain}
\author[J. Evans]{J. EVANS}%Josephine Evans
\author[R. Medina]{R. MEDINA}%Richard Medina
\address[J.~Evans]{Warwick Mathematics Institute, 
Zeeman Building, University of Warwick, CV4 7AL}
\email{Josephine.Evans@warwick.ac.uk}
\address[R.~Medina]{Centre de Recherche en Math\'ematiques de
 la D\'ecision (CEREMADE, CNRS UMR 7534),
  Universit\'e Paris Dauphine - PSL, Place de Lattre de
  Tassigny, 75775 Paris 16, France}
\email{richard.medina-rodriguez@dauphine.psl.eu}
\date{\today}
\subjclass[2020]{35Q82, 35Q84, 35G31, 35B40, 82C05}
\keywords{Kinetic Fokker-Planck equation, BGK thermostat, Maxwell boundary conditions, Non-equilibrium steady states, long-time asymptotic behavior}
\begin{document}

\begin{abstract}
We study a weakly non-linear Fokker-Planck equation with BGK heat thermostats in a spatially bounded domain with  conservative Maxwell boundary conditions, presenting a space-dependent accommodation coefficient and a space-dependent temperature on the spatial boundary. 
The model is based from a problem introduced in [E. A. Carlen, R. Esposito, J. L. Lebowitz, R. Marra, and C. Mouhot.\textit{ Approach to the steady state in kinetic models with thermal
  reservoirs at different temperatures.} {\emph J. Stat. Phys.}, 172(2):522--543, 2018]
where the authors studied the properties of the non-equilibrium steady states for non-linear kinetic Fokker-Planck equations with BGK thermostats in the torus. 
We generalize those results for bounded domains using the recent results presented in [K. Carrapatoso, P. Gabriel, R. Medina, and S. Mischler. \textit{Constructive Krein-Rutman result for kinetic Fokker-Planck equations in a domain}, 2024] for the study of general kinetic Fokker-Planck equations with Maxwell boundary conditions.
More precisely, in a weakly non-linear regime, we obtain the existence of a non-equilibrium steady state and its stability in the perturbative regime. 
\end{abstract}

\maketitle

\tableofcontents

\section{Introduction}\label{sec:Intro}
The study of non-equilibrium steady states (NESS) remains one of the central problems in statistical mechanics.  There are only a few models where fundamental questions, such as whether they exist, whether they are unique, and whether they are stable, can be answered, even if partially (see Subsection \ref{ssec:Motivation} for a detailed review of known results). This paper aims to contribute to the study of NESS within the context of kinetic theory.

This field of mathematics study equations coming from statistical physics modeling multi-particle systems by taking a statistical viewpoint. The unknown quantity is $f(t,x,v)$ which is the probability density function of a single \textquotedblleft typical" particle which at any time $t$ is located at position $x$ and is moving with velocity $v$. In such a setting the system is described as being in \emph{(local) thermodynamic equilibrium} when the particles velocities have a Maxwellian (Gaussian outside the world of kinetic theory) distribution of the form
\[\rho(x) (2 \pi T(x))^{-d/2} \exp \left(- \frac{|v-u(x)|^2}{2T(x)} \right),  \]
for some functions $\rho, u, T$ depending on the spatial variable $x$. 
Most of the time kinetic equations model systems which are outisde thermodynamic equilibrium, however as these systems approach a global steady state, as $t \rightarrow \infty$, they move towards an equilibrium state. In this paper however, we are interested in situations where the steady state does not have a Gaussian velocity distribution.

Physically, non-equilibrium steady states occur in open systems where there exist flows of macroscopic variables such as heat, see for instance \cite{tomé_2006, MR3223169}. In particular, in the context of gas dynamics this can arise when gasses are in contact with thermostats, which is the physical situation we study in this paper. Furthermore it is worth remarking that the study of non-equilibrium steady states in kinetic theory is fundamental as it can help us understand how non-equilibrium behaviors involving the flow of macroscopic quantities can emerge from particle systems.

More precisely, in this paper we study a nonlinear kinetic Fokker-Planck (KFP) equation with several thermostat terms. %KFP equations are toy models for dilute gas dynamics, see for instance \cite{MR1942465} and the references therein. 
The specific nonlinearity in the KFP equation we take comes from \cite[Equation (1.27)]{MR3824949} where they study NESSs for such an equation on the torus. In contrast, our work is on a bounded domain and contains a different and more challenging set of \textquotedblleft thermostat" terms which describe how heat flows in and out of the system. 

We are able to attack this more complicated situation using tools from  \cite{CGMM24}, where they develop powerful techniques to study the existence, uniqueness, and stability of steady states of a linear KFP equation in a bounded domain, presenting Maxwell boundary conditions with a space-dependent temperature. As we rely strongly on these linear tools we work in a regime where the nonlinear term is small.

%The model we study is based on a non-linear kinetic Fokker-Planck (KFP) equation with BGK thermostats in the torus (periodic domain), which was introduced in \cite[Equation (1.27)]{MR3824949}. For instance, this equation can be thought of as a toy model for the dynamics of a dilute gas in contact with various thermal reservoirs. More recently, in \cite{CGMM24}, the authors developed powerful techniques to study the existence, uniqueness, and stability of steady states of a linear KFP equation in a bounded domain, presenting Maxwell boundary conditions with a space-dependent temperature.
%
%Our goal in this work is to apply the tools from \cite{CGMM24} to extend the results from \cite{MR3824949} on the study of the KFP equation with BGK thermostats now within a bounded domain subject to Maxwell boundary conditions with a space-dependent temperature.
%
%However, to obtain our result, we need to introduce a regime where the nonlinear part is small in some sense. This is necessary because the presence of a thermostat at the boundary implies that we don't have access to a priori estimates to ensure that the total energy of the system will be bounded in time (see Subsection \ref{ssec:First_Observ} for more details).

Finally, it is important to note that, even in a weakly non-linear regime, boundary thermostats make the study of the NESS more challenging. For instance, and in contrast with \cite{MR3824949}, we lose the access to an explicit formula for the NESS, we no longer have reasons to believe that the steady states are spatially uniform, and we cannot rule out the existence of steady states with infinite energy.

%\smallskip\noindent
%Indeed, we will be interested in studying the function $f:=f(t,x,v)$, depending on the variables associated to time $t\in (0,\infty)$, space $x\in \Omega\subset \R^d$ and velocity $v\in \R^d$. The function $f$ is a density function such that, for any $t\geq 0$, the quantity $f(t,x,v) \, \dx\, \dv$ stands for the density of particles in the volume element $\dx\, \dv$ centered at $(x,v)$. 

\subsection{Framework}\label{ssec:Framework}
We consider $\alpha \in (0,1/2)$, dimension $d\geq 3$, and we study the following non-linear equation
\be\label{eq:NonLKFP}
\partial_t f =  -v\cdot \grad_x f + \left( \alpha \EE_f + (1-\alpha)\tau\right) \Delta_v f + \Div_v(vf)  + \GGG f \quad \text{ in } \UU := (0,\infty)\times \Omega \times \R^d,
\ee
describing the evolution of the density function $f:=f(t,x,v)$, depending on the variables associated to time $t\in (0,\infty)$, space $x\in \Omega\subset \R^d$ and velocity $v\in \R^d$.
We have considered the function $\tau = \tau(x):\Omega\to \R$ such that 
$$
\tau_0\leq \tau(x) \leq \tau_1 \qquad \forall x\in \Omega,
$$
for some constants $\tau_0,\tau_1>0$, and we have defined the \emph{total energy} functional
$$
\EE = \EE_f := \frac 1{ d} \int_{\Omega \times \R^d} \lv v\rv^2 f\dx \dv,
$$
and the BGK \emph{heat thermostat} 
$$
\GGG f = \sum_{n=1}^\NN \eta_n \GGG_n f \quad \text{ with } \quad \GGG_n f =  \Ind_{\Omega_n} \left( \varrho_f \MM_{T_n} - f \right),
$$ 
for some $\NN \in \N$, some parameters $\eta_n \geq 0$, $T_n>0$, the subsets $\Omega_n \subset \Omega$, and where we have defined
$$
\varrho_f = \int_{\R^d} fdv,  \quad \text{ and } \quad \MM_{\TT} =\MM_\TT (v):= {1\over (2\pi \TT)^{d/2}} \exp\left( -{\lvert v\rvert^2 \over 2\TT}\right) \quad \text{ for } \TT:\Omega \to \R_+^*. 
$$
We present to the reader a discussion on the physical interpretation of the different operators involved in Equation \eqref{eq:NonLKFP} in Subsection \ref{ssec:Motivation} (see also \cite{MR3824949} for further details on the modeling).
\\

%\subsubsection{Boundary Conditions}
We take $\Omega$ to be a bounded domain and, without loss of generality, we impose $\lvert \Omega\rvert =1$. 
Moreover we assume $\Omega$ to be a $C^1$ domain, more precisely we assume that 
$$
\Omega := \{ x \in \R^d; \,  \delta(x) > 0 \}
$$  
for a function $\delta\in W^{2,\infty}(\R^d)$ such that $\delta (x) := \hbox{\rm dist}(x,\partial\Omega)$ on a neighborhood of  the boundary set $\partial\Omega$, thus $n_x = n(x) :=   - \nabla \delta(x)$  
coincides with the  unit normal outward vector field on $\partial\Omega$.
We define the boundary set $\Sigma = \partial \Omega\times \R^d$ and we differentiate between the sets of \emph{outgoing} velocities ($\Sigma_+$) and \text{incoming} velocities ($\Sigma_-$) on the boundary by
\beqn
\Sigma_\pm = \{(x,v)\in \Sigma,\, \pm n_x\cdot v>0 \}.
\eeqn
Furthermore we set $\Gamma = (0,\infty)\times\Sigma$ and accordingly $\Gamma_{\pm} = (0,\infty)\times \Sigma_\pm$. We define $\gamma f$ as the trace function associated with $f$ over $\Gamma$ and $\gamma_\pm f= \Ind_{\Gamma_{\pm}} \gamma f$. We then consider the \emph{accommodation coefficient} $\iota \in C(\partial\Omega, [0,1])$ and we complement Equation \eqref{eq:NonLKFP} with the \emph{Maxwell boundary condition}
\be\label{eq:BoundaryConditions}
\gamma_- f =\RRR \gamma_+ f  := (1-\iota) \SSS  \gamma_+ f +\iota  \DDD \gamma_+ f,\qquad \quad \text{ on } \Gamma_{-},
\ee
where we have defined the \emph{specular reflection} operator 
\beqn
\SSS g(t,x,v) := g(t,x,\VV_x v),\qquad\text{ where } \quad  \VV_x v := v-2n_x(n_x\cdot v),
\eeqn
and the \emph{diffussive reflection} operator 
\beqn%\label{def:DBC}
\DDD g(t,x,v) = \MMM_{\Theta}(v) \int_{\R^d} g(t,x,u) (n_x\cdot u)_+ du  ,\qquad\text{ where } \quad \MMM_\Theta := \sqrt{2\pi/ \Theta}\MM_\Theta,
\eeqn
and we have introduced the boundary temperature $\Theta\in W^{1,\infty}(\overline\Omega, \R)$ such that 
\be\label{eq:Assum-Theta}
\Theta_* \leq \Theta(x) \leq  \Theta^*, \quad  \text{ for some constants } 0< \Theta_* \leq \Theta^* <\infty .
\ee
%Moreover, it is worth remarking that $\MMM_\Theta$ has been normalized in such a way that $\widetilde \MMM_{\Theta} =1$.
%\\

%\subsubsection{Initial condition} 
Finally we complement the nonlinear Equation \eqref{eq:NonLKFP}-\eqref{eq:BoundaryConditions} with the initial condition 
\be\label{eq:InitialDatum}
f (t=0, x,v) = f_0(x,v) \quad\text{ on } \OO,
\ee 
and we assume, without loss of generality, that $\lla f_0\rra_\OO:=\displaystyle \int_\OO f_0 \, \dv\, \dx = 1$.

\subsection{Main results} \label{ssec:Results}
In order to state our main results we introduce the class of the so-called {\it  admissible weight functions} $\omega : \R^d \to (0,\infty)$ defined by
\beqn\label{eq:omega}
\omega = \la v \ra^k \exp(\zeta\la v\ra^s),  
\eeqn
and such that either
\beqn
s=0, \, \zeta\geq 0, \text{ and }  k >   k_*  \text{ with } k_* = d+1,  \quad \text{ or } \quad s \in (0,1] \text{ with } \zeta \in (0,\infty), \text{ and any } k\geq 0.
\eeqn
We refer the reader towards \cite{CGMM24} for an explanation on how this choice of admissible weight functions provides, in particular, a control on the behavior of local Kolmogorov equations. Moreover, we emphasize that due to the fact that $s\in [0,1]$ we will also have the control
\be
\label{eq:cond-Lpomega}
{\grad_v \, \omega /  \omega} \in L^\infty(\R^d),
\ee 
which will be necessary in order to appropriately control the non-linearity of the equation. \\

%\smallskip\noindent
We also need to introduce some functional spaces. 
For a given measure space $(Z,\ZZZ,\mu)$, a weight function $\sigma : Z \to (0,\infty)$ and an exponent $p \in [1,\infty]$, we define the weighted Lebesgue spaces $L^p_\sigma(Z)$ 
associated to the norm 
\be\label{def:Lebesgue_weighted_spaces}
\| g \|_{L^p_\sigma(Z)} = \| \sigma g \|_{L^p(Z)}. 
\ee
Furthermore, we define the space of continuous functions in $Y$ as $C(Z)$. We then have the following results.\\

We first present a well-posedness and stability theorem in the linear framework when $\alpha =0$. 

\begin{theo}\label{theo:SteadySolutionL}
We assume $\alpha =0$. There exists $\FFFF^0\in L^2(\Omega, H^1(\R^d)) \cap L^\infty(\OO)$ unique steady solution to the linear Equation \eqref{eq:NonLKFP}-\eqref{eq:BoundaryConditions}-\eqref{eq:InitialDatum}. Moreover, $\lla \FFFF^0 \rra_\OO=1$ and for any admissible weight function $\varsigma$ there holds
\be\label{eq:SteadySolutionLIneq}
\lvv \grad_v \FFFF^0 \rvv_{L^2_\varsigma(\OO)} <\infty \qquad \text{ and } \qquad \FFFF^0 (x,v) \lesssim (\varsigma(v))^{-1}.
\ee
Furthermore, let $\omega$ be an admissible weight function, for any initial data $f_0\in L^2_\omega(\OO)$ there is a unique global renormalized solution $f \in C(\R_+, L^2_\omega(\OO))$ to the linear Equation \eqref{eq:NonLKFP}-\eqref{eq:BoundaryConditions}-\eqref{eq:InitialDatum} and there is $\lambda >0$ such that 
\be\label{eq:SteadySolutionL}
\lvv f_t -   \FFFF^0\rvv_{L^2_\omega(\OO)} \lesssim e^{-\lambda t} \lvv f_0 -\FFFF^0\rvv_{L^2_\omega(\OO)} \qquad \forall t\geq 0.
\ee
\end{theo}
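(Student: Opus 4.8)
The plan is to recognize that when $\alpha=0$ the equation \eqref{eq:NonLKFP}--\eqref{eq:BoundaryConditions} is \emph{linear}: it reads $\partial_t f = \LL f$ with
\[
\LL f = -v\cdot\grad_x f + (1-\alpha)\tau\,\Delta_v f + \Div_v(vf) + \GGG f,
\]
where $\tau$ depends only on $x$, $\GGG$ is the bounded BGK operator, and the boundary condition is the conservative Maxwell reflection $\RRR$. This is precisely the class of kinetic Fokker--Planck operators with Maxwell boundary conditions in a $C^1$ domain treated in \cite{CGMM24}. So the strategy is: (i) check that our operator and domain satisfy the structural hypotheses of the constructive Krein--Rutman framework of \cite{CGMM24}; (ii) quote the resulting spectral gap / exponential stability result; (iii) extract the steady state as the eigenfunction associated with the dominant eigenvalue, and its quantitative regularity/decay estimates; (iv) fix normalization and uniqueness via conservation of mass and positivity. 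Let me say more on each step.

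First, \textbf{verification of hypotheses.} I would note that the local diffusion coefficient $(1-\alpha)\tau(x)=\tau(x)$ is bounded above and below by the positive constants $\tau_1,\tau_0$, so the operator is locally uniformly a Kolmogorov--Fokker--Planck operator with the structure demanded in \cite{CGMM24}; the drift $\Div_v(vf)$ and the confinement built into the Ornstein--Uhlenbeck part give the needed local equilibria $\MM_{\tau(x)}$. The thermostat $\GGG f=\sum_n \eta_n\Ind_{\Omega_n}(\varrho_f\MM_{T_n}-f)$ is a bounded, mass-preserving (on each $\Omega_n$) operator with a nonnegative off-diagonal part $\varrho_f\MM_{T_n}\Ind_{\Omega_n}$ and a bounded nonpositive diagonal part $-\eta_n\Ind_{\Omega_n}f$; hence it is a bounded perturbation that preserves positivity and only shifts the spectrum by a bounded amount, so the admissible weight classes of \cite{CGMM24} are stable under its addition — here \eqref{eq:cond-Lpomega} is not yet needed (that is for the nonlinear part) but the weight hypotheses $k>d+1$ or $s\in(0,1]$ are exactly those of \cite{CGMM24}. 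The boundary operator $\RRR=(1-\iota)\SS+\iota\DD$ with $\iota\in C(\partial\Omega,[0,1])$ and $\Theta\in W^{1,\infty}$, $0<\Theta_*\le\Theta\le\Theta^*$, is a conservative Maxwell condition of the type covered there. Thus \cite{CGMM24} applies.

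Second, \textbf{invoking the Krein--Rutman / hypocoercivity output.} From \cite{CGMM24} the semigroup $S_\LL(t)$ generated by $\LL$ with boundary condition $\RRR$ is positive, mass-conserving (because both $\GGG$ and $\RRR$ are conservative and the Fokker--Planck part is conservative with these BC), and has a spectral gap in each space $L^2_\omega(\OO)$ for $\omega$ admissible: there is a unique dominant eigenvalue, which by mass conservation is $0$, with a one-dimensional eigenspace spanned by a nonnegative eigenfunction $\FFFF^0$, and the rest of the spectrum lies in $\{\mathrm{Re}\,z\le-\lambda\}$ for some $\lambda>0$. This yields both the existence/uniqueness of the steady state (normalized by $\lla\FFFF^0\rra_\OO=1$, which is possible since the eigenfunction is nonnegative and nontrivial) and the decay \eqref{eq:SteadySolutionL}, by decomposing $f_t-\FFFF^0=S_\LL(t)(f_0-\FFFF^0)$ and noting $f_0-\FFFF^0$ has zero mean hence lies in the complement of the kernel. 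The well-posedness in $C(\R_+,L^2_\omega(\OO))$ for $f_0\in L^2_\omega(\OO)$, in the renormalized sense with the trace theory for the Maxwell BC, is likewise the well-posedness statement of \cite{CGMM24}. The regularity $\FFFF^0\in L^2(\Omega,H^1(\R^d))\cap L^\infty(\OO)$ and the pointwise/weighted bounds \eqref{eq:SteadySolutionLIneq}, namely $\|\grad_v\FFFF^0\|_{L^2_\varsigma(\OO)}<\infty$ and $\FFFF^0\lesssim\varsigma^{-1}$ for every admissible $\varsigma$, follow from the elliptic-in-$v$ / hypoelliptic regularization estimates and the weighted $L^\infty$ bounds established there, applied to the eigenfunction equation $\LL\FFFF^0=0$; the $L^\infty$ bound uses that the weight classes form a scale and the eigenfunction, being in every $L^2_\omega$, also sits in every $L^\infty_{\varsigma}$ after the Moser-type iteration of \cite{CGMM24}.

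Third, \textbf{where the work really is.} The main obstacle is not any new estimate but \emph{the bookkeeping of checking that \cite{CGMM24} truly covers the present operator}, in two respects: (a) the thermostat term $\GGG$ is not of Fokker--Planck type, so one must argue that it enters as a genuinely bounded, positivity- and mass-preserving perturbation that neither destroys the hypocoercive structure nor the boundary trace theory — this is straightforward but must be spelled out (e.g. that $\Ind_{\Omega_n}$ interacting with a merely $C^1$ domain causes no issue since it acts only in $x$ and is bounded); and (b) matching our diffusion coefficient $\tau(x)$, which is $x$-dependent and only assumed $L^\infty$ with positive lower bound, to the regularity their local hypoellipticity lemmas require — if \cite{CGMM24} needs $\tau\in W^{1,\infty}$ or continuity we would add that as a (harmless) standing assumption. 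Once those checks are made, the theorem is a direct transcription of the linear theory, so the proof should be short, essentially a lemma-by-lemma citation with the perturbation argument for $\GGG$ made explicit.
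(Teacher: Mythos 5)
There is a genuine gap at the heart of your argument: you treat the whole BGK thermostat $\GGG$ as a harmless bounded perturbation and then claim that the Krein--Rutman/spectral-gap output of \cite{CGMM24} applies verbatim to the full operator $\LL$. But \cite{CGMM24} covers only the \emph{local} kinetic Fokker--Planck operator; in the paper's notation this is $\BB f = -v\cdot\grad_x f + \Lambda\Delta_v f + v\cdot\grad_v f + (d-\sum_n\eta_n\Ind_{\Omega_n})f$, which absorbs the local \emph{loss} part of $\GGG$ into the zeroth-order coefficient but does not include the non-local gain term $\AA f = \varrho_f\sum_n\eta_n\Ind_{\Omega_n}\MM_{T_n}$. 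A bounded positive perturbation of a generator with a spectral gap does not in general preserve the gap (it can move non-dominant spectrum by up to its operator norm, and the $\eta_n$ are not assumed small), and the positivity, regularization/ultracontractivity and Harnack-type lower bounds that feed the Krein--Rutman machinery are statements about the \emph{semigroup}, which changes when $\AA$ is added. "Bounded, positivity- and mass-preserving" is not enough to transfer any of these; this is precisely where the paper's work lies, not mere bookkeeping.

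Concretely, the paper proceeds by splitting $\LL=\BB+\AA$ and rebuilding the required semigroup properties for $S_\LL$ from those of $S_\BB$: ultracontractivity via an $N$-fold iterated Duhamel formula combined with the $L^1_\omega\to L^p_\omega$ boundedness of $\AA$ (Proposition \ref{prop:Ultra}); positivity via an iteration scheme $f^{k+1}=S_\BB f_0+\int S_\BB \AA f^k$ shown to be Cauchy (Proposition \ref{prop:WeakMaxP}); a strong dissipativity estimate via a further splitting $\BBB=\BB-M\chi_R$, $\AAA=\AA+M\chi_R$; and a Doblin--Harris minorization using the Harnack inequality and the first dual eigenfunction $\phi_1^{\BB}$ of the \emph{local} operator only. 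These are then fed into a \emph{conservative} Krein--Rutman--Doblin--Harris theorem taken from a different reference (\cite[Theorem 6.1]{sánchez2024voltageconductancekineticequation}), since the Krein--Rutman result of \cite{CGMM24} itself is stated in a non-conservative setting. Your outline would become a proof if you replaced "thus \cite{CGMM24} applies" by this perturbative reconstruction of the dissipativity, regularization, positivity and minorization hypotheses for the full semigroup; as written, that step is asserted rather than proved. (Your secondary worry about $\tau\in W^{1,\infty}$ is moot: only $\tau_0\le\tau\le\tau_1$ is used.)
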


The precise sense of the global solutions provided in Theorem \ref{theo:SteadySolutionL} is given by Theorem \ref{theo:WellPosednessL2} with the choice of $\Lambda=\tau$. We also remark that Theorem \ref{theo:WellPosednessL2} is but a direct application of \cite[Theorem 2.11]{CM_Landau_domain} and the trace theory from \cite[Theorem 2.8]{CM_Landau_domain}. 

The existence and uniqueness of a stationary solution for the linear problem, which we remark is also in the sense of Theorem \ref{theo:WellPosednessL2}, as well as its stability are obtained as a direct application of the Krein-Rutmann-Doblin-Harris theory developed in \cite[Theorem 6.1]{sánchez2024voltageconductancekineticequation} but we also refer towards \cite[Theorem 7.1]{CGMM24} for a similar result in a non-conservative setting and to \cite{sanchez:hal-04093201} for the study of a general Krein-Rutmann-Doblin-Harris result in a general theoretical framework. 

We futher note that Theorem \ref{theo:SteadySolutionL} is a slight generalization of \cite[Theorems 1.1 and 1.2 ]{CGMM24}.
\\

In the non-linear framework we have first the existence of a steady state for $\alpha>0$ sufficiently small.

\begin{theo}\label{theo:SteadySolutionNonL}
There exists $\alpha^\star\in (0,1/2)$ such that for every $\alpha \in (0, \alpha^\star)$,
there is a positive function $\FFFF^\alpha \in L^2(\Omega, H^1(\R^d)) \cap L^\infty(\OO)$, steady solution of Equation \eqref{eq:NonLKFP}-\eqref{eq:BoundaryConditions}-\eqref{eq:InitialDatum}. Moreover, $\lla \FFFF^\alpha \rra_\OO=1$ and  for every admissible weight function $\omega$ there holds 
\be\label{eq:PropsSS}
\lvv \grad_v \FFFF^\alpha \rvv_{L^2_\omega(\OO)} <\infty, \quad \FFFF^\alpha (x,v) \lesssim (\omega(v))^{-1}, \quad \text{ and } \quad  \EE_{\FFFF^\alpha}  \leq 2 \EE_{\FFFF^0},
\ee
uniformly in $\alpha$, and where $\FFFF^0$ is given by Theorem \ref{theo:SteadySolutionL}.
\end{theo}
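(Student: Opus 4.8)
The plan is to set up a fixed-point argument around the linear steady state $\FFFF^0$ from Theorem \ref{theo:SteadySolutionL}. Given a candidate density $g$ with $\lla g \rra_\OO = 1$ and controlled energy, we freeze the nonlinear coefficient by replacing $\alpha \EE_f + (1-\alpha)\tau$ with the space-independent quantity $\Lambda_g := \alpha \EE_g + (1-\alpha)\tau$, which (since $\tau_0 \le \tau \le \tau_1$ and $\EE_g$ is a bounded scalar) still satisfies two-sided bounds of the same type as $\tau$. The linear KFP equation with diffusion coefficient $\Lambda_g$ and the same Maxwell boundary conditions and thermostat $\GGG$ falls under the scope of Theorem \ref{theo:SteadySolutionL} (applied with $\Lambda = \Lambda_g$ in place of $\tau$, as pointed out after that theorem), so it admits a unique steady state $\Phi(g) := \FFFF^{\Lambda_g}$ with $\lla \Phi(g)\rra_\OO = 1$, together with the gradient bound, the pointwise bound $\Phi(g)(x,v)\lesssim \omega(v)^{-1}$ for every admissible $\omega$, and exponential relaxation. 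A steady solution of the nonlinear equation \eqref{eq:NonLKFP}--\eqref{eq:BoundaryConditions} is then exactly a fixed point of $\Phi$, because at a fixed point $g = \Phi(g)$ one has $\EE_g = \EE_{\Phi(g)}$, so $\Lambda_g$ is the correct self-consistent coefficient.

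I would then run the Schauder fixed-point theorem on a suitable convex compact set. Define
\[
\mathcal{K} := \left\{ g \in L^1_+(\OO) : \lla g\rra_\OO = 1,\ \EE_g \le 2\,\EE_{\FFFF^0},\ g(x,v)\le C\,\omega_0(v)^{-1} \text{ a.e.}\right\},
\]
for one fixed admissible weight $\omega_0$ (say with exponential weight, so $\langle v\rangle^2 \omega_0^{-1}$ is integrable and the energy constraint is consistent) and a large constant $C$ coming from the pointwise estimate in \eqref{eq:SteadySolutionLIneq}. One must check $\Phi(\mathcal K)\subseteq \mathcal K$: the mass and the pointwise bound are immediate from Theorem \ref{theo:SteadySolutionL}, and the energy bound $\EE_{\Phi(g)}\le 2\,\EE_{\FFFF^0}$ is where the smallness of $\alpha$ enters — one compares the steady state $\FFFF^{\Lambda_g}$ with $\FFFF^0=\FFFF^{\tau}$ and shows $\EE_{\FFFF^{\Lambda_g}}$ is a Lipschitz (or at least continuous) function of the scalar $\Lambda_g$ near $\Lambda_g=\tau$, with $|\Lambda_g-\tau|\le \alpha\,|\EE_g - \tau|\lesssim \alpha$ on $\mathcal K$; choosing $\alpha^\star$ small forces the energy to stay below $2\,\EE_{\FFFF^0}$. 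Compactness of $\mathcal K$ in $L^1(\OO)$ follows from the uniform pointwise tail bound (tightness in $v$), the uniform $L^2_{\omega_0}$-gradient bound in $v$ together with the velocity-averaging/hypoelliptic smoothing inherent to KFP, or more simply from the fact that the steady states solve an elliptic-type equation giving equicontinuity; continuity of $\Phi$ on $\mathcal K$ reduces to continuity of $\Lambda\mapsto \FFFF^\Lambda$, which should follow from the resolvent/spectral construction underlying Theorem \ref{theo:SteadySolutionL} being analytic in the parameter $\Lambda$.

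The main obstacle I anticipate is establishing the continuity (and the quantitative energy estimate) for the map $\Lambda \mapsto \FFFF^\Lambda$ in a way that is uniform in $\alpha$ and compatible with the possibly infinite energy issue flagged in the introduction. The pointwise bound $\FFFF^\Lambda \lesssim \omega^{-1}$ for \emph{every} admissible $\omega$ is what rescues finiteness of $\EE_{\FFFF^\Lambda}$ here, but the implicit constant a priori depends on $\omega$ and on $\Lambda$; I would need to track this dependence through the Krein-Rutman-Doblin-Harris machinery of \cite{CGMM24,sánchez2024voltageconductancekineticequation} to get bounds locally uniform in $\Lambda$ on the compact interval $[\tau_0,\tau_1]$ (or on $[(1-\alpha)\tau_0,\ \alpha\cdot 2\EE_{\FFFF^0}+(1-\alpha)\tau_1]$), and hence uniform in $\alpha\in(0,\alpha^\star)$. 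Once the fixed point $\FFFF^\alpha$ is produced, the stated properties in \eqref{eq:PropsSS} are inherited directly from the membership $\FFFF^\alpha\in\mathcal K$ and from Theorem \ref{theo:SteadySolutionL} applied with $\Lambda=\Lambda_{\FFFF^\alpha}$, giving $\FFFF^\alpha\in L^2(\Omega,H^1(\R^d))\cap L^\infty(\OO)$, the gradient bound, the pointwise bound for arbitrary admissible $\omega$, and positivity (from the strict positivity of linear steady states in the Krein-Rutman framework).
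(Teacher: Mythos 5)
Your strategy is the same one the paper uses --- freeze the nonlinear coefficient, solve the resulting linear problem with the Krein--Rutman--Doblin--Harris theory of Theorem \ref{theo:SteadySolutionL}, and close with a fixed-point argument whose invariant set encodes the bound $\EE \le 2\EE_{\FFFF^0}$ --- but you miss the observation that makes the argument close cleanly: the frozen coefficient $\Lambda_g=\alpha\EE_g+(1-\alpha)\tau$ depends on $g$ only through the single scalar $\nu=\EE_g$. The paper therefore never works with a map on densities; it defines the real-valued map $\FFF(\nu)=\EE_{\FFFF^\alpha_\nu}$ on the interval $[0,2\EE_{\FFFF^0}]$, proves it is continuous and maps the interval into itself for $\alpha$ small, and concludes by a one-dimensional fixed-point theorem. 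All of the function-space machinery you invoke (compactness of $\mathcal K$ in $L^1$, velocity averaging, equicontinuity of steady states) becomes unnecessary.

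As written, your proposal has two genuine gaps, both of which you flag yourself but neither of which you resolve. First, the set $\mathcal K$ you define (mass one, energy bound, pointwise tail bound) is \emph{not} compact in $L^1(\OO)$: a uniform pointwise bound and tightness in $v$ give no compactness in the $x$-variable, and obtaining it from hypoelliptic regularity of the steady states would require a separate argument you do not supply. Second, the continuity of $\Lambda\mapsto\FFFF^\Lambda$ cannot simply be read off from ``analyticity of the spectral construction''; the paper proves it constructively (Lemma \ref{lem:ContinuityFFF} and Proposition \ref{prop:SchauderContinuity}) by an energy estimate on the difference of two \emph{evolution} solutions with parameters $\nu_1,\nu_2$, giving $\lvv f_{1,t}-f_{2,t}\rvv_{L^2_\omega}\lesssim \alpha^2|\nu_1-\nu_2|^2e^{\kappa t}\lvv f_0\rvv_{L^2_\omega}$, combined with the uniform exponential relaxation to the steady states to transfer the estimate to $|\FFF(\nu_1)-\FFF(\nu_2)|$. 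The invariance step is likewise quantitative (Proposition \ref{prop:SchauderBounds}): one compares the solution with coefficient $\alpha\nu+(1-\alpha)\tau$ to the solution of the $\alpha=0$ problem and shows the discrepancy in energy is $O(\alpha^{1/2})$, uniformly over $\nu\in[0,2\EE_{\FFFF^0}]$, which is exactly where $\alpha^\star$ is chosen. Your idea of comparing $\FFFF^{\Lambda_g}$ with $\FFFF^0$ is the right one, but without the scalar reduction and these two quantitative lemmas the argument does not close.
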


The main consequence of Theorem \ref{theo:SteadySolutionNonL} is the existence of a NESS for the non-linear  Equation \eqref{eq:NonLKFP}-\eqref{eq:BoundaryConditions}, as well as some of its qualitative properties regarding regularity and decay tail in velocity.  
We remark that the proof of Theorem \ref{theo:SteadySolutionNonL} is based in the application of a fixed point argument in the spirit of the proof of \cite[Theorem 1]{MR4265362}. Additionally $\FFFF^\alpha$ is a stationary state in the sense of Theorem \ref{theo:WellPosednessL2} by taking $\Lambda = \alpha \EE_{\FFFF^\alpha} + (1-\alpha) \tau$.

We remark that Theorem \ref{theo:SteadySolutionNonL} generalizes \cite[Theorem 1.2]{MR3824949} and that, in contrast with this work, we observe major differences on the behavior and properties of the NESS in the absence of periodic boundary conditions: we have no reasons to believe that the NESS will be independent of the spatial variable $x$, the bounds on the energy functional $\EE$ cannot be obtained as during \cite[Lemma 1.1]{MR3824949} (see Subsection \ref{ssec:First_Observ}), we lack the information to rule out the existence of steady states with unbounded total energy, and we don't have access to an explicit representation of the NESS.
\\

Finally, we state the following stability result for the previous NESS.

\begin{theo}\label{theo:GlobalSolution}
We consider an admissible weight function $\omega$. There are $\alpha^{\star\star}\in (0,\alpha^\star)$ and $\delta>0$, where $\alpha^\star$ is given by Theorem \ref{theo:SteadySolutionNonL}, such that for every $\alpha \in (0, \alpha^{\star\star})$ and for any initial datum $f_0\in L^2_\omega(\OO)$ satisfying
$$
\lvv f_0 - \FFFF^\alpha \rvv_{L^2_\omega(\OO)} \leq \delta,
$$
there is $f \in L^2_\omega(\UU)$ global weak solution of Equation \eqref{eq:NonLKFP}-\eqref{eq:BoundaryConditions}-\eqref{eq:InitialDatum}. Moreover, there is $\eta>0$ for which there holds
\be\label{eq:DecaySmallness}
\lvv f_t -  \FFFF^\alpha\rvv_{L^2_\omega(\OO)} \lesssim e^{-\eta t} \lvv f_0 -  \FFFF^\alpha\rvv_{L^2_\omega(\OO)} \qquad \forall t\geq 0.
\ee
\end{theo}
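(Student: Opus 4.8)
The plan is to set up a perturbative analysis around the non-equilibrium steady state $\FFFF^\alpha$ produced by Theorem \ref{theo:SteadySolutionNonL}, reducing the nonlinear stability statement to the linear decay estimate \eqref{eq:SteadySolutionL} plus a fixed-point / continuation argument to absorb the weak nonlinearity. Write $f_t = \FFFF^\alpha + g_t$, so that $\lla g_t\rra_\OO = 0$ for all $t\geq 0$ (this is conserved by the full dynamics since $\lla f_0\rra_\OO = \lla \FFFF^\alpha\rra_\OO = 1$). Since $\FFFF^\alpha$ solves the stationary equation with diffusion coefficient $\Lambda^\alpha := \alpha\EE_{\FFFF^\alpha} + (1-\alpha)\tau$, subtracting the two equations gives, schematically,
\be\label{eq:perturbeq-plan}
\partial_t g = \LL_{\Lambda^\alpha} g + \alpha\, \EE_g\, \Delta_v\bigl(\FFFF^\alpha + g\bigr),
\ee
where $\LL_{\Lambda^\alpha}$ is the linear KFP generator (transport $+ \Lambda^\alpha\Delta_v + \Div_v(v\,\cdot) + \GGG$) with Maxwell boundary conditions \eqref{eq:BoundaryConditions}, and $\EE_g = \frac1d\int_\OO |v|^2 g\,\dx\,\dv$. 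The term $\alpha\,\EE_g\,\Delta_v\FFFF^\alpha$ is linear in $g$ but small (order $\alpha$), and $\alpha\,\EE_g\,\Delta_v g$ is genuinely quadratic. First I would record that, by Theorem \ref{theo:SteadySolutionL} applied with the diffusion coefficient $\Lambda^\alpha$ (equivalently, by the Krein--Rutman--Doblin--Harris machinery of \cite{sánchez2024voltageconductancekineticequation, CGMM24} which is uniform in $\Lambda$ ranging over a compact interval $[\tau_0, \tau_1]$), the semigroup $S^\alpha_t$ generated by $\LL_{\Lambda^\alpha}$ on the zero-mean, mass-one hyperplane satisfies $\lvv S^\alpha_t h\rvv_{L^2_\omega(\OO)} \lesssim e^{-\lambda t}\lvv h\rvv_{L^2_\omega(\OO)}$ for some $\lambda>0$ \emph{independent of $\alpha$} for $\alpha$ small; this uniformity is the point that lets the linear decay rate survive the perturbation.

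Next I would treat \eqref{eq:perturbeq-plan} by Duhamel's formula,
\be\label{eq:duhamel-plan}
g_t = S^\alpha_t g_0 + \alpha\int_0^t S^\alpha_{t-r}\Bigl(\EE_{g_r}\,\Delta_v\bigl(\FFFF^\alpha + g_r\bigr)\Bigr)\,\mathrm{d}r,
\ee
and run a contraction argument in the Banach space $X := \{ g \in C(\R_+; L^2_\omega(\OO)) : \lla g_t\rra_\OO = 0,\ \sup_{t\geq 0} e^{\eta t}\lvv g_t\rvv_{L^2_\omega(\OO)} < \infty\}$ for a rate $\eta \in (0,\lambda)$ to be fixed, inside the ball of radius $C\delta$. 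The crucial estimate is that $\lvv \EE_g\,\Delta_v h\rvv_{L^2_\omega(\OO)}$ is controlled: $|\EE_g| \lesssim \lvv g\rvv_{L^2_\omega(\OO)}$ since $\langle v\rangle^2 \omega^{-1} \in L^2$ for any admissible weight (here $\omega$ dominates $\langle v\rangle^{d+1}$ and $d\geq 3$, so this pairing is finite), and the $\Delta_v$ hitting $\FFFF^\alpha$ or $g$ is absorbed after an integration by parts against the test function using \eqref{eq:cond-Lpomega} — that is, $\lvv \Delta_v h\rvv$ in the relevant weighted dual norm is controlled by $\lvv \grad_v h\rvv_{L^2_\omega}$-type quantities, which for $h = \FFFF^\alpha$ is finite uniformly in $\alpha$ by \eqref{eq:PropsSS}, and for $h = g_r$ is handled through the regularizing smoothing of $S^\alpha_{t-r}$ (gain of one $v$-derivative, with an integrable singularity $(t-r)^{-1/2}$ as $r\to t$, exactly as in the hypoelliptic KFP estimates used in \cite{CGMM24}). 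Combining: the map defined by the right side of \eqref{eq:duhamel-plan} sends the ball of radius $C\delta$ in $X$ into itself and is a contraction there, provided $\alpha < \alpha^{\star\star}$ and $\delta$ are small enough, because every nonlinear/perturbative contribution carries a factor $\alpha$ (and, for the quadratic term, an extra factor $C\delta$). The unique fixed point is the desired global weak solution $g$, and membership in $X$ is precisely the exponential decay \eqref{eq:DecaySmallness} with $\eta$; uniqueness in $L^2_\omega(\UU)$ follows from the same contraction estimate on any finite time interval. Finally, that the $\FFFF^\alpha + g$ so constructed is a weak solution in the sense of Theorem \ref{theo:WellPosednessL2} with $\Lambda = \alpha\EE_{f_t} + (1-\alpha)\tau$ requires checking that the \emph{self-consistent} energy $\EE_{f_t} = \EE_{\FFFF^\alpha} + \EE_{g_t}$ enters the diffusion coefficient consistently; since I linearized around $\Lambda^\alpha$ fixed, one reconciles this by noting the discrepancy $\alpha(\EE_{f_t} - \EE_{\FFFF^\alpha})\Delta_v f_t = \alpha\,\EE_{g_t}\,\Delta_v f_t$ is exactly the nonlinear term already kept in \eqref{eq:perturbeq-plan}, so the two formulations agree.

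The main obstacle I expect is the interplay between the self-consistent diffusion coefficient and the weighted estimates: one must ensure that $\Lambda^\alpha = \alpha\EE_{\FFFF^\alpha} + (1-\alpha)\tau$ stays in a fixed compact interval $[c_0, c_1] \subset (0,\infty)$ as $\alpha\to 0$ — which follows from $\EE_{\FFFF^\alpha}\leq 2\EE_{\FFFF^0}$ in \eqref{eq:PropsSS} and $\tau_0\leq\tau\leq\tau_1$ — and, more delicately, that the spectral gap $\lambda$ and all implied constants in the linear theory of \cite{CGMM24, sánchez2024voltageconductancekineticequation} are \emph{uniform} over that interval and over the admissible space-dependent data $(\iota,\Theta)$. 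I would argue this uniformity either by inspecting the quantitative Doblin--Harris constants (Lyapunov function, minorization, and trace estimates all depend on $\Lambda$ only through $c_0, c_1$) or by a compactness/continuity argument in $\Lambda$. A secondary technical point is justifying the integration-by-parts that moves $\Delta_v$ off the nonlinearity in the weighted dual pairing, for which the bound \eqref{eq:cond-Lpomega} on $\grad_v\omega/\omega$ is exactly what is needed; this is routine once the smoothing estimate for $S^\alpha_t$ is in hand, but it must be stated carefully because $g_r$ is only known a priori to lie in $L^2_\omega$, not in a higher-order space, at the initial time.
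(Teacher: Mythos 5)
Your high-level setup agrees with the paper's: perturb around $\FFFF^\alpha$, isolate the extra terms $\alpha\,\EE_g\Delta_v\FFFF^\alpha$ (linear, nonlocal, order $\alpha$) and $\alpha\,\EE_g\Delta_v g$ (quadratic), and drive the decay with the uniform-in-$\alpha$ spectral gap of the linear semigroup. But your execution route — Duhamel plus Banach contraction in $C(\R_+;L^2_\omega)$ with exponential weight — has a genuine gap at its central step. To estimate the Duhamel integral $\int_0^t S^\alpha_{t-r}\bigl(\EE_{g_r}\Delta_v g_r\bigr)\,\mathrm{d}r$ in $L^2_\omega(\OO)$ when $g_r$ is only known to lie in $L^2_\omega(\OO)$, you invoke a gain of one $v$-derivative by $S^\alpha_{t-r}$ with an integrable singularity $(t-r)^{-1/2}$, i.e.\ a pointwise-in-time smoothing bound $L^2_xH^{-1}_v\to L^2$. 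No such estimate is available here: the paper's toolbox contains only ultracontractivity ($L^1\to L^\infty$ with a $T^{-\eta}$ rate, Proposition \ref{prop:Ultra}) and \emph{time-integrated} gradient bounds of the form \eqref{eq:bddGrad}, and establishing short-time hypoelliptic regularization up to the boundary under mixed specular/diffusive Maxwell reflection is precisely the kind of regularity the authors state they cannot obtain (they explicitly renounce a hypocoercivity argument for lack of regularity information). The integration by parts you propose works in a weak/variational formulation against a test function, but inside Duhamel it would have to be performed against the semigroup kernel, which is the same unproven smoothing estimate in disguise. The same obstruction blocks the contraction estimate for the difference of two solutions, so the fixed point does not close as written.

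The paper circumvents this entirely. It first proves well-posedness of the linearized problem with a frozen $g$ (Proposition \ref{prop:KFPL2Perturbed}) via Lions' variant of Lax--Milgram, which accepts the $H^{-1}_v$ source $\EE_h\Delta_v\FFFF^\alpha$ natively and yields the time-integrated gradient bound \eqref{eq:bddGradPerturbed}. It then obtains decay not by Duhamel but by constructing an equivalent norm $\lvvv h\rvvv^2=\beta\lvv h\rvv^2_{L^2_{\widetilde\omega}}+\int_0^\infty\lvv S_\PPP(\tau)h\rvv^2_{L^2_\omega}\,\mathrm{d}\tau$ (Proposition \ref{prop:Hypo}), in which the perturbative terms are absorbed using only energy identities, the dual semigroup $S_{\PPP^*}$, and the bound $\lvv\grad_v\FFFF^\alpha\rvv_{L^2_\omega}<\infty$ from \eqref{eq:PropsSS}. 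Finally, the map $g\mapsto h$ is shown to preserve a ball and to be continuous for the \emph{weak} $L^2$ topology — compactness coming from velocity-averaging ($H^{1/4}$ regularity) plus Rellich--Kondrachov and a tightness estimate — so Schauder's theorem applies; no contraction, and hence no smoothing estimate, is ever needed. If you want to salvage your route, you would either have to prove the missing boundary-compatible regularization estimate (a substantial separate result) or switch, as the paper does, to an energy-method decay in a modified norm combined with a compactness fixed point.
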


The global solutions provided by Theorem \ref{theo:GlobalSolution} are constructed in the sense that the function $h := f - \lla f_0\rra_\OO\,  \FFFF^\alpha$ satisfies Equation \eqref{eq:KFPstabilityNL}, in the sense of Proposition \ref{prop:KFPL2Perturbed}. 

It is worth remarking that Proposition \ref{prop:KFPL2Perturbed} is mainly an application of the Lion's variant of the Lax-Milgram theorem \cite[Chap~III,  \textsection 1]{MR0153974} as used in \cite{CM_Landau_domain}, see also \cite{CGMM24, CM_Landau_domain, sanchez:hal-04093201, CM-KFP**} for similar arguments on the existence of solutions of kinetic equations. 
The trace theory was taken mainly from \cite{CM_Landau_domain, CGMM24} but we also refer to \cite{MR1776840,MR2721875,sanchez:hal-04093201} for further references on the trace theory for kinetic equations. 

Furthermore, we emphasize that to obtain the a priori estimates leading to the proof of Proposition \ref{prop:KFPL2Perturbed} we have used the modified weight functions from \cite{CM-KFP**, sanchez:hal-04093201, CGMM24} to control the Maxwell boundary condition. 

In addition, the decay estimate was obtained by defining a new norm in the spirit of  \cite[Proposition 3.6]{CM_Landau},  \cite[Proposition 3.2]{Carrapatoso_2016} and \cite[Proposition 4.1]{MR3591133}. It is worth remarking that we are not able to construct of a \emph{hypocoercivity} theory in the spirit of \cite{MR2562709, MR4581432, DMS, CM-KFP**} due to the lack of extra information on the steady state, namely positivity bounds and regularity.

%We also note that, even though we work in a weakly non-linear regime, Theorem \ref{theo:GlobalSolution} generalizes \cite[Theorem 1.3]{MR3824949} due to it being a boundary value problem exhibiting Maxwell boundary conditions with a space-dependent temperature. 

We note that Theorem \ref{theo:GlobalSolution} generalizes \cite[Theorem 1.3]{MR3824949} and we remark that the techniques used for the obtention of these results are different from those developed during the proof of the main theorems from \cite{MR3824949}. In particular, we do not need to study the underlying ergodic process associated with the linearized operator to obtain our results.

\subsection{Physical motivation and state of the art}\label{ssec:Motivation} 
%To understand why the study of non-equilibrium steady states is important for  statistical physics, we must first explain what do we mean for a steady state to be in \emph{thermal equilibrium} or not. 
%
%We say that a steady state is in thermal equilibrium when it is of the form of a global Maxwellian (or Gaussian) distribution function:
%\beqn%\label{def:MaxwellianDistributionEquilib}
% \rho (2 \pi T)^{-d/2} \exp \left(- \frac{|v-u|^2}{2T} \right),
%\eeqn
%for some parameters $\rho, T >0$ and $u\in \R^d$. Physically, being in equilibrium means that the \emph{entropy production} of the system is zero, and that we are under the presence of a detailed balance, i.e the macroscopic quantities, such as mass, momentum, and energy (heat), do not fluctuate. We refer towards \cite{MR116523} and \cite{MR1707309} for an exposition on equilibrium statistical mechanics from a classical and a modern point of view respectively.
%
%On the other hand, and in opposition to the previous definition, we say that a steady state is a \emph{non-equilibrium steady state} if it does not coincide with a global Maxwellian as described above. Physically, this implies that the entropy production of such configuration is non-zero or equivalently, that there is a flux of macroscopic quantities. We refer towards \cite{MR3223169} and \cite[Chapter 9]{MR1707309} for a mathematical exposition on non-equilibrium phenomena in statistical physics, and we also refer towards \cite{tomé_2006} for a study on the behavior of the entropy for NESS, specially in the case of the KFP equation. 

The study of non-equilibrium systems within statistical physics has been motivated by the investigation of many physical and biological models (see \cite{MR3763144, MR4628001, BIANCA2012359}). We refer the reader towards \cite{Jepps_2010, MR3223169} and \cite[Chapter 9]{MR1707309} for an exposition on non-equilibrium statistical mechanics and towards \cite{tomé_2006} for a characterization of the entropy for NESS, including the case of the KFP equation.
\\

We expose now why Equation \eqref{eq:NonLKFP}-\eqref{eq:BoundaryConditions}-\eqref{eq:InitialDatum} represents a system which is not in \emph{thermal equilibrium} and we explain the physical process that each term models. 

We define the Fokker-Planck type operators
\be\label{eq:KFP_operator}
\CC_\TT f := \TT \Div_v\left( \MM_\TT \grad_v\left(\frac{f}{\MM_\TT}\right)\right) = \TT\Delta_v f + \Div_v(v f),
\ee
associated to the temperature $\TT:\Omega \to \R_+^*$. We observe that the operator described in \eqref{eq:KFP_operator} can be interpreted as having a \emph{thermostat} effect, as it represents the gasses interaction with another gas which is present throughout the domain and has density $\mathcal{M}_{\TT}$. We refer towards \cite{MR1942465, MR1787105, MR749386, MR8130} and the references therein for more on the modeling and properties of Fokker-Planck type operators.

%\smallskip\noindent
On the other hand, for each $n \in \llbracket 1, \NN\rrbracket$, the BGK thermostat $\GGG_n $ model the gasses interaction with other gasses with density $\MM_{T_n}$, which are only present in some parts of the domain $\Omega_n$. We refer towards \cite{MR1942465, MR4265362, MR3824949} and the references therein for more on the modeling and properties of BGK operators.

%\smallskip\noindent
We can then rewrite the right hand side of Equation \eqref{eq:NonLKFP} as
$$
 -v \cdot \nabla_x f + \alpha \CC_{\EE_f} f + (1-\alpha) \CC_\tau f + \sum_{n=1}^\NN \eta_n  \GGG_n f, 
$$
which highlights the different forces acting simultaneously upon the particles. On the one hand, we have the effect of the transport operator, followed by a non-linear interaction where the gasses interact with themselves via a Fokker-Planck operator whose temperature is the gasses total energy, and acting with intensity $\alpha$. On the other hand, we observe the presence of several energy exchange mechanisms:

 \smallskip\noindent
$\bullet$  a linear Fokker-Planck thermostat $\CC_\tau$ whose associated temperature is the function $\tau$ defined in Subsection \ref{ssec:Framework}, and acting with intensity $1-\alpha$,
  
\smallskip\noindent
$\bullet$  $\NN$ BGK thermostats as described above, each with an associated intensity $\eta_n$, with $n \in \llbracket 1, \NN \rrbracket$,
  
  \smallskip\noindent
$\bullet$  and the boundary thermostat at temperature $\Theta$, given by the diffusive boundary condition.
 %
% \smallskip\noindent
%See, for instance, Figure \ref{fig:Possible_Configuration} below for an example of a possible configuration of the phenomena described above.
%
%
%%\vspace{5pt}
%\begin{figure}[h!]
%   \centering
%    \begin{minipage}{0.5\textwidth}
%\begin{tikzpicture}
%% Draw the irregular region (Omega)
%\draw[thick, fill=gray!10] plot [smooth cycle, tension=1.2] 
%    coordinates {(0,0) (2,1) (3,3) (1.5,4) (-2,3) (-3,1) (-2,-1) (-1,-3) (1,-2)};
%\node at (0, 2.8) {\huge$\Omega$};
%
%% Draw the shaded circular regions (omega_1, omega_2, omega_3)
%\fill[gray!30] (-2,1) circle (0.8);
%\node at (-2, 1) {$\Omega_1$};
%
%\fill[gray!30] (1,1.7) circle (1);
%\node at (1, 1.7) {$\Omega_2$};
%
%\fill[gray!30] (0,-1.8) circle (0.5);
%\node at (0, -1.8) {$\Omega_3$};
%
%% Draw the arrow and label for the boundary
%%\draw[->, thick] (5,0) -- (6,0);
%%\node at (9,0) {\small{boundary conditions\newline give thermal walls}};
%
%\end{tikzpicture}
%\end{minipage}%
%    \hspace{0.5cm}  % Adjust the space between the figure and the caption
%    \begin{minipage}{0.45\textwidth}  % Adjust the width of the caption box
%        \caption{A possible configuration of a domain $\Omega$ with three areas (in grey) where BGK thermostats act.}
%        \label{fig:Possible_Configuration}
%    \end{minipage}
%\end{figure}
\\

From the mathematical point of view, the study of NESS poses several challenges: they are generally not explicit, and it is often difficult to prove that they satisfy Poincar\'e or logarithmic Sobolev inequalities which in turn makes it challenging to apply the standard machinery of \emph{hypocoercivity} (see \cite{MR2562709, MR4581432, DMS}) to investigate their stability. 
Nonetheless, we highlight some results regarding non-equilibrium systems for various kinetic models.

\smallskip\noindent
$\bullet$ For a collisionless transport equation in a bounded domain with a non-isothermal boundary condition, we cite \cite{MR4179249}, where A. Bernou shows the existence and stability of a steady state.

\smallskip\noindent
$\bullet$ The study of NESS for BGK equations has received relatively more attention over the last years. We can cite for instance \cite{MR3824949, MR4146892} where E. Carlen, R. Esposito, J. Lebowitz, R. Marra and C. Mouhot study a non-linear BGK equation in the torus with the presence of BGK thermostats at different temperatures. In each of this articles the authors construct a NESS and they prove their exponential stability. Moreover, they also prove uniqueness of such steady state in \cite{MR4146892} whereas in \cite{MR3824949} they obtain local uniqueness.

We also mention the works of J. Evans and A. Menegaki on BGK equations. In \cite{evans2023propertiesnonequilibriumsteadystates} the authors study a non-linear BGK equation in a 1-dimensional torus and obtain existence, local uniqueness and stability of a NESS. It is also worth remaking that the decay estimates were obtained by the used of a hypocoercivity technique in the spirit of \cite{DMS}. Furthermore, in \cite{MR4265362} these authors study a non-linear BGK equation in a real interval presenting diffusive boundary conditions with different temperatures at each side. In this model they prove existence of a NESS under weak conditions on the boundary temperatures.

Finally, within the framework of BGK equation we have \cite{bernou_hal_04176707}, where A. Bernou study a non-isothermal problem, for which he constructs a NESS, and provides its uniqueness and stability under suitable conditions in dimensions 2 and 3.

\smallskip\noindent
$\bullet$ We take a glance now at KFP equation. Regarding linear problems we cite two main papers: on the one hand, we have \cite{MR4278431} where C. Cao studied a problem in the whole $\R^d$ as spatial domain, and proved the existence and uniqueness of a steady solution (non necessarily Maxwellian) as well as its stability. On the other hand, for linear KFP equations with bounded domains we cite \cite{CGMM24} where K. Carrapatoso, P. Gabriel, R. Medina and S. Mischler obtained the existence, uniqueness and stability of a NESS. It is worth remarking that the equation studied in \cite{CGMM24} was non-conservative and it exhibited Maxwell boundary conditions with a space-dependent temperature. 
  
 Furthermore, we cite again \cite{MR3824949} where E. Carlen, R. Esposito, J. Lebowitz, R. Marra and C. Mouhot also studied a non-linear KFP equations with BGK thermostats in the torus and they obtain the existence, local uniqueness, stability, and an explicit formula of a NESS.

\smallskip\noindent
$\bullet$ For the Boltzmann equation with hard spheres near the hydrodynamic limit, we have the papers \cite{MR3740632,GuoZhou_BE, MR3085665, MR2997586, MR1842024} exploring the effects of a heat thermostat on the boundary.
 
Whiting the previously mentioned papers, we highlight \cite{MR3085665} where R. Esposito, Y. Guo, C. Kim and R. Marra construct a NESS and provide its uniqueness and stability in a perturbative regime. Such perturbative regime consisted, besides the fact of being close to the hydrodynamical limit which is a perturbative regime in itself, on the imposition of boundary temperatures that don't fluctuate too much, some smallness condition on the initial data and a small \emph{Knudsen} number. 

Similarly, in \cite{MR2997586} L. Arkeryd, R. Esposito, R. Marra and A. Nouri  construct a NESS which is locally unique, and stable, under the conditions of a small \emph{Knudsen} number and some smallness on the initial condition.  

In the study of NESS for Boltzmann equations not necessarily near its hydrodynamic limit we have \cite{MR1842024, MR3336871, MR1636525}. More precisely, in \cite{MR1842024} the authors construct $L^1$ solutions to the stationary Boltzmann equation in a 1d slab. In \cite{MR3336871}, E. Carlen, J. Lebowitz and C. Mouhot investigate a space homogeneous Boltzmann equation with pseudo-Maxwellian molecules and the non-equilibrium effects come from a Boltzmann-type thermostat. In this setting, the authors obtain existence, uniqueness and stability of a NESS. Furthermore, in \cite{MR1636525}, E. Carlen, R. Esposito, J. Lebowitz, R. Marra and A. Rokhlenko studied a homogeneous Boltzmann equation with KFP and BGK thermostats.

\subsection{First mathematical observations and strategy of the proof of the main results}\label{ssec:First_Observ}
We first remark that Equation \eqref{eq:NonLKFP}-\eqref{eq:BoundaryConditions} is mass conservative. Indeed, we observe that at least at a formal level, there holds
\be\label{eq:MassConservation}
 \frac d{dt} \int_\OO f_t = \int_\OO -v\cdot \grad_x f +  \left( \alpha \EE_f + (1-\alpha)\tau\right) \Delta_v f + \Div_v(vf)  + \GGG f = 0,
\ee
where we have used the fact that the Maxwell boundary condition satisfies
$$
\RRR:L^1 (\Sigma_+, \, d\xi_{1} ^1) \to L^1 (\Sigma_-, \, d\xi_{1} ^1) , \quad \lvv \RRR\rvv_{L^1(\Sigma_+, \, d\xi_{1} ^1)} \leq 1.
$$
This together with our assumptions on $f_0$ imply in particular that $\lla f_t\rra_\OO = 1$ for every $t\geq 0$.

Furthermore, due to the structure of Equation \eqref{eq:NonLKFP} it would be of interest to establish a priori bounds on the behavior of the energy operator $\EE$ in time, however, and in difference with \cite{MR3824949}, this is challenging due to the presence of a diffusion condition at the boundary. Indeed at a formal level, using the conservation of mass, we observe that
$$
\frac d{dt} \EE_{f_t}   \leq 2d (1-\alpha) \left(1 - \frac {\EE_{f_t}} {\tau_1} \right)  + \sum_{n=1}^\NN \eta_n T_n  + \int_{\Sigma_+} \iota \, \lv v\rv^2 \, \gamma_+ f \, \left(  -\lv v\rv^2 + \frac {d+1}2\sqrt{2\over \pi} \Theta^{3/2}   \right) \,  (n_x\cdot v)_+ ,
$$
and using the Grönwall lemma we further deduce that 
\begin{multline}\label{eq:EnergyBoundaryNoControl}
\EE_{f_t} \leq  2d (1-\alpha) \, e^{- {1\over \tau_1} t} \EE_{f_0} + \tau_1\left(2d(1-\alpha) + \sum_{n=1}^\NN \eta_n T_n  \right)\left( 1- e^{- {1\over \tau_1} t}\right) \\+ \int_0^t e^{- {1\over \tau_1} (t-s)} \int_{\Sigma_+} \iota(x) \lv v\rv^2 \, \gamma_+ f_s \, \left(  -\lv v\rv^2 + \frac {d+1}2\sqrt{2\over \pi} \Theta^{3/2}   \right) \,  (n_x\cdot v)_+ .
\end{multline}
The previous formula has several consequences, on the one hand if $\iota \equiv 0$, i.e there is only specular reflection at the boundary, we can immediately deduce the existence of a ball enclosing the functional $\EE$ for all $t\geq 0$.

On the other hand however, when $\iota \not\equiv 0$ and there is a heat source anywhere at the boundary, we cannot guarantee that the energy functional won't explode with time. 
Nonetheless, it is worth remarking that the term
$$
-\lv v\rv^2 + \frac {d+1}2\sqrt{2\over \pi} \Theta^{3/2} ,
$$ 
in \eqref{eq:EnergyBoundaryNoControl} helps in ensuring that the velocity of the particles doesn't grow \emph{too much} at the boundary, and this makes us believe that there should be a mechanism in which this fact helps in bounding the total energy for all time. 
\\

We are unable however, to exploit the previous idea and our strategy then
%instead we turn our attention to the steady states of Equation  \eqref{eq:NonLKFP}-\eqref{eq:BoundaryConditions}-\eqref{eq:InitialDatum} with finite energy. After constructing one such steady state, we construct a perturbative well-posedness theory for the evolution equation and we also obtain the stability of this stationary solutions.
%
%
%
%
%%\subsection{Strategy of the proof of the main results}\label{ssec:Strategy}
%Due \eqref{eq:EnergyBoundaryNoControl} and the analysis from Subsection \ref{ssec:First_Observ}, we know that we don't have access to a priori bounds on the total energy of the solutions of Equation \eqref{eq:NonLKFP}-\eqref{eq:BoundaryConditions}-\eqref{eq:InitialDatum}. Therefore, our strategy 
consists on studying first the linear equation 
\be\label{eq:KFPtau}
\left\{\begin{array}{rcll} 
 \partial_t f &=& \LL f  & \text{ in } \UU ,   \\
\displaystyle \gamma_- f &=& \RRR \gamma_+ f &\text{ on } \Gamma_{-} ,\\
\displaystyle  f_{t=0} &=& f_0 &\text{ in } \OO.
 \end{array}\right.
 \ee
 where $\LL f :=  -v\cdot \grad_x f + \CC_\Lambda f +\GGG f $,
for a function $\Lambda : \Omega \to \R$ such that 
$$
\Lambda_0 \leq \Lambda (x) \leq  \Lambda_1 \quad \text{ for every } x\in \Omega,
$$
for some constants $\Lambda_0, \Lambda_1>0$,
and we recall that $\CC_\Lambda$ is defined in \eqref{eq:KFP_operator}. We then proceed as follows:

\smallskip\noindent
\ding{172} \emph{Well-posedness of Equation \eqref{eq:KFPtau}.} By using the well-posedness and trace theory from \cite{CM_Landau_domain} we deduce that there is a strongly continuous semigroup $S_\LL$ associated with the solutions of Equation \eqref{eq:KFPtau}. We will also study extensively the properties of the semigroup $S_\LL$ which will be obtained by using the results from \cite{CGMM24} on the local part of the operator $\LL$ and extending them to the full operator by using the Duhamel formula.

\smallskip\noindent
\ding{173} \emph{Krein-Rutmann-Doeblin-Harris theory.} By using \cite[Theorem 6.1]{sánchez2024voltageconductancekineticequation} we obtain the existence, uniqueness and stability of a steady solution for Equation \eqref{eq:KFPtau}. In particular, we will deduce that such a steady state has finite total energy. 

\smallskip\noindent
\ding{174} \emph{Proof of Theorem \ref{theo:SteadySolutionL}.} We observe that by taking $\Lambda=\tau$, Equation \eqref{eq:KFPtau} coincides with Equation \eqref{eq:NonLKFP}-\eqref{eq:BoundaryConditions}-\eqref{eq:InitialDatum} with $\alpha =0$. Therefore the previous results immediately imply Theorem \ref{theo:SteadySolutionL}.

\smallskip\noindent
\ding{175} \emph{Existence of the NESS for the non-linear problem.} Using the existence of a steady state of Equation \eqref{eq:KFPtau} with bounded total energy we deduce, by the use of a fixed point argument in the spirit of the proof of \cite[Theorem 1]{MR4265362}, that for small values of $\alpha>0$ we can construct a NESS for Equation \eqref{eq:NonLKFP}-\eqref{eq:BoundaryConditions} satisfying the energy bound \eqref{eq:PropsSS}.
\\

After having proved Theorem \ref{theo:SteadySolutionNonL} we perturb Equation \eqref{eq:NonLKFP}-\eqref{eq:BoundaryConditions}-\eqref{eq:InitialDatum} around the NESS: we take $h= f- \FFFF^\alpha $ and we study the resulting linearized perturbed problem 
\be\label{eq:KFPstabilityLin1}
\left\{\begin{array}{rcll} 
 \partial_t h &=& -v\cdot \grad_x h  +   \CC_{\Lambda^\star} h + \GGG h  + \alpha \EE_g \Delta_v h   + \alpha\EE_{h} \Delta_v \FFFF^\alpha & \text{ in } \UU  \\
\displaystyle \gamma_- h &=& \RRR \gamma_+ h &\text{ on } \Gamma_{-} \\
\displaystyle  h_{t=0} &=& f_0 - \FFFF^\alpha &\text{ in } \OO,
 \end{array}\right.
 \ee 
where we have taken $g:\UU\to \R$ and we have defined $\Lambda^\star := \alpha \EE_{\FFFF^\alpha }  + (1-\alpha) \tau$.
Then to prove Theorem \ref{theo:GlobalSolution} we proceed as follows:

\smallskip\noindent
\ding{176} \emph{Well-posedness of Equation \eqref{eq:KFPstabilityLin1}.} We use again the theory developed in \cite{CM_Landau_domain} to obtain the well-posedness of Equation \eqref{eq:KFPstabilityLin1} under a smallness condition on $g$. It is worth remarking that the arguments leading to this result are more delicate than the proof of the well-posedness for Equation \eqref{eq:KFPtau}, due to the presence of the bilinear term $\EE_g\Delta_v h$ and of the $H^{-1}$ term $\EE_{h} \Delta_v \FFFF^\alpha$.

\smallskip\noindent
\ding{177} \emph{Hypodissipativity.} We remark now that we have access to a decay estimate in a weighted $L^2$ space for the solutions of the following equation
\be\label{eq:PPP_before}
\partial_t h = -v\cdot \grad_x h + \CC_{\Lambda^\star} h + \GGG h , \qquad  \gamma_- h = \RRR \gamma_+ h, \qquad  h_{t=0} = f_0-\FFFF^\alpha.
\ee
This is nothing but a consequence of the fact that it coincides with Equation \eqref{eq:KFPtau} by taking $\Lambda = \Lambda^\star$. We then prove that, for $\alpha$ and $g$ \emph{small} we can construct a new norm $\lvvv \cdot \rvvv$, equivalent to the aforementioned weighted $L^2$ norm, in which we can extend the dissipativity properties of Equation \eqref{eq:PPP_before} to the solutions of Equation \eqref{eq:KFPstabilityLin1} by treating the term $\alpha \EE_g \Delta_v h   + \alpha\EE_{h} \Delta_v \FFFF^\alpha$ as a \emph{small} perturbation in some sense.

\smallskip\noindent
\ding{178} \emph{Fixed point argument on the perturbed setting.} Finally, using the previous informations we will prove that the map that to $g$ associates $h$ a solution of Equation \eqref{eq:KFPstabilityLin1}, leaves invariant a ball in a weighted $L^2$ space, and that it is continuous for the weak $L^2$ topology. This, together with the hypodissipativity result above, lead to the proof of Theorem \ref{theo:GlobalSolution} by using the Schauder fixed point theorem.

\subsection{Structure of the paper} 
The paper is organized as follows.

\smallskip\noindent
In Section \ref{sec:Toolbox} we introduce some elementary lemmas to control the non-local terms from the BGK operators, and we present some powerful results developed in \cite{CGMM24} for the control of local KFP equations.
%\smallskip\noindent
During Section \ref{sec:Study_S_LLL} we develop a priori estimates and the well-posedness of Equation \eqref{eq:KFPtau}. We derive the existence of the semigroup $S_\LLL$ and we provide its ultracontractive properties. We also study the existence and properties of the backwards equation dual to Equation \eqref{eq:KFPtau}.
%\smallskip\noindent
In Section \ref{sec:ProofTheo1} we present the Krein-Rutman-Doblin-Harris theorem from \cite[Theorem 6.1]{sánchez2024voltageconductancekineticequation} and we use it to prove Theorem \ref{theo:SteadySolutionL}.
%\smallskip\noindent
We devote Section \ref{sec:Proof_Theo2} to prove Theorem \ref{theo:SteadySolutionNonL} by using the results from the previous sections and arguing by using a fixed point argument in the spirit of the proof of \cite[Theorem 1]{MR4265362}. 
%\smallskip\noindent
In Section \ref{sec:PerturbEquilib} we study the perturbed Equation \eqref{eq:KFPstabilityLin} and we obtain some a priori estimates as well as its well-posedness under suitable assumptions. 
%\smallskip\noindent
Additionally, during Section \ref{sec:HypoDissipativity} we prove a hypodissipativity result for the solutions of Equation \eqref{eq:KFPstabilityLin}. 
%\smallskip\noindent
Finally, in Section \ref{sec:ProoTheo3} we obtain an equivalent version of Theorem \ref{theo:GlobalSolution} by proving the existence of solutions of the non-linear perturbed problem. The proof is based on the application of the Schauder fixed point theorem and by using the results obtained from Sections \ref{sec:PerturbEquilib} and \ref{sec:HypoDissipativity}.

\subsection{Notation} We state now some of the notations we will be using during this paper. 

\smallskip\noindent
$\bullet$ Given two admissible weight functions $\omega, \varsigma$ we say that $\omega \prec \varsigma$ when $ \omega \varsigma^{-1} \in L^1(\R^d)\cap L^\infty(\R^d)$.

\smallskip\noindent
$\bullet$
Consider a measure space $(Z,\ZZZ,\mu)$ and a weight function $\sigma : Z \to (0,\infty)$, we observe that $L^2_\sigma(Z)$ is a Hilbert space with the scalar product 
$$
\la \phi, \psi\ra_{L^2_\sigma(Z)} :=\int_Z \phi\, \psi \, \sigma^2. 
$$
Furthermore, we also define the weighted Sobolev space $H^1_\sigma(Z)$ as the functions $\psi\in H^1(Z)$ such that the norm
$$
\lvv \psi\rvv_{H^1_\sigma(Z)} :=\lvv \psi\rvv_{L^2_\sigma(Z)} + \lvv \grad \psi \rvv_{L^2_\sigma(Z)} <\infty.
$$ 

\smallskip\noindent
$\bullet$ For two Banach spaces $Z_1$, $Z_2$ we define $\BBB(Z_1, Z_2)$ as the space of the linear bounded operators from $Z_1$ to $Z_2$. In particular, we will denote it only as $\BBB(Z_1)$ when $Z_1 = Z_2$.

\section{Toolbox} \label{sec:Toolbox}
 We define the local ultraparabolic operator
 \be\label{eq:DefLLL-}
\BB f := -v\cdot \grad_x f +  \Lambda(x) \Delta_v f + v\cdot \grad_v f + \left( d- \sum_{n=1}^\NN \eta_n \Ind_{\Omega_n}  \right) f.
\ee
and the non-local operator
\be\label{def:HHH+def}
\AA f := \varrho_f  \sum_{n=1}^\NN \eta_n  \Ind_{\Omega_n} \MM_{T_n}  .
\ee 
and we observe that $\LL = \BB + \AA$, where we recall that $\LL$ is given in Subsection \ref{ssec:First_Observ}. 
This section is devoted to provide the necessary tools in order to study Equation \eqref{eq:KFPtau}. More precisely, in Subsection \ref{ssec:GGG+Properties} we prove the boundedness properties of the operator $\AA$ in every suitable Lebesgue space followed by Subsection \ref{ssec:UltraparabolicKolmogorov&Dual} where we summarize the results from \cite{CGMM24} on KFP equations with Maxwell boundary conditions.

\subsection{Properties of the non-local operator $\AA$} \label{ssec:GGG+Properties} We have the following proposition on the properties of the non-local term of the BGK thermostat.

\begin{prop}\label{prop:HHH+Lp-Lp}
For any two admissible weight functions $\omega$ and $\omega_\star$, and for any $p \in [1, \infty]$, there is a constant $C> 0$ such that 
\be\label{eq:HHH+regularization}
\lvv \AA f \rvv_{L^p_{\omega}(\OO)}  \leq C \sum_{n=1}^\NN \lvv f\rvv_{L^p_{\omega_\star}(\OO_n)} \lesssim \lvv f\rvv_{L^p_{\omega_\star}(\OO)}.
\ee
where we have defined $\OO_n = \Omega_n \times \R^d$.
\end{prop}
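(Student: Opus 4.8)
The plan is to exploit the tensorized structure of the operator: since
$\AA f(x,v)=\sum_{n=1}^\NN\eta_n\,\Ind_{\Omega_n}(x)\,\varrho_f(x)\,\MM_{T_n}(v)$,
the velocity dependence is a fixed Gaussian and the space dependence enters only through the macroscopic density $\varrho_f$ localized to $\Omega_n$. First I would reduce, by the finite triangle inequality in $L^p_\omega(\OO)$, to estimating a single term $\lvv\Ind_{\Omega_n}\varrho_f\MM_{T_n}\rvv_{L^p_\omega(\OO)}$; for $p\in[1,\infty)$ this factors by Fubini as $\lvv\varrho_f\rvv_{L^p(\Omega_n)}\,\lvv\MM_{T_n}\rvv_{L^p_\omega(\R^d)}$, and the case $p=\infty$ is identical with essential suprema replacing integrals. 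The Gaussian factor $\lvv\MM_{T_n}\rvv_{L^p_\omega(\R^d)}$ is a finite constant depending only on $T_n$, $p$ and $\omega$, because for an admissible weight $\omega=\la v\ra^k\exp(\zeta\la v\ra^s)$ with $s\in[0,1]$ the product $\MM_{T_n}^p\omega^p$ is dominated by a polynomial times a Gaussian, hence integrable (resp.\ bounded and vanishing at infinity when $p=\infty$).

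For the density factor I would bound $\varrho_f$ pointwise in $x$ by H\"older's inequality in the velocity variable: writing $\varrho_f(x)=\int_{\R^d}\big(f(x,v)\,\omega_\star(v)\big)\,\omega_\star(v)^{-1}\,dv$ gives $|\varrho_f(x)|\le\lvv\omega_\star^{-1}\rvv_{L^{p'}(\R^d)}\,\lvv f(x,\cdot)\rvv_{L^p_{\omega_\star}(\R^d)}$ with $p'$ the conjugate exponent, and $\lvv\omega_\star^{-1}\rvv_{L^{p'}(\R^d)}<\infty$ for every $p'\in[1,\infty]$: if $\omega_\star$ carries a nontrivial exponential factor this is immediate, and otherwise admissibility forces $\omega_\star=\la v\ra^{k_\star}$ with $k_\star>d+1$, so that $\la v\ra^{-k_\star p'}$ is integrable because $k_\star p'\ge k_\star>d$ (and $\omega_\star^{-1}\le1$ since $\omega_\star\ge1$, which covers $p'=\infty$). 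Raising to the power $p$ and integrating over $\Omega_n$ then yields $\lvv\varrho_f\rvv_{L^p(\Omega_n)}\le\lvv\omega_\star^{-1}\rvv_{L^{p'}(\R^d)}\,\lvv f\rvv_{L^p_{\omega_\star}(\OO_n)}$, where $\OO_n=\Omega_n\times\R^d$.

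Combining the three bounds gives the first inequality in \eqref{eq:HHH+regularization} with explicit constant $C=\sum_{n=1}^\NN\eta_n\,\lvv\MM_{T_n}\rvv_{L^p_\omega(\R^d)}\,\lvv\omega_\star^{-1}\rvv_{L^{p'}(\R^d)}$, and the second inequality follows at once since $\OO_n\subset\OO$ forces $\lvv f\rvv_{L^p_{\omega_\star}(\OO_n)}\le\lvv f\rvv_{L^p_{\omega_\star}(\OO)}$ and the sum has only $\NN$ terms. The argument is elementary; the only mildly delicate point — hence the main obstacle — is verifying the two integrability facts above, which is exactly where the restriction $s\in[0,1]$ (together with $k>k_*=d+1$ when $s=0$) in the definition of an admissible weight is used, both for $\omega$ and for $\omega_\star$.
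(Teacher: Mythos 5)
Your proof is correct and follows essentially the same route as the paper's: reduce to a single term, control the Gaussian factor $\lvv \MM_{T_n}\omega\rvv_{L^p(\R^d)}$ using the admissibility of $\omega$ (the paper phrases this as $\omega \prec \MM_{T_n}^{-1}$), and bound $\varrho_f$ via H\"older in $v$ using $\omega_\star^{-1}\in L^{p'}(\R^d)$. Your write-up is simply a more detailed version of the paper's two-line argument, including the explicit constant.
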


\begin{proof}
We observe that due to the very definition of admissible weight functions there holds $\omega \prec \MM_{T_n}^{-1}$, for every $n\in \llbracket 1, \NN \rrbracket$. We remark that for the case $p=\infty$ the result is obvious, furthermore if $p\in [1,\infty)$ we have that
\beqn
\sum_{n=1}^\NN \int_{\OO_n} \left( \varrho_f \, \eta_n  \MM_{T_n}\right)^p \omega^p \lesssim \sum_{n=1}^\NN \int_{\Omega_n} \left( \int_{\R^d} f \right)^p \lesssim   \sum_{n=1}^\NN \int_{\OO_n} f^p ,\\
\eeqn
where we have used the Hölder inequality to obtain the second inequality. 
\end{proof}

\subsection{Well-posedness and properties of the local ultraparabolic part} \label{ssec:UltraparabolicKolmogorov&Dual}
We look now at the local KFP equation 
\be\label{eq:KFPtauLocal}
\left\{\begin{array}{rcll} 
 \partial_t f &=& \BB f   & \text{ in } \UU , \\
\displaystyle \gamma_- f &=& \RRR \gamma_+ f &\text{ on } \Gamma_{-} ,\\
\displaystyle  f_{t=0} &=& f_0 &\text{ in } \OO.
 \end{array}\right.
 \ee
 where we recall that $\BB$ is given by \eqref{eq:DefLLL-}. 
Furthermore, we also present during this subsection the first eigenproblem associated to the previous equation, i.e the existence and properties of the eigentriplet  $(\lambda_1, f_1, \phi_1)$ satisfying
 \beqn\label{eq:1stEVP}
\lambda_1 \in \R, \quad \BB f_1 = \lambda_1 f_1, \quad \gamma_{_-} f_1 = \RRR \gamma_{_+} f_1, 
\quad \BB^* \phi_1 = \lambda_1 \phi_1, \quad\gamma_{_+} \phi_1 = \RRR^*  \gamma_{_-} \phi_1. 
\eeqn 
 We observe then that, by taking $b(x, v)=v$, $c(x, v)=d - \sum_{n=1}^N  \eta_n \Ind_{\Omega_n}(x)$ and since we have that $\Lambda(\Omega)\subset [\Lambda_0, \Lambda_1]$, this equation fits the framework developed in \cite{CGMM24} (see also \cite{CM_Landau_domain}) with $\gamma = 2$, $b_0 = b_1 =1$ and $k_p = d$ for all $p\in [1,\infty]$, thus by repeating its arguments we have the following theorem summarizing the results from  \cite[Theorems 1.1, 1.2, 5.2, 6.1 and Proposition 4.10]{CGMM24}.
 
 \begin{theo}\label{theo:LocalKFP}
 Let $\omega$ be an admissible weight function, then for any $f_0\in L^p_\omega(\OO)$ with $p\in [1,\infty]$, there exists $f\in C(\R_+ , L^p_\omega(\OO))$, unique global weak solution to the local KFP Equation \eqref{eq:KFPtauLocal} in the sense of distributions (see also \cite[Proposition 3.3]{CGMM24}). 
 Moreover, there are constants $\kappa \geq 0$ and $C>0$ such that for all $p\in [1,\infty]$ there holds
 \be
 \lvv f_t \rvv_{L^p_\omega(\OO)} \leq Ce^{\kappa t} \lvv f_0\rvv_{L^p_\omega(\OO)}\qquad \forall t\geq 0.  \label{eq:GrowthKFPLp}
 % \| \nabla_v f \|_{L^2((t_0,T) \times \OO)}  &\le& C_{t_0,t} \| f_0 \|_{L^p_\omega(\OO)}, \quad \forall \, t > t_0 > 0. \label{eq:Galframe-AprioriEstim4}
 \ee
Additionally, the following statements hold.

\smallskip\noindent
 {\sl (1)} Let $\omega^\star$ be an admissible weight function such that either $s>0$ or $k > K+k^*$ if $s=0$ with $K :=  4(3d+1)(2d+3)$. Define $\omega^\star_\infty := (\omega^\star)^{1/2}$ if $s > 0$ or $\omega^\star_\infty := \omega^\star \langle v \rangle^{-K}$ if $s= 0$.
There exist $\kappa, \eta > 0$ such that  any solution $f$ to the local KFP Equation \eqref{eq:KFPtauLocal} satisfies 
\be\label{eq:EstimL1LrWeak_bis}
 \|  f(T,\cdot)    \|_{L^\infty_{\omega^\star_\infty}(\OO)}   
\lesssim e^{\kappa T} T^{-\eta}  \| f_0    \|_{L^1_{\omega^\star}(\OO)} .
\ee

\smallskip\noindent
 {\sl (2)}  Consider a  weak solution $0 \le f \in L^2((0,T) \times \OO) \cap L^2((0,T) \times \Omega; H^1(\R^d))$  
to the local KFP Equation \eqref{eq:KFPtauLocal}. For any $0 < T_0 < T_1 < T$ and $\eps > 0$, there holds 
\be\label{eq:Harnack}
\sup_{\OO_\eps} f_{T_0} \le C  \inf_{\OO_{\eps}} f_{T_1}, 
\ee
for some constant $C = C(T_0,T_1,\eps) > 0$ and where we have defined
\be\label{def:OmegaEpsOeps}
\OO_{\eps} := \Omega_{\eps}\times B_{\eps^{-1}},
%\Omega_{\eps} :=\{x\in \Omega { \cap B_{\eps^{-1}}?}, \delta(x)   >\eps\},
\quad \Omega_{\eps} :=\{x\in \Omega  , \, \delta(x)   >\eps\} . 
\ee

\smallskip\noindent
 {\sl (3)} There exist two weight functions $\omega_1,m_1$ and an exponent $r > 2$  with $L^r_{\omega_1} \subset (L^2_{m_1})' $ %L^2_{m_1^{-1}}$ 
such that there exists a unique eigentriplet $(\lambda_1^\BB, f_1^\BB,\phi_1^\BB) \in \R \times L^r_{\omega_1} \times L^2_{m_1}$ satisfying the first eigenproblem  \eqref{eq:1stEVP}.
%together with the normalization condition  $\| \phi_1 \|_{L^2_{m_1}} =1$, $\langle \phi_1, f_1 \rangle  = \langle \phi_1, f_1 \rangle_{L^2_{m_1},(L^2_{m_1})'} = 1$. 
These eigenfunctions are continuous functions and they also satisfy
\be\label{eq:theoKR-strictpo&Linftybound}
  0 < f_1^\BB \lesssim \varsigma^{-1} \quad \text{ and } \quad %\in L^\infty_\omega, \quad 0 < \phi \i, % \le C_0e^{-\kappa_0\langle v \rangle^{s_0}}, \quad 0 < 
0 < \phi_1^\BB \lesssim \varsigma   \quad\hbox{on}\quad \OO, 
\ee
for any admissible weight function $\varsigma$.
 \end{theo}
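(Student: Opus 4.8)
The plan is to verify that the local KFP Equation \eqref{eq:KFPtauLocal} is an instance of the abstract class of kinetic equations treated in \cite{CGMM24} (relying on the well-posedness and trace machinery of \cite{CM_Landau_domain}), after which every one of the five assertions is obtained by directly invoking the corresponding statement in \cite{CGMM24}. There is essentially no new argument to make; the content of the proof is the identification of the framework and the tracking of the structural constants.

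First I would write $\BB f = -v\cdot\grad_x f + \Lambda(x)\Delta_v f + b(x,v)\cdot\grad_v f + c(x,v) f$ with $b(x,v)=v$ and $c(x,v) = d - \sum_{n=1}^\NN \eta_n\Ind_{\Omega_n}(x)$, and then check the structural hypotheses of \cite{CGMM24}: the diffusion coefficient is measurable and non-degenerate, $\Lambda(\Omega)\subset[\Lambda_0,\Lambda_1]$; the drift satisfies $|b(x,v)|\lesssim\la v\ra$ and $\grad_v\cdot b \equiv d$, which fixes $\gamma = 2$ and $b_0=b_1=1$; the zeroth-order term obeys $c\le d$ and is bounded, so one may take $k_p = d$ for every $p\in[1,\infty]$; the domain $\Omega$ is $C^1$ with $|\Omega|=1$ (indeed $\delta\in W^{2,\infty}$); and the boundary operator $\RRR$ of \eqref{eq:BoundaryConditions} is a conservative Maxwell reflection with accommodation coefficient $\iota\in C(\partial\Omega,[0,1])$ and wall temperature $\Theta\in W^{1,\infty}(\overline\Omega)$ satisfying \eqref{eq:Assum-Theta}, precisely the boundary class admitted in \cite{CGMM24}. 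I would also record that, since $\int_{\R^d}(v\cdot\grad_v f + df)\,\dv = 0$ and $\RRR$ is mass-contracting, one has $\tfrac{d}{dt}\lla f_t\rra_\OO = -\sum_n\eta_n\int_{\Omega_n}\varrho_{f}\le 0$, so $\BB$ generates a (sub-)conservative dynamics, i.e.\ exactly the non-conservative setting handled in \cite{CGMM24}.

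With the framework matched, the existence and uniqueness of $f\in C(\R_+,L^p_\omega(\OO))$ and the growth bound \eqref{eq:GrowthKFPLp} follow from \cite[Theorems 1.1 and 1.2]{CGMM24} (equivalently, the $L^2$ well-posedness and trace theory of \cite{CM_Landau_domain} together with a weighted interpolation to reach general $p$ and general admissible $\omega$). Assertion (1) is the ultracontractivity/gain-of-integrability estimate of \cite[Theorem 5.2]{CGMM24}, where a De Giorgi--Nash--Moser iteration produces an $L^1_{\omega^\star}\to L^\infty_{\omega^\star_\infty}$ smoothing at the cost of a polynomial weight loss $\la v\ra^K$ (respectively a square root of the weight when $s>0$), which is exactly why $\omega^\star$ must be taken sufficiently integrable and $\omega^\star_\infty$ is defined as in the statement. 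Assertion (2) is the interior kinetic Harnack inequality of \cite[Proposition 4.10]{CGMM24}, whose local-in-$(x,v)$ character is what forces the restriction to $\OO_\eps = \Omega_\eps\times B_{\eps^{-1}}$. Assertion (3) is the constructive Krein--Rutman theorem \cite[Theorem 6.1]{CGMM24}: positivity of the solution semigroup of \eqref{eq:KFPtauLocal} (from the maximum principle and the trace theory), its irreducibility (via the Harnack inequality of (2)), and the compactness and spectral estimates supplied by (1)--(2), yield a unique principal eigentriplet $(\lambda_1^\BB, f_1^\BB,\phi_1^\BB)\in\R\times L^r_{\omega_1}\times L^2_{m_1}$ with $r>2$ and $L^r_{\omega_1}\subset(L^2_{m_1})'$; the strict positivity and weighted bounds \eqref{eq:theoKR-strictpo&Linftybound}, valid for every admissible $\varsigma$, are then read off from the continuity of the eigenfunctions together with the ultracontractive bounds of (1).

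The hard part is not any single estimate but the bookkeeping in the second paragraph: one must make sure that every quantitative hypothesis of \cite{CGMM24} is met with the explicit constants $\gamma = 2$, $b_0 = b_1 = 1$, $k_p = d$, so that the conclusions transfer verbatim. The one genuinely delicate point is the admissibility of the boundary data --- in particular the regions where $\iota$ may vanish (pure specular reflection) combined with the sign-indefinite zeroth-order term $c$ --- but this is covered, since \cite{CGMM24} treats exactly conservative Maxwell boundary conditions with a general accommodation coefficient $\iota\in C(\partial\Omega,[0,1])$ and a non-conservative interior operator.
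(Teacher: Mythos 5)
Your proposal matches the paper's treatment exactly: the paper likewise "proves" Theorem~\ref{theo:LocalKFP} by checking that $b(x,v)=v$, $c(x,v)=d-\sum_{n}\eta_n\Ind_{\Omega_n}$ and $\Lambda(\Omega)\subset[\Lambda_0,\Lambda_1]$ place Equation \eqref{eq:KFPtauLocal} in the framework of \cite{CGMM24} with $\gamma=2$, $b_0=b_1=1$, $k_p=d$, and then imports Theorems 1.1, 1.2, 5.2, 6.1 and Proposition 4.10 of that reference wholesale. The only quibble is a citation detail: judging from how the paper later uses these results (Subsection \ref{sssec:Dissipativity_KRDH}), Proposition 4.10 of \cite{CGMM24} is the $L^1\to L^\infty$ ultracontractivity estimate rather than the Harnack inequality, so your attributions for assertions (1) and (2) appear to be swapped, which is not a mathematical gap.
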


\begin{rem}
Theorem \ref{theo:LocalKFP} implies, in particular, that the family of mappings $S_{\BB}(t) : L^p_\omega \to L^p_\omega$, 
defined by $S_{\BB}(t) f_0 := f(t,\cdot)$ for $t \ge 0$, $f_0 \in L^p_\omega$ and $f_t$ given by Theorem \ref{theo:LocalKFP},  is a positive semigroup of linear and bounded operators. 
\end{rem}

\section{Study of the semigroup $S_{\LL}$}\label{sec:Study_S_LLL}
In this section we study the properties of the semigroup generated by the linear operator $\LL$, namely, well-posedness of the associated evolution equation, control on the growth on the semigroup in weighted Lebesgue spaces, ultracontractivity and existence and properties of the associated dual semigroup.

\subsection{Growth estimates on a weighted $L^2$ framework} 
We define 
\beqn\label{eq:DefBB_0}
\BB_0 f := \CC_\Lambda f -  f\sum_{n=1}^N \eta_n \Ind_{\Omega_n} = \Lambda(x) \Delta_v f +  v\cdot \grad_v f   +  f\left( d- \sum_{n=1}^\NN \eta_n \Ind_{\Omega_n}\right),
\eeqn
and by following the ideas presented in \cite[Subsection 2.1]{CGMM24} for the study of local Kolmogorov equations we first observe that for two functions $h,\omega : \R^d \to \R_+$ and any $p\in [1,\infty)$, we have 
\be\label{eq:identCCCffp}
\int_{\R^d} (\BB_0 h) \, h^{p-1} \omega^p =
  - \frac{4 (p-1)}{ p^2}  \int_{\R^d} |\nabla_v (h\omega )^{p/2} |^2  + \int_{\R^d} h^p \omega^p \varpi^{\BB_0}_{\omega,p},
\ee
where 
\be\label{def:varpi} 
\varpi^{\BB_0}_{\omega, p}(x,v) := 2 \Lambda \left( 1-\frac1p  \right) \frac{|\nabla_v \omega|^2 }{ \omega^2}
+\left( \frac 2p -1\right) \Lambda {\Delta_v \, \omega\over \omega}
- v  \cdot \frac{\nabla_v \omega}{\omega}
+  \left( d- \sum_{n=1}^\NN \eta_n \Ind_{\Omega_n} \right)  - \frac{d }{ p} .
\ee
By choosing $\omega$ to be an admissible weight function, our assumptions during Subsection \ref{ssec:Results} imply that 
$(\varpi_{\omega, p}^{\BB_0})_+ \in L^\infty(\OO)$ and moreover there holds
\beqn\label{eq:varpi-asymptotic}
\limsup _{|v| \to \infty} \bigl( \sup_\Omega \varpi^{\BB_0}_{\omega, p} - \varpi^\sharp_{\omega, p}) \le 0, 
  \quad \text{ where } \quad \varpi^\sharp_{\omega, p} := - b_p^\sharp \langle v \rangle^{s}, 
\eeqn
with  $b^\sharp_p > 0$ given by 
\be\label{eq:b0sharp}
\begin{array}{ll}
 b_p^\sharp :=  k  -  d\left( 1-1/p\right) + \sum_{n=1}^\NN \eta_n \Ind_{\Omega_n}  & \text{ if }  s=0, \\
 b_p^\sharp :=  s \zeta  & \text{ if }  s \in (0,1].
 \end{array}
\ee
In a more quantitative way, for any $\vartheta \in (0,1)$, there exists $\kappa',R' > 0$ such that 
\be\label{eq:varpiC-varpisharp}
 \vartheta \chi_{R'}^c \varpi^\sharp_{\omega, p} \leq \sup_\Omega \varpi^{\BB_0}_{\omega, p} \le \kappa' \chi_{R'} + \vartheta \chi_{R'}^c  \varpi^\sharp_{\omega, p},
\ee
where $\chi_R(v) := \chi(|v|/R)$, $\chi \in C^2(\R_+)$, $\mathbf{1}_{[0,1]} \le \chi \le \mathbf{1}_{[0,2]}$, and $\chi^c_R : 1 - \chi_R$.
Then we have the following lemma.

\begin{lem}\label{lem:GrowthLpPrimal}
Consider an admissible weight function $\omega$. For every $p\in \{1, 2\}$ there are constants $\kappa \geq 0$ and $C\geq 1$ such that for every solution $f(t,x,v)\geq 0$ to Equation \eqref{eq:KFPtau} there holds 
\be\label{eq:GrowthL1Primal}
\lvv f_t \rvv_{L^p_\omega(\OO)} \leq C\, e^{\kappa t} \lvv f_0\rvv_{L^p_\omega(\OO)} \qquad \forall t\geq 0,
\ee
together with the energy estimate on the gradient
\be\label{eq:bddGrad}
 \int_0^t \| \grad_v f_{s} \|^2_{L^2_{\omega}(\OO)} ds \lesssim_C  \| f_0 \|^2_{L^2_{\omega}(\OO)} + 
 \int_0^t \| f_s \|^2_{L^2_{\omega}(\OO)} ds  \qquad \forall t>0.
\ee
\end{lem}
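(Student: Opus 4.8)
The identity \eqref{eq:identCCCffp} handles the local Fokker–Planck part $\BB_0$; the remaining pieces of $\LL = \BB_0 + \AA + (\text{transport})$ that need care are the transport term $-v\cdot\grad_x f$, the non-local BGK gain term $\AA f$, and the boundary contribution coming from integrating the transport term by parts. The plan is to test Equation \eqref{eq:KFPtau} against $f^{p-1}\omega^p$ over $\OO$ (for $p=1$ read this as testing against $\sgn(f)\,\omega$, but since $f\ge 0$ we simply test against $\omega^p$ when $p=1$ and against $f\omega^2$ when $p=2$), and to bound each resulting term.

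First I would write
\[
\frac1p\frac{d}{dt}\int_{\OO} f^p\omega^p
= \int_{\OO}(-v\cdot\grad_x f)\,f^{p-1}\omega^p
+ \int_{\OO}(\BB_0 f)\,f^{p-1}\omega^p
+ \int_{\OO}(\AA f)\,f^{p-1}\omega^p.
\]
The transport term: since $\omega$ depends only on $v$, one has $(-v\cdot\grad_x f)f^{p-1}\omega^p = -\tfrac1p v\cdot\grad_x(f^p)\,\omega^p = -\tfrac1p\Div_x(v f^p)\omega^p$, so by the divergence theorem it equals $-\tfrac1p\int_\Sigma (n_x\cdot v)\,\gamma f^p\,\omega^p$. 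Splitting $\Sigma=\Sigma_+\cup\Sigma_-$ and using the Maxwell boundary condition $\gamma_-f=\RRR\gamma_+f$ together with the $L^1$-contractivity of $\RRR$ (in the weighted sense, exactly as recorded in \eqref{eq:MassConservation} and as used throughout \cite{CGMM24, CM_Landau_domain}), this boundary term is $\le 0$, or at worst controlled; this is the point where one must invoke the trace theory of \cite{CM_Landau_domain, CGMM24} to make the manipulation rigorous for weak solutions rather than merely formal. For the $\BB_0$ term I apply \eqref{eq:identCCCffp} directly, discarding the good (negative) gradient term for the $L^p$ estimate but keeping it for \eqref{eq:bddGrad}, and bounding $\int f^p\omega^p\varpi^{\BB_0}_{\omega,p}\le \|(\varpi^{\BB_0}_{\omega,p})_+\|_{L^\infty}\int f^p\omega^p$, which is finite precisely because $\omega$ is admissible. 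For the non-local term $\AA f$ I use the Hölder/Young inequality $\int(\AA f)f^{p-1}\omega^p\le \|\AA f\|_{L^p_\omega}\|f\|_{L^p_\omega}^{p-1}\lesssim \|f\|_{L^p_\omega}^p$ by Proposition \ref{prop:HHH+Lp-Lp} (with $\omega_\star=\omega$). Collecting everything gives $\frac{d}{dt}\|f_t\|_{L^p_\omega}^p\le C_p\|f_t\|_{L^p_\omega}^p$, and Grönwall yields \eqref{eq:GrowthL1Primal} with some $\kappa\ge0$ and $C\ge1$ (the constant $C$ appears only if one passes through the equivalence of $\|f\|_{L^p_\omega}^p$ with the tested quantity, or absorbs an approximation step).

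For the gradient estimate \eqref{eq:bddGrad}, I redo the $p=2$ computation but now retain the term $-\tfrac{4(p-1)}{p^2}\int|\grad_v(f\omega)^{p/2}|^2 = -\int|\grad_v(f\omega)|^2$. Rearranging and integrating in time from $0$ to $t$,
\[
\int_0^t\|\grad_v(f_s\omega)\|_{L^2(\OO)}^2\,ds
\le \tfrac12\|f_0\|_{L^2_\omega(\OO)}^2
+ C\int_0^t\|f_s\|_{L^2_\omega(\OO)}^2\,ds
+ (\text{boundary term})_{\le 0},
\]
and then I convert $\|\grad_v(f\omega)\|_{L^2}$ back to $\|\grad_v f\|_{L^2_\omega}$ using $\grad_v(f\omega)=\omega\grad_v f+f\grad_v\omega$ and the crucial bound $\grad_v\omega/\omega\in L^\infty$ from \eqref{eq:cond-Lpomega}, so that $\|\grad_v f\|_{L^2_\omega}\lesssim \|\grad_v(f\omega)\|_{L^2}+\|f\|_{L^2_\omega}$. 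Combining with the already-established $L^2$ growth bound \eqref{eq:GrowthL1Primal} to control $\int_0^t\|f_s\|_{L^2_\omega}^2\,ds$ in terms of $\|f_0\|_{L^2_\omega}^2$ (or simply leaving it on the right-hand side, which is the form stated) gives \eqref{eq:bddGrad}.

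**Main obstacle.** The only genuinely delicate point is the boundary term: one must show that $-\tfrac1p\int_\Sigma(n_x\cdot v)\gamma f^p\,\omega^p\le 0$ (or is suitably controlled) using $\gamma_-f=\RRR\gamma_+f$. This requires that $\RRR$ be a contraction on the weighted $L^p$ trace space with weight $\omega^p$ against the measure $|n_x\cdot v|\,d\sigma(x)\,dv$ — true because $\SSS$ preserves $|v|$ and $\omega$ is radial, and $\DDD$ is a contraction by the normalization of $\MMM_\Theta$ — and it requires the trace theory of \cite{CM_Landau_domain, CGMM24} to justify that the integration by parts in $x$ is valid for the weak solutions produced there, rather than being merely formal. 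Everything else is a routine combination of \eqref{eq:identCCCffp}, Proposition \ref{prop:HHH+Lp-Lp}, the admissibility of $\omega$, and Grönwall's lemma.
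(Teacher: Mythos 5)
Your overall architecture (test against $f^{p-1}\omega^p$, use \eqref{eq:identCCCffp} for $\BB_0$, Proposition \ref{prop:HHH+Lp-Lp} plus H\"older for $\AA$, Gr\"onwall, then keep the negative gradient term and use \eqref{eq:cond-Lpomega} for \eqref{eq:bddGrad}) matches the paper's. However, there is a genuine gap at exactly the point you flag as the only delicate one: the boundary term. Your claim that $\RRR$ is a contraction on $L^p(\Sigma_+,\omega^p\,|n_x\cdot v|\,d\sigma_x\,dv)$ ``because $\DDD$ is a contraction by the normalization of $\MMM_\Theta$'' is false for a growing admissible weight. The normalization $\int_{n_x\cdot v<0}\MMM_\Theta(v)\,|n_x\cdot v|\,dv=1$ gives contractivity only for the unweighted $L^1$ flux (this is what \eqref{eq:MassConservation} uses). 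For $p=2$, say, Cauchy--Schwarz gives $\|\DDD g\|^2_{L^2(\d\xi^1_\omega)}\le C\,\|g\|^2_{L^2(\d\xi^1_\omega)}$ with $C=\bigl(\int\MMM_\Theta^2\omega^2|n_x\cdot v|\,dv\bigr)\bigl(\int\omega^{-2}(n_x\cdot v)_+\,dv\bigr)$, and $C\ge 1$ with equality only when $\omega^2\propto\MMM_\Theta^{-1}$; for $\omega=\la v\ra^k e^{\zeta\la v\ra^s}$ one has $C>1$ strictly. So the boundary term is genuinely positive, and it cannot be absorbed into the bulk $L^p_\omega$ norm either, since traces are not controlled by the interior norm. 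Your fallback ``or at worst controlled'' therefore has no justification.

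The paper's proof resolves this with the modified weight $\widetilde\omega$ of \eqref{def:omegaA}: one replaces $\omega^p$ by $\MMM_\Theta^{1-p}$ on a large ball $\{|v|\le 2A\}$ (the weight for which the diffusive reflection is exactly critical) and multiplies by $1+\tfrac12\,n_x\cdot v\,\la v\ra^{-4}$, which tilts the weight between outgoing and incoming velocities so as to produce a strictly favorable sign; with this choice $-\int_\Sigma(\gamma f)^p\widetilde\omega^p(n_x\cdot v)\le 0$ for $A$ large. One then pays two prices your sketch omits: (i) the weight now depends on $x$, so the transport term produces a nonzero commutator $\tfrac1p\int f^p\,v\cdot\nabla_x\widetilde\omega^p$, which must be shown to be $O(\la v\ra^{-2}\widetilde\omega^p)$ using $\Theta\in W^{1,\infty}$ and the compact support of $\chi_A$ (Step 3 of the paper's proof); and (ii) the final estimate is obtained in $L^p_{\widetilde\omega}$ and transferred back to $L^p_\omega$ via the equivalence \eqref{eq:omega&omegatilde}, which is precisely where the constant $C=c_A^p\ge1$ in \eqref{eq:GrowthL1Primal} comes from (your remark that $C$ appears ``only if one passes through an equivalence'' is correct, but the equivalence is unavoidable here). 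The same modification is needed in the gradient step, where one must also check $\nabla_v\widetilde\omega/\widetilde\omega\in L^\infty$ as in \eqref{eq:ControlGradTilde}, not just \eqref{eq:cond-Lpomega} for $\omega$ itself.
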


\begin{rem}
The proof consists on the use of a modified weight function, introduced in \cite[Lemma~2.3]{CM-KFP**}, in order to control the terms coming from the Maxwell boundary conditions. We will proceed as during the proof of \cite[Lemma 2.1]{CGMM24} to control the local the operator $\BB$ and we will use Proposition \ref{prop:HHH+Lp-Lp} to control the non-local part $\AA$. %We also refer to \cite[Proposition~3.3]{CM_Landau_domain}, and \cite{MR3489637,MR3779780,MR4179249, MR1301931,MR1776840,MR2721875,MR4179249} for more details. 
\end{rem}

\begin{proof}[Proof of Lemma \ref{lem:GrowthLpPrimal}]
%Consider $ 0 \le f_0 \in L^p(\omega)$ and  $f = f(t,x,v) \ge 0$ a  solution to the Cauchy problem \eqref{eq:KFPtau}.
%We proceed to the proof in 6 steps. We dedicate Steps 1, 2, 3, 4 and 5 to prove \eqref{eq:GrowthL1Primal} for $p\in[1, \infty)$ and Step 6 to include $p=\infty$ and to conclude for the full range of exponents.
We introduce, as during the proof of \cite[Lemma 2.1]{CGMM24}, the modified weight functions $\omega_A$ and $\widetilde \omega$ defined by 
\be\label{def:omegaA}
 \omega_A^p := \MMM_{\Theta}^{1-p} \chi_A + \omega^p \chi_A^c,
 \qquad  \widetilde \omega^p :=   \left( 1 + \frac{ 1  }{ 2}  \frac{ n_x \cdot   v}{\la v \ra^4} \right) \omega_A^p,
\ee
with $A \ge 1$ to be chosen later, $\hat v := v/\langle v \rangle$, $\widetilde{v} :=  \hat v/\langle v \rangle $, and we recall that $n_x$ is the normal vector on $\partial \Omega$ defined in Subsection \ref{ssec:Framework}. It is worth emphasizing that 
\be\label{eq:omega&omegatilde}
c_A^{-1} \omega \le \tfrac12\omega_A \le \widetilde \omega  \le \tfrac32 \omega_A \le c_A \omega, 
\ee
for some constant $c_A  \in (0,\infty)$ depending only on $A$. 
We recall that $\LL f= -v\cdot \grad_x f + \BB_0 f+\AA f$, thus we have that 
\begin{multline}\label{eq1:SLestimL1}
\frac1p\frac{d}{dt} \int_{\OO} f^p \, \widetilde \omega^p 
%= \int_{\OO} f \, \LL^* \widetilde \omega %\, \widetilde \varpi 
= \int_{\OO} (\BB_0 f)  f^{p-1} \widetilde \omega^p  %\, \widetilde \varpi 
+   \frac1p  \int_{\OO} f^p \, (v\cdot \grad_x \, \widetilde \omega^p) %\, \widetilde \varpi 
- \frac1p  \int_\Sigma (\gamma f)^p \, \widetilde \omega^p   (n_x \cdot v) \\
 + \sum_{n=1}^\NN \eta_n \int_{\Omega_n} \varrho_f \int_{\R^d}  f^{p-1} \widetilde \omega^p \MM_{T_n}.
\end{multline}
We divide then the proof into 6 Steps and we emphasize that Steps 1, 2 and 3 are a repetition of Steps 1, 2 and 3 of the proof of \cite[Lemma 2.1]{CGMM24} thus we only sketch them.

\medskip\noindent
\textit{Step 1.} 
By repeating exactly the arguments from the Step 1 of the proof of \cite[Lemma 2.1]{CGMM24} we immediately have that there is $A>0$ large enough, such that 
$$
- \int_\Sigma (\gamma f)^p \, \widetilde \omega^p   (n_x \cdot v) \le 0.
$$

\medskip\noindent
\textit{Step 2.} 
We now deal with the first term at the right-hand side of \eqref{eq1:SLestimL1}. 
On the one hand from \eqref{eq:identCCCffp}, we have that
$$
\int_{\R^d} (\BB_0 f)  f^{p-1} \widetilde\omega^p   =
 -\frac{4(p-1)}{p^2}  \int_{\R^d} \Lambda(x) |\nabla_v (f^{p/2} \widetilde\omega^{p/2} ) |^2  + \int_{\R^d} f^p \widetilde\omega^p  \varpi^{\BB_0}_{\tilde \omega,p},
$$
with 
\bean
\varpi^{\BB_0}_{\tilde \omega,p}
:=  2 \Lambda \left(1-\frac{1}{p}\right) \frac{|\nabla_v \widetilde \omega|^2 }{ \widetilde\omega^2}  
+  \Lambda \left(\frac 2p -1\right)\frac{ \Delta_v  \widetilde \omega }{\widetilde\omega}  
- v\cdot  \frac{\nabla_v \widetilde\omega}{\widetilde\omega}
+  \left(d-\sum_{n=1}^\NN\eta_n \Ind_{\Omega_n}\right)  - \frac{d}{p}  .
\eean
Following exactly the computations from the Step 2 of the proof of \cite[Lemma 2.1]{CGMM24} we have that
$$
\varpi^{\BB_0}_{\widetilde \omega, p} =  \varpi^{\BB_0}_{ \omega, p} + \WWWW_p \qquad \text{ with } \qquad \WWWW_p = o(\varpi^\sharp_{\omega, p}).
$$
Combining the above estimates together with \eqref{eq:varpiC-varpisharp}, we deduce that for any $\vartheta \in (0,1)$, there exists $\tilde\kappa,\tilde R > 0$ such that 
 \beqn\label{eq:varpiCtilde-varpisharp}
\sup_\Omega \varpi^{\BB_0}_{\tilde\omega,p} \le \tilde\kappa  \chi_{\tilde R} +\vartheta  \chi_{\tilde R}^c  \varpi^\sharp_{\omega,p}. 
\eeqn

\smallskip\noindent
\textit{Step 3.}  
We control then the second term at the right-hand side of \eqref{eq1:SLestimL1}. We first compute
$$
v \cdot \nabla_x (\widetilde \omega^p)
= \frac12 ( \hat v \cdot \grad_x( n_x \cdot \hat v)) \frac{\omega_A^p}{\la v \ra^2}
+ \left( 1 + \frac12 \frac{n_x \cdot v}{\la v \ra^4} \right) v \cdot \nabla_x (\omega_A^p),
$$
and since 
$$
\nabla_x (\omega_A^p) 
=  (p-1) \chi_A \MMM_\Theta^{1-p} \left[ \frac{(d-1)}{2} \frac{\nabla_x \Theta}{\Theta} - \frac{|v|^2}{2} \frac{\nabla_x \Theta}{\Theta^2} \right],
$$
assumption \eqref{eq:Assum-Theta} together with the fact that $\chi_A$ is compactly supported and the regularity assumption on $\Omega$ imply that
$$
v \cdot \nabla_x (\widetilde \omega^p) \lesssim \frac1{ \langle v \rangle^2} \widetilde \omega^p
\lesssim \frac{|\varpi^\sharp_{\omega,p}| }{ \langle v \rangle^{s+2}} \widetilde \omega^p.
$$

\medskip\noindent
\textit{Step 4.} 
We now control the terms coming from the non-local terms. We compute by using the Hölder inequality
\bean
 \int_{\R^d} f &\leq&  \left( \int_{\R^d} f^p \widetilde \omega^p \right)^{1/p}  \left( \int_{\R^d}  \widetilde \omega^{-p'} \right)^{1/p'} \leq c_A^{-1}  \lvv \omega^{-1} \rvv_{L^{p'}(\R^d)}  \left( \int_{\R^d} f^p \widetilde \omega^p \right)^{1/p} \\
 \int_{\R^d} f^{p-1} \widetilde \omega^p \MM_{T_n}  &\leq &\left( \int_{\R^d} f^{p} \widetilde \omega^p \right)^{1-1/p} \left(\int_{\R^d} \widetilde \omega^p \MM_{T_n}^p\right)^{1/p} \leq c_A \lvv \omega \MM_{T_n}\rvv_{L^p(\R^d)}  \left( \int_{\R^d} f^{p} \widetilde \omega^p \right)^{1-1/p} 
\eean 
where $p' = p/(p-1)$, with the convention $1/0 =\infty$, is the conjugate of $p$, and we have used \eqref{eq:omega&omegatilde} to obtain the previous estimates. We then define the constant 
$$
\varpi_{\GGG, p}=  \lvv \omega^{-1} \rvv_{L^{p'}(\R^d)} \sum_{n=1}^\NN \eta_n \lvv  \omega \MM_{T_n}\rvv_{L^p(\R^d)} <\infty,
$$
and we have that
\bean
\sum_{n=1}^\NN \eta_n  \int_{\Omega_n} \varrho_f \left( \int_{\R^d} f^{p-1} \widetilde\omega^p \MM_{T_n} \right)   &\leq&  \sum_{n=1}^\NN \eta_n  \int_\Omega \left( \int_{\R^d} f\right)  \left( \int_{\R^d} f^{p-1} \widetilde \omega^p \MM_{T_n} \right) \\
&\leq& \varpi_{\GGG, p} \int_\OO  f^p \widetilde \omega^p,
\eean
where we have used the above estimates to deduce the second line.

\medskip\noindent
\textit{Step 5.} 
Coming back to \eqref{eq1:SLestimL1} and using Steps 1, 2, 3 and 4, we deduce that 
\be\label{eq:disssipSL-LpPrimal}
\frac1p\frac{d}{dt} \int_\OO f^p \widetilde \omega^p \le  -\frac{4(p-1)}{p^2}  \Lambda_0  \int_\OO |\nabla_v (f\widetilde \omega)^{p/2}|^2 + \int_\OO f^p \widetilde \omega^p \varpi^{\LL}_{\tilde\omega,p}
\ee
with 
\beqn\label{eq:disssipSL-LpBIS}
\varpi^\LL_{\tilde\omega,p} := \varpi^{\BB_0}_{\tilde\omega,p} +  \frac 1p \frac{1 }{ \widetilde \omega^p} v \cdot \nabla_x \widetilde \omega^p + \varpi_{\GGG, p}. 
% \le {  \widetilde\Psi^\LL - \widetilde\Upsilon^\LL } \le \kappa , %C_0 - C_1 \varsigma, 
\eeqn
Gathering the estimate from \eqref{eq:varpiC-varpisharp} and those established in Step~2 and Step~3, we deduce that for any $\vartheta \in (0,1)$, there are $\kappa,R > 0$ such that 
\be\label{eq:varpiL-varpisharp}
  \varpi^{\LL}_{\tilde\omega,p} \le \kappa \chi_{R} +\vartheta \chi_{R}^c  \varpi^\sharp_{\omega,p}.
\ee
In particular, choosing $\vartheta = 1/2$ we have that $  \varpi^{\LL}_{\tilde\omega,p}  \le \kappa$ and we immediately conclude \eqref{eq:GrowthL1Primal}, thanks to Gr\"onwall's lemma and the equivalence between $\omega$ and $\widetilde\omega$ given by \eqref{eq:omega&omegatilde}. 

\medskip\noindent
\textit{Step 6.} 
Coming back now to \eqref{eq:disssipSL-LpPrimal} in the case $p=2$ and integrating in the time interval $(0, t)$ we have that
\be\label{eq:AprioriGrad_1}
\frac12 \int_\OO f^2_t \widetilde \omega^2 \dv\dx  +  \Lambda_0  \int_0^t \int_\OO |\nabla_v (f_s\widetilde \omega)|^2 \dv\dx\d s\leq \frac12 \int_\OO f^2_0 \widetilde \omega^2 \dv\dx + \kappa   \int_0^t   \int_\OO f^2_s \, \widetilde \omega^2 \dv\dx\d s, 
\ee
where we recall that $\kappa>0$ is given by \eqref{eq:varpiL-varpisharp}. Using the triangular inequality we observe now that 
\be\label{eq:AprioriGrad_2}
\lv \grad_v(f\widetilde \omega)\rv^2 \geq \lv \grad_v f\rv^2 \widetilde \omega^2 - \left\lv {\grad_v \widetilde \omega \over \widetilde \omega} \right\rv^2 \, f^2 \widetilde \omega^2. 
\ee
Putting together \eqref{eq:AprioriGrad_1} and \eqref{eq:AprioriGrad_2} we deduce that
\be\label{eq:disssipSL-L2}
  \Lambda_0  \int_0^t \int_\OO |\nabla_v f_s  |^2 \widetilde \omega^2 \leq \frac12 \int_\OO f^2_0 \widetilde \omega^2  + \left( \kappa + \Lambda_0 \left\lvv {\grad_v \widetilde \omega\over \widetilde \omega} \right\rvv_{L^\infty(\OO)}^2 \right)   \int_0^t   \int_\OO f^2_s \, \widetilde \omega^2 .
\ee
Defining, $\wp^2 := 1 +  (n_x \cdot v)/(2\la v \ra^4)$ and $\wp_A^2 := 1 + \chi_A (\MMM_\Theta^{-1} \omega^{-2} - 1)$ so that $\widetilde\omega = \wp \omega_A$ and $\omega_A =   \wp_A \omega$,
we observe that 
\be\label{eq:ControlGradTilde}
\left\lv \grad_v \widetilde \omega \over \widetilde \omega \right\rv \leq \left\lv \grad_v  \omega \over \omega \right\rv + \left\lv \grad_v  \wp \over \wp \right\rv  + \left\lv \grad_v  \wp_A \over \wp_A \right\rv \lesssim 1
\ee
where we have used \eqref{eq:cond-Lpomega} and the fact that $\chi_A$ has compact support, see for instance the proof of \cite[Lemma 2.1]{CGMM24}. 
Putting together the previous computations with \eqref{eq:disssipSL-L2} and using again \eqref{eq:omega&omegatilde} we conclude \eqref{eq:bddGrad}. 
\end{proof}

\subsection{Well-posedness of the kinetic Fokker-Planck equation with BGK thermostats}\label{ssec:KolmogorovPrimal} 
We obtain in this subsection the well-posedness of Equation \eqref{eq:KFPtau}. 
It is worth remarking that existence results for Kolmogorov type equations presenting Maxwell boundary conditions in the context of kinetic equations have been deeply studied in recent years. In particular we construct our solutions using the existence results from \cite[Section 2]{CM_Landau_domain}, but we also refer to \cite[Section 11]{sanchez:hal-04093201}, \cite[Section 3]{CGMM24}, \cite[Section 4.1]{MR2721875} and \cite{MR2072842} for further references on the subject.  

\smallskip\noindent
We denote the boundary measures
\be\label{def:WeightedBoundaryMeasures}
\d\xi_\omega^1 := \omega^2 \lv n_x\cdot v\rv \d\sigma_x\dv , \quad \d\xi_\omega^2 := \omega^2 \la v\ra^{-2} ( n_x\cdot v)^2 \d\sigma_x\dv ,
\ee
where $\d\sigma_x$ represents the Lebesgue measure on the boundary set $\partial \Omega$, we define $\BBBB$ as the set of renormalizing functions $\beta \in W^{2, \infty}_{loc}(\R)$ such that $\beta''\in L^\infty(\R)$, and we consider the Hilbert space $\HHH_\omega$ associated to the Hilbert norm $\lvv \cdot \rvv_{\HHH_\omega}$ defined by
\be\label{def:ExistenceHilbert}
\lvv g\rvv_{\HHH_\omega}^2 := \lvv g\rvv_{L^2_\omega}^2 + \lvv g\rvv_{H^{1,\dagger}_\omega}^2,  \quad \text{ with }\quad \lvv g\rvv_{H^{1,\dagger}_{ \omega}}^2 := \int \left\{ \Lambda \lv \grad_v (f \omega) \rv^2 + \la \varpi^\sharp_{\omega, 2}\ra \, \omega^2 g^2  \right\}.
\ee
Then we have the following well-posedness result.

\begin{theo}\label{theo:WellPosednessL2}
For any admissible weight function $\omega$ and for any $f_0\in L^2_\omega(\OO)$, there exists a unique global renormalized solution $f\in C(\R_+, L^2_\omega(\OO))\cap \HHH_\omega(\UU)$ to Equation \eqref{eq:KFPtau} associated to the initial datum $f_0$. More precisely, $f$ satisifies \eqref{eq:KFPtau} in the renormalized sense, i.e there holds
\begin{multline}\label{eq:paraLp_RenormalizationFormula}
\int_\OO \beta(f_t) \, \varphi (t, \cdot) \,\d v\d x + \int_0^t \int_\OO \beta(f) \left[ -\partial_t \varphi - \BB^* \varphi\right] -\beta'(f) \, \varphi \, \GGG f  \, \d v \d x \d t \\
+\int_0^t \int_\OO   \Lambda \beta''(f) \lv \grad_v f \rv^2 \varphi  \, \d v \d x \d t + \int_0^t \int_\Sigma \beta(\gamma f) \, \varphi (n_x \cdot v) \, \d v\d\sigma_x \d t =\int_\OO \beta(f_0) \, \varphi (0, \cdot) \,\d v\d x,
\end{multline}
for any $t >0 $, any test function $\varphi \in \DD(\bar\UU)$, any $\beta \in \BBBB$ and where we have defined the formal dual operator $\BB^* \varphi := v\cdot \grad_x \varphi + \Lambda \Delta_v \varphi -v\cdot \grad_v \varphi$. 
Moreover, we emphasize that $\gamma f$ is given by \cite[Theorem 2.8]{CM_Landau_domain} and it satisfies $\gamma f\in L^2(\Gamma, \d\xi_\omega^2 \d t)$, as well as the Maxwell boundary condition \eqref{eq:BoundaryConditions} point-wisely. Likewise, it is worth remarking that $f(0,\cdot )=f_0$ also holds point-wisely. 
Furthermore, $f$ satisfies the conclusions of Lemma \ref{lem:GrowthLpPrimal}.
\end{theo}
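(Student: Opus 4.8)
The plan is to recognize Equation \eqref{eq:KFPtau} as a \emph{bounded, lower-order} perturbation of a purely local ultraparabolic equation and then invoke the variational well-posedness and renormalization theory of \cite[Theorem 2.11]{CM_Landau_domain}, together with the kinetic trace theory of \cite[Theorem 2.8]{CM_Landau_domain}. Recalling \eqref{eq:KFP_operator}, \eqref{eq:DefLLL-} and \eqref{def:HHH+def}, I would decompose $\LL = \BB + \AA$, with $\BB$ the local operator of \eqref{eq:DefLLL-} and $\AA$ the non-local BGK gain term of \eqref{def:HHH+def}; by Proposition \ref{prop:HHH+Lp-Lp}, both $\AA$ and the multiplication $f \mapsto f\sum_{n=1}^\NN \eta_n \Ind_{\Omega_n}$ --- hence the whole thermostat term $\GGG$ --- belong to $\BBB(L^2_\omega(\OO))$ for every admissible $\omega$. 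Thus \eqref{eq:KFPtau} has exactly the structure $\partial_t f = -v\cdot\grad_x f + \CC_\Lambda f + \GGG f$ with $\GGG$ a zeroth-order (although non-local) perturbation, which is the framework of \cite[Theorem 2.11]{CM_Landau_domain}.

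The argument then runs through the following steps. \textbf{(i) A priori estimates.} The weighted $L^2$ estimate of Lemma \ref{lem:GrowthLpPrimal} (case $p=2$) and the gradient bound \eqref{eq:bddGrad} encode the weak coercivity (G{\aa}rding-type) inequality for the bilinear form attached to $-v\cdot\grad_x + \CC_\Lambda + \GGG$ in the Hilbert space $\HHH_\omega(\UU)$ of \eqref{def:ExistenceHilbert}: the boundary contribution is made nonpositive after passing to the modified weight $\widetilde\omega$ of \eqref{def:omegaA} (using that $\RRR$ is an $L^1$/$L^2$-contraction against the boundary measures \eqref{def:WeightedBoundaryMeasures}), the Fokker--Planck part produces the dissipation $-\int \Lambda |\grad_v (f\widetilde\omega)|^2$ plus a term controlled via \eqref{eq:varpiC-varpisharp}, and $\GGG$ only contributes an $O(\lvv f\rvv_{L^2_\omega}^2)$ remainder (Step~4 in the proof of Lemma \ref{lem:GrowthLpPrimal}). \textbf{(ii) Existence.} Building a regularized problem and passing to the limit as in \cite[Theorem 2.11]{CM_Landau_domain} --- which rests on Lions's variant of the Lax--Milgram theorem \cite[Chap.~III, \textsection~1]{MR0153974} --- one obtains a weak solution $f \in C(\R_+, L^2_\omega(\OO)) \cap \HHH_\omega(\UU)$; global existence is immediate since the bound \eqref{eq:GrowthL1Primal} rules out blow-up. \textbf{(iii) Trace, boundary and initial conditions.} The trace theory of \cite[Theorem 2.8]{CM_Landau_domain} gives $\gamma f \in L^2(\Gamma, \d\xi_\omega^2\, \d t)$, and one verifies that \eqref{eq:BoundaryConditions} and $f(0,\cdot) = f_0$ hold pointwise. \textbf{(iv) Renormalized formulation.} For $\beta \in \BBBB$, the DiPerna--Lions chain rule, exactly as in \cite[Theorem 2.11]{CM_Landau_domain}, yields \eqref{eq:paraLp_RenormalizationFormula}: the second-order term generates the defect $\Lambda \beta''(f)|\grad_v f|^2 \varphi$, which is meaningful thanks to $\grad_v f \in L^2_\omega$ from \eqref{eq:bddGrad}, while $\GGG$, carrying no derivative of $f$, contributes only the linear term $-\beta'(f)\varphi\, \GGG f$ and no additional defect. \textbf{(v) Uniqueness.} For two solutions with the same datum, the difference $h$ solves the linear equation with zero datum; inserting $\beta(s) = s^2$ (or regularizations thereof) into \eqref{eq:paraLp_RenormalizationFormula} gives the $L^2_\omega$ energy identity, whose boundary term is nonnegative after the weight modification, so Gr\"onwall's lemma forces $h \equiv 0$. \textbf{(vi) Conclusions of Lemma \ref{lem:GrowthLpPrimal}.} Writing $f_0 = f_0^+ - f_0^-$ and using positivity of the solution map --- inherited from the positivity of $S_\BB$ in Theorem \ref{theo:LocalKFP} and of $\AA$ via the Duhamel formula --- $f$ is a difference of two nonnegative solutions, each of which satisfies Lemma \ref{lem:GrowthLpPrimal}, hence so does $f$ after adjusting the constants.

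The main obstacle is that \cite[Theorem 2.11]{CM_Landau_domain} is tailored to \emph{local} operators (plus bounded multiplications), whereas $\AA$ is non-local in $v$ through $\varrho_f$. One therefore has to check with care that this non-locality neither destroys the weak coercivity of the bilinear form --- which is precisely where the boundedness $\AA \in \BBB(L^2_\omega(\OO))$ of Proposition \ref{prop:HHH+Lp-Lp} is used --- nor introduces a new term in the renormalized chain rule beyond $-\beta'(f)\varphi\, \GGG f$ (true because $\AA$ contains no derivatives of $f$). A secondary technical point is ensuring that the weak solution delivered by the variational scheme genuinely lies in $\HHH_\omega(\UU)$, so that the defect term $\Lambda \beta''(f)|\grad_v f|^2 \varphi$ in \eqref{eq:paraLp_RenormalizationFormula} makes sense; this is exactly the content of the energy estimate \eqref{eq:bddGrad}, which is why Lemma \ref{lem:GrowthLpPrimal} is established first.
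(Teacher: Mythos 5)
Your proposal follows essentially the same route as the paper: decompose $\LL$ into the local part plus the bounded non-local BGK term controlled by Proposition \ref{prop:HHH+Lp-Lp}, verify the hypotheses of \cite[Theorem 2.11]{CM_Landau_domain} using the modified weight $\widetilde\omega$ of \eqref{def:omegaA} and the contraction property of $\RRR$, invoke the trace theorem \cite[Theorem 2.8]{CM_Landau_domain}, and recover the energy estimates by inserting $\beta(s)=s^2$, $\varphi=\widetilde\omega^2\chi_R$ into \eqref{eq:paraLp_RenormalizationFormula}. The only cosmetic difference is your step (vi): the paper does not split $f_0=f_0^+-f_0^-$ but simply re-runs the a priori computation of Lemma \ref{lem:GrowthLpPrimal} inside the renormalized formulation (with $\beta(s)=s^2$ for $p=2$ and $\beta(s)=s$ for $p=1$), which is equivalent.
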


\begin{rem}\label{rem:WellPosednessL2}
Theorem \ref{theo:WellPosednessL2} in particular implies that we may associate a strongly continuous in time semigroup, that we denote during the sequel by $S_\LL :L^2_\omega(\OO) \to L^2_\omega(\OO)$, to the solutions of the Equation \eqref{eq:KFPtau}.
\end{rem}

\begin{proof}[Proof of Theorem \ref{theo:WellPosednessL2}]
We split the proof into three steps. 

\medskip\noindent
\emph{Step 1. (Modified weight function)} We take $\widetilde \omega$ as defined in \eqref{def:omegaA} and we remark that $\widetilde \omega = \theta \omega$ with   
$$
\theta (x,v) :=  \left( 1 + \frac{ 1  }{ 2}  \frac{ n_x \cdot   v}{\la v \ra^4} \right) \left( \MMM_{\Theta}^{-1} \omega^{-2} \chi_A +  (1-\chi_A) \right),
$$
and we readily observe that there holds 
\be\label{eq:BoundsVartheta}
\frac 12 \leq \left(1-\frac 12\right) \left( \left[ \MMM_{\Theta}^{-1} \omega^{-2}  -1 \right]\chi_A + 1\right) \leq \theta \leq \frac 32 \left( \MMM_{\Theta}^{-1}(A) +1\right).
\ee
where we have used that $\MMM_{\Theta}^{-1} \omega^{-2} \geq 1$ due to our hypothesis on $\omega$. 
Moreover, recalling that $\omega(v)= \la v\ra^ke^{\zeta\la v\ra^s}$ we compute
\bean
\grad_x \theta&=& \frac 12 {\grad_x (n_x\cdot v)\over \la v\ra^4} \left( \MMM_{\Theta}^{-1} \omega^{-2} \chi_A +  (1-\chi_A)\right) \\
&&+  \left( 1 + \frac{ 1  }{ 2}  \frac{ n_x \cdot   v}{\la v \ra^4} \right)  \omega^{-2} \chi_A \MMM_{\Theta}^{-1}  \left( -\frac{(1+d)}2 {\grad_x\Theta\over \Theta} - {\lv v\rv^2 \grad_x\Theta\over 2\Theta^2}\right), \\
\grad_v \theta &=& \frac 12 {n_x \la v \ra^2 -4 v \, (n_x\cdot v) \over \la v\ra^6} \left( \MMM_{\Theta}^{-1} \omega^{-2} \chi_A +  (1-\chi_A)\right) \\
&&+ \left( 1 + \frac{ 1  }{ 2}  \frac{ n_x \cdot   v}{\la v \ra^4} \right) \left[ \omega^{-2}  \MMM_{\Theta}^{-1} \left( \chi_A {v\over 2 \Theta} + \grad_v \chi_A -2\chi_A \left( {k\over \la v\ra^2} + \zeta s \la v\ra^{s-2} \right)  \right) -\grad_v \chi_A \right].
\eean
From the previous computations and recalling that $\chi_A$ has compact support in the ball of radius $2A$, we deduce that
\bean
\lv\grad_x \theta\rv &\leq& \la v\ra^{-1} \left( \frac12 \lvv \delta\rvv_{W^{2,\infty}}  \left(  \MMM_{\Theta}^{-1}(2A) +1\right)  + \frac 34 \la 2A\ra^3  \MMM_{\Theta}^{-1}(2A) \lvv\Theta\rvv_{W^{1,\infty}} \left( \frac{(1+d)} { \Theta_*} + {1 \over \Theta_*^2}\right)  \right)\\
\lv\grad_x \theta\rv &\leq& \la v\ra^{-1} \left( \frac52  \left(  \MMM_{\Theta}^{-1}(2A) +1\right)  \right.\\
&&\left. + \frac {3\la 2A\ra} 2 \left(\MMM_{\Theta}^{-1}(2A) \lvv \chi_A\rvv_{W^{1,\infty}}  \left( {\la A\ra\over 2\Theta_*} + 1 +2\left( k+\zeta s  \right)\right) +1\right) \right)
\eean
where we have used that $\delta\in W^{2,\infty}$ as defined during Subsection \ref{ssec:Framework}. Finally, from the above computations together with \eqref{eq:BoundsVartheta} and our assumptions on the regularity of $\Omega$, we deduce that
\be\label{eq:Condition_Omega_Grad}
\lv \grad_x \theta\rv + \lv \grad_v \theta\rv \lesssim \theta \la v\ra^{-1}.
\ee
 This concludes Step 1.

\medskip\noindent
\emph{Step 2. (Well-posedness)} We recall now that from the Step 2 of the proof of Lemma \ref{lem:GrowthLpPrimal} we have that the function $\varpi^{\BB_0}_{\widetilde \omega, 2}$, defined in \eqref{def:varpi}, satisfies 
\beqn\label{eq:HptOmegaTildeVarpi}
\varpi^\sharp_{\omega, 2} \leq \varpi^{\BB_0}_{\widetilde \omega, 2} \leq \kappa + \varpi^\sharp_{\omega, 2},
\eeqn
for some constant $\kappa >0$. Furthermore, Proposition \ref{prop:HHH+Lp-Lp} implies that
\beqn\label{eq:CondHL2}
\underset{ (0,T)\times \Omega}{\sup} \lvv \AA\rvv_{\BBB(L^1_v(\omega))} <\infty, \qquad \underset{ (0,T)\times \Omega}{\sup} \lvv \AA\rvv_{\BBB(L^2_v(\omega))} <\infty, 
\eeqn
and from the Step 1 of the proof of Lemma \ref{lem:GrowthLpPrimal} we further have that the boundary collision operator $\RRR$ satisfies the bound
\be\label{eq:ConditionL2RRR}
\RRR:L^2 (\Sigma_+, \, \d\xi_{\widetilde \omega} ^1) \to L^2 (\Sigma_-, \, d\xi_{\widetilde \omega} ^1) , \quad \lvv \RRR\rvv_{L^2(\Sigma_+, \, \d\xi_{\widetilde \omega} ^1)} \leq 1.
\ee
We remark that, from its very definition there holds $\varpi^{\BB_0}_{1, 1} \leq 0$, and also due to the hypothesis on admissible weight functions we have that
\beqn\label{eq:WeightCompatibility}
\left( \Lambda + \lv v\rv + 2d +\sum_{n=1}^\NN \eta_n \Ind_{\Omega_n}\right) \omega^{-1} \in L^2(\UU).
\eeqn
The above informations together with \eqref{eq:BoundsVartheta} and \eqref{eq:Condition_Omega_Grad} imply that we may use \cite[Theorem 2.11]{CM_Landau_domain} and \cite[Theorem 2.8]{CM_Landau_domain}, and we deduce that there exists $f\in C(\R_+, L^2_\omega(\OO))\cap \HHH_\omega (\UU)$ unique renormalized solution to Equation \eqref{eq:KFPtau} with an associated trace function $\gamma f\in L^2(\Gamma, \d\xi^2_\omega \d t)$ satisfying \eqref{eq:paraLp_RenormalizationFormula}.

\medskip\noindent
\emph{Step 3. (Energy estimates)} We obtain the validity of \eqref{eq:GrowthL1Primal} with $p=2$ and \eqref{eq:bddGrad} as a consequence of \eqref{eq:paraLp_RenormalizationFormula} with $\beta(s) = s^2$ and $\varphi = \widetilde \omega^2\chi_R$, for any $R>0$, repeating the computations performed during the proof of Lemma \ref{lem:GrowthLpPrimal}, passing $R\to \infty$ and using the integral version of the Grönwall lemma instead. Moreover, \eqref{eq:GrowthL1Primal} for $p=1$ is obtained in a similar fashion by taking instead $\beta(s) = s$ and $\varphi = \widetilde \omega \chi_R$. 
\end{proof}

\subsection{Ultracontractivity}
We establish now the ultracontractive properties of the semigroup $S_\LL$, which will be obtained from the interplay, via the Duhamel formulation, between the ultracontractive properties of the semigroup $S_{\BB}$ and the boundedness of the BGK non-local term $\AA$.

\begin{prop}\label{prop:Ultra}
 Let $\omega$ be an admissible weight function such that either $s > 0$ or $s=0$ and $k > K + k^*$ for $K :=  4(3d+1)(2d+3)$. Define $\omega_\infty$ as during Theorem \ref{theo:LocalKFP}-(1), i.e  $\omega_\infty := (\omega)^{1/2}$ if $s > 0$ or $\omega_\infty := \omega \langle v \rangle^{-K}$ if $s= 0$. 
There are constants $\kappa, \eta > 0$ such that for any solution $f\geq 0$ to Equation \eqref{eq:KFPtau} there holds 
\be\label{eq:Ultra}
 \|  f(T,\cdot)    \|_{L^\infty_{\omega_\infty}(\OO)}   
\lesssim e^{\kappa T} T^{-\eta}  \| f_0    \|_{L^1_{\omega}(\OO)} \qquad \forall T>0 .
\ee
\end{prop}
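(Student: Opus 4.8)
The plan is to establish \eqref{eq:Ultra} by combining the ultracontractivity of the local semigroup $S_\BB$ from Theorem~\ref{theo:LocalKFP}-(1) with the $L^p_\omega$--$L^p_\omega$ boundedness of the non-local BGK operator $\AA$ from Proposition~\ref{prop:HHH+Lp-Lp}, interpolated through the Duhamel representation $S_\LL(t) = S_\BB(t) + \int_0^t S_\LL(t-s)\, \AA\, S_\BB(s)\, ds$ (equivalently the other Duhamel splitting $S_\LL(t) = S_\BB(t) + \int_0^t S_\BB(t-s)\, \AA\, S_\LL(s)\, ds$). The key point is that $\AA$ maps $L^1_\omega$ into $L^1_\omega$ boundedly, so the regularizing effect of $S_\BB$ ($L^1_\omega \to L^\infty_{\omega_\infty}$ with an integrable-in-time singularity $T^{-\eta}$) survives the perturbation by $\AA$.

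First I would record the relevant building blocks: (i) from Theorem~\ref{theo:WellPosednessL2} and Lemma~\ref{lem:GrowthLpPrimal}, the semigroup $S_\LL$ satisfies $\|S_\LL(t)\|_{\BBB(L^1_\omega)} \le C e^{\kappa t}$ and $\|S_\LL(t)\|_{\BBB(L^2_\omega)} \le C e^{\kappa t}$; (ii) from Theorem~\ref{theo:LocalKFP} the same growth bounds for $S_\BB$ on $L^p_\omega$ for all $p \in [1,\infty]$, together with the smoothing estimate \eqref{eq:EstimL1LrWeak_bis}, namely $\|S_\BB(t) g\|_{L^\infty_{\omega_\infty}(\OO)} \lesssim e^{\kappa t} t^{-\eta} \|g\|_{L^1_\omega(\OO)}$; (iii) Proposition~\ref{prop:HHH+Lp-Lp} giving $\|\AA g\|_{L^1_\omega(\OO)} \lesssim \|g\|_{L^1_\omega(\OO)}$ (and likewise on $L^\infty$). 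One should first verify that the Duhamel formula actually holds at the level of these weak/renormalized solutions — this follows because $\AA S_\BB(s) f_0 \in L^1_\omega$ depends continuously on $s$ and the difference $u(t) := S_\LL(t)f_0 - S_\BB(t)f_0$ solves the local equation \eqref{eq:KFPtauLocal} with the same boundary condition and zero initial datum but with source $\AA S_\LL(\cdot) f_0$, so by uniqueness and linearity $u(t) = \int_0^t S_\BB(t-s) \AA S_\LL(s) f_0\, ds$; the interchange of the trace/boundary operators with this integral is legitimate since everything is in the relevant $L^1$ spaces.

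Next I would run the estimate. Starting from $S_\LL(t) f_0 = S_\BB(t) f_0 + \int_0^t S_\BB(t-s) \AA S_\LL(s) f_0\, ds$, apply the $L^\infty_{\omega_\infty}$ norm and use \eqref{eq:EstimL1LrWeak_bis} on each factor $S_\BB(t-s)$:
\[
\|S_\LL(t) f_0\|_{L^\infty_{\omega_\infty}(\OO)} \lesssim e^{\kappa t} t^{-\eta} \|f_0\|_{L^1_\omega(\OO)} + \int_0^t e^{\kappa(t-s)} (t-s)^{-\eta} \|\AA S_\LL(s) f_0\|_{L^1_\omega(\OO)}\, ds.
\]
Then bound $\|\AA S_\LL(s) f_0\|_{L^1_\omega(\OO)} \lesssim \|S_\LL(s) f_0\|_{L^1_\omega(\OO)} \lesssim e^{\kappa s} \|f_0\|_{L^1_\omega(\OO)}$ by Proposition~\ref{prop:HHH+Lp-Lp} and Lemma~\ref{lem:GrowthLpPrimal}. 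Since $\eta < 1$ (this has to be checked from the statement of Theorem~\ref{theo:LocalKFP}-(1): the exponent there is the one producing an integrable singularity; if instead $\eta \ge 1$ one iterates the Duhamel identity finitely many times to reduce the singularity, using intermediate $L^p_\omega \to L^q_\omega$ smoothing, which the local theory also provides), the integral $\int_0^t (t-s)^{-\eta}\, ds$ converges and is bounded by $C t^{1-\eta} \le C'(1 + t^{-\eta}) e^{\kappa' t}$ up to adjusting constants, which yields \eqref{eq:Ultra} with possibly enlarged $\kappa$ and the same $\eta$.

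\textbf{Main obstacle.} The delicate point is not the abstract Duhamel bootstrap but making sure the Duhamel formula is valid for the renormalized solutions with Maxwell boundary conditions — i.e.\ that $S_\LL$ and $S_\BB$ genuinely compose the way one expects and that the boundary operator $\RRR$ does not obstruct the variation-of-constants argument. I expect this to be handled exactly as in \cite{CGMM24}: the source term $\AA S_\LL(s) f_0$ is a pure velocity-Maxwellian times an $L^1_x$ density, hence lies in every $L^p_\omega$, so the local inhomogeneous problem is well-posed and the representation is justified by uniqueness in the renormalized class of Theorem~\ref{theo:WellPosednessL2}/Theorem~\ref{theo:LocalKFP}. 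A secondary (routine) point is checking that $\omega_\infty$ is itself admissible or at least comparable to an admissible weight so that all the cited estimates apply with a consistent choice of weight; this is immediate from the definition of $\omega_\infty$ and the constraints on $k,s,\zeta$.
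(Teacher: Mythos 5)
Your overall architecture (Duhamel representation of $S_\LL$ in terms of $S_\BB$ and $\AA$, justified by uniqueness of renormalized solutions, plus the smoothing estimate \eqref{eq:EstimL1LrWeak_bis} and Proposition \ref{prop:HHH+Lp-Lp}) is exactly the paper's, but your main estimate has a genuine gap: you integrate $\int_0^t (t-s)^{-\eta}\,ds$ over the whole interval, which requires $\eta<1$, and nothing in Theorem \ref{theo:LocalKFP}-(1) gives that. For a hypoelliptic $L^1\to L^\infty$ bound in dimension $d\ge 3$ the exponent $\eta$ is large (of order $d$ or worse), and the paper never assumes $\eta<1$ — its entire proof is organized around $\eta$ being arbitrary. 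Nor can you rescue the single-step Duhamel by splitting the time integral at $t/2$: on $[t/2,t]$ you would need to put the smoothing on the inner factor, but that factor is $S_\LL(s)f_0$, whose $L^\infty$-regularization is precisely what is being proved, and $\AA$ itself does not map $L^1$ into $L^\infty$ (it regularizes in $v$ only, not in $x$, so $\|\AA f\|_{L^\infty}$ is controlled by $\|\varrho_f\|_{L^\infty_x}$, not by $\|f\|_{L^1}$).

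Your parenthetical fallback ("iterate the Duhamel identity finitely many times") is the correct strategy and is what the paper actually does, but it is the entire technical content of the proof rather than a remark, and the mechanism is not quite the "intermediate $L^p_\omega\to L^q_\omega$ smoothing" you invoke (which the local theory as quoted does not state). The paper iterates Duhamel $N$ times, writes $S_\LL = \VV + \WW * S_\LL * (\AA S_\BB)$ with $\WW=(S_\BB\AA)^{*N}$, and applies \cite[Proposition 2.5]{MR3465438} to the operator $\widetilde S_\BB := S_\BB\AA$: since $\widetilde S_\BB$ is bounded on $L^1_{\omega_\infty}$ and $L^\infty_{\omega_\infty}$ without singularity and bounded $L^1_{\omega_\infty}\to L^\infty_{\omega_\infty}$ with singularity $t^{-\eta}$ (this uses that $\AA$ maps $L^1_\omega$ into $L^1_{\omega_\infty}$, which is where the weight bookkeeping matters), the $N$-fold convolution $(\widetilde S_\BB)^{*N}$ is bounded $L^1\to L^\infty$ with no time singularity once $N$ is large enough. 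The remaining terms $\VV_j=(\widetilde S_\BB)^{*(j-1)}*S_\BB$ are then handled by an induction in which each convolution integral is split as $\int_0^{t/2}+\int_{t/2}^t$, pairing the singular factor with the half of the interval on which it is bounded; this works for any $\eta>0$. Until you carry out (or correctly cite) this iteration-and-convolution argument, the proof is not complete.
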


%The proof follows similarly as \cite[Proof of theorem 7.1, Step 3]{CM_Landau_domain} {\Blue put more references}
\begin{proof}%[Proof of Proposition \ref{prop:Ultra}]
\medskip\noindent
We recall the splitting $\LL = \BB+ \AA$ and using the Duhamel formula we have
 $$
S_\LL = S_{\BB}+ (S_{\BB} \AA) * S_\LL = S_{\BB}+  S_\LL* (\AA S_\BB)  .
$$
Iterating this formula we further deduce that there holds
$$
S_\LL = \VV + \WW*S_\LL *(\AA S_\BB), 
$$
with 
$$
\VV := S_{\BB} + \dots + (S_{\BB} \AA)^{*(N-1)} * S_{\BB}, \quad 
\WW := (S_\BB \AA)^{*N},
$$
and where we define recursively $U^{*k} = U^{*(k-1)}*U$, with the convention $U^{*1} = U$, and some $N\in \N$ to be fixed later. We proceed now in three steps.

\medskip\noindent
\emph{Step 1.} 
We set $\widetilde S_\BB := S_\BB \AA$ and from \eqref{eq:GrowthKFPLp} and Proposition \ref{prop:HHH+Lp-Lp} we have that there are constants $\kappa_1>0$ and $C_1\geq 1$ such that for every $p\in \{1,\infty\}$ there holds
\be\label{eq:UltraStilde1}
\lvv \widetilde S_\BB (t) f_0 \rvv_{L^p_{\omega_\infty}(\OO)} \leq C_1e^{\kappa_1 t} \lvv f_0 \rvv_{L^p_{\omega_\infty}(\OO)} \qquad \forall t\geq 0.
\ee
Furthermore, Theorem \ref{theo:LocalKFP}-(1) implies that there are constants $\kappa_2, \eta ,C_2^0 >0$ for which there holds
\be\label{eq:UltraStilde2}
\lvv \widetilde S_\BB (t) f_0 \rvv_{L^\infty_{\omega_\infty}(\OO)} \leq C_2^0 t^{-\eta} e^{\kappa_2 t} \lvv  \AA f_0 \rvv_{L^1_{\omega}(\OO)}  \leq C_2 t^{-\eta} e^{\kappa_2 t} \lvv  f_0 \rvv_{L^1_{\omega_\infty}(\OO)} \qquad \forall t > 0,
\ee
for some constant $C_2>0$ and where we remark that we have used Proposition \ref{prop:HHH+Lp-Lp} to obtain the second inequality.
Remarking now that $\WW = (\widetilde  S_\BB )^{*N}$, and using \eqref{eq:UltraStilde1} and \eqref{eq:UltraStilde2} we may apply \cite[Proposition 2.5]{MR3465438} and we have that we can set $N\in \N$ such that 
\be\label{eq:UltraStildeWW}
\lvv (\widetilde S_\BB)^{*N} (t) f_0 \rvv_{L^\infty_{\omega_\infty}(\OO)} \leq C_3 e^{\kappa_3 t} \lvv  f_0 \rvv_{L^1_{\omega_\infty}(\OO)} \qquad \forall t\geq 0,
\ee
for some constant $C_3>0$ and any $\kappa_3 >\max(\kappa_1, \kappa_2)$. Using now \eqref{eq:UltraStildeWW} and once again Proposition \ref{prop:HHH+Lp-Lp} we further deduce that
\bean
\lvv \WW*S_\LL *(\AA S_\BB) (t) f_0 \rvv_{L^\infty_{\omega_\infty}(\OO)} &\leq& \int_0^t \int_0^s \lvv \WW \rvv_{\BBB(L^1_{\omega_\infty}, L^\infty_{\omega_\infty})} (t-s) \lvv S_\LL \rvv_{\BBB(L^1_{\omega_\infty})} (s-r)\\
&&  \lvv \AA \rvv_{\BBB(L^1_{\omega}, L^1_{\omega_\infty})} \lvv S_\BB \rvv_{\BBB(L^1_{\omega})} (r) \lvv f_0\rvv_{L^1_\omega} drds  \\
&\lesssim & e^{\kappa_3 t} \lvv f_0\rvv_{L^1_\omega},
\eean
where we have successively used \eqref{eq:UltraStildeWW}, Lemma \ref{lem:GrowthLpPrimal}, Proposition \ref{prop:HHH+Lp-Lp} and Theorem \ref{theo:LocalKFP}, and we have chosen $\kappa_3 = \max (2\kappa_1, 2\kappa_2, \kappa_4)$ where $\kappa_4$ is given by Lemma \ref{lem:GrowthLpPrimal}.

\medskip\noindent
\emph{Step 2.} 
Now we analyze the rest of the terms, on the one hand Theorem \ref{theo:LocalKFP}-(1) immediately gives that there is a constant $C_4>0$ for which there holds
\be\label{eq:UltraSBB1}
\lvv  S_\BB (t) f_0 \rvv_{L^\infty_{\omega_\infty}(\OO)} \leq C_4 t^{-\eta} e^{\kappa_2 t} \lvv  f \rvv_{L^1_{\omega}(\OO)} \qquad \forall t > 0.
\ee
On the other hand we set $\VV_j := (\widetilde S_{\BB} )^{*(j-1)} * S_{\BB}$ for $j\in \llbracket 2, N\rrbracket$ and, from \cite[proof of Proposition 2.5, Equation (2.9)]{MR3465438} we have that for every $p\in \{ 1,\infty\}$ there holds
\be\label{eq:UltraInductionHypo}
\lvv  \VV_j (t) f \rvv_{L^p_{\omega_\infty}(\OO)} \leq   {C^j t^{j-1}\over (j-1)! } \, e^{\kappa_3 t} \lvv f \rvv_{L^p_{\omega_\infty}(\OO)} \qquad \forall t\geq 0, 
\ee
where we have defined $C=\max(C_1, C_2, C_4, C_5)$ with $C_5>0$ given by \eqref{eq:GrowthKFPLp} in Theorem \ref{theo:LocalKFP}. \\

Furthermore, following then the same ideas as in the proof of \cite[Proposition 2.5]{MR3465438}, we will prove now by induction that 
\be\label{eq:UltraInductionHypo}
\lvv  \VV_j (t) f \rvv_{L^\infty_{\omega_\infty}(\OO)} \leq  p_j (t) \, t^{ -\eta} e^{\kappa_2 t} \lvv f \rvv_{L^1_{\omega}(\OO)} \qquad \forall t\geq 0, 
\ee
for some polynomial function $p_j$.

\medskip\noindent
\emph{Step 2.1 (Base case).} 
In particular, for $j=2$ we have that $\VV_2 = \widetilde S_{\BB}*S_{\BB}$ and we compute
\bean
\lvv \widetilde S_{\BB}*S_{\BB} (t)  f_0 \rvv_{L^\infty_{\omega_\infty}(\OO)} &\leq&  \int_0^{t/2} \lvv \widetilde  S_{\BB} (t-s) S_{\BB}(s)  f_0 \rvv_{L^\infty_{\omega_\infty}(\OO)} + \int_{t/2}^t \lvv \widetilde S_{\BB} (t-s) S_{\BB}(s)  f_0 \rvv_{L^\infty_{\omega_\infty}(\OO)} \\
&&=: I_1 + I_2,
\eean
and we bound then each integral separately. We compute for the first one
\bean
I_1 &\leq & C_2 \int_0^{t/2}  (t-s)^{-\eta} e^{\kappa_2 (t-s)} \lvv  S_{\BB}(s)  f_0   \rvv_{L^1_{\omega}(\OO)} \d s \leq (C_1C_2) e^{\kappa_3 t}  \lvv f_0 \rvv_{L^1_{\omega}(\OO)}   \int_0^{t/2}  (t-s)^{-\eta}  \d s\\
& \leq&   {C^2 \over 2^{1-\eta} (1-\eta)} \, t^{1-\eta} e^{\kappa_3 t}  \lvv f_0 \rvv_{L^1_{\omega}(\OO)},
\eean
where we have successively used \eqref{eq:UltraStilde1}, Lemma \ref{lem:GrowthLpPrimal} and the very definitions of $\kappa_3$ and $C$. Moreover we similarly compute for the second integral as follows
\bean
I_2 &\leq &  C_1  \int_{t/2}^t  e^{\kappa_1 (t-s)} \lvv  S_{\BB}(s)  f_0   \rvv_{L^\infty_{\omega_\infty}(\OO)} \d s \leq  C^2 e^{\kappa_3 t}  \lvv f_0 \rvv_{L^1_{\omega}(\OO)}   \int_{t/2}^t  s^{-\eta}  \d s \\
&\leq&    {C^2 \over 2^{1-\eta} (1-\eta)} \, t^{1-\eta} e^{\kappa_3 t}  \lvv f_0 \rvv_{L^1_{\omega}(\OO)},
\eean
where we have successively used \eqref{eq:UltraStilde2}, \eqref{eq:UltraSBB1} and the very definitions of $\kappa_3$ and $C$. Altogether this implies that 
$$
\lvv \widetilde  \VV_2 f_0 \rvv_{L^\infty_{\omega_\infty}(\OO)} \leq  {C^2 \over 2^{-\eta} (1-\eta)} \, t^{1-\eta} e^{\kappa_3 t}  \lvv f_0 \rvv_{L^1_{\omega}(\OO)}. 
$$
and we observe in particular that $p_2 (t) =  t\, C^2 / (2^{-\eta}(1-\eta))$. 

\medskip\noindent
\emph{Step 2.2 (Induction step).} 
We then assume \eqref{eq:UltraInductionHypo} holds for some $j \in \llbracket 2, N-2\rrbracket$ and we will prove it for $j+1$ following an argument similar as above. 
We observe that $\VV_{j+1} = \widetilde S_\BB * \VV_j $, we compute 
\bean
\lvv \widetilde S_\BB * \VV_j \rvv_{L^\infty_{\omega_\infty}(\OO)} &\leq&  \int_0^{t/2} \lvv \widetilde  S_{\BB} (t-s) \VV_j (s)  f_0 \rvv_{L^\infty_{\omega_\infty}(\OO)} + \int_{t/2}^t \lvv \widetilde S_{\BB} (t-s) \VV_j (s)  f_0 \rvv_{L^\infty_{\omega_\infty}(\OO)} \\
&&=: I_1^j + I_2^j,
\eean
and we bound then each integral separately. We compute for the first one
\bean
I_1^j &\leq & C_2 \int_0^{t/2}  (t-s)^{-\eta} e^{\kappa_2 (t-s)} \lvv  \VV_j (s)  f_0   \rvv_{L^1_{\omega}(\OO)} \d s \leq {C^{j+1} \over (j-1) !} e^{\kappa_3 t}  \lvv f_0 \rvv_{L^1_{\omega}(\OO)}   \int_0^{t/2}  (t-s)^{-\eta} s^{j-1} \d s\\
& \leq&   {C^{j+1} t^{j-1} \over 2^{1-\eta} (j-1)! (1-\eta)} \, t^{1-\eta} e^{\kappa_3 t}  \lvv f_0 \rvv_{L^1_{\omega}(\OO)},
\eean
where we have successively used \eqref{eq:UltraStilde1}, Lemma \ref{lem:GrowthLpPrimal} and the very definitions of $\kappa_3$ and $C$. Moreover we similarly compute for the second integral as follows
\bean
I_2^j &\leq &  C_1  \int_{t/2}^t  e^{\kappa_1 (t-s)} \lvv  \VV_j (s)  f_0   \rvv_{L^\infty_{\omega_\infty}(\OO)} \d s \leq p_n(t) e^{\kappa_3 t}  \lvv f_0 \rvv_{L^1_{\omega}(\OO)}   \int_{t/2}^t  s^{ -\eta}  \d s \\
&\leq&  {p_j(t) \over 2^{1-\eta} (1-\eta)}  \, t^{1 -\eta} e^{\kappa_3 t}  \lvv f_0 \rvv_{L^1_{\omega}(\OO)},
\eean
and we have that $p_j$ is defined recurrently by the formula
$$
p_{j+1} (t) = {C^{j+1} \over 2^{1-\eta} (j-1)! (1-\eta)} \, t^j +   {p_j(t) \over 2^{1-\eta} (1-\eta)}  \, t , \quad \text{ for every $j\geq 2$, with $p_2(t)$ as defined above.} 
$$
This concludes the induction argument and the validity of \eqref{eq:UltraInductionHypo}. 

\medskip\noindent
\emph{Step 3. (Conclussion)} We conclude by putting together the results from Steps 1 and 2 and choosing any $\kappa >\kappa_3$.  
\end{proof}

\subsection{Weak Maximum Principle} 
Combining the positivity of the semigroup $S_{\BB}$ given by Theorem \ref{theo:LocalKFP}, and the fact that the non-local operator $\AA$ is also positive we will prove the following proposition.

\begin{prop}[Weak maximum principle] \label{prop:WeakMaxP}
Let $\omega$ be an admissible weight function and let $f \in L^\infty((0,\infty); L^2_\omega ( \OO)) \cap \HHH_\omega(\UU)$ be a solution to Equation \eqref{eq:KFPtau} associated to the initial data $0\leq f_0 \in L^2_\omega(\OO)$. There holds $f_t\geq 0$, for every $t>0$.
\end{prop}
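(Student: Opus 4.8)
The plan is to prove the weak maximum principle by a Duhamel/iteration argument that transfers positivity from the local semigroup $S_\BB$ (which is positive by Theorem~\ref{theo:LocalKFP} and the subsequent remark) to the full semigroup $S_\LL$, exploiting that the non-local operator $\AA$ is positivity preserving (indeed $\AA g \ge 0$ whenever $g \ge 0$, since $\varrho_g \ge 0$ and $\MM_{T_n} > 0$). First I would recall the Duhamel identity $S_\LL = S_\BB + S_\BB \AA * S_\LL$, valid on $L^2_\omega(\OO)$ by the well-posedness of Theorem~\ref{theo:WellPosednessL2} together with Proposition~\ref{prop:HHH+Lp-Lp} (which makes $\AA$ a bounded operator on $L^2_\omega$, so all convolutions are well defined and continuous in time). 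Iterating this identity $N$ times gives
\be
S_\LL = \sum_{k=0}^{N-1} (S_\BB \AA)^{*k} * S_\BB + (S_\BB\AA)^{*N} * S_\LL .
\ee
The first sum is manifestly positive: it is built by composition and time-convolution of the positive operators $S_\BB$ and $\AA$. The issue is the remainder term $(S_\BB\AA)^{*N} * S_\LL$, which still contains $S_\LL$.

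Next I would control the remainder quantitatively. From \eqref{eq:GrowthL1Primal} (or \eqref{eq:GrowthKFPLp}) and Proposition~\ref{prop:HHH+Lp-Lp} the operators $S_\BB(t)$, $\AA$, and $S_\LL(t)$ all satisfy exponential bounds in $\BBB(L^2_\omega(\OO))$, say $\|S_\BB(t)\|, \|S_\LL(t)\| \le C e^{\kappa t}$ and $\|\AA\| \le C'$. A standard convolution estimate (the same one already used in the proof of Proposition~\ref{prop:Ultra}, cf.\ \cite[Proposition 2.5]{MR3465438}) then gives, for $f_0 \in L^2_\omega(\OO)$ with $\|f_0\|_{L^2_\omega(\OO)} \le 1$,
\be
\big\| \big( (S_\BB\AA)^{*N} * S_\LL\big)(t) f_0 \big\|_{L^2_\omega(\OO)} \le \frac{(C^2 C' t)^{N}}{N!}\, e^{\kappa t} =: \varepsilon_N(t),
\ee
uniformly for $t$ in any fixed compact interval $[0,T]$, and $\varepsilon_N(t) \to 0$ as $N\to\infty$. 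Consequently, writing $P_N := \sum_{k=0}^{N-1}(S_\BB\AA)^{*k}*S_\BB$, we have $S_\LL(t) = P_N(t) + R_N(t)$ with $P_N(t)$ positivity preserving and $\|R_N(t)\|_{\BBB(L^2_\omega(\OO))} \le \varepsilon_N(t)/\text{(normalisation)} \to 0$ uniformly on $[0,T]$ as $N\to\infty$.

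Finally I would conclude. Fix $0 \le f_0 \in L^2_\omega(\OO)$ and $t \in [0,T]$, and set $f_t = S_\LL(t) f_0$; by uniqueness in Theorem~\ref{theo:WellPosednessL2} this coincides with the renormalized solution in the statement. For each $N$, $P_N(t) f_0 \ge 0$, so the negative part satisfies $\|(f_t)_-\|_{L^2_\omega(\OO)} = \|(P_N(t) f_0 + R_N(t) f_0)_-\|_{L^2_\omega(\OO)} \le \|R_N(t) f_0\|_{L^2_\omega(\OO)} \le \varepsilon_N(t)\|f_0\|_{L^2_\omega(\OO)}$, using $(a+b)_- \le (b)_-\le |b|$ pointwise when $a\ge 0$. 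Letting $N\to\infty$ forces $(f_t)_- = 0$, i.e.\ $f_t \ge 0$. Since $T$ was arbitrary, this holds for all $t>0$. The main obstacle is purely bookkeeping: one must make sure the Duhamel identity and the associated convolution estimates are justified in $L^2_\omega(\OO)$ with the correct weight (here Proposition~\ref{prop:HHH+Lp-Lp} is exactly what is needed, since $\AA$ maps $L^2_{\omega_\star}$ into $L^2_\omega$ for any admissible pair), and that the series-type remainder estimate is uniform on compact time intervals so that the limit $N\to\infty$ is legitimate; the positivity of $S_\BB$ and $\AA$ does the rest.
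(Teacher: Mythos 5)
Your proposal is correct, but it takes a genuinely different route from the paper. The paper constructs a Picard-type approximating sequence $f^0=0$, $\partial_t f^{k+1}=\BB f^{k+1}+\AA f^k$ with the same boundary and initial conditions, shows each iterate is nonnegative by exactly the Duhamel-plus-positivity observation you use, and then proves the sequence (together with its traces) is Cauchy in $L^\infty((0,T);L^2_\omega)\cap\HHH_\omega$ for $T$ small via energy estimates on the differences $\psi^{k+1}=f^{k+1}-f^k$; the limit is identified with $f$ through the stability and uniqueness of renormalized solutions, and the argument is then iterated over the intervals $[jT,(j+1)T]$. You instead expand $S_\LL$ itself by the iterated Duhamel identity into a manifestly positive partial sum $P_N$ plus a remainder $(S_\BB\AA)^{*N}*S_\LL$ whose operator norm on $L^2_\omega$ decays like $t^N/N!$ on compact time intervals, and conclude $(f_t)_-=0$ by letting $N\to\infty$. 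Your version is shorter: it avoids the Cauchy-sequence machinery, the trace convergence, the small-$T$ restriction, and the appeal to the stability of renormalized solutions, and the factorial convolution bound is exactly the one the paper already uses in the proof of Proposition~\ref{prop:Ultra}, so it incurs no new justification burden beyond the validity of the Duhamel identity for $S_\LL$ (which the paper also assumes there). The paper's construction is arguably more self-contained at the level of weak solutions, since it builds $f$ as a limit of positive approximants rather than manipulating the semigroup identity, but both arguments rest on the same two ingredients: positivity of $S_\BB$ (Theorem~\ref{theo:LocalKFP}) and positivity of $\AA$.
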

\begin{proof}
We consider $f^0=0$ and then we define recurrently $f^k\in L^\infty((0,\infty); L^2_\omega(\OO))\cap \HHH_\omega(\UU)$ as the solution of the following equation
\be\label{eq:WMP_k}
\left\{\begin{array}{rcll} 
 \partial_t f^{k+1} &=& \BB f^{k+1} + \AA f^k& \text{ in } \UU  \\
\displaystyle \gamma_- f^{k+1} &=& \RRR \gamma_+ f^{k+1} &\text{ on } \Gamma_{-} \\
\displaystyle  f^{k+1}_{t=0} &=& f_0 &\text{ in } \OO,
 \end{array}\right.
 \ee
 which is given by Theorem \ref{theo:WellPosednessL2}.
We assume then that $f^k\geq 0$ and by using the Duhamel formula together with Theorem \ref{theo:LocalKFP} -(2) and Proposition \ref{prop:HHH+Lp-Lp}, we immediately deduce that 
$$
f^{k+1}_t = S_{\BB} (t) f_0 + \int_0^t S_{\BB} (t-s) \AA f^k_s ds \geq 0,
$$
due to the fact that $\AA$ is a positive operator. 
Moreover, from the linearity of the operators $\AA$ and $\BB$ we deduce that, in the weak sense, there holds
\be
\left\{\begin{array}{rcll} 
 \partial_t (f^{k+1} -f^k)&=& \BB(f^{k+1}-f^k) + \AA(f^k-f^{k-1})& \text{ in } \UU \\
\displaystyle \gamma_- (f^{k+1}-f^k) &=& \RRR \gamma_+ (f^{k+1}-f^k) &\text{ on } \Gamma_{-} \\
\displaystyle  (f^{k+1}-f^k)_{t=0} &=& 0 &\text{ in } \OO.
 \end{array}\right.
 \ee
We define $\psi^{k+1}:= f^{k+1} -f^k \in L^\infty((0,\infty); L^2_\omega ( \OO)) \cap \HHH_\omega(\UU)$ and, at the level of a priori estimates, by arguing then as during the proof of Lemma \ref{lem:GrowthLpPrimal} we have that 
\begin{multline}\label{eq1:SLestimL1k}
\frac12\frac{d}{dt} \int_{\OO} (\psi^{k+1})^2 \, \widetilde \omega^2
%= \int_{\OO} f \, \LL^* \widetilde \omega %\, \widetilde \varpi 
= \int_{\OO} (\BB_0 \psi^{k+1})  \psi^{k+1} \widetilde \omega^2  %\, \widetilde \varpi 
+   \frac12  \int_{\OO} (\psi^{k+1})^2 \, (v\cdot \grad_x \, \widetilde \omega^2)  \\%\, \widetilde \varpi 
- \frac12  \int_\Sigma (\gamma \psi^{k+1})^2 \, \widetilde \omega^2   (n_x \cdot v) 
 - \int_{\OO} \AA \psi^{k}\,   \psi^{k+1} \widetilde \omega^2.
\end{multline}
where we recall that $\widetilde \omega$ is defined in \eqref{def:omegaA}.
Using the Cauchy-Schwartz inequality and the Young inequality we deduce that 
\be\label{eq:AprioriWMP_k}
\int_{\OO} \AA \psi^{k}\,   \psi^{k+1} \widetilde \omega^2 \lesssim \lvv \AA \psi^k\rvv_{L^2_{\widetilde \omega}(\OO)}^2 + \lvv  \psi^{k+1}\rvv_{L^2_{\widetilde \omega}(\OO)}^2  \lesssim \lvv \psi^k\rvv_{L^2_\omega(\OO)}^2 + \lvv  \psi^{k+1}\rvv_{L^2_\omega(\OO)}^2,
\ee
where we have used \eqref{eq:omega&omegatilde} and \eqref{eq:HHH+regularization} to obtain the second inequality. Coming back now to \eqref{eq1:SLestimL1k}, arguing similarly as during the proof of Lemma \ref{lem:GrowthLpPrimal} and using \eqref{eq:AprioriWMP_k} we have that there is a constant $\kappa >0$ such that 
$$
\lvv \psi^{k+1}_t\rvv_{ L^2_\omega ( \OO)}^2 + \lvv  \psi^{k+1} \rvv_{ H^{1, \dagger}_\omega ( \UU)}^2 \lesssim  \int_0^t e^{\kappa (t-s)} \lvv \psi^k_s\rvv_{L^2_\omega (\OO)}^2  \lesssim e^{\kappa t} \lvv \psi^k_s\rvv_{L^\infty((0, t); L^2_\omega (\OO))}^2 \qquad \forall t\geq 0.
$$
We remark now that the previous estimate is valid for weak solutions either by arguing as during the proof of \cite[Proposition 3.3]{CGMM24} or by using the weak formulation provided by Theorem \ref{theo:WellPosednessL2} taking $\beta (s) = s^2$ and $\varphi = \widetilde \omega^2\chi_R$, repeating the previous analysis, letting $R\to \infty$, and using the integral version of the Grönwall lemma. 

Moreover, by arguing as during the Step 2 of the proof of \cite[Proposition 3.3]{CGMM24} or Step 3 of the proof of \cite[Theorem 2.11]{CM_Landau_domain} (see also Step 3 of the proof of Proposition \ref{prop:KFPL2Perturbed}) we have that
$$
\lvv \gamma \psi^{k+1}\rvv_{L^2(\Gamma, \d\xi^2_\omega\dt)}^2 \lesssim e^{\kappa t} \lvv \psi^k_s\rvv_{L^\infty((0, t); L^2_\omega (\OO))}^2 \qquad \forall t\geq 0
$$

Choosing then $T>0$ small enough we deduce that $f^k, \gamma f^k$ are a Cauchy sequences in the Banach spaces $L^\infty((0,T); L^2_\omega( \OO))\cap \HHH_\omega(\UU)$ and $L^2(\Gamma, \d\xi^2_\omega\dt)$ respectively, thus there are functions $\ffff \in L^\infty((0,T; L^2(\OO))\cap \HHH_\omega(\UU)$ and $\gamma \ffff \in L^2(\Gamma, \d\xi^2_\omega\dt)$ such that 
$$
f^k \to \ffff  \quad \text{strongly in} \quad L^\infty((0,T); L^2_\omega( \OO)) \cap \HHH_\omega(\UU) \quad \text{as} \quad k\to \infty.
$$ 
and 
$$
\gamma f^k \to\gamma \ffff  \quad \text{strongly in} \quad L^2(\Gamma, \d\xi^2_\omega\dt)\quad \text{as} \quad k\to \infty.
$$ 
Furthermore, we remark that as the limit of positive functions we have that $\ffff\geq 0$, and by using the previous convergences we may pass to the limit in the weak formulation associated to Equation \eqref{eq:WMP_k} and we deduce that $\ffff$ is a weak solution to Equation \eqref{eq:KFPtau}. Finally \cite[Theorem 2.8-(3)]{CM_Landau_domain} implies that $\ffff$ is a renormalized solution of Equation \eqref{eq:KFPtau}, thus a solution in the sense of Theorem \ref{theo:WellPosednessL2}. 
By using the uniqueness provided by Theorem \ref{theo:WellPosednessL2} we deduce that $\ffff = f$ a.e and repeating this argument in the time intervals $[jT, (j+1)T]$ for $j\in \N$, we deduce that $S_\LL$ is a positive semigroup.
\end{proof}

\subsection{Study of the dual backwards KFP equation with BGK thermostats in a weighted $L^2$ framework}\label{ssec:KolmogorovDual}
For any finite $T>0$ and a final datum $g_T$, we study in this subsection the dual backwards equation associated to the linear Equation \ref{eq:KFPtau}, 
\be\label{eq:KFPtau*}
\left\{\begin{array}{rcll} 
 -\partial_t g &=& \LL^* g  & \text{ in } \UU_T := (0,T)\times \OO  \\
\displaystyle \gamma_+ g &=& \RRR^* \gamma_- g &\text{ on } \Gamma_{T, +}:= (0,T)\times \Sigma_+ \\
\displaystyle  g_{t=T} &=& g_T &\text{ in } \OO,
 \end{array}\right.
 \ee 
where we have defined the formal dual operator $\LL^* g = v\cdot \grad_x g + \BB^*_0 g + \AA^* g$ with
\beqn\label{eq:DefBB_0*AA*}
\BB^*_0 g:=\Lambda(x) \Delta_v g - v\cdot \grad_v g -  \sum_{n=1}^\NN \eta_n \Ind_{\Omega_n} g, \quad \text{ and } \quad  \AA^* g =  \sum_{n=1}^\NN \eta_n \Ind_{\Omega_n} \int_{\R^d} g\MM_{T_n}\dv .
\eeqn
Moreover, we have defined the dual Maxwell boundary condition operator as follows
\be\label{eq:DefDualMaxwellBC}
\RRR^* g (x,v) = (1-\iota(x))  \SSS g (x,v) + \iota(x) \DDD^* g (x), 
\ee
with
$$ 
 \DDD^* g (x) %= (\widehatt{\MMM_\Theta g})(x) 
 := \int_{\R^d} g(x,w) \MMM_\Theta(w) (n_x \cdot w)_{-} \, dw, 
$$ 
for any function $g$ with support on $\Sigma_-$. We observe that the dual equation is defined in such a way that two solutions $f$ to the forward Cauchy problem \eqref{eq:KFPtau} and $g$ to the  dual problem  \eqref{eq:KFPtau*} satisfy, at least formally, the usual dual identity
\begin{equation}\label{eq:identite-dualite}
\int_\OO f(T) g_T   
= \int_\OO f_0 g(0).
\end{equation}

\subsubsection{Properties of the dual non-local operator} We now have a result analogue to Proposition \ref{prop:HHH+Lp-Lp} for the dual operator $\AA^*$.

\begin{prop}\label{prop:HHH+Lp-Lp*}
We consider two admissible weight functions $\omega$, $\omega_\star$ and we define $m=\omega^{-1}$ and $m_\star = (\omega_\star)^{-1}$. For any $q \in [1, \infty]$ there holds
\be\label{eq:HHH+regularization*}
\lvv \AA^*g \rvv_{L^q_{m}(\OO)}  \leq C \sum_{n=1}^\NN \lvv g \rvv_{L^p_{m_\star}(\OO_n)} \lesssim \lvv g\rvv_{L^q_{m_\star}(\OO)}.
\ee
for some constant $C\geq 0$.
\end{prop}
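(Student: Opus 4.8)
The plan is to mirror the proof of Proposition \ref{prop:HHH+Lp-Lp} exactly, only exchanging the roles of the weight function and its reciprocal and exploiting the adjoint structure of $\AA^*$. Recall that $\AA^* g = \sum_{n=1}^\NN \eta_n \Ind_{\Omega_n} \int_{\R^d} g \MM_{T_n}\dv$, so that $\AA^* g$ is a function of $x$ alone on each $\Omega_n$; the velocity variable only enters through the integration against the Gaussian $\MM_{T_n}$. The key structural fact, as in the primal case, is that for any admissible weight function $\omega$ one has $\omega \prec \MM_{T_n}^{-1}$ for every $n\in\llbracket 1,\NN\rrbracket$, equivalently $\MM_{T_n}^{1/2} \omega \in L^\infty(\R^d)$ and $\MM_{T_n}^{1/2}\omega^{-1}\in L^1(\R^d)$ — because an admissible weight grows at most sub-Gaussianly (indeed $s\le 1$), while $\MM_{T_n}$ decays like a full Gaussian. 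This gives us room to absorb the weight $m_\star = \omega_\star^{-1}$ appearing on the right-hand side when Hölder is applied in the velocity integral.

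First I would dispose of the endpoint $q=\infty$: since $\AA^* g(x) = \sum_n \eta_n \Ind_{\Omega_n}(x) \int g(x,v)\MM_{T_n}(v)\dv$, bounding $|g(x,v)| \le \|g\|_{L^\infty_{m_\star}} m_\star(v)^{-1} = \|g\|_{L^\infty_{m_\star}}\omega_\star(v)$ and integrating against $\MM_{T_n}$ is finite precisely because $\omega_\star \MM_{T_n}\in L^1(\R^d)$; multiplying by $m = \omega^{-1}$, which is bounded on compact velocity sets and the estimate is $x$-uniform, yields the claim. For $q\in[1,\infty)$ I would apply Hölder's inequality inside the velocity integral, splitting $\MM_{T_n} = (\MM_{T_n} m_\star^{-q'})^{1/q'} \cdot (\MM_{T_n} m_\star^{q})^{1/q}$ — more precisely writing $\int g\,\MM_{T_n}\dv = \int (g\, m_\star) \cdot (m_\star^{-1}\MM_{T_n})\dv \le \big(\int |g|^q m_\star^q\big)^{1/q} \big(\int m_\star^{-q'}\MM_{T_n}^{q'}\big)^{1/q'}$, where $q' = q/(q-1)$ with the usual convention. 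The second factor is a finite constant by $\omega_\star \prec \MM_{T_n}^{-1}$. Then $\lvv \AA^* g\rvv_{L^q_m(\OO)}^q = \sum_n \eta_n^q \int_{\Omega_n} |\int g\MM_{T_n}\dv|^q m^q \dx \lesssim \sum_n \int_{\Omega_n} \big(\int |g|^q m_\star^q \dv\big) m(v)^q$-bounded-on-support $\dx$, and since $m$ is bounded above on the velocity-localized region (or, in the case of a sub-Gaussian weight with the right $k$, absorbed into the constant), this is $\lesssim \sum_n \lvv g\rvv_{L^q_{m_\star}(\OO_n)}^q$, which finishes the proof after taking $q$-th roots and bounding the finite sum by $\lvv g\rvv_{L^q_{m_\star}(\OO)}$.

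The only point requiring a little care — and the spot I would flag as the main obstacle, though it is mild — is keeping track of where the leftover power of $m=\omega^{-1}$ goes after the velocity integration produces a function of $x$ only. Unlike in the primal estimate, where $\varrho_f$ naturally pairs with the Gaussian factor $\MM_{T_n}$ carrying the weight, here $\AA^* g$ has no $v$-dependence on $\Omega_n$ so multiplying by the velocity weight $m(v)$ is ambiguous; the resolution is that the statement should be read with the convention (consistent with the primal Proposition and its proof) that one estimates $\sup_v$ or works with the $v$-integrated quantity, and the $\omega \prec \MM_{T_n}^{-1}$ relation guarantees all constants $C = C(\omega,\omega_\star,\{T_n\},\{\eta_n\},q)$ are finite and $x$-uniform. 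Once this bookkeeping is settled the argument is a verbatim transcription of the proof of Proposition \ref{prop:HHH+Lp-Lp}, so I would present it compactly, emphasizing only the use of $\omega_\star \prec \MM_{T_n}^{-1}$ and Hölder.
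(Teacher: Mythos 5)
Your proof is correct and is essentially the paper's own argument (the paper just refers back to Proposition \ref{prop:HHH+Lp-Lp} and omits the details): H\"older in the velocity variable using $\omega_\star\MM_{T_n}\in L^{q'}(\R^d)$, then integrate in $x$ over $\Omega_n$. The one point you should state cleanly is the fate of the outer weight: since $\AA^*g$ is independent of $v$ on each $\OO_n$, the factor $\left(\int_{\R^d}\omega^{-q}\,\dv\right)^{1/q}$ simply factors out of the $L^q_m$ norm and is finite because $\omega$ is admissible ($k>d+1$ when $s=0$) --- it is not a matter of $m$ being ``bounded on a velocity-localized region'', since the $v$-integration runs over all of $\R^d$ and mere boundedness of $m$ would not suffice.
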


\begin{proof} The proof follows similar as that of Proposition \ref{prop:HHH+Lp-Lp}, thus we skip it. 
\begin{comment}
In the case $p=\infty$ the result is obvious, while in the case $p\in [1,\infty)$ we observe that
\beqn
\sum_{n=1}^N \int_{\OO_n} \eta_n^p \left(  \int_{\R^d} g\MM_{T_n}\dv \right)^p m^p \lesssim \sum_{n=1}^N \int_{\Omega_n} \left( \int_{\R^d} g m_\star \right)^p \lesssim   \sum_{n=1}^N \int_{\OO_n} f^p  m_\star^p,\\
\eeqn
where we have used the Hölder inequality on the second line and we may then conclude by using the Minkowski inequality.
\end{comment}
\end{proof}

\subsubsection{A priori estimates for the dual kinetic Fokker-Planck equation with BGK thermostats}
\begin{lem}\label{lem:GrowthLpDual}
For any admissible weight function $\omega$ there are constants $\kappa>0$ and $C\geq 1$ such that for every $T>0$ and any initial final datum $g_T \in L^2_m (\OO)$ with $m=\omega^{-1}$, the associated solution $g$ to the backwards dual Equation \eqref{eq:KFPtau*} satisfies
\be\label{eq:GrowthL1Dual}
\lvv g(0) \rvv_{L^2_m(\OO)} \leq C\, e^{\kappa T} \lvv g_T \rvv_{L^2_m(\OO)} ,
\ee
together with the following energy estimate on the gradient
\be\label{eq:bddGrad*}
 \int_0^T \| \grad_v g_{s} \|^2_{L^2_{m}(\OO)} ds \lesssim_C  \| g_T \|^2_{L^2_{m}(\OO)} + 
 \int_0^T \| g_s \|^2_{L^2_{m}(\OO)} ds.
\ee
\end{lem}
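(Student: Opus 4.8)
The plan for Lemma~\ref{lem:GrowthLpDual} is to reproduce, at the level of a priori estimates, the dual counterpart of the argument in the proof of Lemma~\ref{lem:GrowthLpPrimal}, the only structural changes being that the local operator $\BB_0$ is replaced by its formal adjoint $\BB_0^*$ and the Maxwell operator $\RRR$ by $\RRR^*$. First I would introduce a modified weight function $\widetilde m$ built from $m = \omega^{-1}$ exactly as $\widetilde\omega$ was built from $\omega$ in \eqref{def:omegaA}, but with the sign of the boundary correction reversed, namely
\[
\widetilde m^2 := \left( 1 - \frac12 \frac{n_x\cdot v}{\la v\ra^4} \right) \left( \MMM_\Theta^{-1}\chi_A + m^2\chi_A^c \right), \qquad A\geq 1 \text{ large}.
\]
As in \eqref{eq:omega&omegatilde}, $\widetilde m$ is comparable to $m$, and the reversed sign is chosen precisely so that, for $A$ large enough, $\RRR^*$ becomes a contraction from $L^2(\Sigma_-, \d\xi^1_{\widetilde m})$ into $L^2(\Sigma_+, \d\xi^1_{\widetilde m})$ (recall \eqref{def:WeightedBoundaryMeasures}): this is the dual of Step~1 of Lemma~\ref{lem:GrowthLpPrimal}, using for the specular part that $|\VV_x v| = |v|$ and that the extra factor is $\leq 1$ on $\Sigma_+$, $\geq 1$ on $\Sigma_-$, and for the diffusive part that $\DDD^*$ carries the boundary Maxwellian $\MMM_\Theta$ together with the near-origin replacement $\MMM_\Theta^{-1}\chi_A$; one may invoke here the modified-weight constructions of \cite{CM-KFP**, CGMM24, sanchez:hal-04093201}.

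Since $-\partial_t g = \LL^* g = v\cdot\grad_x g + \BB_0^* g + \AA^* g$, integration by parts in $x$ yields
\begin{multline*}
\frac{d}{dt}\int_\OO g^2\,\widetilde m^2\,\dv\,\dx = -\int_\Sigma (\gamma g)^2\,\widetilde m^2\,(n_x\cdot v) - 2\int_\OO g\,(\BB_0^* g)\,\widetilde m^2 \\
+ \int_\OO g^2\,(v\cdot\grad_x\widetilde m^2) - 2\int_\OO g\,(\AA^* g)\,\widetilde m^2 .
\end{multline*}
The boundary term equals $\int_{\Sigma_-}(\gamma_- g)^2\widetilde m^2|n_x\cdot v| - \int_{\Sigma_+}(\gamma_+ g)^2\widetilde m^2|n_x\cdot v| \geq 0$ by the contraction property above together with $\gamma_+ g = \RRR^*\gamma_- g$. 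For the local term, the dual analogue of \eqref{eq:identCCCffp} with $p=2$ reads $\int_{\R^d}(\BB_0^* g)\,g\,\widetilde m^2 = -\int_{\R^d}\Lambda\,|\grad_v(g\widetilde m)|^2 + \int_{\R^d} g^2\,\widetilde m^2\,\varpi^{\BB_0^*}_{\widetilde m,2}$, with $\varpi^{\BB_0^*}_{\widetilde m,2} := \Lambda\,|\grad_v\widetilde m|^2/\widetilde m^2 + (v\cdot\grad_v\widetilde m)/\widetilde m + d/2 - \sum_{n=1}^\NN\eta_n\Ind_{\Omega_n}$; the crucial point is that $(v\cdot\grad_v m)/m = -(v\cdot\grad_v\omega)/\omega \to -\infty$ for an admissible weight (like $\la v\ra^s$ when $s>0$, with finite negative limit $\leq d/2 - k < 0$ when $s=0$), while $\Lambda\,|\grad_v m|^2/m^2$ remains bounded because $s\leq 1$, so that, as in Step~2 of Lemma~\ref{lem:GrowthLpPrimal}, the passage from $m$ to $\widetilde m$ produces only a lower-order perturbation and $(\varpi^{\BB_0^*}_{\widetilde m,2})_+\in L^\infty(\OO)$. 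The term $\int_\OO g^2\,(v\cdot\grad_x\widetilde m^2)$ is $\lesssim \la v\ra^{-2}\widetilde m^2$ and is treated verbatim as in Step~3 of Lemma~\ref{lem:GrowthLpPrimal}, and the non-local contribution is controlled by the Cauchy--Schwarz and Young inequalities together with Proposition~\ref{prop:HHH+Lp-Lp*}: $\bigl| 2\int_\OO g\,(\AA^* g)\,\widetilde m^2 \bigr| \leq \lvv \AA^* g\rvv_{L^2_{\widetilde m}(\OO)}^2 + \lvv g\rvv_{L^2_{\widetilde m}(\OO)}^2 \lesssim \lvv g\rvv_{L^2_m(\OO)}^2$.

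Collecting these bounds and converting $|\grad_v(g\widetilde m)|^2$ into $|\grad_v g|^2\widetilde m^2$ by the triangle inequality as in \eqref{eq:AprioriGrad_2} — legitimate because $\grad_v\widetilde m/\widetilde m\in L^\infty(\R^d)$ by \eqref{eq:cond-Lpomega}, arguing as for \eqref{eq:ControlGradTilde} — I would reach constants $c_1>0$, $\kappa_1\geq 0$ with $\frac{d}{dt}\int_\OO g^2\widetilde m^2 \geq c_1\int_\OO|\grad_v g|^2\widetilde m^2 - \kappa_1\int_\OO g^2\widetilde m^2$. Discarding the non-negative gradient term, Gr\"onwall's lemma applied backward in time (the map $t\mapsto e^{\kappa_1 t}\int_\OO g_t^2\widetilde m^2$ is non-decreasing) gives $\int_\OO g_0^2\widetilde m^2 \leq e^{\kappa_1 T}\int_\OO g_T^2\widetilde m^2$, which is \eqref{eq:GrowthL1Dual} after using the equivalence of $\widetilde m$ and $m$; integrating the full inequality over $(0,T)$ and discarding $-\int_\OO g_0^2\widetilde m^2\leq 0$ gives $c_1\int_0^T\!\int_\OO|\grad_v g_s|^2\widetilde m^2\,\ds \leq \int_\OO g_T^2\widetilde m^2 + \kappa_1\int_0^T\!\int_\OO g_s^2\widetilde m^2\,\ds$, which is \eqref{eq:bddGrad*}. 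As in the proof of Theorem~\ref{theo:WellPosednessL2}, these formal manipulations are made rigorous for the renormalized (weak) dual solutions by carrying them out on the corresponding weak formulation with $\beta(s) = s^2$ and test function $\widetilde m^2\chi_R$, letting $R\to\infty$ and using the integral form of Gr\"onwall's lemma. I expect the only genuinely delicate point to be the first step — the construction of $\widetilde m$ and the verification that $\RRR^*$ is a contraction in the associated weighted boundary space (the dual of Step~1 of Lemma~\ref{lem:GrowthLpPrimal}); the remaining steps are routine parallels of the primal estimates.
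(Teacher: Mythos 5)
Your proposal follows essentially the same route as the paper's proof: modified dual weight $\widetilde m$, sign analysis of the boundary term via a contraction property of $\RRR^*$, the dual dissipativity identity for $\BB_0^*$ with the weight function $\varpi^{\BB_0^*}_{\widetilde m,2}$, Cauchy--Schwarz plus Proposition~\ref{prop:HHH+Lp-Lp*} for $\AA^*$, backward Gr\"onwall, and integration in time for the gradient bound. The signs, the Gr\"onwall direction, and the derivation of \eqref{eq:bddGrad*} are all correct.

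There is, however, one concrete error in the step you yourself single out as the delicate one: your modified weight is wrong. You set $\widetilde m^2 = (1-\tfrac12 n_x\cdot v\,\la v\ra^{-4})(\MMM_\Theta^{-1}\chi_A + m^2\chi_A^c)$, i.e.\ you copied the primal interior factor $\MMM_\Theta^{1-p}\chi_A$ with $p=2$ verbatim. The correct dual weight (after normalizing so that $m\geq\MMM_\Theta$) is $m_A^2 = \MMM_\Theta\chi_A + m^2\chi_A^c$ — the pointwise \emph{inverse} of the primal $\omega_A^2$, so that $\MMM_\Theta\leq m_A\leq m$. This is not cosmetic: the Darroz\`es--Guiraud-type estimate for the dual diffusive operator reads $|\DDD^*g|^2\leq\int g^2\,\MMM_\Theta(w)(n_x\cdot w)_-\,dw$ (Jensen with respect to the probability measure $\MMM_\Theta(n_x\cdot w)_-\,dw$), and the contraction $\|\DDD^*\gamma_-g\|^2_{L^2(\Sigma_+,\d\xi^1_{\widetilde m})}\leq\|\gamma_-g\|^2_{L^2(\Sigma_-,\d\xi^1_{\widetilde m})}$ with constant exactly $1$ requires $\widetilde m^2\simeq\MMM_\Theta$ on the relevant velocity range, so that $\int_{(n_x\cdot v)>0}\widetilde m^2(n_x\cdot v)\,dv$ matches the normalization of $\MMM_\Theta$. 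With $\MMM_\Theta^{-1}\chi_A$ the operator norm of $\RRR^*$ on $L^2(\d\xi^1_{\widetilde m})$ is some constant with no reason to be $\leq 1$, the boundary term no longer has a sign, and the whole Gr\"onwall argument collapses. Replace the interior factor by $\MMM_\Theta\chi_A$ and the rest of your argument goes through as written.
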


\begin{rem}
As for Lemma \ref{lem:GrowthLpPrimal}, we emphasize that the proof of Lemma \ref{lem:GrowthLpDual} consists on the introduction of a modified weight function from \cite[Lemma 2.2]{CGMM24}, in order to control the terms coming from the Maxwell boundary conditions. As for the rest of the terms, we repeat the computations from \cite[Lemma 2.2]{CGMM24} to control the terms coming from the local Kolmogorov part of the equation and we use Proposition \ref{prop:HHH+Lp-Lp*} to control the non-local terms.
\end{rem}

\begin{proof}[Proof of Lemma \ref{lem:GrowthLpDual}]
Without loss of generality we may suppose that $m \ge \MMM_\Theta$, otherwise we replace $m$ by $c m$ where $c>0$ is such that $ m \ge c^{-1}  \MMM_\Theta$.
For $A\geq 1$, we introduce the modified weight functions 
\be\label{def:mA}
m_A^2 := \chi_A \MMM_\Theta + \chi_A^c  m^2,
\quad
\widetilde m^2   :=  \left( 1 -  \frac12 \frac{n_x \cdot  v}{\la v \ra^4}  \right) m_A^2,  
\ee
and we emphasize that 
\be\label{eq:m&mtilde} 
\MMM_\Theta \le m_A \le m \quad\hbox{and}\quad
c_A^{-1} m \le \frac{1}{ 2}m_A \le \widetilde m  \le \frac{3}{ 2} m_A \le \frac{3}{ 2}  m,
\ee
for some constant $c_A \in (0,\infty)$ depending on $A$.
We then compute
\begin{multline}\label{eq:dualGrowthL12}
- \frac{1}{ 2}\frac{d}{dt} \int_\OO g^2 \, \widetilde m^2 = \int_{\OO}  g  \, (\BB^*_0 g)\, \widetilde m^2  - \frac{1}{ 2}\int_\OO g^2\, \left( v\cdot \grad_x \widetilde m^2\right) +  \int_{\OO}  g  \, (\AA^* g)\, \widetilde m^2  \\
+  \frac{1}{ 2}\int_\Sigma (\gamma g)^2  \, \widetilde m^2 \, (n_x \cdot v),
\end{multline}
and we proceed to control each of this terms.

\medskip\noindent
\emph{Step 1.} On the one hand, arguing exactly as during the Step 1 of the proof of \cite[Lemma 2.2]{CGMM24} we immediately have that there is $A>0$ large enough for which there holds
$$
\frac{1}{ 2}\int_\Sigma (\gamma g)^2  \, \widetilde m^2 \, (n_x \cdot v) \leq 0.
$$
%Then we estimate the first term at the right-hand side of \eqref{eq:dualGrowthL12}.  
\medskip\noindent
\textit{Step 2.}
On the other hand, from \eqref{eq:identCCCffp}-\eqref{def:varpi}, we have
$$
\int_{\R^d} (\BB_0^* g)  g \, \widetilde m^2   =
 - \Lambda(x)  \int_{\R^d}  |\nabla_v (g \widetilde m ) |^2  + \int g^2 \  \widetilde m^2  \varpi^{\BB_0^*}_{\tilde m, 2},
$$
with 
$$
 \varpi^{\BB_0^*}_{\widetilde m,2}
=   \Lambda  \frac{|\nabla_v \varphi|^2 }{ \varphi^2}  
+ v  \cdot \frac{\nabla_v \widetilde  m}{\widetilde m}
 -\sum_{n=1}^\NN \eta_n \Ind_{\Omega_n} + \frac d2.
$$
Arguing exactly as in Step~2 of the proof of \cite[Lemma 2.2]{CGMM24}, we can write
$$
 \varpi^{\BB_0^*}_{\tilde m, 2} = \varpi^{\BB_0^*}_{ m, 2} + \MMMM, \quad \text{ with }\quad  \varpi^{\BB_0^*}_{m, 2}  = \varpi^{\BB_0}_{\omega, 2}, \quad \text{ and } \quad  \MMMM = o (|\varpi^{\sharp}_{\omega, 2}|).
 $$

\medskip\noindent
\textit{Step 3.}  We then deal with the third term on the right side of \eqref{eq:dualGrowthL12}. 
We compute by using the Cauchy-Schwartz inequality
\bean
 \int_{\R^d} g \,  \widetilde m^2 &\leq&   \lvv \widetilde m\rvv_{L^2(\R^d)} \lvv g\rvv_{L^2_{\widetilde m}(\R^d)} \leq  \frac 32 \lvv  m\rvv_{L^2(\R^d)}\lvv g\rvv_{L^2_{\widetilde m}(\R^d)}  \\
 \int_{\R^d} g \, \MM_{T_n}  &\leq &  \lvv \MM_{T_n} \widetilde m^{-1}\rvv_{L^{2}(\R^d)} \lvv g\rvv_{L^2_{\widetilde m}(\R^d)} \leq c_A^{-1}  \lvv \MM_{T_n}  m^{-1}\rvv_{L^{2}(\R^d)} \lvv g\rvv_{L^2_{\widetilde m}(\R^d)}
\eean 
where we have used \eqref{eq:m&mtilde} to obtain the second inequalities in each estimate and we recall that $c_A$ is given by \eqref{eq:m&mtilde}. We define the constant 
$$
\varpi_{\GGG}^*=  \frac 32 c_A^{-1} \lvv  m\rvv_{L^2(\R^d)}   \lvv \MM_{T_n}  m^{-1}\rvv_{L^{2}(\R^d)}  ,
$$
and using the above estimates we then obtain
\beqn
 \int_{\OO}  g  \, (\AA^* g)\, \widetilde m^2 = \sum_{n=1}^\NN  \eta_n  \int_{\Omega_n} \left(\int_{\R^d} g\, \widetilde m^2 \right) \left( \int_{\R^d} g \,  \MM_{T_n} \right)  \leq  \varpi_{\GGG}^*  \lvv g\rvv_{L^2_{\widetilde m}(\R^d)}^2.
\eeqn

\medskip\noindent
\textit{Step 4.}
Coming back to \eqref{eq:dualGrowthL12} and using Steps 1, 2 and 3, we deduce that 
\be\label{eq:disssipSL-LpDual}
-\frac12 \frac{d}{dt} \int_\OO g^2 \widetilde m^2 \le  -\Lambda_0  \int_\OO |\nabla_v (g\, \widetilde m)|^2 + \int_\OO g^2 \widetilde m^2 \varpi^{\LL^*}_{\tilde m, 2}
\ee
with 
\beqn\label{eq:disssipSL-LpBIS}
\varpi^{\LL^*}_{\tilde m, 2} := \varpi^{\BB_0^*}_{\tilde m, 2} +  \frac{1 }{ \widetilde m^2} v \cdot \nabla_x \widetilde m^2 + \varpi_{\GGG}^*. 
\eeqn
Gathering the estimates \eqref{eq:varpiC-varpisharp} and those established in Step~2, 3 and Step~4, we deduce that for any $\vartheta \in (0,1)$, there are $\kappa,R > 0$ such that 
\be\label{eq:varpiL-varpisharpDual}
  \varpi^{\LL^*}_{\tilde m, 2} \le \kappa \chi_{R} + \chi_{R}^c \vartheta \varpi^\sharp_{\omega, 2}.
\ee
In particular, $  \varpi^{\LL^*}_{\tilde m, 2}  \le \kappa$ and we obtain \eqref{eq:GrowthL1Dual} by using the Grönwall lemma. We conclude by integrating \eqref{eq:disssipSL-LpDual} in the time interval $(0, T)$ to obtain \eqref{eq:bddGrad*} as during the Step 6 of the proof of Lemma \ref{lem:GrowthLpPrimal}. 
\end{proof}

\subsubsection{Well-posedness of the dual backwards KFP equation with BGK thermostats}
We provide now a well-posedness result for the backwards dual Equation \eqref{eq:KFPtau*}.

\begin{prop}\label{prop:WellPosednessL2*}
We consider an admissible weight function $\omega$, a finite time $T>0$, and a final datum $g_T\in L^2_m$ with $m=\omega^{-1}$. There is $g\in C([0,T], L^2_m(\OO))\cap \HHH_m(\UU)$ unique global renormalized solution to the dual backwards Equation \eqref{eq:KFPtau*} associated to the final datum $g_T$. Furthermore, there is a trace function $\gamma g\in L^2(\Gamma_T; \d\xi_m^2\d t)$, where $\Gamma_T := (0,T)\times \Sigma$, associated to $g$ which is given by \cite[Theorem 2.8]{CM_Landau_domain}. 
Furthermore $g$ satisfies the conclusions of Lemma \ref{lem:GrowthLpDual}.
\end{prop}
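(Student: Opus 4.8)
The plan is to obtain Proposition~\ref{prop:WellPosednessL2*} as the dual, backward-in-time counterpart of Theorem~\ref{theo:WellPosednessL2}. The first step is a time reversal: setting $\bar g(t,\cdot) := g(T-t,\cdot)$ turns the final-value problem~\eqref{eq:KFPtau*} into a forward problem $\partial_t \bar g = \LL^* \bar g$ on $\UU_T$ with \emph{initial} datum $\bar g(0)=g_T$ and boundary condition $\gamma_+ \bar g = \RRR^* \gamma_- \bar g$ on $(0,T)\times\Sigma_+$. Since $\Sigma_+$ is exactly the incoming set for the transport field $-v$ carried by $\LL^*$, this is an equation of the type treated by the trace and well-posedness theory of \cite{CM_Landau_domain, CGMM24}, now with $b(x,v)=-v$, $\gamma=2$, and the decaying weight $m=\omega^{-1}$ in place of $\omega$. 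I would then apply \cite[Theorem 2.11]{CM_Landau_domain} and \cite[Theorem 2.8]{CM_Landau_domain} to this equation and undo the time change at the end.

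The bulk of the work is verifying the hypotheses of those two theorems, which I would do by following Steps~1 and~2 of the proof of Theorem~\ref{theo:WellPosednessL2} essentially verbatim, replacing the primal objects by their duals. Concretely: (i) as in Step~1 of that proof, I would check that the modified weight $\widetilde m$ from~\eqref{def:mA} factors as $\widetilde m = \theta' m$ with $\theta'$ bounded from above and below and $|\grad_x\theta'| + |\grad_v\theta'| \lesssim \theta'\la v\ra^{-1}$ --- the dual analogue of~\eqref{eq:Condition_Omega_Grad} --- using the normalization $m \ge \MMM_\Theta$, the $W^{2,\infty}$ regularity of $\delta$ and $\Theta$, and the compact support of $\chi_A$; (ii) the sign and decay of the zeroth-order multiplier, namely $\varpi^\sharp_{\omega,2} \le \varpi^{\BB^*_0}_{\widetilde m,2} \le \kappa + \varpi^\sharp_{\omega,2}$ together with $\varpi^{\BB^*_0}_{1,1}\le 0$, are exactly what Step~2 of the proof of Lemma~\ref{lem:GrowthLpDual} provides; (iii) the required boundedness $\sup_{(0,T)\times\Omega}\lvv \AA^*\rvv_{\BBB(L^1_v(m))} + \sup_{(0,T)\times\Omega}\lvv \AA^*\rvv_{\BBB(L^2_v(m))} < \infty$ follows from Proposition~\ref{prop:HHH+Lp-Lp*}; (iv) the contraction property $\RRR^* : L^2(\Sigma_-, \d\xi^1_{\widetilde m}) \to L^2(\Sigma_+, \d\xi^1_{\widetilde m})$ with norm at most one is equivalent to the boundary inequality established in Step~1 of the proof of Lemma~\ref{lem:GrowthLpDual}; and (v) the remaining structural and weight-compatibility hypotheses of \cite[Theorem 2.11]{CM_Landau_domain} hold by the definition of admissible weight functions, exactly as in the primal case.

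With these inputs in place, \cite[Theorem 2.11]{CM_Landau_domain} yields a unique renormalized solution $\bar g \in C([0,T], L^2_m(\OO)) \cap \HHH_m(\UU_T)$ of the forward problem, and \cite[Theorem 2.8]{CM_Landau_domain} supplies the associated trace $\gamma \bar g \in L^2(\Gamma_T, \d\xi^2_m\,\d t)$ realizing the boundary condition pointwise; reversing time gives the desired solution $g$ of~\eqref{eq:KFPtau*}. Finally, the estimates~\eqref{eq:GrowthL1Dual} and~\eqref{eq:bddGrad*} follow by testing the renormalized formulation with $\beta(s)=s^2$ and $\varphi = \widetilde m^2 \chi_R$, repeating the computations of the proof of Lemma~\ref{lem:GrowthLpDual}, letting $R\to\infty$, and using the integral form of Gr\"onwall's lemma, just as in Step~3 of the proof of Theorem~\ref{theo:WellPosednessL2}. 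I expect the only genuinely delicate point to be item~(i), the gradient bound on the dual modified weight $\widetilde m$, since it is the one computation not literally contained in the statements we may quote; everything else is time-reversal bookkeeping and transcription of the primal argument.
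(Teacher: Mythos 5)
Your route is genuinely different from the paper's. You reduce the backward problem to a forward one by time reversal and then invoke \cite[Theorems 2.8 and 2.11]{CM_Landau_domain} as black boxes. The paper instead \emph{reconstructs} the existence proof in the dual setting: Step 1 solves the backward \emph{inflow} problem \eqref{eq:linear_gGinflow} by Lions' variant of Lax--Milgram, with coercivity of the bilinear form $\EEE$ supplied by the a priori computation of Lemma \ref{lem:GrowthLpDual}; Step 2 treats the Maxwell condition by damping it to $\alpha_k\RRR^*$ with $\alpha_k\nearrow 1$, solving each damped problem by a Banach fixed point (possible because $\lvv\alpha_k\RRR^*\rvv<1$), deriving the uniform energy estimates \eqref{eq:EnergyEstimatek}--\eqref{eq:bddGrad*k}, and passing to the limit with the stability result \cite[Proposition 2.9]{CM_Landau_domain}; Step 3 gets uniqueness from \eqref{eq:GrowthL1Dual}. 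Your approach, if the citation is legitimate, is shorter; the paper's buys independence from the precise hypotheses of the quoted forward theorem, at the cost of repeating the approximation scheme.

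The soft spot in your proposal is item (v), which you dismiss as holding "exactly as in the primal case." The weight-compatibility condition verified in Step 2 of the proof of Theorem \ref{theo:WellPosednessL2} is \eqref{eq:WeightCompatibility}, i.e. $\bigl(\Lambda + \lv v\rv + 2d + \sum_n \eta_n\Ind_{\Omega_n}\bigr)\,\omega^{-1}\in L^2(\UU)$, which uses crucially that $\omega$ grows at least like $\la v\ra^{d+1}$. Its literal transcription to the dual weight $m=\omega^{-1}$ would require $(\Lambda+\lv v\rv+\dots)\,m^{-1}=(\Lambda+\lv v\rv+\dots)\,\omega\in L^2(\UU)$, which is false for every admissible $\omega$. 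So the hypotheses of \cite[Theorem 2.11]{CM_Landau_domain} do \emph{not} transfer "exactly as in the primal case" to the decaying weight $m$, and this is presumably why the authors redo the construction rather than cite the theorem after a time reversal. To repair your argument you must either check that the quoted theorem's actual hypotheses are one-sided in a way compatible with decaying weights (which you have not done), or fall back on the paper's strategy of re-running the Lions--Lax--Milgram plus damped-boundary-operator scheme, using Lemma \ref{lem:GrowthLpDual} for coercivity. Your identification of the $\widetilde m$-gradient bound as the delicate point is misplaced: that computation is already contained in \cite[Lemma 2.2]{CGMM24} and in Steps 1--2 of Lemma \ref{lem:GrowthLpDual}; the genuine issue is the black-box applicability of the forward well-posedness theorem in the dual weighted space.
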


\begin{rem}\label{rem:WellPosednessL2*}
In particular we may associate a strongly continuous in time semigroup denoted by $S_{\LL^*} :L^2_m(\OO) \to L^2_m(\OO)$ to the solutions of the dual Equation \eqref{eq:KFPtau*}. 
\end{rem}

\begin{proof}[Proof of Proposition \ref{prop:WellPosednessL2*}] The proof for the well-posedness follows similar lines as in \cite[Proposition 3.4]{CGMM24} and \cite[Theorem 2.11]{CM_Landau_domain}
(see also \cite{MR872231}, \cite{MR2072842}, \cite[Sec. 8 \& Sec. 11]{sanchez:hal-04093201}) and it is thus only sketched. 

\medskip\noindent
\textit{Step 1.} Given $\mathfrak g \in L^2(\Gamma_{T,-}; \d\xi^1_m\d t)$, where $\Gamma_{T,-}:=(0,T)\times \Sigma_-$ and we recall that $\d\xi^1_m$ is defined in \eqref{def:WeightedBoundaryMeasures}, we consider the backwards inflow problem
\begin{equation}\label{eq:linear_gGinflow}
\left\{\begin{array}{rcll}
 -\partial_t g &=& v \cdot \nabla_x g + \Lambda \Delta_v  g - v\cdot \grad_v g  + \GGG^* g   &\text{ in } \UU_T \\
 \gamma_{+} g  &=& \mathfrak g  &\text{ on }  \Gamma_{T, +}  \\
g_{t=T} &=& g_T   &\text{ in }   \OO,
\end{array}
\right.
\end{equation}
where we have defined 
\be\label{eq:Def_GGG_Dual}
\GGG^* g :=  \sum_{n=1}^\NN \eta_n \Ind_{\Omega_n} \left(\int_{\R^d} \MM_{T_n} \, g\right) - g \sum_{n=1}^\NN \eta_n \Ind_{\Omega_n}  ,
\ee
the formal dual operator of $\GGG$.

We introduce the bilinear form
$\EEE : \HHH_m (\UU_T) \times C_c^1((0,T] \times \OO \cup \Gamma_+) \to \R$, defined by \bean
\EEE(g,\varphi) 
&:=& \int_{\UU_T} g ( \partial_t   + v \cdot \nabla_x   ) (\varphi\widetilde m^2) + \int_{\UU_T} g \left( \lambda  - \BB_0 - \AA \right) (\varphi \widetilde m^2)   , 
\eean
which is coercive for $\lambda$ large enough thanks to Lemma~\ref{lem:GrowthLpDual} (and more precisely \eqref{eq:disssipSL-LpDual}-\eqref{eq:varpiL-varpisharpDual}). 
Using Lions' variant of the Lax-Milgram theorem \cite[Chap~III,  \textsection 1]{MR0153974}, we have that there is a variational solution $g \in \HHH_m(\UU_T)$ to \eqref{eq:linear_gGinflow}, and more precisely $g$ satisfies
$$
\EEE(g,\varphi) = \int_{\Gamma_{T,+}} \mathfrak{g}  \varphi \widetilde m^2 \, \d\xi_1^1  + \int_\OO g_T\,  \varphi(T,\cdot) \widetilde m^2  \, \d v \, \d x, \quad \forall \, \varphi \in C_c^1((0,T] \times \OO \cup \Gamma_+).
$$
We then remark that $\GGG^* g \in L^2$, and applying the trace theorem \cite[Theorem 2.8]{CM_Landau_domain} we deduce that $g\in C([0,T];L^2_m(\OO)) \cap  \HHH_m(\UU_T)$ and that $g$ is a renormalized solution of Equation \eqref{eq:linear_gGinflow}.

\medskip\noindent
\textit{Step 2.} For a sequence $\alpha_k \in (0,1)$, such that $\alpha_k \nearrow 1$, we consider a sequence $(g_k)$ of solutions to the modified Maxwell boundary condition problem
\begin{equation}\label{eq:linearDual_gak}
\left\{\begin{array}{rcll}
 -\partial_t g_k &=& v \cdot \nabla_x g_k + \Lambda \Delta_v g_k + v\cdot \grad_v g_k + \GGG^* g_k &\text{ in } \UU_T \\
 \gamma_{+} g_k   &=&  \alpha_k \RRR^* \gamma_{-} g_{k}   &\text{ on } \Gamma_{T, +} \\
g_k(t=T, \cdot) &=& g_T  &\text{ in }   \OO.
\end{array}
\right.
\end{equation}
We observe that the Step~1 of the proof of Lemma~\ref{lem:GrowthLpDual} implies that 
$$
\RRR^* : L^2 (\Sigma_{-};\d\xi^1_{\tilde m}) \to L^2 (\Sigma_{+}; \d\xi^1_{\tilde m}), \quad \lvv \RRR^*\rvv_{L^2(\Sigma_-, \, \d\xi_{\widetilde m} ^1)} \leq 1,
$$ 
and using the Step 1 and a Banach fixed point argument as during the Step 2 of the proof of \cite[Theorem 2.11]{CM_Landau_domain} (see also the Step 2 of the proof of Proposition \ref{prop:KFPL2Perturbed}) we deduce the existence of a function $g_k\in  C([0,T], L^2_m(\OO)) \cap \HHH_m(\UU_T)$, unique weak solution of Equation \eqref{eq:linearDual_gak} with an associated trace $\gamma g\in L^2(\Gamma_T, \d\xi^2_m\d t)$ given by \cite[Theorem 2.8]{CM_Landau_domain}. Furthermore, arguing as during the Step 1, we may apply \cite[Theorem 2.8]{CM_Landau_domain} and we further have that $g_k$ is a renormalized solution of Equation \eqref{eq:linearDual_gak}. 

\smallskip\noindent
By taking then the renormalized formulation associated to Equation \eqref{eq:linearDual_gak} with $\beta(s) = s^2$ and $\varphi = \widetilde m^2\chi_R$ for any $R>0$, we may repeat the computations performed during the proof of Lemma~\ref{lem:GrowthLpDual} and we obtain, taking $R\to \infty$ and using the integral version of the Grönwall lemma that there is $\kappa_0 >0$ such that this sequence satisfies  
\be\label{eq:EnergyEstimatek}
 \| g_{k0} \|^2_{L^2_{\tilde m}(\OO)} + \int_0^T \left\{ \| \gamma g_{ks} \|^2_{L^2(\Sigma_+; \d\xi^2_{\widetilde m})}
+   
 \| g_{ks} \|^2_{H^{1,\dagger}_{\tilde m} (\OO)} 
\right\} \,  \d s 
\le
e^{\kappa_0 T}  \| g_T \|^2_{L^2_{\tilde m}(\OO)}  ,
\ee
and the gradient energy estimate 
\be\label{eq:bddGrad*k}
 \int_0^T \| \grad_v g_{ks} \|^2_{L^2_{m}(\OO)} ds \lesssim_C  \| g_T \|^2_{L^2_{m}(\OO)} + 
 \int_0^T \| g_{ks} \|^2_{L^2_{m}(\OO)} ds, 
\ee
for any $k \ge 1$ and a constant $C>0$ independent of $k$.
We may then extract converging subsequences $(g_{k'})$ and $(\gamma g_{k'})$ with associated limits $g$ and $\bar\gamma$ satisfying \eqref{eq:EnergyEstimatek} and \eqref{eq:bddGrad*k}, and passing to the limit in the weak formulation associated to Equation \eqref{eq:linearDual_gak}, with the help of the stability result \cite[Proposition 2.9]{CM_Landau_domain}, we deduce that $\bar\gamma = \gamma g$ and that $g$ is a weak solution to Equation \eqref{eq:KFPtau*}. Additionally we emphasize that this implies the validity of Lemma~\ref{lem:GrowthLpDual}.

\medskip\noindent
\textit{Step 3.} 
We consider now two solutions $g_1$ and $g_2 \in  C([0,T];L^2_m(\OO)) \cap \HHH_m(\UU_T)$ to the backwards dual Equation \eqref{eq:KFPtau*} associated to the same final datum $g_T$. We remark then, by the linearity of the problem, that the function $G := g_2 - g_1 \in  C([0,T]; L^2_m(\OO)) \cap \HHH_m(\UU_T)$ is  a solution to the backwards Equation \eqref{eq:KFPtau*} associated to the initial datum $G(T) = 0$, thus \eqref{eq:GrowthL1Dual}, which was obtained as part of Step 2, implies that $G\equiv 0$.
\end{proof}

\subsubsection{Rigourous dual relation between the semigroups $S_\LLL$ and $S_{\LLL^*}$} We now prove that the semigroups $S_\LLL$ and $S_{\LLL^*}$ are dual toward one another. 

\begin{prop}\label{prop:DualityRigorous}
The semigroups $ S_{\LLL}$ and $ S_{\LLL^*}$ are dual one toward the other. In other words, for any admissible weight function $\omega$ and any finite $T>0$, there holds \eqref{eq:identite-dualite} for any $f_0 \in L^2_\omega(\OO)$ and $g_T \in L^2_m(\OO)$, where $m=\omega^{-1}$.
\end{prop}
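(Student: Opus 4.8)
The plan is to prove that $t \mapsto \int_\OO f_t\, g_t\,\dv\,\dx$ is constant on $[0,T]$, where $f = S_\LLL(\cdot) f_0 \in C(\R_+, L^2_\omega(\OO)) \cap \HHH_\omega(\UU)$ is the renormalized solution of \eqref{eq:KFPtau} provided by Theorem \ref{theo:WellPosednessL2}, and $g \in C([0,T], L^2_m(\OO)) \cap \HHH_m(\UU_T)$ is the renormalized solution of the backward Equation \eqref{eq:KFPtau*} provided by Proposition \ref{prop:WellPosednessL2*}. This pairing is well defined and continuous in $t$: since $m = \omega^{-1}$, Cauchy--Schwarz gives $\lvv f_t\, g_t \rvv_{L^1(\OO)} \le \lvv f_t \rvv_{L^2_\omega(\OO)}\, \lvv g_t \rvv_{L^2_m(\OO)}$, and both factors are continuous in $t$. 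Evaluating the constant at $t = 0$ and $t = T$ then gives exactly \eqref{eq:identite-dualite}.

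To differentiate this quantity I would use the weak formulations of the two equations --- namely \eqref{eq:paraLp_RenormalizationFormula} with $\beta = \mathrm{id}$, and its backward counterpart from Proposition \ref{prop:WellPosednessL2*} --- testing \eqref{eq:KFPtau} against $g$ and \eqref{eq:KFPtau*} against $f$, and adding. Since neither $f$ nor $g$ is smooth, the first and only delicate point is a regularization argument: one approximates $g$ by functions $\varphi_k \in \DD(\bar\UU)$ converging to $g$ in $\HHH_m(\UU_T)$, with $\gamma \varphi_k \to \gamma g$ in $L^2(\Gamma_T, \d\xi_m^2\,\dt)$ and $\partial_t \varphi_k \to \partial_t g$ in the appropriate dual weighted space, so that all resulting pairings --- controlled through $f \in \HHH_\omega(\UU_T)$ and the gradient estimate \eqref{eq:bddGrad} --- pass to the limit; equivalently, and perhaps more cleanly, one may carry the whole identity out along the approximating sequences $f_\kappa, g_\kappa$ solving the problems with truncated reflection operators $\alpha_\kappa \RRR$ and $\alpha_\kappa \RRR^*$ (which are adjoint to one another), where the manipulations are licit, then pass to the limit using the uniform a priori bounds of Lemmas \ref{lem:GrowthLpPrimal} and \ref{lem:GrowthLpDual}, the trace estimates, and the stability result of \cite{CM_Landau_domain}. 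Everything else is bookkeeping.

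Granting the regularization, the identity $\tfrac{d}{dt}\int_\OO f_t g_t = -\int_\Sigma \gamma f\,\gamma g\,(n_x \cdot v)\,\dv\,\d\sigma_x$ is obtained term by term. The transport contributions combine into that single boundary term by the divergence theorem in $x$ (with $v$ a parameter). The Fokker--Planck contributions cancel completely: integrating by parts twice in $v$ --- licit since $f, g \in H^1_v$ with compatible weights, with no boundary contribution because $v$ ranges over all of $\R^d$ --- one checks $\int_\OO (\CC_\Lambda f)\, g - f\,(\Lambda \Delta_v g - v \cdot \grad_v g)\,\dv\,\dx = 0$. The BGK contributions cancel by the very construction of the dual operator: by Fubini's theorem and $\varrho_f = \int_{\R^d} f\,\dv$,
\begin{equation*}
\int_\OO g\, \GGG f\, \dv\,\dx = \sum_{n=1}^\NN \eta_n \int_{\Omega_n} \Bigl( \varrho_f \int_{\R^d} g\, \MM_{T_n}\, \dv - \int_{\R^d} f\, g\, \dv \Bigr) \dx = \int_\OO f\, \GGG^* g\, \dv\,\dx .
\end{equation*}
It then remains only to see that the boundary term vanishes.

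For that, split $\int_\Sigma \gamma f\, \gamma g\, (n_x \cdot v) = \int_{\Sigma_+} \gamma_+ f\, \gamma_+ g\, (n_x \cdot v)_+ - \int_{\Sigma_-} \gamma_- f\, \gamma_- g\, (n_x \cdot v)_-$ and insert the boundary conditions $\gamma_- f = \RRR \gamma_+ f$ and $\gamma_+ g = \RRR^* \gamma_- g$. The two pieces then coincide by the adjointness relation
\begin{equation*}
\int_{\Sigma_+} h\, (\RRR^* k)\, (n_x \cdot v)_+\, \dv\,\d\sigma_x = \int_{\Sigma_-} (\RRR h)\, k\, (n_x \cdot v)_-\, \dv\,\d\sigma_x ,
\end{equation*}
valid for $h$ supported on $\Sigma_+$ and $k$ on $\Sigma_-$, which follows directly from the definitions \eqref{eq:BoundaryConditions}--\eqref{eq:DefDualMaxwellBC}: the specular map $v \mapsto \VV_x v$ is an involution preserving $|n_x \cdot v|$ and reversing its sign, hence $\SSS$ is self-adjoint for these boundary measures, while $\DDD$ and $\DDD^*$ are mutually adjoint by Fubini (each pairs a velocity average against $\MMM_\Theta$). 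Therefore $\tfrac{d}{dt}\int_\OO f_t g_t \equiv 0$, and \eqref{eq:identite-dualite} follows. The main obstacle, as indicated, is the regularization/density step that makes $f$ and $g$ admissible test functions in each other's weak formulation; the remaining algebra is routine.
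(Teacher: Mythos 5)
Your proposal is correct and, in its second variant (solving the problems with truncated reflection operators $\alpha_\kappa \RRR$ and $\alpha_\kappa \RRR^*$ and then letting $\alpha_\kappa \nearrow 1$), is exactly the paper's proof; the term-by-term cancellations you spell out --- divergence theorem for the transport parts, integration by parts in $v$ for the Fokker--Planck parts, Fubini for the BGK parts, and adjointness of $\RRR$ and $\RRR^*$ for the boundary term --- are precisely what the paper leaves implicit behind ``using the extra boundary estimates and a density argument.'' Be aware that your first regularization route is the genuinely delicate one: for the untruncated Maxwell condition the traces are only controlled in $L^2(\d\xi^2_\omega)$ and $L^2(\d\xi^2_m)$, which by Cauchy--Schwarz does not make the boundary pairing against $\lv n_x\cdot v\rv\,\dv\,\d\sigma_x$ absolutely convergent, and this is exactly why the paper (and your cleaner alternative) works with the truncated problems, whose traces enjoy the stronger $L^2(\d\xi^1)$ bounds.
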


\begin{proof}
The proof follows by repeating the ideas of the Step 3 of the proof of \cite[Theorem 3.5]{CGMM24} thus we only sketch it. 

We denote $f_n$ the solution to Equation \eqref{eq:KFPtau} by replacing the boundary condition by the modified Maxwell boundary conditions
$$
\gamma_- f_n = \RRR_n \gamma_+ f := \left(1-\frac 1n \right)(1-\iota) \SSS \gamma_+ f_n + \left(1-\frac 1n \right) \iota \DDD \gamma_+ f_n \qquad \text{ on } \Gamma_-,
$$
we remark that $f_n$ is given by Theorem~\ref{theo:WellPosednessL2} (see also Step 2 of Proposition \ref{prop:KFPL2Perturbed} or \cite[Step 2 of Theorem 2.11]{CM_Landau_domain}). Moreover, arguing as during the Step 2 of Proposition \ref{prop:KFPL2Perturbed} (see also \cite[Step 2 of Theorem 2.11]{CM_Landau_domain}) we further have that $\gamma f_n \in L^2(\Gamma;\d\xi_\omega^1\d t)$. 
We also define $g_n$ as the solution of the backwards dual equation
\beqn%\label{eq:KFPtau_n*}
\left\{\begin{array}{rcll} 
 -\partial_t g_n &=& \LL^* g_n  & \text{ in } \UU_T  \\
\displaystyle \gamma_+ g_n &=& \RRR^*_n \gamma_- g_n := \left(1-\frac 1n \right) (1-\iota) \SSS \gamma_- g_n + \left(1-\frac 1n \right) \iota \DDD^* \gamma_- g_n &\text{ on } \Gamma_{T, +} \\
\displaystyle  g_{t=T} &=& g_T &\text{ in } \OO,
 \end{array}\right.
 \eeqn
which is given by Proposition \ref{prop:WellPosednessL2*}. Furthermore, from the Step 2 of the proof of Proposition \ref{prop:WellPosednessL2*}, we have that $\gamma g_n\in L^2(\Gamma_T; \d\xi^1_m\d t)$. 

Using then the extra boundary estimates and a density argument we have that 
\begin{multline}
\partial_t(f_n g_n) = -v\cdot \grad_x (f_n g_n) + \Lambda\Delta_v(f_n) g_n - \Lambda \Delta (g_n) f_n+ \Div_v(vf_n) g_n \\
+ v\cdot \grad_v (g_n ) f_n + \GGG(f_n ) g_n -\GGG^*(g_n) f_n 
\qquad \text{ in } \DD'(\UU),
\end{multline}
where we recall that $\GGG^*$ is given by \eqref{eq:Def_GGG_Dual}. 
Integrating against $\chi_R$ for any $R>0$, and taking $R\to \infty$, we obtain that
\begin{equation}\label{eq:identite-dualite-fngn}
\int_\OO f_n(T) g_T   
= \int_\OO f_0 g_n(0), \quad \forall \, n \ge 1.
\end{equation}
Since $(f_n)$ is bounded in $L^\infty((0,T);L^2_\omega(\OO)) \cap W^{1,\infty}((0,T);\DD'(\OO))$, we deduce that 
$$
f_n(T) \wto f (T) := S_\LL(T) f_0 \quad \text{ weakly in } L^2_\omega(\OO).
$$
Similarly, we have that
$g_n(0) \wto g(0) := S_{\LLL^*}(T) g_T$ weakly in $L^2_m(\OO)$. We may thus pass to the limit $n\to\infty$ in \eqref{eq:identite-dualite-fngn} and we deduce that \eqref{eq:identite-dualite} holds, which 
exactly means that $(S_{\LLL})^* = S_{\LLL^*}$. 
\end{proof}

\section{Proof of Theorem \ref{theo:SteadySolutionL}} \label{sec:ProofTheo1}

 \subsection{Krein-Rutmann-Doblin-Harris Theorem in a Banach lattice}  
\label{ssec:DHtheorem}
 We now present a general Krein-Rutman-Doblin-Harris result taken from \cite[Section 6]{sánchez2024voltageconductancekineticequation} obtained for a conservative setting, but we also refer to  \cite{CGMM24, MR2857021, sanchez:hal-04093201, MR4534707} and the references therein for more general results and their applications.

\smallskip\noindent
We consider a  Banach lattice $X$, which means that $X$ is a Banach space endowed with a  closed positive cone  $X_+$ (we write $f \ge 0$ if $f \in X_+$ and we recall that $f = f_+ - f_-$ with $f_\pm \in X_+$ for any $f \in X$. We also denote $|f| := f_+ + f_-$). We assume that $X$  is in duality with another Banach lattice $Y$, with  closed positive cone  $Y_+$,  so that the bracket $\langle \phi,f \rangle$ is well defined for any $f \in X$, $\phi \in Y$, and 
 that $f \in X_+$ (resp. $\phi \ge 0$) iff $\langle \psi , f\rangle \ge 0$ for any $\psi \in Y_+$ (resp. iff  $\langle \phi, g \rangle \ge 0$ for any $g \in X_+$), typically $X = Y'$ or $Y = X'$. 
  We write  
 $\psi \in Y_{++}$ if $\psi \in Y$ satisfies $\langle \psi, f \rangle > 0$ for any $f \in X_+ \backslash \{ 0 \}$. 
 
 \smallskip\noindent
 We consider a  positive and  conservative (or stochastic) semigroup $S = (S_t) = (S(t))$ on $X$, that means that  $S_t$ is a bounded linear mapping on $X$ such that 
$$S_t : X_+ \to X_+ \quad \text{ for any } t \ge 0,$$
and there exist  $\phi_1 \in Y_{++}$, $\| \phi_1 \| = 1$,   and a dual semigroup $S^*= S^*_t = S^*(t)$ on $Y$ such that  $S_t^* \phi_1 =  \phi_1$ for any $t \ge 0$. 
  More precisely, we assume that $S^*_t$ is a bounded linear mapping on $Y$ such that $\langle S(t) f, \phi \rangle = \langle  f, S^*(t)\phi \rangle$, for any $f \in X$, $\phi \in Y$ and $t \ge 0$, 
  and in particular $S^*_t : Y_+ \to Y_+$ for any $t \ge 0$.

\smallskip\noindent
We denote by $\LLLL$ the generator of $S$ with domain $D(\LLLL)$. 
 For $\psi \in Y_+$, we define the seminorm
 $$
 [f]_\psi := \langle |f|, \psi \rangle, \quad  \forall \, f \in X,
  $$
and we introduce now some assumptions on the semigroup $S$.

\smallskip\noindent
$\bullet$  First we introduce the strong dissipativity condition 
 \bear
\label{eq:NEWHarris-Primal-LyapunovCond}
\|   S (t) f  \|
&\le& C_0 e^{\lambda t}   \|  f  \| +   C_1 \int_0^t e^{\lambda(t-s)} [  S(s) f ]_{\phi_1}  ds,  
 \eear
for any $f \in X$ and $t \ge 0$, where $\lambda < 0$ and $C_i \in (0,\infty)$, $i=0, 1$.
 
\smallskip\noindent
$\bullet$ Next, we make the slightly relaxed  Doblin-Harris  positivity  assumption 
   \bear
\label{eq:NEWDoblinHarris-primal}
&&  
S_T f \ge  \eta_{\eps,T} g_\eps [S_{T_0} f]_{\psi_\eps}, \quad \forall \, f \in X_+,  
 \eear
 for any  $T \ge T_1 >  T_0 \ge0$ and $\eps > 0$, where $ \eta_{\eps,T} > 0$, and $(g_\eps)$ and $(\psi_\eps)$ are bounded decreasing families of functions of $ X_+ \backslash \{ 0 \}$ and $Y_{+} \backslash \{ 0 \}$ respectively. 
%We say that the above condition is relaxed because we may possibly have $T_0 > 0$ while the condition \eqref{eq:NEWDoblinHarris-primal}   holds with $T_0 = 0$ in the usual  Doblin-Harris. 

\smallskip\noindent
$\bullet$ We finally assume the compatibility interpolation like condition  
  \bear
\label{eq:NEWHarris-LyapunovCondNpsieps}
&&  
[f]_{\phi_1} \le \xi_\eps  \| f \| + \Xi_\eps [f]_{\psi_\eps}, \ \forall \, f \in X,  \  \eps \in (0,1], 
 \eear
for two families of positive real numbers $(\xi_\eps)$ and $(\Xi_\eps)$ such that $\xi_\eps\searrow0$ as $\eps \searrow 0$.

 \smallskip\noindent
 We refer to \cite[Section 7]{CGMM24} for a detailed discussion about these assumptions. Then we have the following theorem from \cite[Theorem 6.1]{sánchez2024voltageconductancekineticequation}, see also \cite[Theorem 7.1]{CGMM24}.

\begin{theo}\label{theo:KRDoblinHarris}
Consider a semigroup $S$ on a Banach lattice $X$ which satisfies \eqref{eq:NEWHarris-Primal-LyapunovCond}-\eqref{eq:NEWDoblinHarris-primal}-\eqref{eq:NEWHarris-LyapunovCondNpsieps}. 
There exists a unique normalized positive stationary state  $\ffff_1 \in D(\LLLL)$, that is  
\be\label{eq:FirstEigenproblemDH}
\LLLL \ffff_1 = 0, \quad \ffff_1 \ge 0, \quad   \langle \phi_1, \ffff_1 \rangle = 1.
\ee
 Furthermore, there exist some constructive constants $C \ge 1$ and $\lambda_2 > 0$ such that 
\be\label{eq:SteadySolutionLLongTimeB}
\|S(t) f - \langle f,\phi_1 \rangle \ffff_1   \| \le C e^{-\lambda_2 t} 
\|  f - \langle f,\phi_1 \rangle \ffff_1 \|
\ee
for any $f \in X$ and $t \ge 0$.
 \end{theo}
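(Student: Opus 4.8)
The plan is to recognize this as a Doblin--Harris statement and prove it in three moves: (i) note that the hyperplane cut out by $\phi_1$ is preserved by the flow; (ii) build a norm on $X$ equivalent to $\|\cdot\|$ under which $S_T$ is a strict contraction on $X_0:=\{f\in X:\langle\phi_1,f\rangle=0\}$ for one conveniently large time $T$; and (iii) deduce \eqref{eq:FirstEigenproblemDH} and \eqref{eq:SteadySolutionLLongTimeB} from this contraction by soft arguments. For (i): since $S^*_t\phi_1=\phi_1$ we have $\langle\phi_1,S_tf\rangle=\langle\phi_1,f\rangle$ for all $f\in X$ and $t\ge0$, so $X_0$ is $S$-invariant; moreover for $f\ge0$ this is the conservation identity $[S_tf]_{\phi_1}=[f]_{\phi_1}$, and for general $f$ it gives $[S_tf]_{\phi_1}\le[S_t|f|]_{\phi_1}=[|f|]_{\phi_1}$.

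For (ii), the candidate norm is $\Nt f\Nt:=\|f\|+\beta\,[f]_{\psi_\eps}$ for parameters $\eps\in(0,1]$, $\beta>0$ to be fixed at the end; since $(\psi_\eps)$ is bounded in $Y$ one has $[f]_{\psi_\eps}\lesssim\|f\|$, hence $\Nt\cdot\Nt$ and $\|\cdot\|$ are equivalent for every admissible choice of parameters. To obtain the contraction, take $f\in X_0\setminus\{0\}$ and split $f=f_+-f_-$; because $\langle\phi_1,f\rangle=0$ and $\phi_1\in Y_{++}$, both $f_\pm$ are non-zero. Applying the relaxed Doblin--Harris bound \eqref{eq:NEWDoblinHarris-primal} to each of $f_+,f_-\in X_+$ and subtracting from $S_Tf_+$ and $S_Tf_-$ their common minorant $\eta_{\eps,T}\,g_\eps\big([S_{T_0}f_+]_{\psi_\eps}\wedge[S_{T_0}f_-]_{\psi_\eps}\big)$ (both remainders stay $\ge0$) yields the lattice inequality $|S_Tf|\le S_T|f|-2\eta_{\eps,T}\,g_\eps\big([S_{T_0}f_+]_{\psi_\eps}\wedge[S_{T_0}f_-]_{\psi_\eps}\big)$. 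Taking $\|\cdot\|$ and $[\cdot]_{\psi_\eps}$ of this, feeding in the Lyapunov estimate \eqref{eq:NEWHarris-Primal-LyapunovCond} for the non-negative function $|f|$ (where the conservation identity turns the memory integral into $\tfrac{C_1}{|\lambda|}(1-e^{\lambda T})\,[|f|]_{\phi_1}$), and then trading $[|f|]_{\phi_1}$ through the interpolation hypothesis \eqref{eq:NEWHarris-LyapunovCondNpsieps} for $\xi_\eps\|f\|+\Xi_\eps\big([f_+]_{\psi_\eps}+[f_-]_{\psi_\eps}\big)$, one arrives at an estimate of the shape $\Nt S_Tf\Nt\le\big(C_0e^{\lambda T}+c_1\xi_\eps\big)\|f\|+(\text{residual }\psi_\eps\text{-seminorm terms})-(\text{Harris gain})$. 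One then fixes $T$ large so that $C_0e^{\lambda T}<\tfrac12$ (possible since $\lambda<0$), next $\eps$ small so that $c_1\xi_\eps$ is negligible, and finally $\beta$ so that the residual $\psi_\eps$-seminorm contributions are absorbed by the Harris gain (distinguishing the regime where $[f]_{\phi_1}$ is already negligible against $\|f\|$, handled by the factor $C_0e^{\lambda T}<1$ alone, from its complement, where the Harris minorization bites): this produces $\Nt S_Tf\Nt\le\kappa\Nt f\Nt$ with $\kappa<1$ for all $f\in X_0$.

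For (iii): choose $f_0\in X_+$ with $\langle\phi_1,f_0\rangle=1$; then $S_{(n+1)T}f_0-S_{nT}f_0=S_{nT}\big(S_Tf_0-f_0\big)$ with $S_Tf_0-f_0\in X_0$, so $\Nt S_{(n+1)T}f_0-S_{nT}f_0\Nt\le\kappa^{\,n}\Nt S_Tf_0-f_0\Nt$, the sequence $(S_{nT}f_0)$ is Cauchy and converges to some $\ffff_1\ge0$ with $\langle\phi_1,\ffff_1\rangle=1$ and $S_T\ffff_1=\ffff_1$; applying $S_t$ and using that the $S_T$-fixed point of prescribed mass is unique (the contraction again) gives $S_t\ffff_1=\ffff_1$ for every $t$, i.e. $\ffff_1\in D(\LLLL)$ and $\LLLL\ffff_1=0$, which is \eqref{eq:FirstEigenproblemDH}; uniqueness is immediate since two such stationary states differ by an element of $X_0$ fixed by $S_T$, hence zero. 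For the long-time estimate, write $f=\langle\phi_1,f\rangle\ffff_1+h$ with $h\in X_0$, so $S_tf-\langle\phi_1,f\rangle\ffff_1=S_th$; splitting $t=nT+r$ with $r\in[0,T)$ and combining $\Nt S_{nT}h\Nt\le\kappa^{\,n}\Nt h\Nt$ with the bound on $\|S_r\|$, $r\in[0,T]$, provided by \eqref{eq:NEWHarris-Primal-LyapunovCond}, then returning to $\|\cdot\|$ via the norm equivalence, gives \eqref{eq:SteadySolutionLLongTimeB} with $\lambda_2:=-T^{-1}\log\kappa>0$.

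The \emph{main obstacle} is step (ii), precisely the compatibility of the parameter choices: the Harris gain produced by \eqref{eq:NEWDoblinHarris-primal} is proportional to $[S_{T_0}f_+]_{\psi_\eps}\wedge[S_{T_0}f_-]_{\psi_\eps}$, and one must verify that, outside the regime where the $\phi_1$-mass of $f$ is already dominated by $\|f\|$, this minimum is large enough to outweigh the $\Xi_\eps$-weighted $\psi_\eps$-seminorm terms left over from the interpolation step. This is exactly where the monotonicity of the families $(g_\eps)$, $(\psi_\eps)$ and the vanishing $\xi_\eps\searrow0$ are used, and getting the order of quantifiers right ($T$, then $\eps$, then $\beta$, possibly with an auxiliary mass threshold) is the delicate part; the invariance of $X_0$, the existence/uniqueness of $\ffff_1$, and the passage from the one-step contraction to \eqref{eq:SteadySolutionLLongTimeB} are then routine.
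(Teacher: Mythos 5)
The paper does not actually prove Theorem \ref{theo:KRDoblinHarris}: it is imported verbatim from \cite[Theorem 6.1]{sánchez2024voltageconductancekineticequation} (see also \cite[Theorem 7.1]{CGMM24}), and the paper's own work consists only in verifying the hypotheses \eqref{eq:NEWHarris-Primal-LyapunovCond}--\eqref{eq:NEWDoblinHarris-primal}--\eqref{eq:NEWHarris-LyapunovCondNpsieps} for the semigroup $S_\LL$ in Subsection \ref{ssec:KRDHTheo1}. So there is no internal proof to compare against; what you have written is a reconstruction of the Harris-contraction argument used in those references, and its architecture is essentially the right one: invariance of the hyperplane $X_0$, the lattice coupling inequality $|S_Tf|\le S_T|f|-2\eta_{\eps,T}\,g_\eps\,([S_{T_0}f_+]_{\psi_\eps}\wedge[S_{T_0}f_-]_{\psi_\eps})$ obtained by subtracting the common minorant of $S_Tf_\pm$, the regime splitting according to whether $[f]_{\phi_1}$ is dominated by $\xi_\eps\|f\|$, and the soft passage from the one-step contraction to existence, uniqueness and \eqref{eq:SteadySolutionLLongTimeB}. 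Your observation that $f\in X_0\setminus\{0\}$ forces $f_\pm\neq0$ because $\phi_1\in Y_{++}$, and the order of quantifiers ($T$ first, then $\eps$, then $\beta$), are both correct.

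The one point where your sketch would need repair is the choice of equivalent norm. You set $\Nt f\Nt=\|f\|+\beta[f]_{\psi_\eps}$ and plan to harvest the Harris gain in the $[\cdot]_{\psi_\eps}$ seminorm; but applying $[\cdot]_{\psi_\eps}$ to the coupling inequality produces a gain proportional to $\langle\psi_\eps,g_\eps\rangle$, and nothing in the abstract hypotheses guarantees that this pairing is nonzero ($\psi_\eps\in Y_+\setminus\{0\}$ and $g_\eps\in X_+\setminus\{0\}$ could in principle be ``disjoint''). The pairing that is guaranteed to be positive is $\langle\phi_1,g_\eps\rangle>0$, precisely because $\phi_1\in Y_{++}$. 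The norm should therefore be $\|f\|+\beta[f]_{\phi_1}$, with the gain harvested in $[\cdot]_{\phi_1}$; the interpolation hypothesis \eqref{eq:NEWHarris-LyapunovCondNpsieps} is then used in the other direction, namely to bound the minorization mass from below, $[S_{T_0}f_\pm]_{\psi_\eps}\ge\Xi_\eps^{-1}\bigl(\tfrac12[f]_{\phi_1}-\xi_\eps\|S_{T_0}f_\pm\|\bigr)$, which is what makes the gain comparable to $[f]_{\phi_1}$ outside the Lyapunov-dominated regime. With that substitution the rest of your argument (including steps (i) and (iii), which are routine and correctly executed) goes through; as written, step (ii) has this gap, which you partly anticipated by flagging the compatibility of the parameter choices as the main obstacle.
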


 \subsection{Proof of Theorem \ref{theo:SteadySolutionL}}\label{ssec:KRDHTheo1}
We will now prove our first main result, Theorem \ref{theo:SteadySolutionL}. The proof will be done by verifying that the semigroup $S_\LL$ defined by Theorem \ref{theo:WellPosednessL2}, satisfies the necessary hypothesis to use Theorem \ref{theo:KRDoblinHarris}. It is worth remarking that the verification of the dissipativity and the interpolation conditions will be done similarly as in the Steps 1 and 2 of the proof of \cite[Theorem 1.2]{sánchez2024voltageconductancekineticequation} and the proof of \cite[Theorem 1.2]{CGMM24}.

\smallskip\noindent
During the rest of this subsection we consider any fixed admissible weight function $\omega$ and we define $X=L^2_\omega(\OO)$. Moreover, we remark that since Equation \eqref{eq:KFPtau} conserves mass then we immediately deduce that $\phi_1=1$. 

%\smallskip\noindent
%$\bullet$  {\sl Strong dissipativity condition.} 
\subsubsection{Strong dissipativity condition} \label{sssec:Dissipativity_KRDH}
We consider two real constants $M,R>0$ to be specified later and we define the operators 
$$
\AAA f = \AA f + M\chi_R f \qquad \text{ and }\qquad \BBB f = \BB f - M\chi_R f,
$$
where we recall that $\chi_R$ is defined in Section \ref{sec:Study_S_LLL}. We study then the local equation
\be \label{eq:LocalKFPDH}
\partial_t f = \BBB f \qquad \text { in } \UU,
\ee
complemented with the Maxwell boundary conditions \eqref{eq:BoundaryConditions} and the initial datum \eqref{eq:InitialDatum}. We observe that Equation \eqref{eq:LocalKFPDH}-\eqref{eq:BoundaryConditions}-\eqref{eq:InitialDatum} fits the framework developed in \cite{CGMM24}, thus by repeating the arguments leading to the proof of  \cite[Theorem 1.1]{CGMM24} we deduce the existence of a strongly continuous semigroup $S_\BBB$ associated to the solutions of Equation \eqref{eq:LocalKFPDH}-\eqref{eq:BoundaryConditions}-\eqref{eq:InitialDatum} and with generator $\BBB$.

Moreover, by exactly repeating the arguments leading to the proof of Lemma \ref{lem:GrowthLpPrimal} (see also the proof of \cite[Lemma 2.1]{CGMM24}) we deduce that for any $p\in \{1, 2\}$ and for every $a>0$ there are $M, R>0$ such that
\be\label{eq:DHStrongDissipativityBB}
\lvv S_\BBB(t) f_0\lvv_{L^p_\omega(\OO)} \leq C_1 e^{-a t} \lvv f_0\rvv_{L^p_\omega(\OO)} \qquad \forall t\geq 0,
\ee
for some constant $C_1>0$. 
Indeed, by taking $\widetilde \omega$ as defined in \eqref{def:omegaA} we have by using integration by parts 
\bean
\frac 1p \frac d{dt} \int_\OO f^p \widetilde \omega^p &=& \int_{\OO} (\BB_0 f) \, f^{p-1}\,  \widetilde \omega^p  +  \frac 1p \int_{\OO} f^p \widetilde \omega^p \, \left({v\cdot \grad_x \, \widetilde \omega \over \widetilde \omega }\right) -  \frac 1p \int_\Sigma (\gamma f)^p \, \widetilde \omega^p   (n_x \cdot v)  -   \int_{\OO} M\chi_R f^p\\
&\leq &  \int_\OO f^p \, \widetilde \omega^p \, \varpi_{\widetilde \omega, p}^{\BBB} ,
\eean
where we have defined 
$$
 \varpi_{\widetilde \omega, p}^{\BBB}  =  \varpi_{\widetilde \omega, p}^{\BB_0} + {1\over \widetilde \omega} \, v\cdot \grad_x \widetilde \omega - M\chi_R.
$$
Then by proceeding as during the Steps 2 and 3 of the proof of Lemma \ref{lem:GrowthLpPrimal} we deduce that for every $\vartheta \in (0,1)$ there are $\kappa, A>0$ such that 
$$
 \varpi_{\widetilde \omega, p}^{\BBB} = \kappa \chi_A + \vartheta \chi^c_A   \varpi_{ \omega, p}^{\sharp} -M\chi_R,
$$
and we obtain \eqref{eq:DHStrongDissipativityBB} by choosing $\vartheta = 1/2$, $M=-\kappa -a$, $R=A$, and using the Gronwall lemma together with \eqref{eq:omega&omegatilde}. 

%\smallskip\noindent
Furthermore, using \eqref{eq:DHStrongDissipativityBB} and following the arguments leading to the proof of \cite[Proposition 4.10]{CGMM24} we deduce that, up to possibly choosing $M, R>0$ larger, we also have that there are constants $\nu >0$ and $C_2\geq 1$ such that
\be
 \lvv S_\BBB (t) f_0 \rvv_{L^2_\omega(\OO)} \leq C_2^0  \| S_{\BBB}(t) f_0 \|_{L^\infty_{\omega_1}(\OO)} \le C_2^0 C_2 {e^{-a t} \over t^\nu} \| f_0 \|_{L^1_{ \omega_2}(\OO)}
\quad \forall \, t > 0  , \label{eq:UltraSBN12}
\ee
where we have defined $\omega_1 = (\omega)^2$ and $\omega_2 = (\omega_1)^2$ if $s>0$ or $\omega_2 = \omega_1 \la v\ra^K$ if $s=0$ with $K= 4(3d+1)(2d+3)$, and we have taken the constant $C_2^0= \lvv \omega (\omega_1)^{-1}\rvv_{L^2(\R^d)}$.

%\smallskip\noindent
We define $\widetilde S_\BBB = S_\BBB \AAA$ and arguing as during \eqref{eq:UltraStilde2} we observe that \eqref{eq:DHStrongDissipativityBB} and \eqref{eq:UltraSBN12} together with Proposition~\ref{prop:HHH+Lp-Lp} imply that for every $p\in \{1,2\}$ there holds
\be
\lvv \widetilde S_{\BBB} (t) f_0 \rvv_{L^p_\omega(\OO)} \leq C_2 e^{ -a t} \lvv f_0 \rvv_{L^p_\omega(\OO)}  \quad \text{ and } \quad \lvv \widetilde S_\BBB (t) f_0 \rvv_{L^2_\omega(\OO)} \leq C_2^0 C_2 {e^{-a t} \over t^\nu} \| f_0 \|_{L^1_{ \omega}}. \label{eq:DHStrongDissipativityBBL1TILDE}
\ee
We then remark that \eqref{eq:DHStrongDissipativityBBL1TILDE} implies that we may use \cite[Proposition 2.5]{MR3465438} and we obtain that there is $N\in \N$ such that 
$$
\lvv (\widetilde S_\BBB)^{*N} (t) f_0 \rvv_{L^2_\omega(\OO)} \leq C_3 e^{-a' t} \| f_0 \|_{L^1_{ \omega}},
$$
for any $a'<a$ and some constant $C_3>0$. 
Using then the iterated Duhamel formula as during the proof of Proposition~\ref{prop:Ultra} we have that
$$
S_\LL = \VVV + \WWW*S_\LL , 
$$
with 
$$
\VVV := S_\BBB + \dots + (S_\BBB \AAA)^{*(N)} * S_\BBB, \quad \text{ and } \quad 
\WWW := (S_\BBB \AAA)^{*(N+1)}.
$$
On the one hand, using \eqref{eq:DHStrongDissipativityBB} and Proposition~\ref{prop:HHH+Lp-Lp} we immediately deduce that 
$$
\lvv \VVV (t) f_0 \rvv_{L^2_\omega(\OO)} \lesssim  e^{-a t} \lvv f_0\rvv_{L^2_\omega(\OO)}.
$$ 
On the other hand, we observe that 
\bean
\lvv \WWW (t) f\rvv_{L^2_\omega(\OO)} &\leq& \int_0^t \lvv (\widetilde S_\BBB \AAA)^{*N} (t-s)  S_\BBB (s) \AAA   f \rvv_{L^2_\omega(\OO)} \d s\\
&\lesssim & \int_0^t e^{-a' (t-s)} e^{-as} \lvv \AA f\rvv_{L^1_\omega(\OO)} \lesssim  e^{-a' t }  \lvv  f\rvv_{L^1(\OO)}
\eean
where we have successively used the very definition of $\WWW$, \eqref{eq:DHStrongDissipativityBBL1TILDE} and Proposition~\ref{prop:HHH+Lp-Lp}. Finally choosing $a=2$ and $a'=1$ and putting together the previous computations we obtain that
$$
\lvv S_\LL f_0\rvv_{L^2_\omega(\OO)} \lesssim e^{-t} \lvv f_0 \rvv_{L^2_\omega(\OO)} + \int_0^t e^{-(t-s)} \lvv S_\LL(s) f_0\rvv_{L^1(\OO)},
$$ 
which is nothing but the strong dissipativity condition \eqref{eq:NEWHarris-Primal-LyapunovCond}.

%\smallskip\noindent
%$\bullet$ {\sl Relaxed Doblin-Harris positivity condition.}
\subsubsection{Relaxed Doblin-Harris positivity condition.} \label{sssec:Positivity_KRDH}
We fix $0 \le f_0 \in L^2_\omega$ and denote $f_t := S_\LL(t) f_0$, and $\phi_1^{\BB}$ the steady solution of the dual backwards equation of Equation \eqref{eq:KFPtauLocal} given by Theorem \ref{theo:LocalKFP}-(3). 
For any $T>0$ we set $0<T_0<T_1 \leq T$, we fix $\eps>0$ and an admissible weight function $\varsigma$ such that $\omega \prec \varsigma$, and we compute
 \bean
 \varsigma f_{T_1} \geq \phi_1^{\BB} f_{T_1} &=& \phi_1^{\BB}\left( S_{\BB}(T_1-T_0) f_{T_0} + \int_0^{T_1-T_0} S_{\BB}(T_1-T_0-s) \AA f_{T_0+s} \, ds \right)\\
 &\geq& \phi_1^{\BB} S_{\BB}(T_1-T_0) f_{T_0} \, \geq  \left(\inf_{\OO_\eps} \phi_1^{\BB} \right) \left(\inf_{\OO_\eps} S_{\BB}(T_1-T_0) f_{T_0} \right) \mathbf{1}_{\OO_{\eps}} ,
% \\
% &\geq & \frac 1{C_1C_2} \left(\sup_{\OO_\eps} \phi_1 \right) \left(\sup_{\OO_\eps} S_{\BB}((T_1-T_0)/2) f_{T_0} \right) \mathbf{1}_{\OO_{\eps}} \\
%  &\geq & \frac 1{C_1C_2} \sup_{\OO_\eps} \left( \phi_1 S_{\BB}((T_1-T_0)/2) f_{T_0} \right) \mathbf{1}_{\OO_{\eps}} \\
%  &\geq & \frac 1{C_1C_2}  \mathbf{1}_{\OO_{\eps}} \frac{1 }{ |\OO_\eps|} \int_{\OO_\eps}  \phi_1\, S_{\BB}((T_1-T_0)/2) f_{T_0}  \\
% &\geq & \frac 1{C_1C_2}  \mathbf{1}_{\OO_{\eps}} \frac{1 }{ |\OO_\eps|} \int_{\OO_\eps} f_{T_0} \, S_{\BB^*}((T_1-T_0)/2) \phi_1  =  \frac {e^{\lambda_1 (T_1-T_0)/2}}{C_1C_2}  \mathbf{1}_{\OO_{\eps}} \frac{1 }{ |\OO_\eps|}  \, \langle f_{T_0},  \phi_1 \mathbf{1}_{\OO_\eps} \rangle  
 \eean
 where we have used \eqref{eq:theoKR-strictpo&Linftybound} and the Duhamel formula to obtain the first line and we have used the fact that $f_{T_0+s}\geq 0$ for every $s\in [0, T_1-T_0]$, due to the weak maximum principle established in Proposition \ref{prop:WeakMaxP}, the fact that $\AA$ is a positive operator, and the fact that $S_\BB$ is a positive semigroup (see Theorem \ref{theo:LocalKFP}-(2)) to obtain the second line. 
 
 \smallskip\noindent
 Then using the Harnack inequality from Theorem~\ref{theo:LocalKFP}-(2) and the fact that $0<\phi_1^{\BB}$ is a continuous function in $\OO$, we further deduce that there are constants $C_1 := C_1 (T_0, T_1, \eps)>0$ and $C_2>0$ for which there holds
 \bean
  \varsigma f_{T_1} 
  &\geq & \frac 1{C_2} \left(\sup_{\OO_\eps} \phi_1^{\BB} \right)  \frac1{C_1}\left(\sup_{\OO_\eps} S_{\BB}((T_1-T_0)/2) f_{T_0} \right) \mathbf{1}_{\OO_{\eps}} \\
  &\geq & \frac 1{C_1C_2} \mathbf{1}_{\OO_{\eps}} \sup_{\OO_\eps} \left( \phi_1^{\BB} S_{\BB}((T_1-T_0)/2) f_{T_0} \right)  \\
  &\geq & \frac 1{C_1C_2}  \mathbf{1}_{\OO_{\eps}} \frac{1 }{ |\OO_\eps|} \int_{\OO_\eps}  \phi_1^{\BB}\, S_{\BB}((T_1-T_0)/2) f_{T_0}  \\
 &\geq & \frac 1{C_1C_2}  \mathbf{1}_{\OO_{\eps}} \frac{1 }{ |\OO_\eps|} \int_{\OO_\eps} f_{T_0} \, S_{\BB^*}((T_1-T_0)/2) \phi_1^{\BB}  =  \frac {e^{-\lambda_1 (T_1-T_0)/2}}{C_1C_2}  \mathbf{1}_{\OO_{\eps}} \frac{1 }{ |\OO_\eps|}  \, \langle f_{T_0},  \phi_1^{\BB} \mathbf{1}_{\OO_\eps} \rangle  ,
 \eean
 where $\lambda_1$ is given by Theorem \ref{theo:LocalKFP}-(3), $S_{\BB^*}$ is the dual semigroup of $S_\BB$, which existence is given by \cite[Proposition 3.4]{CGMM24} and the duality relation is given by \cite[Theorem 3.5-(3)]{CGMM24}.
Moreover, to obtain the final equality we have used the fact that $g (t,\cdot) := e^{-\lambda_1 t} \phi_1^{\BB}(\cdot)$ is a solution to the backwards in time problem 
 \beqn\label{eq:KFPtauLocal*}
 -\partial_t g =  \BB^* g  \qquad \text{ in } (0,T)\times \OO, 
 \eeqn
complemented with the dual Maxwell boundary conditions \eqref{eq:DefDualMaxwellBC} and associated to the final datum $g_T = \phi_1^{\BB}$. 
Altogether, we have obtained that 
$$
f_{T_1} \geq \frac {e^{-\lambda_1 (T_1-T_0)/2}} {C_1C_2} \frac{1 }{ |\OO_\eps|}  \,\varsigma^{-1} \mathbf{1}_{\OO_{\eps}} \langle S_{T_0} f_0, \phi_1^{\BB} \mathbf{1}_{\OO_\eps} \rangle  
$$
which is \eqref{eq:NEWDoblinHarris-primal} %eq:NEWweakHarnack-primal} 
with the constant $\eta_{\eps, T} = e^{-\lambda_1 (T_1-T_0)/2}/(C_1C_2 |\OO_\eps|)$, and the families of functions  $g_\eps = \varsigma^{-1}\mathbf{1}_{\OO_\eps}  \in L^2_\omega(\OO)$ and $ \psi_\eps = \phi_1^{\BB} \mathbf{1}_{\OO_\eps} \in L^2_m(\OO)$, with $m=\omega^{-1}$. 
 
% \smallskip\noindent
%$\bullet$ {\sl Interpolation condition.} 
\subsubsection{Interpolation condition.}\label{sssec:Interpolation_KRDH} For any $f \in L^2_\omega$, we have 
\beqn
\int_\OO |f| = \int_{\OO^c_\eps} |f| +  \int_{\OO_\eps} |f| 
\le \Bigl( \int_{\OO^c_\eps} \omega^{-2} \Bigr)^{1/2} \| f \|_{L^2_\omega} + \int |f| {\bf 1}_{\OO_\eps}, 
\eeqn
so that \eqref{eq:NEWHarris-LyapunovCondNpsieps} holds true with 
$\Xi_\eps := 1$, $\xi_\eps := \| {\bf 1}_{\OO^c_\eps} \|_{L^2_\omega} \to 0$ because $\omega^{-1} \in L^2(\OO)$.

\subsubsection{Krein-Rutmann-Doblin-Harris result for Equation \eqref{eq:KFPtau}} 
Due to the results in sub-sub\-sections \ref{sssec:Dissipativity_KRDH}, \ref{sssec:Positivity_KRDH} and \ref{sssec:Interpolation_KRDH} we deduce the following result. 

\begin{prop}\label{prop:SS_Lambda}
There exists a unique normalized positive stationary state $\CCCC$ to Equation \eqref{eq:KFPtau} such that $\lla \CCCC\rra_\OO=1$. Moreover, for any admissible weight function $\varsigma$ there holds
\be\label{eq:SteadySolutionLIneq_tau}
\lvv \grad_v \CCCC \rvv_{L^2_\varsigma(\OO)} <\infty \qquad \text{ and } \qquad \CCCC (x,v) \lesssim (\varsigma(v))^{-1}.
\ee
Furthermore, let $\omega$ be an admissible weight function, for any initial data $f_0\in L^2_\omega(\OO)$ there is a unique global renormalized solution $f \in C(\R_+, L^2_\omega(\OO))$ to Equation \eqref{eq:KFPtau} and there is $\lambda >0$ such that 
\be\label{eq:SteadySolutionLLongTimeB_tau}
\|S(t) f - \lla f_0 \rra_\OO\,  \CCCC   \|_{L^2_\omega(\OO)} \lesssim  e^{-\lambda t} 
\|  f - \lla f_0 \rra_\OO \, \CCCC \|_{L^2_\omega(\OO)}
\ee
for any $t \ge 0$.
\end{prop}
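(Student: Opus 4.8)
The plan is to obtain Proposition \ref{prop:SS_Lambda} as a direct application of the abstract Krein-Rutman-Doblin-Harris Theorem \ref{theo:KRDoblinHarris} to the semigroup $S_\LL$ furnished by Theorem \ref{theo:WellPosednessL2}. First I would fix an admissible weight function $\omega$ and set $X = L^2_\omega(\OO)$, in duality with $Y = L^2_m(\OO)$, $m = \omega^{-1}$, both endowed with their natural positive cones. The semigroup $S_\LL$ is positive by Proposition \ref{prop:WeakMaxP}, it is conservative by the mass conservation identity \eqref{eq:MassConservation}, and the duality relation of Proposition \ref{prop:DualityRigorous} identifies the invariant dual eigenfunction as $\phi_1 \equiv 1 \in Y_{++}$, so that $S_{\LL^*}(t)\phi_1 = \phi_1$ for all $t \ge 0$. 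The three structural hypotheses of Theorem \ref{theo:KRDoblinHarris} are exactly the ones established in the three preceding subsections: the strong dissipativity \eqref{eq:NEWHarris-Primal-LyapunovCond} is verified in Section \ref{sssec:Dissipativity_KRDH} via the $N$-fold iterated Duhamel decomposition $S_\LL = \VVV + \WWW * S_\LL$ together with the decay and ultracontractivity bounds on $S_\BBB$; the relaxed Doblin-Harris positivity \eqref{eq:NEWDoblinHarris-primal} is verified in Section \ref{sssec:Positivity_KRDH} using the Harnack inequality of Theorem \ref{theo:LocalKFP}-(2) and the positivity of the local eigenfunction $\phi_1^{\BB}$; and the interpolation condition \eqref{eq:NEWHarris-LyapunovCondNpsieps} is verified in Section \ref{sssec:Interpolation_KRDH} since $\omega^{-1} \in L^2(\OO)$. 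Theorem \ref{theo:KRDoblinHarris} then yields a unique $\CCCC := \ffff_1 \in D(\LLLL) \cap X$ with $\LLLL\CCCC = 0$, $\CCCC \ge 0$ and $\lla \CCCC \rra_\OO = \langle \phi_1,\CCCC\rangle = 1$ (this is \eqref{eq:FirstEigenproblemDH}), and the exponential convergence \eqref{eq:SteadySolutionLLongTimeB}, which is precisely \eqref{eq:SteadySolutionLLongTimeB_tau}; the existence and uniqueness of the global renormalized solution $f \in C(\R_+, L^2_\omega(\OO))$ is Theorem \ref{theo:WellPosednessL2}.

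It then remains to upgrade the qualitative bounds \eqref{eq:SteadySolutionLIneq_tau} and to check that $\CCCC$ is independent of $\omega$. For the decay, I would exploit the stationarity $\CCCC = S_\LL(t)\CCCC$ for every $t > 0$: applying the ultracontractive estimate \eqref{eq:Ultra} of Proposition \ref{prop:Ultra} with an auxiliary admissible weight $\omega'$ (satisfying the restrictions of that proposition and chosen as heavy as one wishes) gives $\CCCC \in L^\infty_{\omega'_\infty}(\OO)$; since $\omega'$ is arbitrary, given any prescribed admissible $\varsigma$ one selects $\omega'$ so that $\omega'_\infty \gtrsim \varsigma$ and concludes $\CCCC(x,v) \lesssim \varsigma(v)^{-1}$. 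For the gradient bound, once $\CCCC \in L^2_\varsigma(\OO)$ is known, Lemma \ref{lem:GrowthLpPrimal} applies in the weight $\varsigma$, and since $\|\CCCC_t\|_{L^2_\varsigma(\OO)} = \|\CCCC\|_{L^2_\varsigma(\OO)}$ is constant in $t$, integrating the energy estimate \eqref{eq:bddGrad} over $(0,1)$ gives $\|\grad_v\CCCC\|^2_{L^2_\varsigma(\OO)} \lesssim \|\CCCC\|^2_{L^2_\varsigma(\OO)} < \infty$. For weight-independence, given two admissible weights with $\omega_1 \prec \omega_2$, the state $\CCCC_{\omega_2}$ constructed in $L^2_{\omega_2}(\OO)$ also lies in $L^2_{\omega_1}(\OO)$, hence it is an admissible normalized stationary state for the problem posed on $L^2_{\omega_1}(\OO)$, and the uniqueness clause of Theorem \ref{theo:KRDoblinHarris} forces $\CCCC_{\omega_1} = \CCCC_{\omega_2}$; since any two admissible weights are dominated by a common admissible weight, all the $\CCCC_\omega$ coincide and $\CCCC$ is well defined.

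I expect the only genuinely delicate point to be this bootstrap of the qualitative bounds together with the weight-independence: one must check that the ultracontractive estimate \eqref{eq:Ultra} can be invoked with an auxiliary weight $\omega'$ strong enough to dominate an arbitrary $\varsigma$ while still meeting the admissibility conditions of Proposition \ref{prop:Ultra}, and that, once $\CCCC$ is known to decay fast, it genuinely belongs to the functional framework of Lemma \ref{lem:GrowthLpPrimal} and Theorem \ref{theo:WellPosednessL2} for the heavier weight, so that the stationarity identity $\CCCC = S_\LL(t)\CCCC$ may legitimately be read in that space. The heart of the statement, namely existence, uniqueness and exponential relaxation, is otherwise an immediate consequence of Theorem \ref{theo:KRDoblinHarris} once the verifications of Sections \ref{sssec:Dissipativity_KRDH}, \ref{sssec:Positivity_KRDH} and \ref{sssec:Interpolation_KRDH} are in place.
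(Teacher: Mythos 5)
Your proposal is correct and follows essentially the same route as the paper: a direct application of Theorem \ref{theo:KRDoblinHarris} with $\phi_1\equiv 1$, relying on the verifications of Subsections \ref{sssec:Dissipativity_KRDH}, \ref{sssec:Positivity_KRDH} and \ref{sssec:Interpolation_KRDH}, with the qualitative bounds \eqref{eq:SteadySolutionLIneq_tau} obtained from Proposition \ref{prop:Ultra} and Lemma \ref{lem:GrowthLpPrimal} applied to the stationarity identity $\CCCC=S_\LL(t)\CCCC$. Your additional care about the weight-independence of $\CCCC$ and the bootstrap into heavier-weight spaces is a sensible elaboration of details the paper leaves implicit.
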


\begin{proof}
The existence of $\CCCC$ is a direct application of Theorem \ref{theo:KRDoblinHarris} to Equation \eqref{eq:KFPtau} by using the results from sub-sub\-sections \ref{sssec:Dissipativity_KRDH}, \ref{sssec:Positivity_KRDH} and \ref{sssec:Interpolation_KRDH}. The estimates \eqref{eq:SteadySolutionLIneq_tau} are a consequence of Lemma \ref{lem:GrowthLpPrimal} and Proposition \ref{prop:Ultra}. Furthermore, the global existence is given by Theorem \ref{theo:WellPosednessL2} and the decay estimate \eqref{eq:SteadySolutionLLongTimeB_tau} is a consequence of Theorem \ref{theo:KRDoblinHarris}.
\end{proof}

\subsubsection{Proof of Theorem \ref{theo:SteadySolutionL}}
We remark that since $\alpha = 0$, Equation \eqref{eq:NonLKFP} coincides with Equation \eqref{eq:KFPtau} by taking $\Lambda=\tau$, thus Proposition \ref{prop:SS_Lambda} implies the existence of $\FFFF^0 \in L^2_{\omega}(\UU)$ unique stationary solution to this linear equation. 
\qed

 \subsection{Decay for the dual semigroup $S_{\LL^*}$}

 In this subsection we deduce a decay estimate for the solutions of the dual Equation \eqref{eq:KFPtau*} in the spirit of Proposition \ref{prop:SS_Lambda} that we present in the following proposition.
 
 \begin{prop}\label{prop:SteadySolutionLDual}
We consider a finite $T>0$, an admissible weight function $\omega$ and we define $m = \omega^{-1}$. There are constants $\lambda, C >0$ such that for any $g$ solution to Equation \eqref{eq:KFPtau*} there holds
\be\label{eq:SteadySolutionLDual}
\lvv g_0 - \la g_T, \CCCC \ra_{L^2(\OO)} \rvv_{L^2_m(\OO)} \leq C e^{-\lambda T} \lvv g_T - \la g_T, \CCCC \ra_{L^2(\OO)} \rvv_{L^2_m(\OO)} \qquad \forall t\geq 0,
\ee
where $\CCCC$ is given by Proposition \ref{prop:SS_Lambda}. 
\end{prop}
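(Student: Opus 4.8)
The plan is to derive \eqref{eq:SteadySolutionLDual} from the primal decay estimate \eqref{eq:SteadySolutionLLongTimeB_tau} of Proposition~\ref{prop:SS_Lambda} by transferring it to the dual semigroup through the rigorous duality relation of Proposition~\ref{prop:DualityRigorous}; no new quantitative estimate is needed. Set $X := L^2_\omega(\OO)$ and $Y := L^2_m(\OO)$ with $m = \omega^{-1}$, and use the unweighted pairing $\la f,g\ra := \int_\OO fg$, for which $Y$ is identified with the topological dual of $X$ (the supremum in $\|g\|_Y = \sup_{\|f\|_X \le 1}\la f,g\ra$ being attained, up to normalization, at $f = m^2 g$). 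Since Equation~\eqref{eq:KFPtau} conserves mass, its dual invariant is the constant function $1$, and Proposition~\ref{prop:SS_Lambda} gives $\la \CCCC, 1\ra = \lla\CCCC\rra_\OO = 1$; hence $\Pi f := \lla f\rra_\OO\,\CCCC$ is a rank-one projection on $X$. Mass conservation and $S_\LL(t)\CCCC = \CCCC$ give $\Pi S_\LL(t) = S_\LL(t)\Pi = \Pi$, so \eqref{eq:SteadySolutionLLongTimeB_tau} can be rewritten as $\|S_\LL(t)(I-\Pi)\|_{\BBB(X)}\lesssim e^{-\lambda t}$, the implicit constant absorbing $\|I-\Pi\|_{\BBB(X)}$, which is finite because $|\lla f\rra_\OO|\le\|1\|_Y\|f\|_X$ and $\|\CCCC\|_X<\infty$ — the latter from $\CCCC\lesssim\varsigma^{-1}$ in \eqref{eq:SteadySolutionLIneq_tau} applied to an admissible weight $\varsigma$ with $\varsigma^{-1}\omega\in L^2(\R^d)$.

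For the transfer, Proposition~\ref{prop:DualityRigorous} asserts that $S_{\LL^*}(T)\in\BBB(Y)$ is the transpose of $S_\LL(T)\in\BBB(X)$. Its transpose projection is $\Pi^* g = \la\CCCC,g\ra\,1$, so $(I-\Pi^*)g = g - \la g,\CCCC\ra_{L^2(\OO)}$ is exactly the centered quantity in \eqref{eq:SteadySolutionLDual}; moreover the primal conserves mass, hence $S_{\LL^*}(t)1 = 1$ and $\Pi^*$ commutes with $S_{\LL^*}(T)$, so that $(I-\Pi)^* = I-\Pi^*$ and $(S_\LL(T)(I-\Pi))^* = (I-\Pi^*)S_{\LL^*}(T) = S_{\LL^*}(T)(I-\Pi^*)$. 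Since the operator norm of a bounded operator equals that of its transpose,
\[
\|S_{\LL^*}(T)(I-\Pi^*)\|_{\BBB(Y)} = \|(S_\LL(T)(I-\Pi))^*\|_{\BBB(Y)} = \|S_\LL(T)(I-\Pi)\|_{\BBB(X)}\lesssim e^{-\lambda T}.
\]
Applying this bound to the element $(I-\Pi^*)g_T\in Y$, using idempotency of $I-\Pi^*$ together with $S_{\LL^*}(T)\Pi^*g_T = \la g_T,\CCCC\ra\,S_{\LL^*}(T)1 = \la g_T,\CCCC\ra\,1$ and $S_{\LL^*}(T)g_T = g_0$, yields $\|g_0 - \la g_T,\CCCC\ra_{L^2(\OO)}\|_Y \lesssim e^{-\lambda T}\|g_T - \la g_T,\CCCC\ra_{L^2(\OO)}\|_Y$, which is \eqref{eq:SteadySolutionLDual}.

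I do not expect a genuine obstacle here: this is the standard fact that spectral-gap-type decay of a conservative semigroup is inherited by its dual, and every ingredient — the duality $(S_\LL)^* = S_{\LL^*}$ (Proposition~\ref{prop:DualityRigorous}), the primal decay (Proposition~\ref{prop:SS_Lambda}), mass conservation, and $\CCCC\in L^2_\omega(\OO)$ — is already available. The only care required is bookkeeping: matching the primal projection $f\mapsto\lla f\rra_\OO\CCCC$ with the dual projection $g\mapsto\la g,\CCCC\ra\,1$, checking these commute with the respective semigroups (so that the centered datum appears on the right-hand side rather than $g_T$ itself), and identifying $(I-\Pi^*)g$ with $g - \la g,\CCCC\ra_{L^2(\OO)}$. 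Equivalently, one may avoid the transpose-norm language and argue by testing directly: for $\|f\|_X\le 1$ write $\la f,\, g_0 - \la g_T,\CCCC\ra_{L^2(\OO)}\ra = \la S_\LL(T)f - \lla f\rra_\OO\CCCC,\; g_T - \la g_T,\CCCC\ra_{L^2(\OO)}\ra$ — the shift by $\la g_T,\CCCC\ra_{L^2(\OO)}$ being legitimate because $\la S_\LL(T)f - \lla f\rra_\OO\CCCC,\, 1\ra = 0$ by mass conservation — and then apply Cauchy--Schwarz, Proposition~\ref{prop:SS_Lambda}, and $\|f - \lla f\rra_\OO\CCCC\|_X\lesssim\|f\|_X$ before taking the supremum over $f$.
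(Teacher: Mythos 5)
Your proposal is correct and follows essentially the same route as the paper: the paper's proof is precisely your "testing directly" variant — it writes $\|g(0)-\la g_T,\CCCC\ra\|_{L^2_m}$ as a supremum over unit vectors $f_0\in L^2_\omega$, uses the duality identity \eqref{eq:identite-dualite}, shifts by $\la g_T,\CCCC\ra$ and by $\lla f_0\rra\CCCC$ (justified by mass conservation and $\lla\CCCC\rra_\OO=1$), and concludes with Cauchy--Schwarz and the primal decay \eqref{eq:SteadySolutionLLongTimeB_tau}. Your transpose-norm packaging of the same argument is a harmless reformulation.
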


\begin{proof}
We remark that 
$$
\la g(0), \CCCC\ra_{L^2(\OO)} = \la S_{\LL^*} g_T, \CCCC\ra_{L^2(\OO)} = \la g_T, S_{\LL} \CCCC\ra_{L^2(\OO)} = \la g_T, \CCCC\ra_{L^2(\OO)},
$$
where we have used the duality relation \eqref{eq:identite-dualite} together with the fact that 1 and $\CCCC$ are steady states of Equation \eqref{eq:KFPtau*} and Equation \eqref{eq:KFPtau} respectively.
 This implies in particular that the function $g(0) - \la g_T, \CCCC\ra_{L^2(\OO)}$ is also a solution of Equation \eqref{eq:KFPtau*} and 
 \beqn
\la g(0) - \la g_T, \CCCC\ra_{L^2(\OO)}, \CCCC\ra_{L^2(\OO)}= \la g(0), \CCCC\ra_{L^2(\OO)} + \lla \CCCC\rra_\OO \, \la g_T, \CCCC\ra_{L^2(\OO)}   = 0,
 \eeqn
 due to the fact that $\lla \CCCC\rra_\OO=1$ as provided by Proposition \ref{prop:SS_Lambda}. In particular, arguing exactly in the same way we also have that
 \be\label{eq:DualConservation}
\la g_T - \la g_T, \CCCC \ra_{L^2(\OO)}, \CCCC\ra_{L^2(\OO)}=  0.
 \ee
We then compute 
 \bean
 \| g(0) - \la g_T, \CCCC \ra \|_{L^2_m(\OO)} 
 &=& \sup_{f_0 \in L^2_\omega, \| f_0 \|_{L^{2}_\omega(\OO)} \le 1} \, \int_\OO f_0 \left( g(0)-  \la g_T, \CCCC\ra\right)
 \\
 &=&\sup_{f_0 \in L^2_\omega, \| f_0 \|_{L^{2}_\omega(\OO)} \le 1} \, \int_\OO f(T) \, \left(  g_T  - \la g_T, \CCCC\ra \right)
 \\
 &=&  \sup_{f_0 \in L^2_\omega, \| f_0 \|_{L^{2}_\omega(\OO)} \le 1} \, \int_\OO \left( f(T) - \lla f_0\rra \CCCC \right)\, \left(  g_T  - \la g_T, \CCCC\ra \right)
 \\
&\le&  \| g_T - \la g_T, \CCCC\ra \|_{L^2_m(\OO)}  \sup_{f_0 \in L^2_\omega, \| f_0 \|_{L^{2}_\omega(\OO)} \le 1} \,  \| f(T)  - \lla f_0 \rra \CCCC \|_{L^{2}_\omega(\OO)} 
 \\
&\lesssim&  \| g_T - \la g_T, \CCCC\ra \|_{L^2_m(\OO)} \sup_{f_0 \in L^2_\omega, \| f_0 \|_{L^{2}_\omega(\OO)} \le 1} \,e^{-\lambda T}   \| f_0  - \lla f_0 \rra \CCCC \|_{L^{2}_\omega(\OO)} \\
&\lesssim&  e^{-\lambda T}  \| g_T - \la g_T, \CCCC \ra \|_{L^2_m(\OO)}, 
\eean
where we have defined $f_T = S_\LL (T) f_0$, and we have successively used the Riesz representation theorem, the duality identity  \eqref{eq:identite-dualite}, the conservation of mass, \eqref{eq:DualConservation}, the Cauchy-Schwartz inequality, and the decay estimate \eqref{eq:SteadySolutionLLongTimeB_tau}.
\end{proof}

\section{Proof of Theorem \ref{theo:SteadySolutionNonL}}\label{sec:Proof_Theo2}
We now consider a constant $\nu \in [0,2 \EE_{\FFFF^0}]$, where $\FFFF^0$ is given by Theorem \ref{theo:SteadySolutionL}, and we study the problem
\be\label{eq:NESS}
\left\{\begin{array}{rcll} 
 \partial_t f &=& -v\cdot \grad_x f + \alpha\CC_{\nu}  f + (1-\alpha) \CC_\tau f + \GGG f  & \text{ on } \UU , \\
\displaystyle \gamma_- f &=& \RRR \gamma_+ f &\text{ on } \Gamma_{-} ,\\
\displaystyle  f_{t=0} &=& f_0 &\text{ on } \OO,
 \end{array}\right.
 \ee
where $\alpha \in (0,1/2)$ and we remark that
 $$
 \alpha\CC_{\nu}  f + (1-\alpha) \CC_\tau f = (\alpha \nu + (1-\alpha) \tau) \Delta_v f + \Div_v(vf) \quad \text{ with }\quad \alpha \nu + (1-\alpha) \tau\in [\tau_0/2, \tau_1 + 2\EEE_{\FFFF^0}].
 $$
We observe that Equation \eqref{eq:NESS} fits the framework developed in Sections \ref{sec:Study_S_LLL} and \ref{sec:ProofTheo1}, with the choice $\Lambda(x) = \alpha \nu + (1-\alpha) \tau(x)$ and we remark that, as as pointed out above, the bounds on $\Lambda$ are independent of $\alpha$ and $\nu$. We then define the map $\FFF: [0,2 \EE_{\FFFF^0}] \to \R$ by
\be\label{eq:DefFFF}
\FFF(\nu) = \EE_{\FFFF_{\nu}^\alpha},
\ee
where $\FFFF_{\nu}^\alpha$ is the steady solution of Equation \eqref{eq:NESS} given by Proposition \ref{prop:SS_Lambda}. 
We will then prove Theorem \ref{theo:SteadySolutionNonL} by proving that there is $\alpha^\star>0$ small enough such that for every $\alpha \in (0,\alpha^\star)$ the map $\FFF$ has a fixed point $\nu^\star$, which in particular will imply that $\FFFF^\alpha_{\nu^\star}$ is a steady solution of the non-linear Equation \eqref{eq:NonLKFP}-\eqref{eq:BoundaryConditions}.

\subsection{Proof of Theorem \ref{theo:SteadySolutionNonL}}
In this subsection we will proceed to verify the necessary hypothesis to deduce the existence of the fixed point for $\FFF$ by using a fixed point theorem.

%\medskip\noindent
%\textbullet \, \emph{Continuity of the map $\FFF$.} 
\subsubsection{Continuity of the map $\FFF$}  We prove the continuity of the map defined on \eqref{eq:DefFFF} for which we will first prove the following lemma.

\begin{lem}\label{lem:ContinuityFFF}
We consider $f_1, f_2$ solutions of Equation \eqref{eq:NESS} associated with $\nu_1, \nu_2 \in [0,2 \EE_{\FFFF^0}]$ respectively. For any admissible weight function $\omega$ there are constants $\kappa\geq 0$ and $C\geq 1$ such that there holds 
\be\label{eq:ContinuityFFF}
\lvv f_{2,t}-f_{1, t}\rvv_{L^2_\omega} \leq   \alpha^2  \lv \nu_1-\nu_2\rv^2 C e^{\kappa t}  \lvv f_0  \rvv_{L^2_\omega} \qquad \forall t\geq 0.
\ee
\end{lem}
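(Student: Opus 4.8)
The plan is to subtract the two equations, write an evolution equation for the difference $h := f_2 - f_1$, and run the weighted $L^2$ energy estimate from Lemma \ref{lem:GrowthLpPrimal} on $h$, controlling the extra forcing term that arises from the mismatch of the diffusion coefficients. Concretely, since $f_i$ solves \eqref{eq:NESS} with $\Lambda_i(x) = \alpha\nu_i + (1-\alpha)\tau(x)$, the difference satisfies
\[
\partial_t h = -v\cdot\grad_x h + \Lambda_1 \Delta_v h + \Div_v(vh) + \GGG h + \alpha(\nu_2-\nu_1)\Delta_v f_2,
\qquad \gamma_- h = \RRR\gamma_+ h, \qquad h_{t=0}=0 .
\]
So $h$ solves the linear Equation \eqref{eq:KFPtau} (with $\Lambda = \Lambda_1$, whose bounds are uniform in $\alpha,\nu$) with zero initial datum and an additional source term $F := \alpha(\nu_2-\nu_1)\Delta_v f_2$.

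First I would carry out the energy estimate exactly as in the proof of Lemma \ref{lem:GrowthLpPrimal}: test the renormalized formulation with $\beta(s)=s^2$ and $\varphi = \widetilde\omega^2\chi_R$, use Steps 1--5 there to absorb the boundary term, the local operator $\BB_0$, the transport term and the BGK non-local term, ending with
\[
\frac12\frac{d}{dt}\int_\OO h^2\widetilde\omega^2 \le \kappa \int_\OO h^2\widetilde\omega^2 + \int_\OO F\, h\,\widetilde\omega^2 .
\]
The new term to handle is $\int_\OO F h \widetilde\omega^2 = \alpha(\nu_2-\nu_1)\int_\OO (\Delta_v f_2)\, h\,\widetilde\omega^2$. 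I would integrate by parts in $v$ to move one derivative off $f_2$, getting $-\alpha(\nu_2-\nu_1)\int_\OO \grad_v f_2 \cdot \grad_v(h\widetilde\omega^2)$, expand $\grad_v(h\widetilde\omega^2) = (\grad_v h)\widetilde\omega^2 + 2h\widetilde\omega\grad_v\widetilde\omega$, and then apply Cauchy--Schwarz and Young's inequality together with the bound $|\grad_v\widetilde\omega/\widetilde\omega|\lesssim 1$ from \eqref{eq:ControlGradTilde}. This yields
\[
\Bigl|\int_\OO F h\widetilde\omega^2\Bigr| \le \alpha^2|\nu_2-\nu_1|^2 \,\|\grad_v f_2\|^2_{L^2_\omega(\OO)} + C\bigl(\|\grad_v h\|^2_{L^2_\omega(\OO)} \cdot 0 \,\text{-type absorption}\bigr) + C\|h\|^2_{L^2_\omega(\OO)},
\]
where the gradient-of-$h$ contribution is absorbed into the coercive $-\Lambda_0\int|\grad_v(h\widetilde\omega)|^2$ term already present on the left-hand side of \eqref{eq:disssipSL-LpPrimal} (at the price of enlarging $\kappa$, again using \eqref{eq:AprioriGrad_2}). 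Since $f_2$ is a solution of \eqref{eq:KFPtau} with $\Lambda=\Lambda_2$, Lemma \ref{lem:GrowthLpPrimal} gives the gradient-in-time bound $\int_0^t\|\grad_v f_{2,s}\|^2_{L^2_\omega}\,ds \lesssim \|f_0\|^2_{L^2_\omega} + \int_0^t\|f_{2,s}\|^2_{L^2_\omega}\,ds \lesssim e^{\kappa t}\|f_0\|^2_{L^2_\omega}$, so after Grönwall's lemma (integral form, then $R\to\infty$), using $h_{t=0}=0$ and the equivalence $\omega\simeq\widetilde\omega$ of \eqref{eq:omega&omegatilde}, one arrives at
\[
\|h_t\|^2_{L^2_\omega(\OO)} \lesssim \alpha^2|\nu_1-\nu_2|^2 \int_0^t e^{\kappa(t-s)}\|\grad_v f_{2,s}\|^2_{L^2_\omega(\OO)}\,ds \lesssim \alpha^2|\nu_1-\nu_2|^2 e^{\kappa t}\|f_0\|^2_{L^2_\omega(\OO)},
\]
which is \eqref{eq:ContinuityFFF} after taking square roots (absorbing constants and renaming $\kappa$).

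The main obstacle is the source term $\alpha(\nu_2-\nu_1)\Delta_v f_2$, which a priori only lives in $H^{-1}_v$; the resolution is precisely the integration by parts above combined with the a priori gradient estimate \eqref{eq:bddGrad} applied to $f_2$, so that no regularity beyond $L^2(\Omega;H^1(\R^d))$ is needed. A minor technical point is that all manipulations must be performed at the level of the renormalized formulation \eqref{eq:paraLp_RenormalizationFormula} with the cutoff $\chi_R$ and the modified weight $\widetilde\omega$, exactly as in Lemma \ref{lem:GrowthLpPrimal} and Theorem \ref{theo:WellPosednessL2}, and then one passes $R\to\infty$; since $h$ itself is the (unique) solution of \eqref{eq:KFPtau} with source $F\in L^2_{\omega^{-1}}$-dual, its well-posedness and trace properties are covered by the same \cite[Theorem 2.11]{CM_Landau_domain} machinery already invoked. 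Finally, the continuity of $\FFF$ follows immediately: $|\FFF(\nu_2)-\FFF(\nu_1)| = \frac1d|\int_\OO |v|^2(\FFFF^\alpha_{\nu_2}-\FFFF^\alpha_{\nu_1})|$, and one bounds this using \eqref{eq:ContinuityFFF} at a fixed large time (or letting $t\to\infty$ with the decay \eqref{eq:SteadySolutionLLongTimeB_tau}) together with the uniform tail bound $\FFFF^\alpha_\nu(x,v)\lesssim\omega(v)^{-1}$ from Proposition \ref{prop:SS_Lambda} and $\langle v\rangle^2\omega^{-1}\in L^1$.
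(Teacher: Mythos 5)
Your proposal is correct and follows essentially the same route as the paper: subtract the two equations, treat $\alpha(\nu_2-\nu_1)\Delta_v f_2$ as a source, integrate by parts in $v$, absorb the $\grad_v(h\widetilde\omega)$ contribution into the coercive term via a Young inequality calibrated to $\alpha|\nu_1-\nu_2|$, and close with the gradient bound \eqref{eq:bddGrad} for $f_2$ and Grönwall at the level of the renormalized formulation. The only (immaterial) difference is that the paper integrates by parts a second time, producing a $\Delta_v\widetilde\omega/\widetilde\omega$ term, whereas you keep $\grad_v f_2$ throughout; and note that, exactly as in the paper's own proof, the Grönwall step yields $\alpha|\nu_1-\nu_2|$ rather than $\alpha^2|\nu_1-\nu_2|^2$ once square roots are taken, so your conclusion matches what the paper actually proves.
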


\begin{proof}
We take $F = f_1 - f_2$ and we observe that $F$ solves, in the weak sense, the following equation 
 \begin{equation}
\left\{\begin{array}{rcll} \label{eq:FFFcont}
 \partial_t F &=& \displaystyle -v\cdot \grad_x F +\BB_1 F + \AA F+ \alpha (\nu_1-\nu_2) \Delta_v f_2 & \text{ in } \UU  \\
\displaystyle \gamma_- F &=& \RRR \gamma_+ F &\text{ on } \Gamma_{-} \\
\displaystyle  F_{t=0} &=& 0 &\text{ in } \OO,
 \end{array}\right.
 \end{equation}
 where we have defined $\BB_1 f = \CC_{\Lambda_1} f - f \sum_{n=1}^N \eta_n \Ind_{\Omega_n}$
 with 
 $$
 \Lambda_1(x) := \alpha \nu_1 + \tau(x) \quad \text{ and we remark that } \quad  {\tau_0\over 2} \leq \Lambda (x) \leq   2 \EE_{\FFFF^0}+\tau_1 \quad \text{ for all } x\in \Omega. 
$$
At the level of a priori estimates we proceed as during the proof of Lemma \ref{lem:GrowthLpPrimal} and we have that
\beqn
\frac 12 \frac d{dt} \int_\OO F_t^2 \, \widetilde \omega^2 \leq   -{\tau_0\over 2}  \int_{\OO} |\nabla_v (F \widetilde\omega ) |^2  +  \int_\OO \varpi^{\LL_1}_{\widetilde \omega, 2} \, F^2 \widetilde \omega^2+ \alpha (\nu_1-\nu_2)  \int_\OO (\Delta_v f_2) F\,  \widetilde \omega^2 ,
\eeqn
where 
$$
 \varpi_{\widetilde \omega, 2}^{\LL_1}  =  \varpi_{\widetilde \omega, 2}^{\BB_1} +  \left( v\cdot \grad_x \widetilde \omega \over \widetilde \omega \right) +\varpi_\GGG.
$$
Again as during the proof of Lemma \ref{lem:GrowthLpPrimal} we deduce that $\varpi_{\widetilde \omega, 2}^{\LL_1} \leq  \kappa_1$, for some constant $\kappa_1>0$ independent of $\nu_1, \nu_2$ and $\alpha$. 
Using now integration by parts, we compute
\bean
\int_\OO (\Delta_v f_2) F\widetilde \omega^2 &=& -\int_\OO \grad f_2 \cdot \grad (F \widetilde \omega) \, \widetilde \omega - \int_\OO \grad f_2 \cdot \grad_v \widetilde \omega\, (F \, \widetilde \omega)  \\
&\leq& \lvv \grad_v f_2\lvv_{L^2_\omega(\OO)} \lvv \grad_v (F\widetilde \omega)\rvv_{L^2(\OO)} + \int_\OO f_2 \, \grad_v (F\widetilde \omega) \grad_v \widetilde \omega + \int_\OO f_2 \, F\, \widetilde \omega \, \Delta_v \widetilde \omega\\
&\leq &  \lvv \grad_v f_2\lvv_{L^2_\omega(\OO)} \lvv \grad_v (F\widetilde \omega)\rvv_{L^2(\OO)}  + \left\lvv {\grad_v \widetilde \omega \over \widetilde \omega} \right\rvv_{L^\infty(\OO)} \lvv \grad_v(F\widetilde \omega)\rvv_{L^2(\OO)} \lvv f_2 \lvv_{L^2_{\widetilde \omega}(\OO)}  \\
&& +  \left\lvv {\Delta_v \widetilde \omega \over \widetilde \omega} \right\rvv_{L^\infty(\OO)} \lvv F \rvv_{L^2_{\widetilde \omega}(\OO)} \lvv f_2 \lvv_{L^2_{\widetilde \omega}(\OO)} 
\eean
where we have used the Cauchy Schwartz inequality in the second and third line. We recall from the Step 6 of the proof of Lemma~\ref{lem:GrowthLpPrimal} that we may write $\widetilde\omega = \wp \omega_A$ and $\omega_A =   \wp_A \omega$  where $\wp^2 := 1 +  (n_x \cdot v)/(2\la v \ra^4)$ and $\wp_A^2 := 1 + \chi_A (\MMM_\Theta^{-1} \omega^{-2} - 1)$. 
We then have that
$$
\left\lv {\Delta_v \widetilde \omega \over \widetilde \omega} \right\rv \leq \left\lv {\Delta_v \wp \over  \wp}  \right\rv + \left\lv {\Delta_v \wp_A \over  \wp_A}  \right\rv + \left\lv {\Delta_v \omega \over  \omega}  \right\rv  + 2\left\lv {\grad_v \wp\over \wp} \cdot { \grad_v \wp_A\over \wp_4}\right\rv + 2\left\lv {\grad_v \wp\over \wp} \cdot { \grad_v \omega \over \omega}\right\rv + 2\left\lv {\grad_v \omega \over \omega} \cdot { \grad_v \wp_A\over \wp_4}\right\rv,
$$
thus using our hypothesis on $\omega$, the compact support of $\wp_A$, and the very definition of $\wp$ together with \eqref{eq:ControlGradTilde} we deduce that 
\be\label{eq:ControlGradDeltaTilde}
\left\lvv {\grad_v \widetilde \omega / \widetilde \omega} \right\rvv_{L^\infty(\OO)} +  \left\lvv {\Delta_v \widetilde \omega / \widetilde \omega} \right\rvv_{L^\infty(\OO)} \leq C_\omega <\infty,
\ee
for some constant $C_\omega>0$.
Using the previous informations together with the Young inequality we deduce 
\bean
\int_\OO (\Delta_v f_2) F\widetilde \omega^2 &\leq & \frac {a_1}2 \lvv \grad_v (F\widetilde \omega)\rvv_{L^2(\OO)}^2 + \frac 1{2a_1}   \lvv \grad_v f_2\lvv_{L^2_\omega(\OO)}^2  +  \frac{a_2C_\omega }2 \lvv \grad_v(F\widetilde \omega)\rvv_{L^2(\OO)}^2  \\
&& + \frac {C_\omega}{2a_2} \lvv f_2 \lvv_{L^2_{\widetilde \omega}(\OO)}^2   + \frac {a_3 C_\omega}2 \lvv F \rvv_{L^2_{\widetilde \omega}(\OO)}^2 + \frac{C_\omega}{2a_3}  \lvv f_2 \lvv_{L^2_{\widetilde \omega}(\OO)}^2
\eean
for any constants $a_j>0$, $j=1,2, 3$. Then we choose $a_1 = 1/(\alpha \lv \nu_1-\nu_2\rv)$, $a_2 = 1/(\alpha C_\omega\lv \nu_1-\nu_2\rv)$ and $a_3 = 1/(\alpha \lv \nu_1-\nu_2\rv)$ and putting together the previous informations we obtain 
\begin{multline*}
\frac 12 \frac d{dt}\int_\OO F_t^2 \, \widetilde \omega^2 \leq   -{\tau_0\over 2}  \int_{\R^d} |\nabla_v (F \widetilde\omega ) |^2  +  \int_\OO \left( \varpi^{\LL_1}_{\widetilde \omega, 2} +{C_\omega \over 2} \right) \, F^2 \widetilde \omega^2 \\
 + \frac{\alpha^2}2 \lv \nu_1-\nu_2\rv^2  \left[ \lvv \grad_v f_2\lvv_{L^2_{\widetilde \omega}(\OO)}^2  + ( C_\omega^2 + C_\omega) \lvv f_2 \lvv_{L^2_{\widetilde \omega}(\OO)}^2 \right].
\end{multline*}
Using the Grönwall lemma and \eqref{eq:omega&omegatilde} we deduce that
\begin{multline}\label{eq:Continuity_lemma}
\lvv F\rvv_{L^2_{ \omega}(\OO)}^2 \leq  2c_A^2 \frac{\alpha^2}2 \lv \nu_1-\nu_2\rv^2 \int_0^t e^{ \left( \kappa_1 +C_\omega/2 \right) (t-s)} \\
\times \left[ \lvv \grad_v f_{2,s}\lvv_{L^2_{ \omega}(\OO)}  + ( C_\omega^2 + C_\omega) \lvv f_{2,s} \lvv_{L^2_{ \omega}(\OO)} \right] \, \d s.
\end{multline}
We obtain \eqref{eq:ContinuityFFF} by using  \eqref{eq:GrowthL1Primal} with $p=2$ and \eqref{eq:bddGrad} from Lemma \ref{lem:GrowthLpPrimal}.
\\

We conclude by remarking that \eqref{eq:Continuity_lemma} is still valid for weak solutions by arguing as follows. Using \cite[Theorem 2.8]{CM_Landau_domain} (see also for instance the proof of Proposition \ref{prop:KFPL2Perturbed}) we deduce that $F$ is also a renormalized solution of Equation \eqref{eq:FFFcont}. Applying then the renormalized formulation associated to the previous equation with $\beta (s) = s^2$, $\varphi = \widetilde \omega^2\chi_R$ for any $R>0$ and where $\widetilde \omega$ is as defined in \eqref{def:omegaA}, we deduce that arguing as before, passing to the limit as $R\to \infty$, and using the integral version of the Grönwall lemma we obtain \eqref{eq:Continuity_lemma}.
\end{proof}

Then we have the tools to prove the following continuity result.% that we recall it's defined in \eqref{eq:DefFFF}.
\begin{prop}\label{prop:SchauderContinuity}
The map $\FFF:\R\to \R$ is continuous. More precisely, for every $\eps>0$ there is $\delta >0$ such that if $\lv \nu_1-\nu_2\rv \leq \delta$ then 
$$
\lv \FFF(\nu_1) - \FFF(\nu_2) \rv \leq \eps.
$$
\end{prop}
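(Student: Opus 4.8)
The plan is to reduce the continuity of $\FFF$ to a quantitative bound on $\lvv \FFFF^\alpha_{\nu_1}-\FFFF^\alpha_{\nu_2}\rvv_{L^2_\omega(\OO)}$ for a single, conveniently chosen admissible weight $\omega$, and then to produce such a bound by feeding Lemma~\ref{lem:ContinuityFFF} into the decay estimate \eqref{eq:SteadySolutionLLongTimeB_tau} through an absorption argument. First I would fix an admissible weight $\omega$ with $\langle v\rangle^2\omega^{-1}\in L^2(\R^d)$ (the polynomial choice $\omega=\langle v\rangle^{d+2}$ works since $d\ge3$); then, using $\lvert\Omega\rvert=1$ and Cauchy--Schwarz,
\[
\lv\FFF(\nu_1)-\FFF(\nu_2)\rv=\frac1d\left\lv\int_\OO\lv v\rv^2(\FFFF^\alpha_{\nu_1}-\FFFF^\alpha_{\nu_2})\dx\dv\right\rv\le C_\omega\,\lvv\FFFF^\alpha_{\nu_1}-\FFFF^\alpha_{\nu_2}\rvv_{L^2_\omega(\OO)},
\]
so everything reduces to estimating the right-hand side. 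I would also record that, by \eqref{eq:SteadySolutionLIneq_tau} applied with an admissible $\varsigma$ such that $\omega\prec\varsigma$, the norm $\lvv\FFFF^\alpha_\nu\rvv_{L^2_\omega(\OO)}$ is bounded by a constant $M$ \emph{uniform in} $\nu\in[0,2\EE_{\FFFF^0}]$, since the constants in Lemma~\ref{lem:GrowthLpPrimal} and Proposition~\ref{prop:Ultra} depend on $\nu$ only through $\Lambda_0=\tau_0/2$ and $\Lambda_1=\tau_1+2\EE_{\FFFF^0}$.

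Next I would write $S^{(i)}$ for the semigroup of Equation~\eqref{eq:NESS} with $\nu=\nu_i$, whose unique steady state is $\FFFF^\alpha_{\nu_i}$, and note by the same uniformity remark that the constants $C,\lambda$ in \eqref{eq:SteadySolutionLLongTimeB_tau} applied to $S^{(i)}$ can be chosen independent of $\nu_i$. Using $S^{(1)}(t)\FFFF^\alpha_{\nu_1}=\FFFF^\alpha_{\nu_1}$ and $\lla\FFFF^\alpha_{\nu_1}\rra_\OO=1$, for every $t\ge0$ the triangle inequality gives
\[
\lvv\FFFF^\alpha_{\nu_1}-\FFFF^\alpha_{\nu_2}\rvv_{L^2_\omega}\le\lvv S^{(1)}(t)\FFFF^\alpha_{\nu_1}-S^{(2)}(t)\FFFF^\alpha_{\nu_1}\rvv_{L^2_\omega}+\lvv S^{(2)}(t)\FFFF^\alpha_{\nu_1}-\lla\FFFF^\alpha_{\nu_1}\rra_\OO\FFFF^\alpha_{\nu_2}\rvv_{L^2_\omega}.
\]
The first term is controlled by Lemma~\ref{lem:ContinuityFFF} with $f_0=\FFFF^\alpha_{\nu_1}$, hence by $\alpha^2\lv\nu_1-\nu_2\rv^2Ce^{\kappa t}\lvv\FFFF^\alpha_{\nu_1}\rvv_{L^2_\omega}\le CM\alpha^2e^{\kappa t}\lv\nu_1-\nu_2\rv^2$; the second by \eqref{eq:SteadySolutionLLongTimeB_tau} applied to $S^{(2)}$ (with steady state $\FFFF^\alpha_{\nu_2}$ and mass $\lla\FFFF^\alpha_{\nu_1}\rra_\OO=1$), hence by $Ce^{-\lambda t}\lvv\FFFF^\alpha_{\nu_1}-\FFFF^\alpha_{\nu_2}\rvv_{L^2_\omega}$. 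Choosing $t=T$ so large that $Ce^{-\lambda T}\le\tfrac12$ and absorbing the last term into the left-hand side yields $\lvv\FFFF^\alpha_{\nu_1}-\FFFF^\alpha_{\nu_2}\rvv_{L^2_\omega}\le2CM\alpha^2e^{\kappa T}\lv\nu_1-\nu_2\rv^2$, and combining with the first display gives $\lv\FFF(\nu_1)-\FFF(\nu_2)\rv\le C_3\lv\nu_1-\nu_2\rv^2$ with $C_3$ independent of $\nu_1,\nu_2$; the claim then follows with $\delta=\min\{1,(\eps/C_3)^{1/2}\}$.

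The argument is short once Lemma~\ref{lem:ContinuityFFF} and Proposition~\ref{prop:SS_Lambda} are available, so the only genuinely delicate point is the \emph{uniformity in $\nu$} of every constant that enters the estimate — the $L^2$ growth and ultracontractivity constants, the Krein--Rutman--Doblin--Harris decay rate $\lambda$ and its multiplicative constant, and the a priori bound $M$ on $\lvv\FFFF^\alpha_\nu\rvv_{L^2_\omega}$ — which is precisely what makes the final bound on $\lv\FFF(\nu_1)-\FFF(\nu_2)\rv$ independent of the base point. This uniformity is, however, guaranteed because $\nu$ enters the whole construction only through the two-sided bound $\tau_0/2\le\alpha\nu+(1-\alpha)\tau\le\tau_1+2\EE_{\FFFF^0}$ on the diffusion coefficient. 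The other place that needs a little care is the absorption step, i.e.\ choosing $T$ with $Ce^{-\lambda T}\le1/2$, but this is elementary.
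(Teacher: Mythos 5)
Your argument is correct and uses the same two ingredients as the paper's proof — the stability estimate of Lemma \ref{lem:ContinuityFFF} and the exponential convergence \eqref{eq:SteadySolutionLLongTimeB_tau} — combined by a triangle inequality after reducing $\lv\FFF(\nu_1)-\FFF(\nu_2)\rv$ to $\lvv\FFFF^\alpha_{\nu_1}-\FFFF^\alpha_{\nu_2}\rvv_{L^2_\omega}$ via Cauchy--Schwarz. The only difference is organisational: the paper compares both steady states to the time-$T$ evolution of a common \emph{arbitrary} initial datum $f_0$, producing three terms and then letting $T\to\infty$ as $\eps\to0$ (so $\delta$ degenerates with $\eps$ through $e^{\kappa T}$), whereas you initialise at $\FFFF^\alpha_{\nu_1}$ so that one term vanishes identically and the remaining decay term can be absorbed into the left-hand side with a \emph{fixed} $T$. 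This buys a slightly stronger conclusion, namely a quantitative quadratic modulus $\lv\FFF(\nu_1)-\FFF(\nu_2)\rv\le C_3\lv\nu_1-\nu_2\rv^2$ uniform in the base point, rather than bare continuity. Both versions rest on the same implicit uniformity in $\nu$ of the constants $C,\lambda,\kappa$ and of the $L^2_\omega$ bound on the steady states (which the paper also invokes, e.g.\ in \eqref{eq:DecayComparisonLUB}), coming from the fact that $\nu$ enters only through the two-sided bound on the diffusion coefficient; you are right to flag this as the one point requiring care.
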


\begin{proof}
We fix an admissible weight function $\omega$ and we define $C_0 = d^{-1}\lvv  \la v\ra^2 \omega^{-1}\rvv_{L^2}<\infty$ and $T>0$ to be specified later. There holds
\bean
\lv \FFF(\nu_1)- \FFF(\nu_2)\rv &\leq & \frac 1d \int_\OO \lv v\rv^2 \lv \FFFF^\alpha_{\nu_1} - \FFFF^\alpha_{\nu_2} \rv \quad \leq C_0   \lvv \FFFF^\alpha_{\nu_1} - \FFFF^\alpha_{\nu_2}\rvv_{L^2_\omega(\OO)}\\
&\leq &  C_0 \lvv \FFFF^\alpha_{\nu_1} - f_{1, T} \rvv_{L^2_\omega(\OO)} +  C_0 \lvv f_{1, T} - f_{2, T} \rvv_{L^2_\omega(\OO)} +  C_0 \lvv \FFFF^\alpha_{\nu_2} - f_{2, T} \rvv_{L^2_\omega(\OO)}\\
&\leq & C_0 C_1 e^{-\lambda T} \lvv \FFFF^\alpha_{\nu_1} - f_0 \rvv_{L^2{\omega}(\OO)} +  C_0 C_2\alpha^2  \lv \nu_1-\nu_2\rv^2  e^{\kappa T}  \lvv f_0  \rvv_{L^2_\omega(\OO)} \\
&&+   C_0C_1 e^{-\lambda T} \lvv \FFFF^\alpha_{\nu_2} - f_0 \rvv_{L^2_{\omega}(\OO)}
\eean
where we have used the Cauchy-Schwartz inequality in the first line, the triangular inequality to obtain the second, and Theorem \ref{theo:SteadySolutionL} and Lemma \ref{lem:ContinuityFFF} on the last line and we remark that $C_1, \lambda>0$ are given by Theorem \ref{theo:SteadySolutionL} and $\kappa, C_2>0$ are given by Lemma \ref{lem:ContinuityFFF}. 

\smallskip\noindent
Then for every fixed $\eps>0$ we choose $T$ large enough such that $C_0C_1e^{-\lambda T} \leq \eps/3$ and we choose $\delta = \sqrt{\eps/(3C_0C_2 e^{\kappa T})}$ so that there holds 
$$
\lv \FFF(\nu_1)- \FFF(\nu_2)\rv \leq \eps.
$$
and this completes the proof.
\end{proof}

%\medskip\noindent
%\textbullet \, \emph{Preservation of lower and upper bounds.} 
\subsubsection{Preservation of lower and upper bounds} We dedicate this sub-subsection to prove the following proposition.

\begin{prop}\label{prop:SchauderBounds}
Let $\nu \in [0, 2\EE_{\FFFF^0}]$, there is $\alpha^\star \in (0,1/2)$ such that for every $\alpha \in (0,\alpha^\star)$ there holds $\FFF(\nu) \in [0, 2\EE_{\FFFF^0}]$. 
\end{prop}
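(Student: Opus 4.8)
The plan is to establish quantitative bounds on the energy functional $\EE_{\FFFF^\alpha_\nu}$ that are \emph{uniform in $\nu \in [0,2\EE_{\FFFF^0}]$} and that degenerate appropriately as $\alpha \to 0$, so that for $\alpha$ small the map $\FFF$ sends the interval $[0,2\EE_{\FFFF^0}]$ into itself. The central observation is that $\FFFF^\alpha_\nu$ is the unique steady state of Equation \eqref{eq:NESS}, which fits the framework of Sections \ref{sec:Study_S_LLL}--\ref{sec:ProofTheo1} with the choice $\Lambda(x) = \alpha\nu + (1-\alpha)\tau(x)$, whose bounds $\tau_0/2 \le \Lambda(x) \le \tau_1 + 2\EE_{\FFFF^0}$ are \emph{independent} of $\alpha$ and $\nu$. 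In particular, when $\alpha = 0$ one recovers exactly $\FFFF^0$, and $\FFF(0)$ at $\alpha = 0$ equals $\EE_{\FFFF^0}$, which lies strictly inside the interval; the goal is to show the perturbation introduced by $\alpha > 0$ does not move the energy out of $[0,2\EE_{\FFFF^0}]$.

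First I would recall that the lower bound $\FFF(\nu) \ge 0$ is immediate, since $\FFFF^\alpha_\nu \ge 0$ (it is the positive stationary state from Proposition \ref{prop:SS_Lambda}) and hence $\EE_{\FFFF^\alpha_\nu} = \frac1d \int_{\OO} |v|^2 \FFFF^\alpha_\nu \ge 0$. The work is in the upper bound. I would compare $\FFFF^\alpha_\nu$ with $\FFFF^0$: the difference $G := \FFFF^\alpha_\nu - \FFFF^0$ satisfies a linear stationary equation of the form $\LL_\Lambda G = -\alpha(\nu - \tau)\Delta_v \FFFF^0$ with conservative Maxwell boundary conditions and $\lla G\rra_\OO = 0$ (both have unit mass), where $\LL_\Lambda$ is the generator from Equation \eqref{eq:KFPtau} with $\Lambda = \alpha\nu + (1-\alpha)\tau$. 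Using the decay estimate \eqref{eq:SteadySolutionLLongTimeB_tau} of Proposition \ref{prop:SS_Lambda} together with the Duhamel representation $G = \int_0^\infty S_\LL(s)\big(-\alpha(\nu-\tau)\Delta_v\FFFF^0\big)\,ds$ — or, more carefully, by running the evolution with initial datum $\FFFF^0$ toward its equilibrium $\FFFF^\alpha_\nu$ and exploiting exponential relaxation — I would obtain $\lvv G \rvv_{L^2_\omega(\OO)} \lesssim \alpha \, \lvv (\nu-\tau)\Delta_v \FFFF^0\rvv_{(H^1_\omega)'}$ or a similar $H^{-1}$-type bound, uniformly in $\nu$ (since $|\nu - \tau| \le 2\EE_{\FFFF^0} + \tau_1$ is bounded, and $\lvv \grad_v \FFFF^0 \rvv_{L^2_\varsigma(\OO)} < \infty$ by \eqref{eq:SteadySolutionLIneq}). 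Then, choosing an admissible weight $\omega$ with $\la v\ra^2 \omega^{-1} \in L^2$, Cauchy--Schwarz gives $|\FFF(\nu) - \EE_{\FFFF^0}| = \frac1d|\int_\OO |v|^2 G| \le C\lvv G\rvv_{L^2_\omega(\OO)} \le C'\alpha$, and picking $\alpha^\star$ so that $C'\alpha^\star \le \EE_{\FFFF^0}$ yields $\FFF(\nu) \le 2\EE_{\FFFF^0}$ for all $\alpha < \alpha^\star$.

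The main obstacle I anticipate is controlling the source term $\alpha(\nu-\tau)\Delta_v\FFFF^0$ rigorously: $\Delta_v \FFFF^0$ is only an $H^{-1}$-type object (we only know $\grad_v \FFFF^0 \in L^2_\varsigma$), so the Duhamel/resolvent argument must be set up in the weak (renormalized) sense of Theorem \ref{theo:WellPosednessL2}, integrating by parts to move one velocity derivative onto the test function or onto the semigroup, and one must check that the time integral $\int_0^\infty$ converges — which it does thanks to the exponential decay \eqref{eq:SteadySolutionLLongTimeB_tau} combined with the gradient energy estimate \eqref{eq:bddGrad}, but the bookkeeping requires care, exactly as in Lemma \ref{lem:ContinuityFFF}. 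An alternative route, avoiding $H^{-1}$ altogether, is to mimic the a priori energy computation: multiply the stationary equation for $\FFFF^\alpha_\nu$ by $|v|^2$ and integrate, producing $0 = \frac{d}{dt}\EE = $ (linear Fokker--Planck contribution with coefficient $\alpha\nu + (1-\alpha)\tau$) $+$ (BGK contributions) $+$ (boundary term), and then bound the boundary term using the sign-favorable factor $-|v|^2 + \frac{d+1}{2}\sqrt{2/\pi}\,\Theta^{3/2}$ noted in \eqref{eq:EnergyBoundaryNoControl}; but since that boundary term cannot in general be shown to be negative (as the introduction stresses), the comparison-with-$\FFFF^0$ approach via the linear decay estimate is the robust one and is what I would carry out.
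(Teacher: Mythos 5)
Your primary route --- subtracting the two \emph{stationary} equations to get $\LL_\Lambda G=-\alpha(\nu-\tau)\Delta_v\FFFF^0$ for $G=\FFFF^\alpha_\nu-\FFFF^0$ and then inverting via $G=\int_0^\infty S_{\LL_\Lambda}(s)\bigl(\alpha(\nu-\tau)\Delta_v\FFFF^0\bigr)\,\d s$ --- has a gap at exactly the point you flag, and the fixes you sketch do not close it with the tools available in the paper. The exponential decay \eqref{eq:SteadySolutionLLongTimeB_tau} handles $s\to\infty$, but the problem is at $s\to 0$: the source lies only in $L^2_xH^{-1}_v$, so to make sense of $S_{\LL_\Lambda}(s)\Delta_v\FFFF^0$ in $L^2_\omega$ for small $s$ you need a short-time regularization estimate of the type $\lvv S(s)\Div_v F\rvv_{L^2_\omega}\lesssim s^{-\theta}\lvv F\rvv_{L^2_\omega}$ with $\theta<1$, which is nowhere established. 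Invoking \eqref{eq:bddGrad} does not help here, because that estimate controls $\grad_v$ of a solution in $L^2_t$, not the action of the semigroup on divergence-form data; and moving the derivative onto the dual semigroup requires an $L^1_t$ (not $L^2_t$) control of $\grad_v S_{\LL^*}(s)\varphi$ near $s=0$, which again is not available. A direct stationary energy estimate also fails, since the coercivity constant $\kappa$ in \eqref{eq:varpiL-varpisharp} has the wrong sign and the spectral gap is only hypodissipative.

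The paper's proof is essentially the fallback you mention in passing: a \emph{dynamic} comparison. One runs the evolution $f_t$ of \eqref{eq:NESS} and the evolution $\phi_t$ of the $\alpha=0$ equation from the \emph{same} initial datum $f_0$; the difference $\psi=f-\phi$ has zero initial datum and source $\alpha(\nu-\tau)\Delta_v f_t$ involving the \emph{evolving} solution, whose velocity gradient is controlled in $L^2_t L^2_{x,v}$ by the parabolic estimate \eqref{eq:bddGrad} --- this is precisely how the $H^{-1}$ difficulty is circumvented (together with a first smallness condition $\alpha\le\alpha^\star_1$ needed to absorb part of the extra diffusion into the dissipation). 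This yields only $\lvv\psi_T\rvv_{L^2_\omega}\le\alpha^{1/2}C_T\lvv f_0\rvv_{L^2_\omega}$ with $C_T$ \emph{growing} in $T$, not the uniform-in-time $O(\alpha)$ bound on $\lvv\FFFF^\alpha_\nu-\FFFF^0\rvv$ you claim; the conclusion is then reached by a three-term triangle inequality $\EE_{\FFFF^\alpha_\nu}\le C(\lvv\FFFF^\alpha_\nu-f_T\rvv+\lvv f_T-\phi_T\rvv+\lvv\phi_T-\FFFF^0\rvv)+\EE_{\FFFF^0}$, choosing $T$ large first (using the exponential relaxation of \emph{both} equations to their steady states, with constants uniform in $\nu,\alpha$) and only then $\alpha^\star_2$ small. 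Your lower bound via positivity is the same as the paper's. If you rewrite the argument in the dynamic form, taking care to fix $T$ before $\alpha$ and to state the first smallness condition on $\alpha$, you recover the paper's proof.
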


\begin{proof}
We fix during the proof an admissible weight function $\omega$ and an arbitrary $0\leq f_0 \in L^2_\omega(\OO)$ with $\lla f_0\rra = 1$. On the one hand from the weak maximum principle provided by Proposition \ref{prop:WeakMaxP} we deduce that
$$
0\leq \EE_f \leq  C_\omega^0 \,  \lvv f\rvv_{L^2_\omega(\OO)},
$$
where $C^0_\omega = d^{-1}\lvv  \la v\ra^2  \omega^{-1}\rvv_{L^2(\R^d)}<\infty$ and we remark that we have used the Cauchy-Schwartz inequality to obtain the second inequality.
We observe that 
$$ 
\alpha \CC_{\nu} f + (1-\alpha) \CC_{\tau} f = \alpha (\nu-\tau)\Delta_v f + \CC_{\tau} f ,
$$ 
and we will compare a solution $f$ to Equation \eqref{eq:NESS} and a solution $\phi$ to the linear equation
\be\label{eq:UpperBoundG}
\left\{\begin{array}{rcll} 
 \partial_t \phi &=& -v\cdot \grad_x \phi + \CC_\tau \phi + \GGG \phi   & \text{ in } \UU , \\
\displaystyle \gamma_- \phi &=& \RRR \gamma_+ \phi &\text{ on } \Gamma_{-} ,\\
\displaystyle  \phi_{t=0} &=& f_0 &\text{ in } \OO.
 \end{array}\right.
 \ee 
 
 \medskip\noindent
 \emph{Step 1.} We first remark that Proposition \ref{prop:SS_Lambda} and Theorem \ref{theo:SteadySolutionL} provide the existence of $\FFFF^\alpha_{\nu}$ and $\FFFF^0$, respective stationary solutions to Equations \eqref{eq:NESS} and \eqref{eq:UpperBoundG}, furthermore we also have the existence of some constants $\lambda, C_1>0$ independent of $\alpha$ and $\nu$ such that 
\be\label{eq:DecayComparisonLUB}
\lvv f_t - \FFFF^\alpha_{\nu}  \rvv_{L^2_{\omega}(\OO)} \leq C_1 e^{-\lambda t} \lvv f_0 - \FFFF^\alpha_{\nu}  \rvv_{L^2_{\omega}(\OO)} \quad \text{ and } \quad  \lvv \phi_t - \FFFF^0  \rvv_{L^1_{\omega}(\OO)}  \leq C_1 e^{-\lambda t} \lvv f_0 - \FFFF^0  \rvv_{L^1_{\omega}(\OO)}, 
\ee
 for every $t\geq 0$. 
 We then set $\psi=f-\phi$ and we observe that $\psi$ is a weak solution of the following kinetic equation
 \be\label{eq:UpperBoundH}
\left\{\begin{array}{rcll} 
 \partial_t \psi &=& -v\cdot \grad_x \psi + \CC_\tau \psi + \GGG \psi + \alpha (\nu -\tau(x)) \Delta_v f  & \text{ in } \UU  \\
\displaystyle \gamma_- \psi &=& \RRR \gamma_+ \psi&\text{ on } \Gamma_{-} \\
\displaystyle  \psi_{t=0} &=& 0 &\text{ in } \OO.
 \end{array}\right.
 \ee

 At the level of a priori estimates, we introduce the modified weight function $\widetilde \omega$ as defined in \eqref{def:omegaA}, and arguing as during the proof of Lemma \ref{lem:GrowthLpPrimal} we have that
 \be\label{eq:EstimateGronwall1}
\frac d{dt} \int_\OO \psi^2_t \, \widetilde \omega^2 \leq   -{\tau_0\over 2} \int_{\OO} |\nabla_v (\psi \widetilde\omega ) |^2  +   \int_\OO \varpi^{\LL}_{\widetilde \omega, 2} \, \psi^2 \widetilde \omega^2 + \alpha  \int_\OO  (\nu -\tau(x))\,  (\Delta_v f)  \psi \, \widetilde \omega^2 ,
\ee
an there is a constant $\kappa_1$, independent of $\nu_1, \nu_2$ and $\alpha$,  such that $\varpi_{\widetilde \omega, 2}^{\LL} \leq  \kappa_1$. 
On the other hand we compute 
\bear
\left\lv \int_\OO (\Delta_v f)  \psi \, \widetilde \omega^2 \right\rv&\leq& \left\lv \int_\OO \grad_v f \cdot \grad_v(  \psi \widetilde \omega) \,  \widetilde \omega \right\rv + \left\lv \int_\OO \grad_v f  \cdot \grad_v \widetilde \omega \,  ( \psi \widetilde \omega) \right\rv \nonumber\\
&\leq & \frac 12 \left( 1 + C_\star\right) \lvv \grad_v f\rvv_{L^2_{\widetilde \omega}(\OO)}^2 +  \frac 12 \int_{\OO} |\nabla_v (\psi \widetilde\omega ) |^2 +    \frac {C_\star}2  \lvv \psi \lvv_{L^2_{\widetilde \omega}(\OO)}^2, \label{eq:EstimateGronwall2}
\eear
 where we have used integration by parts and the triangular inequality to obtain the first line and the Cauchy-Scwhartz inequality together with the Young inequality to obtain the second. Moreover we remark that we have set $C_\star = \lvv (\grad_v \widetilde \omega) / \widetilde \omega\rvv_{L^\infty(\OO)}$, which is finite due to the analysis leading to \eqref{eq:ControlGradTilde}. 
Putting together \eqref{eq:EstimateGronwall1} and \eqref{eq:EstimateGronwall2} we then obtain
\begin{multline*}%\label{eq:EstimateGronwall3}
\frac d {dt} \int_\OO \psi^2_t \, \widetilde \omega^2 \leq   -\left( {\tau_0\over 2} - \alpha \left( \tau_1 + 2 \EE_{\FFFF^0} \right) \right)   \int_{\OO} |\nabla_v (\psi \widetilde\omega ) |^2  +  \left( \kappa_1 + \frac{C_\star} 2  \right) \int_\OO  \psi^2 \widetilde \omega^2 \\
 + \frac \alpha 2 \left( 1 + C_\star\right) \left( \tau_1 + 2 \EE_{\FFFF^0} \right) \int_0^t \int_\OO    \lv \grad_v f\rv^2 \, \widetilde \omega^2  .
\end{multline*}
We set then $\alpha^\star_1 = \tau_0/(4\left( \tau_1 + 2 \EE_{\FFFF^0} \right))$ and we observe that using \eqref{eq:omega&omegatilde} and the Grönwall lemma we deduce that
\bean
\lvv \psi_t \rvv_{L^2_{ \omega}(\OO)}^2 &\leq & c_A^2 \frac \alpha 2 \left( 1 + C_\star\right) \left( \tau_1 + 2 \EE_{\FFFF^0} \right) \int_0^t \int_\OO e^{\kappa_2(t-s)} \lv \grad_v f_s \rv^2\omega^2 \\
&\leq & c_A^2 \, C\,  \frac \alpha 2 \left( 1 + C_\star\right) \left( \tau_1 + 2 \EE_{\FFFF^0} \right) \, e^{\kappa_2 t}  \left( \lvv f_0 \rvv_{L^2_\omega(\OO)}^2 + \int_0^t \lvv f_s \rvv_{L^2_\omega(\OO)}^2  \right) \\
&\leq &  c_A^2 \, C\,  \frac \alpha 2 \left( 1 + C_\star\right) \left( \tau_1 + 2 \EE_{\FFFF^0} \right) \, e^{\kappa_2 t} \left( 1+ C^2e^{2\kappa t}\right) \lvv f_0 \rvv_{L^2_\omega(\OO)}^2
\eean
where we have used \eqref{eq:bddGrad} to obtain the second inequality, we have used \eqref{eq:GrowthL1Primal} with $p=2$ to obtain the third, and we remark that we have set $\kappa_2 = \kappa_1 + C_\star/2$, and the constants $\kappa, C>0$ are given by Lemma~\ref{lem:GrowthLpPrimal}.
Finally we define the constant 
$$
C_t^2  := c_A^2 \,  \frac C 2 \left( 1 + C_\star\right) \left( \tau_1 + 2 \EE_{\FFFF^0} \right) \, e^{\kappa_2 t} \left( 1+ C^2e^{2\kappa t}\right).
$$
 We conclude this step by arguing that the previous estimate holds for weak solutions of Equation \eqref{eq:UpperBoundG}. Using \cite[Theorem 2.8]{CM_Landau_domain} (see also for instance the proof of Proposition \ref{prop:KFPL2Perturbed}) we deduce that $h$ is also a renormalized solution of Equation \eqref{eq:UpperBoundH}. Applying then the renormalized formulation associated to the previous equation with $\beta (s) = s^2$, $\varphi = \widetilde \omega^2\chi_R$ for any $R>0$, we have that arguing as during this step, passing to the limit as $R\to \infty$, and using the integral version of the Grönwall lemma we obtain the same estimate.

\medskip\noindent
\emph{Step 2.} 
We take $T>0$ to be defined later and we compute by using the Cauchy-Schwartz inequality
 \bean
 \frac 1d \int_\OO \lv v\rv^2 \FFFF^\alpha_{\nu}\dx \dv &\leq&   \frac 1d \int_\OO \lv v\rv^2 \lv \FFFF^\alpha_{\nu} -f_{T} \rv\dx\dv  +  \frac 1d \int_\OO \lv v\rv^2 \lv f_{T} - \phi_{T}  \rv\dx \dv \\
 &&+  \frac 1d \int_\OO \lv v\rv^2 \lv \phi_{T} - \FFFF^0  \rv\dx \dv + \frac 1d \int_\OO \lv v\rv^2 \lv \FFFF^0 \rv\dx \dv \\
 &\leq & C^0_\omega \lvv \FFFF^\alpha_{\nu} -f_{T} \rvv_{L^2_\omega(\OO)} + C^0_\omega \lvv f_{T} -\phi_{T}   \rvv_{L^2_\omega(\OO)}  + C^0_\omega  \lvv \phi_{T} - \FFFF^0   \rvv_{L^2_\omega(\OO)} +  \EE_{\FFFF^0}  \\
 &\leq & C^0_\omega C_1 e^{-\lambda T} \lvv \FFFF^\alpha_{\nu} -f_{0} \rvv_{L^2_\omega(\OO)} +  \alpha^{1/2} C_T  \lvv f_0 \lvv_{L^2_{\omega}(\OO)} + C^0_\omega C_1 e^{-\lambda T}  \lvv f_{0} - \FFFF^0   \rvv_{L^2_\omega(\OO)} \\
 &&+  \EE_{\FFFF^0} 
  \eean
where we have used \eqref{eq:DecayComparisonLUB} to obtain the last inequality and we recall that $C^0_\omega>0$ is defined at the beginning of the proof. 
We set then $T>0$ such that
 $$
C^0_\omega C_1 e^{-\lambda T} \lvv f_0 - \FFFF^\alpha_{\nu}  \rvv_{L^2_{\omega}(\OO)} \leq  \frac {\EE_{\FFFF^0}} 3, \qquad 
C^0_\omega C_1 e^{-\lambda T} \lvv f_0 - \FFFF^0  \rvv_{L^2_{\omega}(\OO)} \leq  \frac {\EE_{\FFFF^0}} 3,
 $$
 we choose $\alpha^\star_2$ small enough such that 
 $$
(\alpha^\star_2)^{1/2} C_{T} \lvv f_0 \lvv_{L^2_{\omega}(\OO)} \leq \frac 13 \EE_{\FFFF^0},
 $$
 and we conclude by setting $\alpha^\star = \min(\alpha^\star_1, \alpha^\star_2)$.  
 \end{proof}

% \medskip\noindent
%\textbullet \, \emph{Conclusion.} 
\subsubsection{Proof of Theorem \ref{theo:SteadySolutionNonL}}
We take $\alpha \in (0, \alpha^\star)$, where $\alpha^\star >0$ is given by Proposition \ref{prop:SchauderBounds} and we remark that Proposition \ref{prop:SchauderContinuity} implies in particular that the image set of the map $\FFF$ is a compact set contained in $[0, 2\EE_{\FFFF^0}]$. Using this together with Propositions \ref{prop:SchauderContinuity} and \ref{prop:SchauderBounds}, we may apply a fixed point theorem (for instance a real version of the Schauder fixed point theorem) and we conclude that there is $\nu^\star\in [0, 2\EE_{\FFFF^0}]$ such that $\FFF(\nu^\star) = \nu^\star$.
\qed

%%%%%%%%%%%%%%%%%%%%%%%%%%%%%%%%%%%%%%%%%%%%%%%%%%%%%%%%%%%%%%%%%%%%%%%%%%%%%%%%%%%%%%%%%%%%%%%%%%%%%%%%%%%%%%%%%%%%%%%%%%%%%%%%%%%%%%%%%%%%%%%%%%%%%%%%%%%%%%%%%%%%%%%%%%%%%%%%%%%%%%%%%%%%%%%%%%%%%%%%%%%%%%%%%%%%%%%%%%%%%%%%
\section{Perturbation around the equilibrium. Growth estimate and well-posedness.} \label{sec:PerturbEquilib}

Throughout the sequel we take $\alpha \in (0,\alpha^\star)$ where $\alpha^\star>0$ is given by Theorem \ref{theo:SteadySolutionNonL}, we introduce a function $g:\UU\to \R$ such that $\lla g_t\rra_\OO = 0$ for all $t\geq 0$, and the initial datum 
\be\label{eq:PerturbedInitialDatum}
h_0:\OO\to \R \qquad \text{ such that }\qquad  \lla h_0\rra_\OO = 0,
\ee 
and we will study the following equation
\be\label{eq:KFPstabilityLin}
\left\{\begin{array}{rcll} 
 \partial_t h &=& -v\cdot \grad_x h  +   \CC_{\Lambda^\star} h +\GGG h   + \alpha \EE_{g}\Delta_v h  + \alpha\EE_{h} \Delta_v \FFFF^\alpha  & \text{ in } \UU  \\
\displaystyle \gamma_- h &=& \RRR \gamma_+ h &\text{ on } \Gamma_{-} \\
\displaystyle  h_{t=0} &=& h_0 &\text{ in } \OO,
 \end{array}\right.
 \ee
where $\Lambda^\star = \alpha \EE_{\FFFF^\alpha } + (1-\alpha) \tau$, as introduced in \eqref{eq:KFPstabilityLin1} during Subsection \ref{ssec:First_Observ}. In particular, we observe that, due to Theorem \ref{theo:SteadySolutionNonL}, there holds $\tau_0/2 \leq \Lambda^\star \leq 2\EE_{\FFFF^0} + \tau_1$,
where it is worth remarking that the previous upper and lower bounds of $\Lambda^\star$ are independent on $\alpha$.

 \begin{rem} 
 At a formal level we note that, by arguing as during \eqref{eq:MassConservation}, Equation \eqref{eq:KFPstabilityLin} conserves mass, therefore a solution $h$ to Equation \eqref{eq:KFPstabilityLin} will satisfy that $\lla h_t \rra_\OO=0$ for all $t\geq 0$. 
 \end{rem}
 
\begin{rem}\label{rem:NonLPerturbed}
Moreover, and still at a formal level, if $g=h$ then a solution $h$ of Equation \eqref{eq:KFPstabilityLin} satisfies that $f= \FFFF^\alpha + h$ is a solution to Equation \eqref{eq:NonLKFP}-\eqref{eq:BoundaryConditions}-\eqref{eq:InitialDatum}.
\end{rem}

\smallskip\noindent
We will dedicate the rest of this section to prove the well-posedness of Equation \eqref{eq:KFPstabilityLin} under suitable assumptions. 
During the sequel we will use the notations 
\be\label{eq:DefQQQg}
\QQ_g h = \PP h +\NNNN_g h, \quad \text{ and } \quad \QQQ_gh = \PPP h + \NNNN_g h, 
\ee
where
\be \label{eq:DefPPPg}
  \PPP h = -v\cdot \grad_x h  + \PP h, \quad  \PP h =  \CC_{\Lambda^\star} h +\GGG h, \quad \text{ and } \quad   \NNNN_g h = \alpha \EE_{g}\Delta_v h  + \alpha\EE_{h} \Delta_v \FFFF^\alpha. \\
 \ee

\subsection{A priori growth estimate}
We will prove first that under suitable assumptions for the function $g$ we can control the growth of the solution in time.
\begin{prop}\label{prop:GrowthLpPrimal+NNNN} 
Let $\omega$ be an admissible weight function, there are constants $\eps_1, \kappa, C>0$ such that if $\lvv g\rvv_{L^2_\omega(\OO)} \leq \eps$ for any $\eps\in (0,\eps_1)$, then there is a constant $C>0$ such that for any solution $h$ to Equation \eqref{eq:KFPstabilityLin} there holds 
\be\label{eq:bddGrowthPerturbed}
\lvv h_t \rvv_{L^2_\omega (\OO)} \leq Ce^{\kappa t} \lvv h_0\rvv_{L^2_\omega(\OO)} \qquad \forall t \geq 0,
\ee
together with the energy estimate on the gradient
\be\label{eq:bddGradPerturbed}
 \int_0^t \| \grad_v h_{s} \|^2_{L^2_{\omega}(\OO)} ds \lesssim_C  \| h_0 \|^2_{L^2_{\omega}(\OO)} + 
 \int_0^t \| h_s \|^2_{L^2_{\omega}(\OO)} ds  \qquad \forall t>0.
\ee
\end{prop}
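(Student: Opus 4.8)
The plan is to mimic the proof of Lemma~\ref{lem:GrowthLpPrimal} (Steps~1--6), treating the two extra terms $\alpha\EE_g\Delta_v h$ and $\alpha\EE_h\Delta_v\FFFF^\alpha$ coming from $\NNNN_g$ as lower-order perturbations that are absorbed by the dissipation $-\frac{\tau_0}{2}\int_\OO|\grad_v(h\widetilde\omega)|^2$ once $\eps_1$ is small enough. Concretely, I would work at the level of a priori estimates first, with $\widetilde\omega$ the modified weight from \eqref{def:omegaA}, and write the evolution of $\frac12\frac{d}{dt}\int_\OO h^2\widetilde\omega^2$ as in \eqref{eq1:SLestimL1} with $p=2$, plus the two new contributions $\alpha\EE_g\int_\OO(\Delta_v h)\,h\,\widetilde\omega^2$ and $\alpha\EE_h\int_\OO(\Delta_v\FFFF^\alpha)\,h\,\widetilde\omega^2$. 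Steps~1--3 (boundary term nonpositive for $A$ large, control of $\varpi^{\BB_0}_{\widetilde\omega,2}$, control of $v\cdot\grad_x\widetilde\omega^2$) and Step~4 (the BGK non-local terms via Proposition~\ref{prop:HHH+Lp-Lp}) are verbatim as in Lemma~\ref{lem:GrowthLpPrimal}, giving $\varpi^\LL_{\widetilde\omega,2}\le\kappa_1$ for some $\kappa_1>0$ independent of $\alpha$ and $g$.

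The new work is in bounding the $\NNNN_g$-contributions. First, since the equation conserves mass one has $\lla h_t\rra_\OO=0$, but more importantly the Cauchy--Schwarz inequality gives $|\EE_g|\le C^0_\omega\lvv g\rvv_{L^2_\omega(\OO)}\le C^0_\omega\eps$ and $|\EE_h|\le C^0_\omega\lvv h\rvv_{L^2_\omega(\OO)}$ with $C^0_\omega=d^{-1}\lvv\la v\ra^2\omega^{-1}\rvv_{L^2}$. For the first term, integrating by parts exactly as in the proof of Lemma~\ref{lem:ContinuityFFF} (using $\lvv\grad_v\widetilde\omega/\widetilde\omega\rvv_{L^\infty}+\lvv\Delta_v\widetilde\omega/\widetilde\omega\rvv_{L^\infty}\le C_\omega$ from \eqref{eq:ControlGradDeltaTilde} and \eqref{eq:cond-Lpomega}):
\begin{equation*}
\Bigl|\alpha\EE_g\int_\OO(\Delta_v h)\,h\,\widetilde\omega^2\Bigr|
\le \alpha C^0_\omega\eps\Bigl(\lvv\grad_v(h\widetilde\omega)\rvv_{L^2(\OO)}^2
+ C_\omega'\,\lvv h\rvv_{L^2_{\widetilde\omega}(\OO)}^2\Bigr),
\end{equation*}
where the bad gradient term is $\le\frac{\tau_0}{8}\lvv\grad_v(h\widetilde\omega)\rvv_{L^2}^2$ once $\alpha C^0_\omega\eps_1$ is small. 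For the $H^{-1}$-type term, write $\int_\OO(\Delta_v\FFFF^\alpha)\,h\,\widetilde\omega^2=-\int_\OO\grad_v\FFFF^\alpha\cdot\grad_v(h\widetilde\omega^2)$ and distribute the derivative; using $\lvv\grad_v\FFFF^\alpha\rvv_{L^2_\omega(\OO)}<\infty$ from Theorem~\ref{theo:SteadySolutionNonL} together with $\FFFF^\alpha\lesssim\omega^{-1}$, Young's inequality yields
\begin{equation*}
\Bigl|\alpha\EE_h\int_\OO(\Delta_v\FFFF^\alpha)\,h\,\widetilde\omega^2\Bigr|
\le \alpha C^0_\omega\lvv h\rvv_{L^2_\omega(\OO)}\Bigl(\tfrac{\tau_0}{8\alpha C^0_\omega}\lvv\grad_v(h\widetilde\omega)\rvv_{L^2(\OO)}^2 + C''\lvv h\rvv_{L^2_\omega(\OO)}\Bigr),
\end{equation*}
so both the gradient part is absorbed into $-\frac{\tau_0}{2}\int_\OO|\grad_v(h\widetilde\omega)|^2$ (leaving, say, $-\frac{\tau_0}{4}$) and the remainder is $\lesssim\lvv h\rvv_{L^2_\omega(\OO)}^2$. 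Collecting everything gives $\frac{d}{dt}\int_\OO h^2\widetilde\omega^2\le -\frac{\tau_0}{4}\int_\OO|\grad_v(h\widetilde\omega)|^2+\kappa\int_\OO h^2\widetilde\omega^2$ for a constant $\kappa>0$ (depending on $\alpha,\eps_1,\omega$ but not on $h,g$), and Gr\"onwall's lemma combined with the equivalence \eqref{eq:omega&omegatilde} gives \eqref{eq:bddGrowthPerturbed}. Integrating the differential inequality on $(0,t)$ and then arguing as in Step~6 of Lemma~\ref{lem:GrowthLpPrimal} — using the triangle inequality \eqref{eq:AprioriGrad_2} and the uniform bound \eqref{eq:ControlGradTilde} on $\grad_v\widetilde\omega/\widetilde\omega$ — converts the $\lvv\grad_v(h\widetilde\omega)\rvv_{L^2}^2$ control into \eqref{eq:bddGradPerturbed}.

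Finally, I would justify that these a priori computations are legitimate for the (renormalized/weak) solutions. As in the earlier proofs, one invokes the renormalized formulation (in the sense of Theorem~\ref{theo:WellPosednessL2} / \cite[Theorem~2.8]{CM_Landau_domain}, once well-posedness of \eqref{eq:KFPstabilityLin} is available from the later section) with $\beta(s)=s^2$ and test function $\varphi=\widetilde\omega^2\chi_R$, repeats the above estimates, passes $R\to\infty$, and uses the integral form of Gr\"onwall. The main obstacle is purely the bookkeeping in handling the $\alpha\EE_h\Delta_v\FFFF^\alpha$ term: it is genuinely an $H^{-1}$ object (one cannot put both velocity derivatives on $h$), so one must integrate by parts to land on $\grad_v\FFFF^\alpha$, and the only available control on $\FFFF^\alpha$ is the weighted $L^2$ gradient bound and the pointwise decay from Theorem~\ref{theo:SteadySolutionNonL} — care is needed to ensure the resulting constants are uniform in $\alpha\in(0,\alpha^\star)$, which works because the bounds on $\Lambda^\star$, on $\EE_{\FFFF^\alpha}\le 2\EE_{\FFFF^0}$, and on $\lvv\grad_v\FFFF^\alpha\rvv_{L^2_\omega}$ are all $\alpha$-uniform.
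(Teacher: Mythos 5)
Your proposal is correct and follows essentially the same route as the paper's proof: the $\PPP$-part is handled verbatim as in Lemma \ref{lem:GrowthLpPrimal}, $\NNNN_g h$ is split into the $\EE_g\Delta_v h$ and $\EE_h\Delta_v\FFFF^\alpha$ contributions, each is integrated by parts and estimated via Cauchy--Schwarz (using $|\EE_g|\lesssim\lvv g\rvv_{L^2_\omega}$, $|\EE_h|\lesssim\lvv h\rvv_{L^2_\omega}$ and the $\alpha$-uniform bound on $\lvv\grad_v\FFFF^\alpha\rvv_{L^2_\omega}$), the gradient pieces are absorbed into the dissipation by taking $\eps_1$ small and an appropriately small Young parameter, and one concludes with Gr\"onwall plus the Step-6 argument for \eqref{eq:bddGradPerturbed}. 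The only blemish is your second displayed inequality, which as written is cubic in $h$ (the factor $\lvv h\rvv_{L^2_\omega}$ multiplies the gradient term); the correct outcome of Young's inequality is $\tfrac{\tau_0}{8}\lvv\grad_v(h\widetilde\omega)\rvv_{L^2}^2+C''\lvv h\rvv_{L^2_\omega}^2$, which is what you actually use in the ensuing absorption and is exactly the paper's choice of the parameter $a^*$.
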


\begin{proof}
The proof follows that of Lemma \ref{lem:GrowthLpPrimal}. We introduce the modified weight function $\widetilde \omega$ as defined in \eqref{def:omegaA} and we write 
\begin{multline}\label{eq1:SQQestimL2}
\frac12\frac{d}{dt} \int_{\OO} h^2 \, \widetilde \omega^2
%= \int_{\OO} f \, \LL^* \widetilde \omega %\, \widetilde \varpi 
= \la \PP h,   h\ra_{L^2_{\widetilde \omega}(\OO)}  %\, \widetilde \varpi 
+   \frac12  \int_{\OO} h^2 \, (v\cdot \grad_x \, \widetilde \omega^2) %\, \widetilde \varpi 
-  \frac 12 \int_\Sigma (\gamma h)^p \, \widetilde \omega^2   (n_x \cdot v) \\
  + \la \NNNN_g h, h\ra_{L^2_{\widetilde\omega}(\OO)}.
\end{multline}
We split the proof into 3 Steps.

\medskip\noindent
\emph{Step 1. (Control of the terms coming from the operator $\PPP$)}
Repeating the arguments from the Steps 1, 2, 3, 4 and 5 of the proof of Lemma \ref{lem:GrowthLpPrimal} we know that there is $\kappa>0$ such that 
\beqn 
\la \PP h,   h\ra_{L^2_{\widetilde \omega}(\OO)} 
+   \frac12  \int_{\OO} h^2 \, (v\cdot \grad_x \, \widetilde \omega^2) 
-  \frac 12 \int_\Sigma (\gamma h)^p \, \widetilde \omega^2   (n_x \cdot v) 
\leq - \frac {\tau_0}2 \lvv \grad_v(h \, \widetilde \omega)\rvv_{L^2(\OO)} + \kappa \lvv h \rvv_{L^2_{\widetilde \omega}(\OO)}^2.
\eeqn

\medskip\noindent
\emph{Step 2. (Control of the term $\NNNN_g$)} We will prove during this step that for every $a^*>0$ there holds 
\begin{multline*}
\la \NNNN_g h, h\ra_{L^2_{\widetilde \omega}(\OO)} \leq \alpha C_\omega  \left( \lvv g\rvv_{L^2_{ \omega}(\OO)} + {a^* \over 2}  \lvv \grad_v \FFFF^\alpha \rvv_{L^2_{ \omega}(\OO)} \right) \lvv \grad_v (h{\widetilde \omega}) \rvv_{L^2(\OO)}^2 \\
+ \alpha C_\omega \left[\left( 1+ {1\over 2a^*} \right)  \lvv \grad_v \FFFF^\alpha \rvv_{L^2_{ \omega}(\OO)}+ \lvv g\rvv_{L^2_{ \omega}(\OO)}  \right] \lvv h\rvv_{L^2_{\widetilde \omega}(\OO)}^2 ,
\end{multline*}
for some constant $C_\omega >0$.
Indeed we have
$$
\la \NNNN_g h, h\ra_{L^2_{\widetilde \omega}(\OO)}  = \alpha \left( \EE_{g} \la \Delta_v h , h\ra_{L^2_{\widetilde \omega}(\OO)}  +   \EE_{h} \la \Delta_v  \FFFF^\alpha , h\ra_{L^2_{\widetilde \omega}(\OO)} \right) =: \alpha( \NN_1 + \NN_2),
$$
and we will control each term separately. On the one hand using integration by parts we compute
\bean
\NN_1 &=& \EE_g \int_\OO \Delta h\, h\, {\widetilde \omega}^2 = \EE_g \left( -\int_\OO \lv \grad_v (h {\widetilde \omega})\rv^2 + \int_\OO h^2{\widetilde \omega}^2 \, {\lv \grad_v {\widetilde \omega}\rv^2 \over {\widetilde \omega}^2}  \right) \\
&\leq & C_\omega^1\,  \lvv g\rvv_{L^2_\omega(\OO)} \left( \lvv \grad_v (h{\widetilde \omega}) \rvv_{L^2(\OO)}^2 + \left\lvv { \grad_v {\widetilde \omega} \over {\widetilde \omega}}\right\rvv_{L^\infty(\R^d)}^2   \lvv h\rvv_{L^2_{\widetilde \omega}(\OO)}^2     \right),
\eean
where we have set $C_\omega^1 = \lvv v \, \omega^{-1}\rvv_{L^2}$, and we have used the Cauchy-Schwartz inequality to obtain the second line. 
On the other hand we have that
\bean
\NN_2 &=& \EE_h \int_\OO \Delta \FFFF^\alpha\, h\, {\widetilde \omega}^2 = \EE_h \left( -\int_\OO \grad_v \FFFF^\alpha \cdot \grad_v (h {\widetilde \omega}) \, {\widetilde \omega} - \int_\OO (\grad_v \FFFF^\alpha \cdot \grad_v {\widetilde \omega} )\, h {\widetilde \omega} \, \right) \\
&\leq & C_{\widetilde \omega}^1 \lvv h\rvv_{L^2_{\widetilde \omega}(\OO)}\left( \lvv \grad_v \FFFF^\alpha \rvv_{L^2_{\widetilde \omega}(\OO)} \lvv \grad_v (h{\widetilde \omega}) \rvv_{L^2(\OO)} + C_{\widetilde \omega}^2 \lvv \grad_v \FFFF^\alpha \rvv_{L^2_{\widetilde \omega}(\OO)} \lvv  h \rvv_{L^2_{\widetilde \omega}(\OO)}\right)\\
&\leq & {a^* C_{\widetilde \omega}^1 \, c_A  \over 2}  \lvv \grad_v \FFFF^\alpha \rvv_{L^2_\omega(\OO)} \lvv \grad_v (h{\widetilde \omega}) \rvv_{L^2(\OO)}^2 +  c_A C_{\widetilde \omega}^1 \left( C_{\widetilde \omega}^2 + {1\over 2a^*}\right)  \lvv \grad_v \FFFF^\alpha \rvv_{L^2_\omega(\OO)} \lvv h\rvv_{L^2_{\widetilde \omega}(\OO)}^2 ,
\eean
where $C_{\widetilde \omega}^2 = \lvv (\grad_v \widetilde \omega) / \widetilde \omega\rvv_{L^\infty}$ and we have used integration by parts on the first line, the Cauchy-Schwartz inequality to obtain the second and the Young inequality together with \eqref{eq:omega&omegatilde} to obtain the third line. 

\smallskip\noindent
We then remark that $C^1_\omega <\infty$ due to the very definition of $\omega$ and $C^1_{\widetilde \omega} + C^2_{\widetilde \omega} <\infty$ by arguing as during \eqref{eq:ControlGradDeltaTilde}.
Altogether the previous computations imply the inequality presented at the beginning of this step. 

\medskip\noindent
\emph{Step 3. (Conclusion)}
We choose $\eps_1 = \tau_0/(8C_{ \omega})$ and $a^*=\tau_0/(8C_{ \omega} \lvv \grad_v \FFFF^\alpha \rvv_{L^2_{\omega}(\OO)})$ where we recall that $C_\omega$ is given by the Step 2. Then, since $\lvv g\rvv_{L^2_{\omega}(\OO)}\leq \eps$ with $\eps\leq \eps_1$, we deduce that
$$
\frac12\frac{d}{dt} \int_{\OO} h^2 \, \widetilde \omega^2 \leq -\frac {\tau_0}4  \lvv \grad_v (h \widetilde\omega) \rvv_{L^2(\OO)}^2 + (\kappa + \kappa^\star)  \lvv h \rvv_{L^2_{\widetilde\omega}(\OO)}^2,
$$
where 
$$
\kappa^\star = C_{\omega} \left(\left( 1+ {1\over 2a^*} \right)  \lvv \grad_v \FFFF^\alpha \rvv_{L^2_{\omega}(\OO)}+ \lvv g\rvv_{L^2_{\omega}(\OO)} \right).
$$
We remark that $\kappa^\star$ is finite due to \eqref{eq:PropsSS}, we conclude the proof by remarking that \eqref{eq:bddGrowthPerturbed} is a consequence of the Grönwall lemma and \eqref{eq:bddGradPerturbed} is obtained by arguing as during the Step 6 of the proof of Lemma \ref{lem:GrowthLpPrimal}.
\end{proof}

\subsection{Well-posedness in a weighted $L^2$ framework} In this subsection we will extend the well-posedness theory presented in Subsection \ref{ssec:KolmogorovPrimal} to a framework fitting Equation \eqref{eq:KFPstabilityLin}. 

\smallskip\noindent
We note that the main difficulty comes from the presence of a term including $\Delta_v \FFFF^\alpha \in L^2_x H^{-1}_v$. We remark, however, that the main technical tools to achieve such a well-posedness result have been recently developed in \cite[Subsection 2.3]{CM_Landau_domain}, and we will be using them during the proof of the following proposition.

\begin{prop}\label{prop:KFPL2Perturbed}
Let $\omega$ be an admissible weight function. There is $\eps_2>0$ such that if $\lvv g\rvv_{L^2_\omega(\UU)}\leq \eps$ for any $\eps\in (0,\eps_2)$ there holds that for any initial datum $h_0 \in L^2_\omega(\OO)$, there exists a unique global weak solution $h \in C(\R_+, L^2_\omega(\OO)) \cap  \HHH_\omega(\UU)$  to Equation \eqref{eq:KFPstabilityLin}, where we recall that the Hilbert space $\HHH_\omega$ is defined in \eqref{def:ExistenceHilbert}. 
More precisely, for every test function $\varphi\in \DD'(\bar \UU)$ there holds
\begin{multline}\label{eq:Perturbed_renormalized_formulation}
\int_\UU \grad_v\varphi \cdot \grad_v h \left( \Lambda^\star + \alpha\EE_g\right) + \int_\UU h \left(-\partial_t \varphi +  v\cdot \grad_v \varphi - \GGG^* \varphi -v\cdot \grad_x \varphi  \right) \\ -\int_\UU \EE_h \, \grad_v \FFFF^\alpha \cdot \grad_v \varphi + \int_\Gamma \gamma h \, \varphi (n_x\cdot v) = \int_\OO h_0\, \varphi(0,\cdot)
\end{multline}
where we remark that the trace $\gamma h$ is defined by \cite[Theorem 2.8]{CM_Landau_domain} and satisfies $\gamma h \in L^2_\omega(\Gamma,d\xi^2_\omega)$ as well as  the Maxwell boundary condition \eqref{eq:BoundaryConditions} pointwisely. Finally there also holds $h(0,\cdot) = h_0$ pointwisely. 
\end{prop}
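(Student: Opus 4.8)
The plan is to adapt the strategy of Theorem~\ref{theo:WellPosednessL2}, which itself relies on Lions' variant of the Lax--Milgram theorem as implemented in \cite[Subsection~2.3]{CM_Landau_domain}, with the extra difficulty being the two nonlocal-in-velocity terms $\alpha\EE_g\Delta_v h$ and $\alpha\EE_h\Delta_v\FFFF^\alpha$. I would split the proof into three steps exactly as in Theorem~\ref{theo:WellPosednessL2}: (1) choose the modified weight $\widetilde\omega$ from \eqref{def:omegaA} and record the bounds \eqref{eq:BoundsVartheta}--\eqref{eq:Condition_Omega_Grad}, which are unchanged since they only concern the geometry of $\Omega$ and the weight; (2) set up a coercive bilinear form and apply Lions' theorem to the inflow problem, then pass to the Maxwell boundary condition by a Banach fixed point argument on the reflection operator $\RRR$ (using $\lvv\RRR\rvv_{L^2(\Sigma_+,\d\xi^1_{\widetilde\omega})}\le 1$ from Step~1 of Lemma~\ref{lem:GrowthLpPrimal}) together with the trace theory of \cite[Theorem~2.8]{CM_Landau_domain}; (3) derive the energy estimates \eqref{eq:bddGrowthPerturbed}--\eqref{eq:bddGradPerturbed} from the renormalized formulation, which is exactly Proposition~\ref{prop:GrowthLpPrimal+NNNN}.

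The key point in Step~2 is to treat the diffusion coefficient as $\Lambda^\star+\alpha\EE_g$, which is a genuine positive coefficient bounded above and below uniformly (since $\EE_g\lesssim\lvv g\rvv_{L^2_\omega}\le\eps$, the coefficient sits in $[\tau_0/2,\,2\EE_{\FFFF^0}+\tau_1+1]$ once $\eps$ is small), so the second-order part of the operator still fits the framework of \cite{CM_Landau_domain} with a modified but admissible $\Lambda$. The genuinely new term is $\alpha\EE_h\Delta_v\FFFF^\alpha$: since $\FFFF^\alpha\in L^2(\Omega,H^1(\R^d))$ by Theorem~\ref{theo:SteadySolutionNonL}, we have $\grad_v\FFFF^\alpha\in L^2_\omega(\OO)$, so $\Delta_v\FFFF^\alpha\in L^2_x H^{-1}_v$ only, and moreover it is multiplied by the scalar $\EE_h$ which depends linearly and nonlocally on $h$. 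I would incorporate this into the bilinear form via the term $-\int\EE_h\,\grad_v\FFFF^\alpha\cdot\grad_v(\varphi\widetilde\omega^2)$ appearing in \eqref{eq:Perturbed_renormalized_formulation}, absorbing it into coercivity by the Cauchy--Schwarz/Young argument of Step~2 of Proposition~\ref{prop:GrowthLpPrimal+NNNN}: the bound $|\EE_h|\le C^1_\omega\lvv h\rvv_{L^2_\omega}$ together with $\lvv\grad_v\FFFF^\alpha\rvv_{L^2_\omega}<\infty$ and the choice of a small free parameter lets one control $\alpha\EE_h\langle\Delta_v\FFFF^\alpha,h\rangle_{L^2_{\widetilde\omega}}$ by $\tfrac{\tau_0}{8}\lvv\grad_v(h\widetilde\omega)\rvv_{L^2}^2$ plus a constant times $\lvv h\rvv_{L^2_{\widetilde\omega}}^2$, at the cost of taking $\alpha<\alpha^\star$ and $\eps<\eps_2$ small. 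This is what makes the form coercive for $\lambda$ large, and then Lions' theorem gives a variational solution, while the trace theorem promotes it to a renormalized solution in $C(\R_+,L^2_\omega(\OO))\cap\HHH_\omega(\UU)$ satisfying \eqref{eq:Perturbed_renormalized_formulation} and the Maxwell boundary condition pointwise.

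For uniqueness I would take two solutions $h_1,h_2$ with the same data, note that $H:=h_1-h_2$ satisfies \eqref{eq:KFPstabilityLin} with $h_0=0$ (here linearity is essential: the map $h\mapsto\NNNN_g h$ is linear in $h$ for fixed $g$, and the coefficient $\Lambda^\star+\alpha\EE_g$ does not depend on the solution), and then apply the a priori estimate \eqref{eq:bddGrowthPerturbed} from Proposition~\ref{prop:GrowthLpPrimal+NNNN} to conclude $H\equiv 0$. Finally, the pointwise identities $\gamma h$ satisfying \eqref{eq:BoundaryConditions} and $h(0,\cdot)=h_0$ follow from \cite[Theorem~2.8]{CM_Landau_domain} exactly as in Theorem~\ref{theo:WellPosednessL2}.

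I expect the main obstacle to be Step~2, specifically making rigorous the variational setup in the presence of the $L^2_x H^{-1}_v$ term $\alpha\EE_h\Delta_v\FFFF^\alpha$: one must check that this term defines a bounded (and, after the smallness absorption, compatible) perturbation of the coercive part on the Hilbert space $\HHH_\omega(\UU_T)$, and that the fixed point argument transferring the inflow solution to the Maxwell boundary condition still closes when this term is present. The smallness of $\alpha$ (already granted by working below $\alpha^\star$) and of $\lvv g\rvv_{L^2_\omega}$ is what buys the needed slack; the bookkeeping of the weight $\widetilde\omega$ versus $\omega$ through \eqref{eq:omega&omegatilde} and \eqref{eq:ControlGradDeltaTilde} is routine but must be done carefully so that all constants remain independent of $\alpha$ in the relevant ranges.
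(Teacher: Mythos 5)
Your overall architecture matches the paper's: modified weight $\widetilde\omega$, Lions' variant of Lax--Milgram on an inflow problem with diffusion coefficient $\widetilde\Lambda=\Lambda^\star+\alpha\EE_g$ (the paper makes exactly this choice and fixes $\eps_2$ so that $\widetilde\Lambda\ge\tau_0/4$), promotion to a renormalized solution with trace via \cite[Theorem 2.8]{CM_Landau_domain}, energy estimates read off from Proposition \ref{prop:GrowthLpPrimal+NNNN}, and uniqueness by linearity. One genuine difference of route: you propose to place the nonlocal terms ($\GGG h$ and $\alpha\EE_h\Delta_v\FFFF^\alpha$) directly inside the bilinear form and absorb them into coercivity by Cauchy--Schwarz/Young. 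The paper instead keeps the bilinear form purely local and runs a Picard iteration in which $\HH^k=dh^k+\GGG h^k+\EE_{h^k}\Delta_v\FFFF^\alpha$ is frozen at the previous iterate, proving the scheme is Cauchy in $C([0,T];L^2_{\widetilde\omega})\cap\HHH_{\widetilde\omega}$ for $T$ small. Your variant is plausible (the added term is bilinear, bounded on $\HHH_{\widetilde\omega}\times\Phi$, and the Young absorption you describe is exactly the computation of Step 2 of Proposition \ref{prop:GrowthLpPrimal+NNNN}), and it keeps the source in the class $L^2+\Div_v L^2$ needed for the trace theorem afterwards; the paper's iteration buys a cleaner fit with the hypotheses of \cite[Proposition 2.8]{CM_Landau_domain}, which is stated for a local operator plus a \emph{given} source.

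The genuine gap is in your treatment of the boundary condition. A ``Banach fixed point argument on $\RRR$'' using only $\lvv\RRR\rvv_{L^2(\Sigma_+,\d\xi^1_{\widetilde\omega})}\le 1$ does not close: with operator norm exactly $1$ the boundary contribution to the energy estimate does not contract, so the iteration $\gamma_-h^{k+1}=\RRR\gamma_+h^k$ need not converge. The paper (following \cite{CM_Landau_domain}) solves instead the damped problem $\gamma_-h=\delta\RRR\gamma_+h$ with $\delta<1$ — there the factor $\delta$ on the boundary term plus smallness of $T$ gives the contraction — and then performs a separate, non-trivial limit $\delta_k\nearrow1$ (Step 3 of the paper's proof). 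That limit is not soft: it requires uniform bounds in $\HHH_{\widetilde\omega}$ and on the traces, an $H^{1/4}$ regularity bound from \cite[Theorem 1.3]{MR1949176} combined with Rellich--Kondrachov and a tightness estimate to upgrade weak to strong $L^2_\varsigma$ convergence (whence $\EE_{h_k}\to\EE_h$, needed to pass to the limit in the nonlocal term), and a weak-$L^1$/weak-$L^2_{\rm loc}$ argument on the traces to identify $\gamma_-h=\RRR\gamma_+h$ a.e.\ in the limit. None of this appears in your proposal, and without it you only obtain solutions of the damped boundary problem, not of \eqref{eq:KFPstabilityLin}.
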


\begin{rem}
The proof of Proposition \ref{prop:KFPL2Perturbed} is based on \cite[Theorem 3.3]{CGMM24} and \cite[Theorem 2.11]{CM_Landau_domain} using the a priori estimate from Proposition \ref{prop:GrowthLpPrimal+NNNN} and \cite[Proposition 2.8]{CM_Landau_domain}.
\end{rem}

\begin{rem}\label{rem:Def_S_QQQ_g}
In particular, Proposition \ref{prop:KFPL2Perturbed} implies the existence of a strongly continuous semigroup, which will be denoted as $S_{\QQQ_g}$, associated to the solutions of Equation \eqref{eq:KFPstabilityLin}. 
\end{rem}

\begin{proof}[Proof of Proposition~\ref{prop:KFPL2Perturbed}] We proceed to the proof into four steps.

\medskip\noindent
\textit{Step 1.} We consider $\widetilde \omega$ as defined in \eqref{def:omegaA}, a finite $T>0$, $\HH \in L^2_{t,x, v} + L^2_{t,x} H^{-1}_v$ such that $\HH = \HH_0 + \Div_v \HH_1$ with $\HH_0,\HH_1 \in L^2_{\widetilde \omega}(\UU_T)$, and the boundary data $\HH_b \in L^2(\Gamma_{T,-}; d\xi^1_{\widetilde \omega})$.
We consider the inflow problem
\begin{equation}\label{eq:linear_primal_inflow}
\left\{\begin{array}{rcll}
 \partial_t h &=& - v \cdot \nabla_x h + \widetilde \Lambda \Delta_v h + v\cdot \grad_v h  +\HH \quad &\text{ in } \UU_T  \\
 \gamma_{-} h  &= &\HH_b \quad &\text{ on } \Gamma_{T,-} \\
 h_{t=0} &=& h_0  \quad &\text{ in }   \OO,
\end{array}\right.
\end{equation}
where $\widetilde \Lambda =  \alpha \EE_{\FFFF^\alpha } + \alpha \EE_g + (1-\alpha) \tau$.
By setting $\eps_2^1 = \tau_0 / (2 \lvv \la v\ra^2 \omega^{-1} \rvv_{L^2})$ we observe that if $\lvv g\rvv_{L^2_\omega(\UU)}\leq \eps$ for $\eps\in (0,\eps_2^1)$, there holds
 $$
 {\tau_0 \over 4} \leq \widetilde \Lambda \leq 2\EE_{\FFFF^0} + {\tau_0 \over 2}  + \tau_1.
 $$
We define, for a constant $\lambda>0$, the bilinear form
$\EEE : \HHH_{\widetilde \omega} (\UU_T) \times C_c^1([0,T)\times \OO \cup \Gamma_{T,-}) \to \R$ by 
\beqn
\EEE(h,\varphi) := \int_{\UU_T} \widetilde \Lambda \, \grad_v h \cdot \grad_v (\varphi \widetilde \omega^2)  + \int_{\UU_T} h \left(\lambda \varphi \, \widetilde \omega^2 - (\partial_t \varphi)\,   \widetilde \omega^2 +  \Div_v(v \varphi \, \widetilde \omega^2) -v\cdot \grad_x (\varphi \, \widetilde \omega^2)  \right).
\eeqn
We observe that, due to our choice of $\eps_2^1$, we can follow the same computations as during the proof of Lemma~\ref{lem:GrowthLpPrimal} and we obtain that there is $\lambda_0 \in \R$ such that 
 \beqn\label{eq:EEE-coercive}
\EEE(\varphi,\varphi) 
\ge (\lambda-\lambda_0) \| \varphi \|^2_{L^2_{\tilde\omega}} +  \| \varphi \|^2_{H^{1,\dagger}_{\tilde\omega}} 
% \| \varphi \|^2_{\dot H^1_{\sqrt{\sigma}\omega}} %+ \lambda_1 \| \varphi \|^2_{L^2_{\sqrt{ \varsigma}\omega}}   
%+ \tfrac12 \| \varphi(0) \|^2_{L^2_{\tilde\omega}} + \tfrac12 \| \gamma_+ \varphi \|^2_{L^2_{\tilde\omega}(\Gamma_+; d\xi_1)}
, 
\eeqn
for any $\varphi \in C_c^1([0,T)\times \OO \cup \Gamma_-)$. 
Taking then $\lambda>\lambda_0$, the  bilinear form $\EEE$ is coercive and using Lions' variant of the Lax-Milgram theorem from \cite[Chap~III,  \textsection 1]{MR0153974}, we deduce the existence of a function $h_\lambda \in \HHH_{\widetilde \omega} (\UU_T)$
which satisfies the variational equation 
\beqn\label{eq:EEE-cequation}
\EEE(h_\lambda,\varphi) = \int_{\Gamma_{T,-}} \HH_b \, e^{-\lambda t} \, \varphi\,  \widetilde\omega^2 \, \d\xi^1_1  + \int_\OO h_0 \, \varphi(0,\cdot) \, \widetilde\omega^2  \, \d v \, \d x + \int_{\UU_T} \HH \, \varphi \, \widetilde \omega^2   \,\d v\, \d x \, \d t  , 
\eeqn
for every $\varphi \in C_c^1([0,T) \times \OO \cup \Gamma_-)$ and where we remark that the last term is defined as the product between functions in $L^2_{t,x}H^{-1}_v$ and $L^2_{t,x}H^{1}_v$ in $\UU_T$.

\smallskip\noindent
Defining now $h := h_\lambda e^{\lambda t}$ we deduce that $h \in \HHH_{\widetilde \omega}(\UU_T)$ 
is a weak solution to the inflow problem \eqref{eq:linear_primal_inflow} and by using \cite[Proposition 2.8-(3)]{CM_Landau_domain} with the choices $\sigma_{ij} = \widetilde \Lambda$, $\nu = v$ and $G=\HH$, we further have that $h \in C([0,T), L^2_{\widetilde \omega}(\OO))\cap \HHH_{\widetilde \omega}(\UU_T)$ and it has a trace $\gamma h\in L^2(\Gamma_T; \d\xi^1_{\widetilde \omega} \, \d t)$. Another consequence of \cite[Proposition 2.8-(3)]{CM_Landau_domain} is that $h$ is a renormalized solution to Equation \eqref{eq:linear_primal_inflow}, i.e there holds 
\bear\label{eq:FPK-traceL2}
&& \int_{\UU_t} \left( \beta(h) \left( -\partial_t \varphi -v\cdot \grad_x \varphi - \widetilde \Lambda \Delta_v \varphi + \Div_v (v\varphi) \right) + \beta''(h) \, \widetilde \Lambda \lv \grad_v h\rv^2     \varphi \right)  \dv \, \dx \, \d s
 \\ \nonumber
&&\qquad
+  \int_{\Gamma_t} \beta(\gamma \, h) \, \varphi \,  (n_x \cdot {v}) \, \dv \, \d\sigma_{\! x} \, \d s
+ \left[ \int_\OO \beta(h_t)  \varphi(t,\cdot) \, \dx \, \dv \right]_0^t =  \int_{\UU_t} \HH  \, \beta'(h) \,  \varphi \, \d v\, \d x\, \d s
\eear
for any $t\in [0,T]$, any renormalizing function $\beta\in \BBBB$, and any test function $\varphi \in \DD'(\bar \UU_T)$. 

\smallskip\noindent
Using now the renormalized formulation \eqref{eq:FPK-traceL2} with $\beta (s) = s^2$ and $\varphi = \widetilde \omega^2 \chi_R$ for any $R>0$, we proceed similarly as during the proof of Proposition \ref{prop:GrowthLpPrimal+NNNN}, we take the limit as $\R\to \infty$, and using the integral version of the Grönwall lemma we obtain that
$$
\lvv h\rvv_{L^2_{\widetilde \omega}(\UU_t)}^2 \leq e^{\lambda_0 t} \lvv h_0\rvv_{L^2_{\widetilde \omega}(\OO)}^2 + C \int_0^t e^{\lambda_0 (t-s)}  \left(  \lvv  \HH_{0s} \rvv_{L^2_{\widetilde \omega}(\OO)}^2 +  \lvv \HH_{1s} \rvv_{L^2_{\widetilde \omega}(\OO)}^2   +   \lvv \HH_{bs} \rvv_{L^2(\Sigma_-; \d\xi^1_{\widetilde \omega})}^2  \right)  \d s,
$$
for every $t\in [0,T]$. Considering then $h_1, h_2 \in  C([0,T), L^2_{\widetilde \omega}(\OO))\cap \HHH_{\widetilde \omega}(\UU_T)$ two solutions of Equation \eqref{eq:linear_primal_inflow}, we take $\psi = h_1-h_2$ and we remark that $\psi$ solves Equation \eqref{eq:linear_primal_inflow} in the weak sense with $h(0) = \HH = \HH_b = 0$. We immediately deduce that $\psi\equiv 0$ from the above energy estimate, thus the uniqueness of the solution to Equation \eqref{eq:linear_primal_inflow}.

\medskip\noindent
\textit{Step 2.} We take $\delta\in (0,1)$, $h^0=0$ and we define the sequence $h^k$ recurrently as follows
\begin{equation}\label{eq:linear_primal_inflow_K}
\left\{\begin{array}{lcll}
 \partial_t h^{k+1} &=& - v \cdot \nabla_x h^{k+1} + \widetilde \Lambda \Delta_v h^{k+1} + v\cdot \grad_v h^{k+1}  +\HH^k  &\text{ in } \UU_T \\
 \gamma_{-} h^{k+1}  &= &\delta \, \RRR \gamma_+ h^k  &\text{ on }  \Gamma_{T, -} \\
 h_{t=0}^{k+1} &=& h_0   &\text{ in }   \OO,
\end{array}\right.
\end{equation}
where $\HH^k = dh^k+ \GGG h^k+ \EE_{h^k} \, \Delta_v \FFFF^\alpha$. We observe that the sequence $h^k$ is well defined by using an induction argument: given $h^{k}\in C([0,T], L^2_{\widetilde \omega}(\OO))\cap  \HHH_{\widetilde \omega}(\UU_T)$ with a trace $\gamma h^{k}\in L^2(\Gamma_T;  \d\xi^1_{\widetilde \omega} \, \d t)$
we have that $\HH^{k}\in L^2_{t,x}H^{1}_v + L^2_{t,x}H^{-1}_v$ and since $\RRR$ satisfies \eqref{eq:ConditionL2RRR}, as proved during the Step 1 of the proof of Lemma \ref{lem:GrowthLpPrimal}, we also have that
$\RRR \gamma_+ h^{k}\in L^2(\Gamma_{T, -}, \d\xi^1_{\widetilde \omega} \, \d t)$.
Applying then the results from Step~1 we obtain the existence of $h^{k+1} \in C([0,T), L^2_{\widetilde \omega}(\OO))\cap \HHH_{\widetilde \omega}$ with an associated trace function $\gamma h^{k+1} \in L^2(\Gamma_T; \d\xi^1_{\widetilde \omega}\, \d t)$, renormalized solution of Equation \eqref{eq:linear_primal_inflow_K}. 

\smallskip\noindent
We take then $\eps_2 = \min(\eps_1, \eps_2^1)$ where $\eps_1$ is given by Proposition \ref{prop:GrowthLpPrimal+NNNN}, and using the renormalized formulation \eqref{eq:FPK-traceL2} with the choices of $\beta(s)=s^2$ and $\varphi = \widetilde \omega^2\chi_R$ for any $R>0$ we can follow the computations performed during the proof of Proposition \ref{prop:GrowthLpPrimal+NNNN}, we pass to the limit as $R\to \infty$, and we get the energy estimate
\begin{multline}\label{eq:Energy_Estimate_K}
\lvv h^{k+1}_t \rvv_{L^2_{\widetilde \omega}(\OO)}^2 + \lvv h^{k+1} \rvv_{H^{1,\dagger}_\omega(\OO)}^2 + \lvv \gamma_+ h^{k+1} \rvv_{L^2(\Gamma_{t,+}; d\xi^1_{\widetilde \omega})}^2 \leq \lvv h^{k+1}_0 \rvv_{L^2_{\widetilde \omega}(\OO)}^2
\\ +\delta \lvv \gamma_+ h^k \rvv_{L^2(\Gamma_{t,+}; d\xi^1_{\widetilde \omega})}^2 + \lambda_0 \lvv h^{k+1}\rvv_{L^2_{\widetilde \omega}(\UU_t)}^2 + \lambda_1 \lvv h^{k}\rvv_{L^2_{\widetilde \omega}(\UU_t)}^2 ,
\end{multline}
for every $t\in [0,T]$ and some constants $\lambda_0, \lambda_1 >0$. 
Using then the integral version of the Grönwall lemma we obtain
\begin{multline*}%\label{eq:Energy_Estimate_K_Gronwall}
\| h^{k+1}_t \|^2_{L^2_{\tilde\omega}(\OO)} + \int_0^t \left( \| \gamma_+ h^{k+1}_s \|^2_{L^2(\Sigma_+; d\xi^1_{\widetilde \omega})} +   \| h^{k+1}_s  \|^2_{H^{1,\dagger}_{\tilde\omega}(\OO)} \right) \, e^{\lambda_0(t-s)} \, d s 
\\
\le
 \| h_0 \|^2_{L^2_{\tilde\omega}(\OO)} e^{\lambda_0 t} + \int_0^t  \left( \delta \| \gamma_+ h^k_s \|^2_{L^2_{\tilde\omega}(\Sigma_-; d\xi_1)}  +  \lambda_1 \lvv h^{k}_s \rvv_{L^2_{\widetilde \omega}(\OO)}^2  \right) \, e^{\lambda_0(t-s)} \, d s.
\end{multline*}
Choosing then $T>0$ small enough, the previous estimate implies that $h^k$ and $\gamma h^k$ are Cauchy sequences in the spaces $ C([0,T], L^2_{\widetilde \omega}(\OO))\cap  \HHH_{\widetilde \omega}(\UU_T)$ and $L^2 (\Gamma_T;  \d\xi^1_{\widetilde \omega} \, \d t)$ respectively.
Therefore we deduce that there are some functions $h \in C([0,T), L^2_{\widetilde \omega}(\OO))\cap  \HHH_{\widetilde \omega}$
and $\bar \gamma \in L^2 (\Gamma_T; \d\xi^1_{\widetilde \omega} \, \d t)$ such that $h^k \to h$ strongly in $ C([0,T), L^2_{\widetilde \omega}(\OO))\cap  \HHH_{\widetilde \omega}$ and $\gamma h^k \to \bar \gamma$ strongly in $L^2 (\Gamma_T; \d\xi^1_{\widetilde \omega}\, \d t)$
as $k\to \infty$. 

\smallskip\noindent
By taking then $\beta$ one-to-one, any $\varphi \in \DD'(\bar\UU_T)$, and using the Lebesgue dominated convergence theorem, we may pass to the limit in the weak formulation of Equation \eqref{eq:linear_primal_inflow_K} and we obtain that $h$ is a weak solution of the problem
 \begin{equation}\label{eq:linear_primal_inflow_infty}
\left\{\begin{array}{lcll}
 \partial_t h &=& \QQQ_g h %- v \cdot \nabla_x h + \CC_{\widetilde \Lambda}  h + \GGG h+ \EE_{h} \, \Delta_v \FFFF^\alpha 
 &\text{ in } \UU_T \\
 \gamma_{-} h  &= &\delta \, \RRR \gamma_+ h \quad &\text{ on }  \Gamma_{T, -} \\
 h_{t=0} &=& h_0  \quad &\text{ in }   \OO,
\end{array}\right.
\end{equation}
where we recall that $\QQQ_g$ is defined in \eqref{eq:DefQQQg}. Using again \cite[Proposition 2.8-(3)]{CM_Landau_domain} we have that $h$ is also a renormalized solution of Equation \eqref{eq:linear_primal_inflow_infty} with a trace function $\gamma h$ and we immediately deduce that $\gamma h = \bar \gamma$ a.e in $\Gamma_T$.

\smallskip\noindent
We now take again $\beta(s)=s^2$ and  $\varphi := (n_x \cdot v) \la v \ra^{-2} \omega^2(v)$ in the renormalize formulation of Equation \eqref{eq:linear_primal_inflow_infty} and following the same arguments leading to the energy estimate \eqref{eq:Energy_Estimate_K} we obtain that
\be\label{eq:ControlMaxwell}
\int_{\Gamma} (\gamma h)^2 \, \d\xi^2_{ \omega}  \, \d t %\hat \omega^2 (n_x \cdot v)^2\dv d\sigma_{\!x}dt 
\lesssim   \| h_0 \|^2_{L^2_{ \omega}(\OO)} e^{\lambda_2 T}. %  \| g_k \|_{\GGG}^2.
\ee
for some constant $\lambda_2 >0$. 

\smallskip\noindent
Moreover, using the renormalized formulation with the choices $\beta(s)=s^2$ and $\varphi = \widetilde \omega^2\chi_R$ for any $R>0$, arguing as during the proof of Proposition \ref{prop:GrowthLpPrimal+NNNN}, and taking the limit as $R\to \infty$, we obtain that there is $\kappa >0$ for which there holds
\bear\label{eq:linear_gak_bdd_BEFORE}
&&\quad \| h_{t} \|^2_{L^2_{\widetilde\omega}(\OO)} + \int_0^t \left\{ (1-\delta) \| \gamma h_{s} \|^2_{L^2(\Sigma_{+}; \d\xi^1_{\widetilde\omega})}
+  
 \| h_{s} \|^2_{H^{1,\dagger}_{\widetilde\omega}(\OO)} 
\right\} \, e^{\kappa (t-s)} \, \d s 
\le e^{\kappa t} 
 \| h_0 \|^2_{L^2_{\widetilde\omega}(\OO)}  , 
\eear
for every $t\in [0, T]$. 
We remark then that the linearity of Equation \eqref{eq:linear_primal_inflow_infty} together with the estimate \eqref{eq:linear_gak_bdd_BEFORE} give the uniqueness of the solution.

\medskip\noindent
\textit{Step 3.} For a sequence $\delta_k \in (0,1)$, $\delta_k \nearrow 1$, we consider $h_k \in C([0,T), L^2_{\widetilde \omega}(\OO))\cap  \HHH_{\widetilde \omega}$ as the renormalized solutions to the modified Maxwell reflection boundary condition problem
\begin{equation}\label{eq:linear_gak}
\left\{\begin{array}{lcll}
 \partial_t h_k &=& \QQQ_g h_k  &\text{ in }  \UU_T \\
 \gamma_{-} h_k   &=&  \delta_k \RRR \gamma_{+} h_k   &\text{ on } \Sigma_{T, -} \\
 h_{k, t=0} &=& h_0   &\text{ in }   \OO,
\end{array}\right.
\end{equation}
obtained by using Step~2.
By following the arguments leading to the estimate \eqref{eq:linear_gak_bdd_BEFORE}, we have that there is $\kappa >0$ such that $h_k$ satisfies 
\bear\label{eq:linear_gak_bdd}
&&\quad \| h_{kt} \|^2_{L^2_{\widetilde\omega}(\OO)} + \int_0^t \left\{ (1-\delta_k) \| \gamma h_{ks} \|^2_{L^2(\Sigma_{+};\d\xi^1_{\widetilde\omega})}
+  
 \| h_{ks} \|^2_{H^{1,\dagger}_{\widetilde\omega}(\OO)} 
\right\} \, e^{\kappa (t-s)} \, \d s 
\le e^{\kappa t} 
 \| h_0 \|^2_{L^2_{\widetilde\omega}(\OO)} , 
\eear
for any $t \in [0,T]$ and any $k \ge 1$. Moreover, for any constants $T, R>0$ we may use the renormalized formulation as during the proof of Proposition \ref{prop:GrowthLpPrimal+NNNN} and we obtain that
\beqn%\label{eq:linear_gak_bdd_Grad}
\int_0^t \lvv \grad_v h_{ks} \rvv_{L^2_\omega (\OO)} \d s \lesssim e^{\kappa t}  \lvv h_0\rvv_{L^2_\omega(\OO)},
\eeqn
for any $0 < t\leq  T$. Additionally, from \eqref{eq:ControlMaxwell}, we also have the following energy estimate on the boundary
\beqn%\label{eq:linear_gak_bdd_Bdry}
\int_{\Gamma_T} (\gamma h_k)^2 \, \d\xi^2_{\widetilde \omega}  \, \d t %\hat \omega^2 (n_x \cdot v)^2\dv d\sigma_{\!x}dt 
\lesssim   \| h_0 \|^2_{L^2_{\widetilde \omega}(\OO)} e^{\lambda_2 T}. %  \| g_k \|_{\GGG}^2.
\eeqn
From the above estimates, we deduce that, up to the extraction of a subsequence, there exist $h \in \HHH_{\widetilde \omega} (\UU_T)  \cap L^2_{\widetilde \omega}(\UU_T)) \cap L^2([0,T]\times \Omega; H^1_\omega(\R^d))$ and $\nu_\pm \in L^2(\Gamma_{T, \pm}; \d\xi^2_{\widetilde \omega} \d t)$ such that 
\be\label{eq:Weak_Cvge_L2}
h_k \wto h \hbox{ weakly in } \ \HHH_{\widetilde \omega}(\UU_T) \cap L^2_{\widetilde \omega}(\UU_T) \cap L^2([0,T]\times \Omega; H^1_\omega(\R^d)), 
\ee
and
$$
\gamma_\pm h_k \wto \nu_\pm \hbox{ weakly in } \ L^2(\Gamma_{T, \pm}; \d\xi^2_{\widetilde \omega} \d t). 
$$
We now establish more properties regarding the convergence of the previous subsequences to their limit in Steps 3.1 and 3.2 and we will conclude the existence of weak solutions in Step 3.4.

\medskip\noindent
\textit{Step 3.1. (Strong convergence in $L^2(\UU_T)$)} Since $h_{k}\in  L^2([0,T]\times \Omega; H^1_\omega(\R^d))$ and satisfies Equation \eqref{eq:linear_gak} in the weak sense, for any truncated (in $t$ and $x$) version $\left(\bar h_k\right)$ of $\left(h_k\right)$ we may apply \cite[Theorem~1.3]{MR1949176} and we deduce that $\left(\bar h_k\right)$ is bounded in $H^{1/4}(\R_t \times \R^d _x\times \R^d_v)$. Using now the version of the Rellich-Kondrachov theorem for fractional Sobolev spaces (see for instance \cite[Corollary 7.2]{MR2944369} or \cite[Lemma 6.11]{MR3081641}) we deduce that 
\be\label{eq:Stron_Local_Cvge_L2}
h_k \to h \hbox{ strongly in } L^2([0,T]\times \OO_R), 
\ee
for any $R>0$ and where we have defined $\OO_R := \{(x,v)\in \OO; \, d(x,\Omega^c)>1/R, \, \lv v\rv <R \}$. 

\smallskip\noindent
Let now $\varsigma = \omega \la v\ra^{-1/2}$, we have that the sequence $h_{k}$ is tight in $L^2_\varsigma(\UU_T)$. Indeed we observe that
\be\label{eq:Tightness_hk_L2}
\lvv h_{kt} \rvv_{L^2_{\varsigma}(\OO_R^c)} \leq \frac 1{\la R\ra^{1/2}}  \lvv h_{kt} \rvv_{L^2_{\omega}(\OO_R^c)}   \leq \frac 1{\la R\ra^{1/2}} \lvv h_{kt} \rvv_{L^2_\omega(\OO)} \lesssim \frac 1{\la R\ra^{1/2}} e^{\kappa t} \lvv h_0\rvv_{L^2_\omega(\OO)},
\ee
for every $t\in[0,T]$, and where we remark that we have used \eqref{eq:linear_gak_bdd} together with \eqref{eq:omega&omegatilde} to obtain the last inequality. Combining \eqref{eq:Stron_Local_Cvge_L2} and \eqref{eq:Tightness_hk_L2} we classically obtain that $h_k \to h$ strongly in $L^2_{\varsigma}(\UU_T)$ as $k\to \infty$. Using this convergence, the Cauchy-Schwartz inequality, and the fact that $\langle v \rangle^{2+1/2}  \omega^{-1} \in L^2(\R^d)$, we deduce then that
\be\label{eq:Convergence_Energy_L2}
\EE_{h_k} \to \EE_h \qquad \text { as } k\to \infty.
\ee

\noindent
\textit{Step 3.2. (Pointwise convergence for the boundary term)}
Because $\langle v \rangle  \omega^{-1} \in L^2(\R^d)$, 
we have that $L^2(\Gamma_T;  \d\xi^2_{\widetilde \omega} \, \d t) \subset L^1(\Gamma_T; \d\xi_1^1 \, \d t)$ and recalling that, from the very definition of the Maxwell reflection operator given by \eqref{eq:BoundaryConditions}, there holds
\beqn\label{eq:KolmogorovWP-hypL1}
\RRR : L^1(\Sigma_+;d\xi^1_1) \to L^1(\Sigma_-;d\xi^1_1), \quad   \| \RRR \|_{L^1(\Sigma;  d\xi^1_1 )} \le 1, 
\eeqn
we deduce that $\RRR(\gamma h_{k+})  \wto \RRR(\mathfrak{h}_+)$   weakly in $L^1(\Gamma_{T,-}; \d\xi_1^1\, \d t)$. 
On the other hand, from \cite[Theorem 3.2]{CGMM24}, we have $\gamma h_k \wto \gamma h$ weakly in $\Lloc^2 (\Gamma_T; \d\xi^2_1\, \d t)$. %so that ${\mathfrak g} = \gamma g$. 
Using both convergences in the boundary condition $\gamma_- h_k = \RRR(\gamma_+ h_k)$, we obtain that $\gamma_- h = \RRR(\gamma_+ h)$ a.e.

\medskip\noindent
\textit{Step 3.3. (Conclusion of the existence of weak solutions)}
Using \eqref{eq:Weak_Cvge_L2} and \eqref{eq:Convergence_Energy_L2} we may take the limit in the weak formulation of Equation \eqref{eq:linear_gak} and we obtain that $h$ is a weak solution of Equation \eqref{eq:linear_gak} complemented with the Maxwell reflection boundary condition and associated to the initial datum $h_0$, i.e there holds \eqref{eq:Perturbed_renormalized_formulation}.

\smallskip\noindent
Moreover, \cite[Proposition 2.8-(2)]{CM_Landau_domain} implies that $h \in C([0,T]; L^2_{\widetilde \omega}(\OO)) \cap \HHH_{\widetilde \omega}(\UU_T)$ and that $h$ is also a renormalized solution of Equation \eqref{eq:linear_gak}. Repeating the arguments leading to \eqref{eq:linear_gak_bdd} we deduce the energy estimate
\be\label{eq:bddL2&HHH}
 \| h_{t} \|^2_{L^2_{\widetilde\omega}(\OO)} + 2 \int_0^t  \| h_{ks} \|^2_{H^{1,\dagger}_{\widetilde\omega}(\OO)}  \, e^{\kappa(t-s)} \, \d s 
\le e^{\kappa t}
 \| h_0 \|^2_{L^2_{\widetilde\omega}(\OO)}   \qquad \forall t\in [0, T].
\ee

\medskip\noindent
\textit{Step 4.} We consider now two solutions $h_1$ and $h_2 \in  C([0,T];L^2_{\widetilde \omega}(\OO)) \cap \HHH_{\widetilde \omega}(\UU_T)$ to Equation \eqref{eq:KFPstabilityLin} associated to the same initial datum $h_0$. We now define $\psi := h_2 - h_1 $ and we note that $ \psi \in  C([0,T]; L^2_{\widetilde \omega}(\OO)) \cap \HHH_{\widetilde \omega}(\UU_T)$  is  a weak solution to Equation \eqref{eq:KFPstabilityLin} associated to the initial datum $\psi(0) = 0$. We immediately get from \eqref{eq:bddL2&HHH} that $\psi \equiv 0$ a.e. 

\smallskip\noindent
Finally, by repeating this result in every time interval $[jT, (j+1)T]$ for $j\in \N$, we obtain the existence and uniqueness of a global weak solution to Equation \eqref{eq:KFPstabilityLin}
\end{proof}

\section{Hypodissipativity} \label{sec:HypoDissipativity}

We start this section by taking a glance at the equation
\be\label{eq:KFP_PPP}
\left\{\begin{array}{rcll} 
 \partial_t h &=& \PPP h   & \text{ in } \UU  \\
\displaystyle \gamma_- h &=& \RRR \gamma_+ h &\text{ on } \Gamma_{-} \\
\displaystyle  h_{t=0} &=& h_0 &\text{ in } \OO,
 \end{array}\right.
 \ee
 where we recall that $h_0$ is given by \eqref{eq:PerturbedInitialDatum}, and $\PPP$ is defined in \eqref{eq:DefPPPg}.
 We have then the following lemma.
 
 \begin{lem}\label{lem:Hypo_PPP}
Consider $\omega$ and admissible weight function. The following statements hold.

\smallskip\noindent
 {\sl (1)} Let $h_0 \in L^2_\omega(\OO)$, there is $h \in L^2_\omega(\UU) \cap \HHH_\omega(\UU)$  unique global weak solution to Equation \eqref{eq:KFP_PPP}. 
 
 \smallskip\noindent
In particular, there is a strongly continuous continuous semigroup $S_\PPP:L^2_\omega(\OO) \to L^2_\omega(\OO)$ associated to the solutions of Equation \eqref{eq:KFP_PPP}. Moreover there are constants $\lambda>0$ and $C_P>0$ such that there holds
\be\label{eq:DecayPPP}
\lvv  S_\PPP(t) h_0  \rvv_{L^2_\omega(\OO)} \leq C_P \, e^{-\lambda t } \lvv h_0\rvv_{L^2_\omega(\OO)} ,
\ee
and
\be\label{eq:bddGradPPP}
 \int_0^t \| \grad_v S_\PPP(s) h_0 \|^2_{L^2_{\omega}(\OO)} ds \lesssim_{C_P}  \| h_0 \|^2_{L^2_{\omega}(\OO)} + 
 \int_0^t \| S_\PPP(s) h_0 \|^2_{L^2_{\omega}(\OO)} ds  ,
\ee
for all $t > 0$.

\smallskip\noindent
 {\sl (2)} There is a function $\HHHH  \in L^2(\Omega, H^1(\R^d))$ unique steady solution of Equation \eqref{eq:KFP_PPP}. Moreover for every admissible weight function $\varsigma$ there is a constant $C_\HHHH>0$ such that 
\be\label{eq:BdHHHH}
\lvv \HHHH \rvv_{L^2_\varsigma(\OO)} \leq C_\HHHH. 
\ee

\smallskip\noindent
 {\sl (3)} For any final time $T>0$ and any final datum $\psi_T\in L^2_m(\OO)$, with $m=\omega^{-1}$, there is $\psi\in L^2_m(\UU_T)\cap \HHH_m (\UU_T)$ unique global weak solution of the dual backwards in time equation
\be\label{eq:KFP_PPP_dual}
\left\{\begin{array}{rcll} 
 \partial_t \psi &=& \PPP^* \psi := v\cdot \grad_x \psi + \Lambda^\star \Delta_v \psi - v\cdot \grad_v \psi + \GGG^* \psi   & \text{ in } \UU_T  \\
\displaystyle \gamma_+ \psi &=& \RRR^* \gamma_- \psi &\text{ on } \Gamma_{T, +} \\
\displaystyle  \psi_{t=T} &=& \psi_T &\text{ in } \OO,
 \end{array}\right.
 \ee 
 where we recall that $\GGG^*$ is defined in \eqref{eq:Def_GGG_Dual} and $\RRR^*$ is defined \eqref{eq:DefDualMaxwellBC}. In particular, there is a strongly continuous semigroup $S_{\PPP^*} : L^2_m (\OO) \to L^2_m(\OO)$ which is dual to the semigroup $S_\PPP$, i.e there holds \eqref{eq:identite-dualite}. Moreover, there is a constant $C_P^*>0$ such that
\be\label{eq:DecayPPP*}
\lvv  S_{\PPP^*} (0) \, \psi_T  -\la \psi_T, \HHHH\ra \rvv_{L^2_m(\OO)}  \leq C_P^* \, e^{-\lambda T } \lvv \psi_T  -\la \psi_T, \HHHH\ra  \rvv_{L^2_m(\OO)} ,
\ee
and
\be\label{eq:bddGradPPP*}
 \int_0^T \| \grad_v S_{\PPP^*}(0) \psi_T \|^2_{L^2_{m}(\OO)} ds \lesssim_{C_P^*}  \| \psi_T \|^2_{L^2_{m}(\OO)} + 
 \int_0^T \| S_{\PPP^*} (0) \psi_T  \|^2_{L^2_{m}(\OO)} ds .
\ee
 \end{lem}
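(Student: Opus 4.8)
The plan is to obtain Lemma \ref{lem:Hypo_PPP} as a straightforward specialization of the linear theory already developed for Equation \eqref{eq:KFPtau}. The crucial observation is that $\PPP h = -v\cdot\grad_x h + \CC_{\Lambda^\star}h + \GGG h$ is exactly the operator $\LL$ from Subsection \ref{ssec:First_Observ} with the choice $\Lambda = \Lambda^\star = \alpha\EE_{\FFFF^\alpha} + (1-\alpha)\tau$, and by Theorem \ref{theo:SteadySolutionNonL} (see \eqref{eq:PropsSS} and the discussion of $\Lambda^\star$ at the start of Section \ref{sec:PerturbEquilib}) we have $\tau_0/2 \le \Lambda^\star \le 2\EE_{\FFFF^0} + \tau_1$ with bounds independent of $\alpha$. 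Hence every hypothesis required to apply Theorem \ref{theo:WellPosednessL2}, Proposition \ref{prop:SS_Lambda}, Proposition \ref{prop:WellPosednessL2*} and Proposition \ref{prop:SteadySolutionLDual} is met.

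For part (1), I would simply invoke Theorem \ref{theo:WellPosednessL2} with $\Lambda = \Lambda^\star$ to get the unique global weak (renormalized) solution $h \in C(\R_+, L^2_\omega(\OO))\cap\HHH_\omega(\UU)$ and the associated strongly continuous semigroup $S_\PPP$ (Remark \ref{rem:WellPosednessL2}); the gradient energy estimate \eqref{eq:bddGradPPP} is precisely \eqref{eq:bddGrad} from Lemma \ref{lem:GrowthLpPrimal}, which Theorem \ref{theo:WellPosednessL2} guarantees the solution satisfies. The exponential decay \eqref{eq:DecayPPP} follows from Proposition \ref{prop:SS_Lambda}: it produces a unique normalized stationary state $\CCCC$ for Equation \eqref{eq:KFPtau} with $\Lambda = \Lambda^\star$ and the decay bound \eqref{eq:SteadySolutionLLongTimeB_tau}; since $h_0$ satisfies $\lla h_0\rra_\OO = 0$ by \eqref{eq:PerturbedInitialDatum}, the projection term $\lla h_0\rra_\OO\CCCC$ vanishes, and \eqref{eq:SteadySolutionLLongTimeB_tau} collapses to exactly \eqref{eq:DecayPPP}. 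For part (2), I would note that $\HHHH := \CCCC$ (the stationary state just mentioned) is the unique steady solution in the relevant class, and \eqref{eq:BdHHHH} is the pointwise/integrability bound $\CCCC(x,v)\lesssim \varsigma(v)^{-1}$ from \eqref{eq:SteadySolutionLIneq_tau}, which upon integration against $\varsigma^2$ gives a finite $L^2_\varsigma$ norm because $\varsigma^{-1}\in L^2(\R^d)$ for admissible weights. One subtle point worth a line: uniqueness of the steady state in $L^2(\Omega,H^1(\R^d))$ rather than in a weighted space — this is handled by the fact that any such steady state automatically lies in the weighted space by the ultracontractivity/Harnack machinery (Proposition \ref{prop:Ultra}, Theorem \ref{theo:LocalKFP}-(2)) and then Proposition \ref{prop:SS_Lambda} applies.

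For part (3), the dual well-posedness and the existence of the dual semigroup $S_{\PPP^*}$ dual to $S_\PPP$ come from Proposition \ref{prop:WellPosednessL2*} and Proposition \ref{prop:DualityRigorous} applied with $\Lambda = \Lambda^\star$; here $\PPP^* = \LL^*$ with this choice of $\Lambda$, so the backwards equation \eqref{eq:KFP_PPP_dual} is literally \eqref{eq:KFPtau*}. The gradient estimate \eqref{eq:bddGradPPP*} is \eqref{eq:bddGrad*} from Lemma \ref{lem:GrowthLpDual}. The decay estimate \eqref{eq:DecayPPP*} is exactly the content of Proposition \ref{prop:SteadySolutionLDual} (with $\CCCC = \HHHH$): it gives $\lvv g_0 - \la g_T,\CCCC\ra\rvv_{L^2_m} \le Ce^{-\lambda T}\lvv g_T - \la g_T,\CCCC\ra\rvv_{L^2_m}$, which is \eqref{eq:DecayPPP*} verbatim. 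So the proof is essentially a table of correspondences: $\Lambda \leftrightarrow \Lambda^\star$, $\LL \leftrightarrow \PPP$, $\CCCC \leftrightarrow \HHHH$, and a citation of the appropriate earlier statement for each of the ten displayed claims.

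I do not expect a genuine obstacle here — the work was done in Sections \ref{sec:Study_S_LLL} and \ref{sec:ProofTheo1}. The only thing to be careful about is bookkeeping: making sure the constant $\lambda$ in \eqref{eq:DecayPPP} and \eqref{eq:DecayPPP*} can be taken to be the \emph{same}, which is immediate since both descend from the single spectral gap in Theorem \ref{theo:KRDoblinHarris}/Proposition \ref{prop:SS_Lambda}; and confirming that the $\alpha$-uniformity of the bounds on $\Lambda^\star$ means the constants $C_P, C_P^*, C_\HHHH, \lambda$ can be chosen uniformly in $\alpha\in(0,\alpha^\star)$, which we record since it will be needed in Section \ref{sec:ProoTheo3}. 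A concise way to write the proof is therefore: ``This is Theorem \ref{theo:WellPosednessL2}, Remark \ref{rem:WellPosednessL2}, Lemma \ref{lem:GrowthLpPrimal}, Proposition \ref{prop:SS_Lambda}, Proposition \ref{prop:WellPosednessL2*}, Proposition \ref{prop:DualityRigorous}, Lemma \ref{lem:GrowthLpDual} and Proposition \ref{prop:SteadySolutionLDual} applied with $\Lambda = \Lambda^\star$, using $\lla h_0\rra_\OO = 0$ to kill the mass-projection terms and the $\alpha$-uniform bounds $\tau_0/2 \le \Lambda^\star \le 2\EE_{\FFFF^0}+\tau_1$ from Theorem \ref{theo:SteadySolutionNonL} to make all constants independent of $\alpha$.''
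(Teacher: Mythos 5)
Your proposal is correct and coincides with the paper's own proof, which is exactly the one-paragraph specialization you describe: Equation \eqref{eq:KFP_PPP} fits the framework of Equation \eqref{eq:KFPtau} with $\Lambda=\Lambda^\star$, so (1)--(2) follow from Lemma \ref{lem:GrowthLpPrimal}, Theorem \ref{theo:WellPosednessL2} and the Krein--Rutman--Doblin--Harris theorem (via Proposition \ref{prop:SS_Lambda}), and (3) from Lemma \ref{lem:GrowthLpDual}, Propositions \ref{prop:WellPosednessL2*}, \ref{prop:DualityRigorous} and \ref{prop:SteadySolutionLDual}. Your extra remarks on the mass-zero initial datum killing the projection term and on the $\alpha$-uniformity of the bounds on $\Lambda^\star$ are correct bookkeeping that the paper leaves implicit.
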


\begin{proof} 
We first remark that Equation \eqref{eq:KFP_PPP} fits the framework of Equation \eqref{eq:KFPtau} with $\Lambda = \Lambda^\star$. Therefore {\sl(1)} and {\sl(2)}  are a consequence of Lemma \ref{lem:GrowthLpPrimal}, Theorem \ref{theo:WellPosednessL2}, and Theorem \ref{theo:KRDoblinHarris}. Finally, {\sl(3)} is a consequence of Lemma \ref{lem:GrowthLpDual}, Proposition \ref{prop:WellPosednessL2*}, Proposition \ref{prop:DualityRigorous}, and Proposition \ref{prop:SteadySolutionLDual}. 
\end{proof}

\smallskip\noindent
We now aim to prove that, under suitable assumptions on the function $g$ and on the parameter $\alpha$, the decay properties of the semigroup $S_\PPP$ can be extended to $S_{\QQQ_g}$ by treating the term $\NNNN_g$ as a perturbation. The core idea of the proof is to define a new norm, in the spirit of \cite[Proposition 3.6]{CM_Landau},  \cite[Proposition 3.2]{Carrapatoso_2016} and \cite[Proposition 4.1]{MR3591133}, that leverages both the dissipativity of $\PPP$ and the control on $\QQQ_g$ established in Proposition \ref{prop:GrowthLpPrimal+NNNN}.

\begin{prop}\label{prop:Hypo}
Let $\omega$ be an admissible weight function. There are constants $\alpha^{\star\star} \in (0,\alpha^\star)$, $\eta >0$ and $\eps_3>0$ such that for every $\alpha \in (0,\alpha^{\star\star})$, if $\lvv g_t \rvv_{L^\infty_\omega(\OO)} \leq \eps$ holds for any $\eps\in (0,\eps_3)$, then if $h_0 \in L^2_\omega(\OO)$, every solution $h$ of Equation \eqref{eq:KFPstabilityLin} satisfies
\be\label{eq:Hypo}
\lvv h_t\rvv_{L^2_\omega(\OO)} \leq Ce^{-\eta t} \lvv h_0\rvv_{L^2_\omega(\OO)} \qquad \forall t\geq 0,
\ee
for some constant $C\geq 1$.
\end{prop}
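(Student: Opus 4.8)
The plan is to prove Proposition~\ref{prop:Hypo} by constructing an equivalent norm on $L^2_\omega(\OO)$ that restores the exponential decay of $S_\PPP$ to the perturbed evolution $S_{\QQQ_g}$. First I would recall the decomposition $\QQQ_g h = \PPP h + \NNNN_g h$ from \eqref{eq:DefQQQg}--\eqref{eq:DefPPPg} and the dissipativity estimate \eqref{eq:DecayPPP} for $S_\PPP$, together with the gradient bound \eqref{eq:bddGradPPP}. The difficulty is that $S_\PPP$ decays with rate $\lambda > 0$ but $\NNNN_g h = \alpha \EE_g \Delta_v h + \alpha \EE_h \Delta_v \FFFF^\alpha$ is an unbounded ($H^{-1}$-valued) perturbation; however Proposition~\ref{prop:GrowthLpPrimal+NNNN} shows that for $g$ small in $L^2_\omega$ the full flow $S_{\QQQ_g}$ has at worst exponential growth with some rate $\kappa > 0$, and the a priori estimate there already quantifies how $\NNNN_g$ interacts with the dissipative structure. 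The core idea, following \cite[Proposition 3.6]{CM_Landau}, \cite[Proposition 3.2]{Carrapatoso_2016} and \cite[Proposition 4.1]{MR3591133}, is to interpolate these two facts via a time-averaged norm.

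Concretely, for a parameter $\ttt > 0$ to be fixed, I would define
\be\label{eq:NewNormHypo}
\Nt h \Nt^2 := \int_0^\ttt e^{2\eta s} \lvv S_{\QQQ_g}(s) h \rvv_{L^2_\omega(\OO)}^2 \, \ds,
\ee
or a variant thereof, and show that for $\alpha$ and $\lvv g\rvv$ small enough this is equivalent to $\lvv \cdot \rvv_{L^2_\omega(\OO)}^2$: the upper bound follows from the growth estimate \eqref{eq:bddGrowthPerturbed}, and the lower bound from a short-time control showing $\lvv S_{\QQQ_g}(s) h\rvv_{L^2_\omega} \gtrsim \lvv h\rvv_{L^2_\omega}$ for $s$ small, again a consequence of \eqref{eq:bddGrowthPerturbed}. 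Then I would compute $\frac{d}{dt}\Nt S_{\QQQ_g}(t) h\Nt^2$ using the semigroup property: this equals $e^{2\eta\ttt}\lvv S_{\QQQ_g}(t+\ttt) h\rvv^2 - \lvv S_{\QQQ_g}(t) h\rvv^2 + 2\eta \Nt S_{\QQQ_g}(t)h\Nt^2$. Via the Duhamel formula $S_{\QQQ_g}(r) = S_\PPP(r) + \int_0^r S_\PPP(r-\sigma) \NNNN_g S_{\QQQ_g}(\sigma) \, d\sigma$, I would bound $\lvv S_{\QQQ_g}(t+\ttt) h\rvv_{L^2_\omega}$ by $C_P e^{-\lambda \ttt}\lvv S_{\QQQ_g}(t) h\rvv_{L^2_\omega}$ plus a contribution from the perturbation term. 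The perturbation contribution is controlled by combining Proposition~\ref{prop:GrowthLpPrimal+NNNN} (which, via the Step 2 estimate there, shows $\la \NNNN_g h, h\ra_{L^2_{\widetilde\omega}}$ is dominated by a small multiple of $\lvv \grad_v(h\widetilde\omega)\rvv_{L^2}^2$ plus a term proportional to $\alpha(\lvv g\rvv + \lvv \grad_v\FFFF^\alpha\rvv)$ times $\lvv h\rvv^2$) together with the gradient energy estimates \eqref{eq:bddGradPerturbed} and \eqref{eq:bddGradPPP}, and with the smallness of $\EE_{\FFFF^\alpha} - \EE_{\FFFF^0}$ type quantities guaranteed by \eqref{eq:PropsSS}. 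Choosing first $\ttt$ large so that $C_P e^{-\lambda \ttt} \le 1/4$, then $\eta$ small, then $\alpha^{\star\star}$ and $\eps_3$ small so that the perturbative terms are absorbed, I would obtain $\frac{d}{dt}\Nt S_{\QQQ_g}(t) h\Nt^2 \le -\eta \Nt S_{\QQQ_g}(t) h\Nt^2$, whence $\Nt S_{\QQQ_g}(t) h\Nt^2 \le e^{-\eta t}\Nt h\Nt^2$, and the conclusion \eqref{eq:Hypo} follows from norm equivalence.

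A subtlety is that the bilinear term $\alpha \EE_g \Delta_v h$ genuinely modifies the diffusion coefficient, so one must make sure that the dissipation $-\tfrac{\tau_0}{4}\lvv \grad_v(h\widetilde\omega)\rvv_{L^2}^2$ surviving in \eqref{eq:bddGrowthPerturbed}'s proof is still available to absorb the cross terms; since $|\EE_g| \le C_\omega \lvv g\rvv_{L^2_\omega}$ is uniformly small, the perturbed coefficient $\Lambda^\star + \alpha\EE_g$ stays close to $\Lambda^\star \ge \tau_0/2$, which is exactly the regime of Proposition~\ref{prop:GrowthLpPrimal+NNNN}. Another point is that $\lvv g_t\rvv_{L^\infty_\omega(\OO)} \le \eps$ gives control uniformly in $t$, which is needed because the Duhamel integrals run over all of $[0,t+\ttt]$; I would note that $\eps_3 \le \min(\eps_1, \eps_2)$ from the earlier propositions.

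The main obstacle I expect is the careful bookkeeping in the Duhamel/energy estimate: one must track the interplay between the $H^{-1}$-valued perturbation $\EE_h \Delta_v \FFFF^\alpha$ (which requires integrating by parts and using $\lvv \grad_v \FFFF^\alpha\rvv_{L^2_\omega} < \infty$ from \eqref{eq:PropsSS}) and the gradient energy that is only controlled in the time-integrated sense \eqref{eq:bddGradPerturbed}, and make sure the order of choosing constants $\ttt$, $\eta$, $\alpha^{\star\star}$, $\eps_3$ is consistent (no circular dependence). The equivalence of norms and the differential inequality are then essentially bookkeeping once the smallness hierarchy is set up correctly.
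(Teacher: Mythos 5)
Your overall strategy (construct an equivalent norm in which the dissipativity of $S_\PPP$ survives the perturbation $\NNNN_g$) matches the paper's, but your choice of norm forces a step that the paper's tools do not support. The paper takes $\lvvv h\rvvv^2 = \beta\lvv h\rvv^2_{L^2_{\widetilde\omega}} + \int_0^\infty \lvv S_\PPP(\tau)h\rvv^2_{L^2_\omega}\,\d\tau$; the extra $\beta$-component is essential, since differentiating it produces the pointwise-in-time dissipation $-\beta\tfrac{\tau_0}{2}\lvv\nabla_v(h_t\widetilde\omega)\rvv^2_{L^2}$ that absorbs the $\lvv\nabla_v h_t\rvv^2_{L^2_\omega}$ generated by the bilinear term $\alpha\EE_g\Delta_v h$, and the cross terms $\la S_\PPP(\tau)\NNNN_g h_t,\, S_\PPP(\tau)h_t\ra_{L^2_\omega}$ are handled by passing to the dual semigroup $S_{\PPP^*}$ and integrating the Laplacian by parts onto it, using \eqref{eq:DecayPPP*} and \eqref{eq:bddGradPPP*} from Lemma \ref{lem:Hypo_PPP}.

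Your norm $\int_0^{\ttt} e^{2\eta s}\lvv S_{\QQQ_g}(s)h\rvv^2\,\ds$ instead requires you to estimate, via Duhamel, $\lvv\int_0^{\ttt} S_\PPP(\ttt-\sigma)\,\NNNN_g h_{t+\sigma}\,\d\sigma\rvv_{L^2_\omega}$, i.e.\ to apply $S_\PPP$ directly to the $L^2_xH^{-1}_v$-valued data $\alpha\EE_g\Delta_v h + \alpha\EE_h\Delta_v\FFFF^\alpha$ and measure the output in $L^2_\omega$. This needs a smoothing estimate of the type $\lvv S_\PPP(r)\Div_v F\rvv_{L^2_\omega}\lesssim r^{-1/2}e^{-\lambda r}\lvv F\rvv_{L^2_\omega}$, which is neither proved in the paper (only $L^1\to L^\infty$ ultracontractivity and time-integrated $L^2$ energy bounds are available for $S_\PPP$) nor supplied by you. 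The estimates \eqref{eq:bddGradPerturbed} and \eqref{eq:bddGradPPP} that you invoke control $\nabla_v h$ only in a time-averaged sense and cannot substitute for such smoothing: inside the Duhamel integral the perturbation sits inside a norm rather than an inner product, so there is no test function against which to integrate by parts, which is precisely the device the paper relies on. This is the step where your argument, as written, fails. Two secondary issues: since $g=g(t,x,v)$, $S_{\QQQ_g}$ is an evolution family rather than an autonomous semigroup, so the identity behind your computation of $\tfrac{d}{dt}\Nt S_{\QQQ_g}(t)h\Nt^2$ must be replaced by the two-parameter cocycle property (the paper sidesteps this by only ever applying the autonomous $S_\PPP(\tau)$ to $h_t$); and the $2\eta\Nt\cdot\Nt^2$ term in that derivative should carry a minus sign, which fortunately works in your favour rather than against you.
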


\begin{proof}
We introduce $\beta>0$ to be fixed later, and we define the norm
$$
\lvvv h_t \rvvv^2 = \beta \lvv h_t \rvv_{L^2_{\widetilde \omega}(\OO)}^2 + \int_0^\infty \lvv S_ \PPP(\tau) h_t \rvv_{L^2_{\omega}(\OO)}^2\d\tau, 
$$
where we recall that $S_\PPP$ is given by Lemma \ref{lem:Hypo_PPP}-(1) and $\widetilde \omega$ is defined in \eqref{def:omegaA}.
We first observe that the norm $\lvvv \cdot \rvvv$ is well defined due to \eqref{eq:DecayPPP}, and we also have that
\be\label{eq:HypoNormEquiv}
\beta c_A^{-1} \lvv h_t \rvv_{L^2_{\omega}(\OO)}^2 \leq \beta \lvv h_t \rvv_{L^2_{\widetilde \omega}(\OO)}^2 \leq \lvvv h_t \rvvv^2  \leq \left( \beta c_A + {(C_P)^2 \over 2\lambda} \right) \lvv h_t \rvv_{L^2_{ \omega}(\OO)}^2 , 
\ee
where we have used \eqref{eq:omega&omegatilde} to obtain the first inequality, and \eqref{eq:DecayPPP} together with  \eqref{eq:omega&omegatilde} again to obtain the last inequality. 
\\

%\smallskip\noindent
We take $\eps_3^1 = \min(\eps_1, \eps_2)$, where $\eps_1>0$ is given by Proposition \ref{prop:GrowthLpPrimal+NNNN} and $\eps_2>0$ is given by Proposition \ref{prop:KFPL2Perturbed}, and we assume during the sequel that $\lvv g_t \rvv_{L^2_\omega(\OO)} \leq \eps$ for $\eps \in (0, \eps_3^1)$. We recall that $\QQQ_g$ is defined in \eqref{eq:DefQQQg}, and arguing as during the proof of Proposition \ref{prop:GrowthLpPrimal+NNNN} (see also the arguments leading to Proposition \ref{prop:KFPL2Perturbed}) we have that there is $\kappa>0$ such that
\bean
\frac d{dt} \lvvv h_t \rvvv^2 &=& \beta \la h_t, \QQQ_g \, h_t\ra_{L^2_{\widetilde\omega}(\OO)} + \frac d{dt} \int_0^\infty \lvv S_ \PPP(\tau) h_t \rvv_{L^2_{\omega}(\OO)}^2\d\tau  \\
&\leq&  -\beta \frac{\tau_0}2 \lvv \grad_v (h_t \widetilde \omega)\rvv_{L^2(\OO)}^2  + \beta \kappa \lvv h_t\rvv_{L^2_{\widetilde \omega}(\OO)}^2 +  {d\over dt}  \int_0^\infty \lvv S_ \PPP(\tau) h_t \rvv_{L^2_\omega(\OO)}^2\d\tau\\
&\leq & -\beta \frac{\tau_0}2 \lvv \grad_v h_t \rvv_{L^2_{\widetilde \omega}(\OO)}^2  + \beta \left( \kappa + C^0_\omega \frac{\tau_0}2 \right) \lvv h_t\rvv_{L^2_{\widetilde \omega}(\OO)}^2 +  {d\over dt}  \int_0^\infty \lvv S_ \PPP(\tau) h_t \rvv_{L^2_\omega(\OO)}^2\d\tau\\
&\leq &  -\beta c_A^{-1} \frac{\tau_0}2 \lvv \grad_v h_t \rvv_{L^2_{ \omega}(\OO)}^2  + \beta c_A \left( \kappa + C^0_\omega \frac{\tau_0}2 \right) \lvv h_t\rvv_{L^2_{ \omega}(\OO)}^2 +  {d\over dt}  \int_0^\infty \lvv S_ \PPP(\tau) h_t \rvv_{L^2_\omega(\OO)}^2\d\tau
\eean
where we have have used \eqref{eq:omega&omegatilde} to obtain the last line and we have defined the positive constant $C_\omega^0 = \lvv (\grad_v\widetilde \omega)/\widetilde \omega\rvv_{L^\infty}<\infty$, due to \eqref{eq:ControlGradTilde}. We then divide the rest of the proof into four steps. 

\medskip\noindent
\emph{Step 1.} We observe that Lemma \ref{lem:Hypo_PPP}-(1) and the Lebesgue dominated convergence theorem imply that
$$
{d\over dt}  \int_0^\infty \lvv S_ \PPP(\tau) h_t \rvv_{L^2_{\omega}(\OO)}^2\d\tau =  \int_0^\infty  {d\over dt}  \lvv S_ \PPP(\tau) h_t \rvv_{L^2_{\omega}(\OO)}^2\d\tau.
$$
Using Remark \ref{rem:Def_S_QQQ_g} we notice that we may write $h(t) = S_{\QQQ_g} h_0$, where $S_{\QQQ_g}$ is the strongly continuous semigroup given by Proposition \ref{prop:KFPL2Perturbed} and Remark \ref{rem:Def_S_QQQ_g}. Moreover from Lemma \ref{lem:Hypo_PPP}-(1) we also know that $S_\PPP$ is a strongly continuous semigroup. Thus, using \cite[Chapter 1, Corollary 1.4-(d), Corollary 2.3, and Theorem 2.4]{MR710486}, we may perform the following computations 
\begin{multline}\label{eq:DerivSemigroupPPP}
{d\over dt} S_ \PPP (\tau) h_t = S_ \PPP (\tau) {d\over dt} S_{\QQQ_g}(t) h_0 = S_ \PPP (\tau)\left( ( \PPP + \NNNN_g) h_t \right) \\ 
= S_ \PPP (\tau)  \PPP h_t + S_ \PPP(\tau) \NNNN_g h_t = {d\over\d\tau} S_ \PPP(\tau) h_t + S_ \PPP(\tau) \NNNN_g h_t,
\end{multline}
where we have used repeatedly the results from \cite{MR710486} to obtain the previous chain of equalities. 
Using \eqref{eq:DerivSemigroupPPP} we deduce then that 
\bean
{d\over dt} \lvv S_ \PPP(\tau) h_t \rvv_{L^2_{\omega}(\OO)}^2  &=& 2 \left\la  {d\over dt}  S_ \PPP (\tau) h_t ,\,  S_ \PPP(\tau) h_t \right\ra_{L^2_\omega(\OO)}\\
&=& 2 \left\la   {d\over\d\tau} S_ \PPP(\tau) h_t + S_ \PPP(\tau) \NNNN_g h_t ,\,  S_ \PPP(\tau) h_t \right\ra_{L^2_\omega(\OO)}\\
&=& {d\over\d\tau} \lvv  S_ \PPP(\tau) h_t \rvv_{L^2_\omega(\OO)}^2  + 2\left\la  S_{ \PPP} (\tau)  \NNNN_g  h_t , \,  S_ \PPP(\tau) h_t  \right\ra_{L^2_\omega(\OO)},
\eean 
and we now proceed to compute each term separately.
On the one hand, due to the fundamental theorem of calculus, \cite[Corollary 1.4-(d)]{MR710486}, and Lemma \ref{lem:Hypo_PPP}-(1), we obtain that
$$
\int_0^\infty \frac d{d\tau} \lvv  S_ \PPP(\tau) h_t \rvv_{L^2_\omega(\OO)}^2\d\tau  = -  \lvv  h_t \rvv_{L^2_\omega(\OO)}^2.
$$
On the other hand, Lemma \ref{lem:Hypo_PPP}-(3) implies the existence of $S_{\PPP^*}$, dual semigroup to $S_\PPP$, thus we may compute
\bean
\int_0^\infty \left\la  S_{ \PPP} (\tau)  \NNNN_g  h_t , \,  S_ \PPP(\tau) h_t  \right\ra_{L^2_\omega(\OO)} \d \tau= \int_0^\infty \left\la \NNNN_g  h_t , \,  S_{ \PPP^*} (0) \left[ (S_ \PPP(\tau) h_t) \omega^2 \right] \right\ra_{L^2(\OO)} \d\tau  =: P_1 + P_2 ,
\eean
and we control now $P_1$ and $P_2$ separately.

\medskip\noindent
\emph{Step 2. (Control of $P_2$)} We set $m=\omega^{-1}$ and we have
\bean
P_2 &:=& \int_0^\infty \left\la  \alpha \EE_{h_t} \Delta_v \FFFF^\alpha , \,  S_{ \PPP^*} (0) \left[ (S_ \PPP(\tau) h_t) \omega^2 \right]  \right\ra_{L^2(\OO)}\d\tau \\
&=& -\int_0^\infty \alpha  \EE_{h_t} \la \grad_v \FFFF^\alpha , \grad_v \left(S_{ \PPP^*} (0) \left[ (S_ \PPP(\tau) h_t) \omega^2\right] \right) \ra _{L^2(\OO)}\d\tau \\
&\leq & \alpha C_\omega^1 \int_0^\infty \lvv h_t \rvv_{L^2_\omega(\OO)}  \lvv \grad_v \FFFF^\alpha\rvv_{L^2_\omega(\OO)} \left\lvv \grad_v \left(S_{ \PPP^*}(0) \left[  (S_ \PPP(\tau) h_t) \omega^2 \right] \right) \right\rvv _{L^2_m(\OO)}\d\tau ,
\eean 
where we remark that we have used integration by parts to obtain the second line and the Cauchy-Schwartz inequality to obtain the third one. Moreover we have defined $C_\omega^1 = \lvv \la v\ra^2 \omega^{-1}\rvv_{L^2}$ and we deduce that $C_\omega^1<\infty$ due to the very definition of $\omega$.

%\smallskip\noindent
We now remark that $(S_ \PPP(\tau) h_t) \omega^2 \in L^2_m(\OO)$, indeed there holds
$$
\left( \int_\OO (S_ \PPP(\tau) h_t)^2 \omega^4 m^2 \right)^{1/2}= \lvv S_\PPP(\tau) h_t \rvv_{L^2_\omega(\OO)} \lesssim e^{-\lambda \tau + \kappa t} \lvv h_0 \rvv_{L^2_\omega(\OO)},
$$
where we have successively used the fact that $m=\omega^{-1}$, \eqref{eq:DecayPPP} and Proposition \ref{prop:GrowthLpPrimal+NNNN}. Coming back then to $P_2$ we further compute
\bean
P_2 &\leq & \alpha C_\omega^1 \lvv h_t \rvv_{L^2_\omega(\OO)}   \lvv \grad_v \FFFF^\alpha\rvv_{L^2_\omega(\OO)} \underset{A\to \infty}{\lim} \int_0^A  \left\lvv \grad_v \left(S_{ \PPP^*}(0) \left[ \left(S_ \PPP(\tau) h_t\right) \omega^2\right] \right) \right\rvv _{L^2_m(\OO)}\d\tau  \\
&\leq & \alpha C_\omega^1 \lvv h_t \rvv_{L^2_\omega(\OO)}   \lvv \grad_v \FFFF^\alpha\rvv_{L^2_\omega(\OO)} \underset{A\to \infty}{\lim}  \left\lvv (S_ \PPP(A) h_t) \omega^2 \right\rvv _{L^2_m(\OO)}  \\
&& + \alpha C_\omega^1 \lvv h_t \rvv_{L^2_\omega(\OO)}   \lvv \grad_v \FFFF^\alpha\rvv_{L^2_\omega(\OO)} \underset{A\to \infty}{\lim}  \int_0^A  \left\lvv S_{ \PPP^*}(0) \left[ \left( S_ \PPP(\tau) h_t \right) \omega^2 \right] \right\rvv _{L^2_m(\OO)} \d\tau  \\
&\leq & \alpha C_\omega^1 \lvv h_t \rvv_{L^2_\omega(\OO)}   \lvv \grad_v \FFFF^\alpha\rvv_{L^2_\omega(\OO)}  \int_0^\infty  \left\lvv S_{ \PPP^*}(0) \left[ \left( S_ \PPP(\tau) h_t \right) \omega^2 \right] -\la S_ \PPP(\tau) h_t , \HHHH\ra_{L^2_\omega(\OO)} \right\rvv _{L^2_m(\OO)}  \d\tau \\
&&+  \alpha C_\omega^1 \lvv h_t \rvv_{L^2_\omega(\OO)}   \lvv \grad_v \FFFF^\alpha\rvv_{L^2_\omega(\OO)}  \int_0^\infty   \la S_ \PPP(\tau) h_t , \HHHH\ra_{L^2_\omega(\OO)} \left\lvv m \right\rvv _{L^2(\OO)}  \d \tau\\
&=:& P_2^1 + P_2^2
\eean
where we have used \eqref{eq:bddGradPPP*} to obtain the second inequality and the triangular inequality to obtain the last inequality. 
On the one hand we have
\bean
P_2^1&:=&  \alpha C_\omega^1 \lvv h_t \rvv_{L^2_\omega(\OO)}   \lvv \grad_v \FFFF^\alpha\rvv_{L^2_\omega(\OO)}  \int_0^\infty  \left\lvv S_{ \PPP^*}(0) \left[ \left( S_ \PPP(\tau) h_t \right) \omega^2 \right] -\la S_ \PPP(\tau) h_t , \HHHH\ra_{L^2_\omega(\OO)} \right\rvv _{L^2_m(\OO)}  \d\tau \\
&\leq & \alpha C_\omega^1 \lvv h_t \rvv_{L^2_\omega(\OO)}   \lvv \grad_v \FFFF^\alpha\rvv_{L^2_\omega(\OO)}  C_P^* \int_0^\infty   e^{- \lambda \tau} \left\lvv \left( S_ \PPP(\tau) h_t \right) \omega^2  -\la S_ \PPP(\tau) h_t , \HHHH\ra_{L^2_\omega(\OO)} \right\rvv _{L^2_m(\OO)}  \d\tau \\
&\leq & \alpha C_\omega^1 \lvv h_t \rvv_{L^2_\omega(\OO)}   \lvv \grad_v \FFFF^\alpha\rvv_{L^2_\omega(\OO)} C_P^* \int_0^\infty   e^{- \lambda \tau} \left\lvv S_ \PPP(\tau) h_t \right\rvv_{L^2_\omega(\OO)}  \d \tau \\
& & +  \alpha C_\omega^1 \lvv h_t \rvv_{L^2_\omega(\OO)}   \lvv \grad_v \FFFF^\alpha\rvv_{L^2_\omega(\OO)} C_P^* \int_0^\infty \la S_ \PPP(\tau) h_t , \HHHH\ra_{L^2_\omega(\OO)} \left\lvv  m \right\rvv _{L^2(\OO)}  \d\tau 
\eean
where we have used \eqref{eq:DecayPPP*} to obtain the first inequality, the triangular inequality to obtain the second inequality. Using then \eqref{eq:DecayPPP} and the Cauchy-Schwartz inequality we further have that
\bean
P_2^1 &\leq & \alpha C_\omega^1 \lvv h_t \rvv_{L^2_\omega(\OO)}   \lvv \grad_v \FFFF^\alpha\rvv_{L^2_\omega(\OO)} C_P^* C_P \int_0^\infty   e^{- 2\lambda \tau} \left\lvv h_t \right\rvv_{L^2_\omega(\OO)}  \d\tau \\
& & +  \alpha C_\omega^1 \lvv h_t \rvv_{L^2_\omega(\OO)}   \lvv \grad_v \FFFF^\alpha\rvv_{L^2_\omega(\OO)} C_P^*  \int_0^\infty \left\lvv S_ \PPP(\tau) h_t  \right\rvv_{L^2_\omega(\OO)}  \left\lvv \HHHH\right\rvv_{L^2_\omega(\OO)} \left\lvv  m \right\rvv _{L^2(\OO)} \d\tau \\
&\leq & \alpha \frac{C_\omega^1}{2\lambda} \lvv h_t \rvv_{L^2_\omega(\OO)} ^2  \lvv \grad_v \FFFF^\alpha\rvv_{L^2_\omega(\OO)} C_P^* C_P \\
& & +  \alpha C_\omega^1 \lvv h_t \rvv_{L^2_\omega(\OO)}   \lvv \grad_v \FFFF^\alpha\rvv_{L^2_\omega(\OO)} C_P^* \left\lvv \HHHH\right\rvv_{L^2_\omega(\OO)} \left\lvv  m \right\rvv _{L^2(\OO)} C_P \int_0^\infty e^{-\lambda \tau} \left\lvv  h_t  \right\rvv_{L^2_\omega(\OO)}   \d\tau \\
&\leq & \alpha C_\omega^1 \lvv h_t \rvv_{L^2_\omega(\OO)}^2 \lvv \grad_v \FFFF^\alpha\rvv_{L^2_\omega(\OO)}  C_P^* C_P \left( \frac 1{2\lambda} + \frac 1\lambda \left\lvv \HHHH\right\rvv_{L^2_\omega(\OO)} \left\lvv  m \right\rvv _{L^2(\OO)}  \right)
\eean
 where we have used again \eqref{eq:DecayPPP} to obtain the last inequality.

\smallskip\noindent
On the other hand we compute 
\bean
P_2^2& :=& \alpha C_\omega^1 \lvv h_t \rvv_{L^2_\omega(\OO)}   \lvv \grad_v \FFFF^\alpha\rvv_{L^2_\omega(\OO)}  \int_0^\infty   \la S_ \PPP(\tau) h_t , \HHHH\ra_{L^2_\omega(\OO)} \left\lvv m \right\rvv _{L^2(\OO)} \d\tau\\
&\leq &   \alpha C_\omega^1 \lvv h_t \rvv_{L^2_\omega(\OO)}   \lvv \grad_v \FFFF^\alpha\rvv_{L^2_\omega(\OO)}  \int_0^\infty \left\lvv S_ \PPP(\tau) h_t  \right\rvv_{L^2_\omega(\OO)}  \left\lvv \HHHH\right\rvv_{L^2_\omega(\OO)} \left\lvv  m \right\rvv _{L^2(\OO)} \d\tau\\
&\leq & \alpha C_\omega^1 \lvv h_t \rvv_{L^2_\omega(\OO)}   \lvv \grad_v \FFFF^\alpha\rvv_{L^2_\omega(\OO)}  \left\lvv \HHHH\right\rvv_{L^2_\omega(\OO)} \left\lvv  m \right\rvv _{L^2(\OO)} C_P  \int_0^\infty e^{-\lambda \tau} \left\lvv  h_t  \right\rvv_{L^2_\omega(\OO)}   \d\tau \\
&\leq & \alpha C_\omega^1 \lvv h_t \rvv_{L^2_\omega(\OO)}^2 \lvv \grad_v \FFFF^\alpha\rvv_{L^2_\omega(\OO)}  
 \frac 1\lambda \left\lvv \HHHH\right\rvv_{L^2_\omega(\OO)} \left\lvv  m \right\rvv _{L^2(\OO)}  C_P
\eean
where we have used successively the Cauchy-Schwartz inequality to obtain the second line and \eqref{eq:DecayPPP} to obtain the third line. 

\smallskip\noindent
Putting together the previous computations we observe that we have obtained  
$$
P_2 \leq \alpha C_\omega^1 \lvv h_t \rvv_{L^2_\omega(\OO)}^2 \lvv \grad_v \FFFF^\alpha\rvv_{L^2_\omega(\OO)}  C_P  \left( \frac {C_P^*}{2\lambda} + \frac { C_P^* + 1 }\lambda \left\lvv \HHHH\right\rvv_{L^2_\omega(\OO)} \left\lvv  m \right\rvv _{L^2(\OO)}  \right),
$$
and this concludes this step.

\medskip\noindent
\emph{Step 3. (Control of $P_1$)} We compute now for the term $P_1$ as follows
\bean 
P_1&:=&  \int_0^\infty \alpha \EE_{g_t} \la \Delta_v h_t , S_{ \PPP^*}(0) \left[ S_ \PPP(\tau) h_t \omega^2\right] \ra _{L^2(\OO)}d\tau \\
&=& -\int_0^\infty \alpha  \EE_{g_t} \left\la \grad_v h_t , \grad_v \left(S_{ \PPP^*} (0) \left[ \left(S_ \PPP(\tau) h_t\right) \omega^2)\right] \right) \right\ra _{L^2(\OO)}\d\tau \\
&=& -\int_0^\infty \alpha  \EE_{g_t} \left\la \grad_v h_t  , \grad_v \left(S_{ \PPP^*} (0) \left[ \left( S_ \PPP(\tau) h_t\right) \omega^2)\right]  \right) \right\ra _{L^2(\OO)}\d\tau \\
&\leq & \alpha C_\omega^1 \int_0^\infty \lvv g_t \rvv_{L^2_\omega(\OO)}  \lvv \grad_v h_t  \rvv_{L^2_\omega(\OO)} \lvv \grad_v (S_{ \PPP^*}(0) \left[ \left(S_ \PPP(\tau) h_t\right) \omega^2\right] ) \rvv _{L^2_m(\OO)}\d\tau , 
\eean
where we have successively used integration by parts and the Cauchy-Schwartz inequality. Arguing exactly as for the term $P_2$ during the Step 2 we deduce that 
$$
P_1 \leq \alpha C_\omega^1 \lvv h_t \rvv_{L^2_\omega(\OO)} \lvv g_t \rvv_{L^2_\omega(\OO)}  \lvv \grad_v h_t\rvv_{L^2_\omega(\OO)} C_P  \left( \frac {C_P^*}{2\lambda} + \frac { C_P^* + 1 }\lambda \left\lvv \HHHH\right\rvv_{L^2_\omega(\OO)} \left\lvv  m \right\rvv _{L^2(\OO)}  \right).
$$
Using then the Young inequality we further have that
\bean
P_1 &\leq & \frac \alpha 2 C_\omega^1  \lvv g_t \rvv_{L^2_\omega(\OO)}  \lvv \grad_v h_t\rvv_{L^2_\omega(\OO)}^2 C_P  \left( \frac {C_P^*}{2\lambda} + \frac { C_P^* + 1 }\lambda \left\lvv \HHHH\right\rvv_{L^2_\omega(\OO)} \left\lvv  m \right\rvv _{L^2(\OO)}  \right)\\
&&+ \frac \alpha 2 C_\omega^1 \lvv h_t \rvv_{L^2_\omega(\OO)}^2 \lvv g_t \rvv_{L^2_\omega(\OO)} C_P  \left( \frac {C_P^*}{2\lambda} + \frac { C_P^* + 1 }\lambda \left\lvv \HHHH\right\rvv_{L^2_\omega(\OO)} \left\lvv  m \right\rvv _{L^2(\OO)}  \right).
\eean

\medskip\noindent
\emph{Step 4. (Choice of parameters and conclusion)} Putting together the computations performed during the Steps 1, 2 and 3, and using \eqref{eq:PropsSS} and \eqref{eq:BdHHHH}, we have that 
\bean
\frac d{dt} \lvvv h_t \rvvv^2 
&\leq & -\beta c_A^{-1} \frac{\tau_0}2 \lvv \grad_v h_t\rvv_{L^2_{ \omega}(\OO)}^2  + \beta c_A \left( \kappa + C^0_\omega \frac{\tau_0}2 \right) \lvv h_t\rvv_{L^2_{ \omega}(\OO)}^2 -  \lvv  h_t \rvv_{L^2_\omega(\OO)}^2 \\
&& + \alpha C_\omega^1 \lvv h_t \rvv_{L^2_\omega(\OO)}^2 C_{\FFFF^\alpha}  C_P \left( \frac {C_P^*}{2\lambda} + \frac { C_P^* + 1 }\lambda C_\HHHH \left\lvv  m \right\rvv _{L^2(\OO)}  \right) \\
&&+   C_\omega^1  \lvv g_t \rvv_{L^2_\omega(\OO)}  \lvv \grad_v h_t\rvv_{L^2_\omega(\OO)}^2 C_P  \left( \frac {C_P^*}{2\lambda} + \frac { C_P^* + 1 }\lambda C_\HHHH  \left\lvv  m \right\rvv _{L^2(\OO)}  \right)\\
&&+  C_\omega^1 \lvv h_t \rvv_{L^2_\omega(\OO)}^2 \lvv g_t \rvv_{L^2_\omega(\OO)}  C_P \left( \frac {C_P^*}{2\lambda} + \frac { C_P^* + 1 }\lambda C_\HHHH \left\lvv  m \right\rvv _{L^2(\OO)}  \right),
\eean
where the constants $C_{\FFFF^\alpha}>0$ and $C_\HHHH>0$ are given by \eqref{eq:PropsSS} and \eqref{eq:BdHHHH} respectively. 
We then choose 
\bean
\alpha &= &\frac 14 \left( C_\omega^1 C_{\FFFF^\alpha} C_P \left( \frac {C_P^*}{2\lambda} + \frac { C_P^* + 1 }\lambda C_\HHHH \left\lvv  m \right\rvv _{L^2(\OO)}  \right)\right)^{-1}, \\
\beta &=& \frac 14 c_A^{-1} \left( \kappa + C^0_\omega \frac {\tau_0} 2 \right),\\
\eps_3^2 &=& \min\left(  \frac 14, \, {\beta c_A^{-1}\over 4}  \right) \left( C_\omega^1C_P \left( \frac {C_P^*}{2\lambda} + \frac { C_P^* + 1 }\lambda C_\HHHH \left\lvv  m \right\rvv _{L^2(\OO)}  \right)\right)^{-1},
\eean
and we set $\eps_3 = \min (\eps_3^1, \eps_3^2)$. This selection of parameters implies that 
$$
\frac d{dt} \lvvv h_t \rvvv^2 \leq - \frac 14 \lvv  h_t \rvv_{L^2_\omega(\OO)}^2, 
$$
and we conclude the proof by using \eqref{eq:HypoNormEquiv} and the Grönwall lemma. 
\end{proof}

\section{Proof of Theorem \ref{theo:GlobalSolution}}\label{sec:ProoTheo3}

We consider the non-linear equation
\be\label{eq:KFPstabilityNL}
\left\{\begin{array}{rcll} 
 \partial_t h &=& \QQQ_h h & \text{ in } \UU  \\
\displaystyle \gamma_- h &=& \RRR \gamma_+ h &\text{ on } \Gamma_{-} \\
\displaystyle  h_{t=0} &=& h_0 &\text{ in } \OO,
 \end{array}\right.
 \ee 
where we recall that $\QQQ_h$ is defined in \eqref{eq:DefQQQg},
and we will dedicate this section to prove an equivalent version of Theorem \ref{theo:GlobalSolution} in terms of the solutions of Equation \eqref{eq:KFPstabilityNL}. 
 
 \smallskip\noindent
 Indeed, we recall from Remark \ref{rem:NonLPerturbed} that if there is a fixed point for the map that associates $g$ to a solution $h$ of Equation \eqref{eq:KFPstabilityLin}, then $f = \FFFF^\alpha + h$ will be a solution of Equation \eqref{eq:NonLKFP}-\eqref{eq:BoundaryConditions}-\eqref{eq:InitialDatum}. Moreover the decay estimate \eqref{eq:DecaySmallness} will be a direct consequence of Proposition \ref{prop:Hypo}.

\begin{prop}\label{prop:FixedPointPerturbed}
Let $\omega$ be an admissible weight function. There is a constant $\delta>0$ such that, for any $\alpha \in (0, \alpha^{\star\star})$, where $\alpha^{\star\star}>0$ is given by Proposition \ref{prop:Hypo},
and any initial datum $h_0\in L^2_\omega(\OO)$ such that
$$
\lvv h_0\rvv_{L^2_\omega(\OO)} \leq \delta,
$$
there is $h \in L^2_\omega(\UU)$ unique global weak solution of Equation \eqref{eq:KFPstabilityNL} in the sense of Proposition \ref{prop:KFPL2Perturbed}. Furthermore, the decay estimate \eqref{eq:Hypo} holds for the solutions of Equation \eqref{eq:KFPstabilityNL}.
\end{prop}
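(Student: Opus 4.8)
The statement to prove, Proposition \ref{prop:FixedPointPerturbed}, is the nonlinear fixed-point closure of the linear theory developed in Sections \ref{sec:PerturbEquilib}--\ref{sec:HypoDissipativity}. The strategy is a Schauder fixed-point argument on the map $\Phi$ that sends a function $g$ to the solution $h$ of the linearized perturbed Equation \eqref{eq:KFPstabilityLin}, exactly as announced in step \ding{178} of Subsection \ref{ssec:First_Observ}. First I would fix an admissible weight function $\omega$ and set $\eps_\circ := \min(\eps_1,\eps_2,\eps_3)$, where $\eps_1,\eps_2,\eps_3$ are the smallness thresholds furnished by Proposition \ref{prop:GrowthLpPrimal+NNNN}, Proposition \ref{prop:KFPL2Perturbed} and Proposition \ref{prop:Hypo} respectively, and I would work with $\alpha\in(0,\alpha^{\star\star})$. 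Define the closed set
\[
\EEE_\delta := \Bigl\{\, g\in L^2_\omega(\UU)\ :\ \lla g_t\rra_\OO = 0\ \forall t\ge0,\ \ \|g_t\|_{L^2_\omega(\OO)}\le \eps_\circ\, e^{-\eta t}\|h_0\|_{L^2_\omega(\OO)}\ \forall t\ge 0 \,\Bigr\},
\]
with $\eta>0$ the decay rate from Proposition \ref{prop:Hypo}. For $\delta$ small enough that $\|h_0\|_{L^2_\omega}\le\delta$ forces $\eps_\circ\|h_0\|_{L^2_\omega}\le\eps_\circ$ and in particular $\sup_t\|g_t\|_{L^2_\omega}\le\eps_\circ$, Proposition \ref{prop:KFPL2Perturbed} gives a unique global weak solution $h=\Phi(g)\in C(\R_+,L^2_\omega(\OO))\cap\HHH_\omega(\UU)$ to Equation \eqref{eq:KFPstabilityLin}, so $\Phi$ is well defined on $\EEE_\delta$.

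\textbf{Invariance.} The key point is that $\EEE_\delta$ is stable under $\Phi$. Since $g\in\EEE_\delta$ satisfies $\|g_t\|_{L^2_\omega}\le\eps_\circ e^{-\eta t}\|h_0\|_{L^2_\omega}\le\eps_3$ for all $t$, Proposition \ref{prop:Hypo} applies verbatim to $h=\Phi(g)$ and yields
\[
\|h_t\|_{L^2_\omega(\OO)}\ \le\ C\,e^{-\eta t}\,\|h_0\|_{L^2_\omega(\OO)}\qquad\forall t\ge 0,
\]
for the constant $C\ge1$ of that proposition. To absorb the loss of the constant $C$ I would shrink $\delta$: requiring $C\|h_0\|_{L^2_\omega}\le\eps_\circ\|h_0\|_{L^2_\omega}$ is automatic if $C\le\eps_\circ$ — which may fail — so instead one should incorporate $C$ into the definition of $\EEE_\delta$, e.g.\ replace the bound by $\|g_t\|_{L^2_\omega}\le C\eps_\circ'e^{-\eta t}\|h_0\|_{L^2_\omega}$ with $\eps_\circ' := \eps_\circ/C$ and keep $\delta$ so small that $C\eps_\circ'\delta = \eps_\circ\delta\le\eps_\circ$. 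With the bookkeeping arranged this way, $\Phi(\EEE_\delta)\subseteq\EEE_\delta$; conservation of mass, i.e.\ $\lla h_t\rra_\OO=0$, follows from the mass-conservation remark attached to Equation \eqref{eq:KFPstabilityLin}. One also records that, by Proposition \ref{prop:GrowthLpPrimal+NNNN} together with \eqref{eq:bddGradPerturbed}, $\Phi(\EEE_\delta)$ is bounded in $\HHH_\omega(\UU)\cap L^2([0,T]\times\Omega;H^1_\omega(\R^d))$ on every time window $[0,T]$.

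\textbf{Continuity and compactness.} Next I would equip $\EEE_\delta$ with the topology of $L^2_\omega$ on compact time intervals (or, after a time-weighted renorming, on $\R_+$), and show $\Phi$ is continuous for the weak $L^2$ topology: this is precisely the argument already carried out in Step 3 of the proof of Proposition \ref{prop:KFPL2Perturbed}. If $g_n\rightharpoonup g$ weakly in $L^2_\omega(\UU_T)$ with all $g_n\in\EEE_\delta$, then $\EE_{g_n}\to\EE_g$ by the tightness-plus-local-compactness argument of Step 3.1 there (using $\langle v\rangle^{2+1/2}\omega^{-1}\in L^2(\R^d)$), the $\HHH_\omega$-bounds give a weakly convergent subsequence $h_n=\Phi(g_n)\rightharpoonup h$, the energies $\EE_{h_n}\to\EE_h$ by the same compactness, the boundary term passes by Step 3.2, and one identifies the weak limit as $\Phi(g)$ via the weak formulation \eqref{eq:Perturbed_renormalized_formulation}; uniqueness in Proposition \ref{prop:KFPL2Perturbed} forces the whole sequence to converge. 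Convexity of $\EEE_\delta$ is clear (it is an intersection of balls and the linear mass constraint), and compactness for the weak topology follows since $\EEE_\delta$ is a bounded, weakly closed subset of the reflexive space $L^2_\omega$. Applying the Schauder (Tychonoff) fixed-point theorem produces $h_\star\in\EEE_\delta$ with $\Phi(h_\star)=h_\star$; by Remark \ref{rem:NonLPerturbed} this $h_\star$ solves the nonlinear Equation \eqref{eq:KFPstabilityNL}, uniqueness is inherited from the a priori estimate \eqref{eq:bddL2&HHH} of Proposition \ref{prop:KFPL2Perturbed} applied to the difference of two solutions (now genuinely using $g=h$ in both), and the decay \eqref{eq:Hypo} holds because $h_\star\in\EEE_\delta$.

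\textbf{Main obstacle.} The delicate step is the weak continuity of $\Phi$, specifically the passage to the limit in the two quadratic-in-$(g,h)$ terms $\alpha\EE_g\Delta_v h$ and $\alpha\EE_h\Delta_v\FFFF^\alpha$: weak convergence alone does not pass through products, so one genuinely needs the strong convergence $\EE_{g_n}\to\EE_g$ and $\EE_{h_n}\to\EE_h$, which in turn rests on velocity-tightness uniform in $n$ (from the $L^2_\omega$-bound with the margin $\langle v\rangle^{-1/2}$) combined with local-in-$(x,v)$ strong compactness via the averaging/fractional-Sobolev estimate \cite[Theorem 1.3]{MR1949176} and a fractional Rellich--Kondrachov theorem — exactly the machinery of Step 3.1 of Proposition \ref{prop:KFPL2Perturbed}, which I would invoke rather than redo. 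A secondary technical nuisance is the constant-$C$ bookkeeping in the invariance step, handled as above by defining $\EEE_\delta$ with the factor $C$ already built in and choosing $\delta$ accordingly.
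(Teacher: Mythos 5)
Your proposal is correct and follows essentially the same route as the paper: a Schauder fixed-point argument for $\Phi(g)=S_{\QQQ_g}h_0$ on a small ball in $L^2_\omega(\UU)$ endowed with the weak topology, with invariance obtained from Proposition \ref{prop:Hypo} by taking $\delta=\eps_3/C$, and weak continuity obtained exactly as in Step 3 of Proposition \ref{prop:KFPL2Perturbed} (velocity averaging plus tightness to upgrade $\EE_{g_n},\EE_{h_n}$ to strong convergence). The only difference is cosmetic — the paper uses the constant ball $\{\|g_t\|_{L^2_\omega}\le\eps_3\}$ rather than your time-decaying set, which avoids your constant-bookkeeping digression.
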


\begin{rem}
The proof follows the main ideas from the proof of \cite[Theorem 1.1]{CM_Landau_domain}.
\end{rem}

\begin{proof}%[Proof of Proposition~\ref{prop:FixedPointPerturbed}]
We define the ball
$$
\ZZ := \{ g \in L^2_\omega(\UU), \, \| g_t \|_{L^2_\omega(\OO)} \le \eps_3 \quad \forall t\geq 0 \},  
$$
where $\eps_3>0$ is given by Proposition \ref{prop:Hypo}, and it is worth remarking that $\eps_3 \leq \min(\eps_1, \eps_2)$, where $\eps_1, \eps_2>0$ are given by Propositions \ref{prop:GrowthLpPrimal+NNNN} and \ref{prop:KFPL2Perturbed} respectively, thus making them valid. 
We set $\delta =\eps_3 /C_0$ where $C_0\geq 1$ is given by Proposition \ref{prop:Hypo}, and we define the map
$$
\Phi : \ZZ \to \ZZ, \quad g \mapsto \Phi(g) = G  := S_{\QQQ_{g}} h_0. 
$$
It is worth emphasizing that, due to our choice of $\delta$, Proposition \ref{prop:Hypo} will ensure that $S_{\QQQ_{g}} h_0 \in \ZZ$ for every $g\in \ZZ$.
Finally, we endow $\ZZ$ with the weak  topology induced by $L^2_\omega(\OO)$, which makes $\ZZ$ a convex and compact set.

\medskip\noindent
\emph{Step 1. (Continuity of the map $\Phi$)} 
We consider a sequence $(g_n)$ in $\ZZ$ such that  $g_n \wto g$ weakly in $\ZZ$ as $n \to \infty$ and we define $G_n := S_{\QQQ_{g_n}} h_0$. 
Using Proposition \ref{prop:Hypo} together with \eqref{eq:bddGradPerturbed}, we have that
$$
\| G_n (t,\cdot) \|_{L^2_{\omega}(\OO)} +  \lvv \grad_v G_n  \rvv_{L^2_\omega(\UU)} 
\le C_0  \| h_0 \|_{L^2_\omega}  \leq \eps_3 \quad \forall \, t \ge 0, 
$$
so that $G_n \in \ZZ \cap L^2(\R_+ \times \Omega; H^1_\omega(\R^d))$. Thus there exist a subsequence $(G_{n'})$ and a function $G \in \ZZ \cap L^2(\R_+ \times \Omega; H^1_\omega(\R^d))$ such that 
\be\label{eq:Weak_Conv_Gn}
G_{n'} \wto G \quad \text{ weakly in } \ZZ \cap L^2(\R_+ \times \Omega; H^1_\omega(\R^d)) \text{ as } n' \to \infty.
\ee

\smallskip\noindent
We emphasize that $G_{n'}$ solves 
\be\label{eq:Perturbed_Equation_Schauder_nprime}
\partial_t G_{n'} = \PPP G_{n'} + \NNNN_{g_{n'}} G_{n'}, \quad \gamma_- G_{n'} = \RRR \gamma_+ G_{n'}  , \quad
(G_{n'})_{| t=0} = h_0, 
\ee
in the sense provided by Proposition~\ref{prop:KFPL2Perturbed}. Therefore, by arguing as during the Step 3.1 of the proof of Proposition~\ref{prop:KFPL2Perturbed}, we have that for any truncated (in $t$ and $x$) version $(\bar G_{n'})$ of $(G_{n'})$, we may apply \cite[Theorem~1.3]{MR1949176}, which gives that $(\bar G_{n'})$ is bounded in $H^{1/4} ( \R_t \times \R^d_x \times \R^d_v)$. Using then the version of the Rellich-Kondrachov theorem for fractional Sobolev spaces (see for instance \cite[Corollary 7.2]{MR2944369} or \cite[Lemma 6.11]{MR3081641}) we deduce that 
\be\label{eq:Relative_Compact_L2}
(G_{n'} ) \hbox{ is relatively compact in } L^2((0,T) \times \OO_R). 
\ee
for any $T,R > 0$, and we recall that $\OO_R := \{ (x,v) \in \OO ; \, d(x,\Omega^c) > 1/R, \, |v| < R \}$. 

\smallskip\noindent
Moreover, setting $\varsigma = \omega \la v\ra^{-1/2}$ we observe that the sequence $\left( G_{n'}\right)$ is tight in $L^2_\varsigma(\UU)$. Indeed we have that
\be\label{eq:Tightness_Gn_L2_1}
\lvv G_{n't} \rvv_{L^2_{\varsigma}( \OO_R^c)} \leq \frac 1{\la R\ra^{1/2}} \lvv G_{n't} \rvv_{L^2_{\omega}(\OO_R^c)} \leq \frac 1{\la R\ra^{1/2}} \lvv G_{n't} \rvv_{L^2_\omega(\OO)} \lesssim \frac 1{\la R\ra^{1/2}} e^{-\eta t} \lvv h_0\rvv_{L^2_\omega(\OO)},
\ee
for every $t \geq 0$, and where we remark that we have used \eqref{eq:Hypo} to obtain the last inequality and $\eta>0$ is given by Proposition \eqref{prop:Hypo}. Integrating \eqref{eq:Tightness_Gn_L2_1} in the time interval $(T, \infty)$ we further deduce that
\be\label{eq:Tightness_Gn_L2}
\lvv G_{n't} \rvv_{L^2_{\varsigma}((T, \infty)\times \OO_R^c)}  \lesssim \frac 1{\la R\ra^{1/2}} e^{-2\eta T} \lvv h_0\rvv_{L^2_\omega(\OO)},
\ee
which gives the tightness of the sequence $G_{n'}$ in $L^2_\varsigma(\UU)$. We then classically deduce from \eqref{eq:Relative_Compact_L2} and \eqref{eq:Tightness_Gn_L2} that
$$
G_{n'}  \to G \  \hbox{ strongly in } \ L^2_\varsigma((0,\infty) \times \OO) \text{ as } n'\to \infty.
$$
Using this convergence, the Cauchy-Schwartz inequality, and the fact that $\langle v \rangle^{5/2} \omega^{-1} \in L^2(\R^d)$, we deduce then that
\be\label{eq:Convergence_Energy_L2_Gn}
\EE_{G_{n'}} \to \EE_G \qquad \text { as } n'\to \infty.
\ee
Using \eqref{eq:Weak_Conv_Gn} and \eqref{eq:Convergence_Energy_L2_Gn} we may then pass to the limit in the weak formulation associated to Equation \eqref{eq:Perturbed_Equation_Schauder_nprime} and we obtain that $G$ solves
\beqn\label{eq:Perturbed_Equation_Schauder_limit}
\partial_t G = \PPP G+ \NNNN_g G, \quad \gamma_- G  = \RRR \gamma_+ G, \quad  G_{| t=0} = h_0, 
\eeqn
in the sense of Proposition~\ref{prop:KFPL2Perturbed}. Moreover, from Proposition \ref{prop:Hypo} and Proposition \ref{prop:GrowthLpPrimal+NNNN} we also have that 
$$
\| G \|_{L^2_\omega(\UU)} \le \eps_3, \quad \text{ and } \quad \int_0^\infty \!\! \int_\OO |\nabla_v  G|^2 \omega^2 \,  \d v \, \d x \, \d t \lesssim  \| h_0 \|^2_{L^2_\omega(\OO)}.
$$
Finally, due to the uniqueness given by Proposition~\ref{prop:KFPL2Perturbed} we get that $G$ is the only possible function in $L^2_\omega(\UU)$ such that $G = S_{\QQQ_{g}} h_0$. By the uniqueness of the possible limit we deduce that the map $\Phi$ is continuous.

\medskip\noindent
\emph{Step 2. (Conclusion)} We may apply now Schauder's fixed point theorem in the space $\ZZ$ with the map $\Phi$, and we obtain that there exists $h \in \ZZ$ such that $h = \Phi (h)$. Moreover, due to the very definition of $\Phi$, we have that $h$ is a global weak solution of Equation \ref{eq:KFPstabilityNL} in the sense of Theorem~\ref{prop:KFPL2Perturbed}, and
it satisfies the decay estimate \eqref{eq:Hypo}. This concludes the proof.
\end{proof}

\bigskip\noindent
\textbf{Acknowledgements.} The authors warmly thank Kleber Carrapatoso and Stéphane Mischler for the discussions during the process of the article and for pointing out important references. 
J. Evans is supported by a Royal Society University Research Fellowship R1\_251808 (since October 2025). 
R. Medina acknowledges the funding received  from the European Union’s Horizon 2020 research and innovation programme under the Marie Skłodowska-Curie grant agreement No 945332 \includegraphics[width = .5cm]{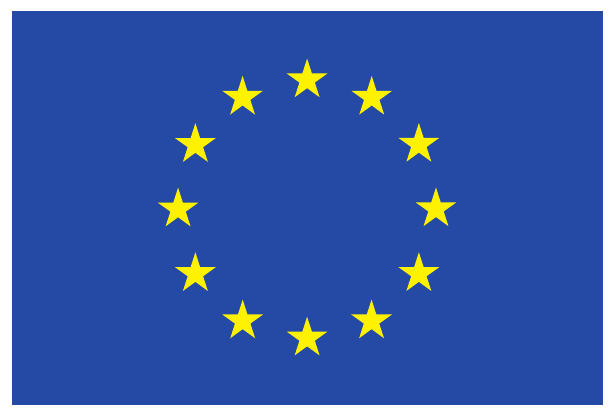}.
For the purpose of open access, the authors have applied a Creative Commons Attribution (CC-BY) license to any Author Accepted Manuscript version arising from this submission.

\bigskip
\bigskip
%%%%%%%%%%%%%%%
\bibliographystyle{plain}
\bibliography{1.KFPnonlinear.bib}

\end{document}